\newtheorem{theorem}{Theorem}[section]
\newtheorem{corollary}[theorem]{Corollary}
\newtheorem{definition}[theorem]{Definition}
\newtheorem{lemma}[theorem]{Lemma}
\newtheorem{proposition}[theorem]{Proposition}
\newtheorem*{problemp}{Problem P$_{\varepsilon}$}
\newtheorem*{problem0}{Problem P$_0$}
\theoremstyle{remark}
\newtheorem{remark}[theorem]{Remark}
\numberwithin{equation}{section}
\newcommand{\diff}{{\rm{d}}}
\newcommand{\ep}{\varepsilon}
\begin{document}

\title[Boundary Conditions with Memory]{Robust Exponential Attractors for Coleman--Gurtin Equations with Dynamic Boundary Conditions Possessing Memory}

\author[J. L. Shomberg]{Joseph L. Shomberg}

\subjclass[2000]{35B40, 35B41, 45K05, 35Q79.}

\keywords{Coleman--Gurtin equation, dynamic boundary conditions, memory relaxation, exponential attractor, basin of attraction, global attractor, finite dimensional dynamics, robustness}

\address{Department of Mathematics and Computer Science, Providence
College, Providence, RI 02918, USA, \\
\tt{{jshomber@providence.edu} }}

\date{\today}

\begin{abstract}
The well-posedness of a generalized Coleman--Gurtin equation equipped with dynamic boundary conditions with memory was recently established by the author with C.G. Gal. 
In this article we report advances concerning the asymptotic behavior and stability of this heat transfer model. 
For the model under consideration, we obtain a family of exponential attractors that is robust/H\"{o}lder continuous with respect to a perturbation parameter occurring in a singularly perturbed memory kernel. 
We show that the basin of attraction of these exponential attractors is the entire phase space.
The existence of (finite dimensional) global attractors follows.
The results are obtained by assuming the nonlinear terms defined on the interior of the domain and on the boundary satisfy standard dissipation assumptions. 
Also, we work under a crucial assumption that dictates the memory response in the interior of the domain matches that on the boundary.
\end{abstract}

\maketitle

\tableofcontents

\section{Introduction to the model problem}

In the framework of \cite{GPM98}, let us only consider a thermodynamic
process based on heat conduction. 
Suppose that a bounded domain $\Omega
\subset \mathbb{R}^{n},n\geq 1,$ is occupied by a body which may be
inhomogeneous, but has a configuration constant in time. 
Thermodynamic
processes taking place inside $\Omega ,$ with sources also present at the
boundary $\Gamma ,$ give rise to the following model for the temperature
field $u$:
\begin{align}
& \partial_{t}u - \omega\Delta u - (1-\omega) \int_{0}^{\infty} k(s) \Delta u(x,t-s) \diff s + f(u)  \label{eq1m} \\ 
& + \alpha(1-\omega) \int_0^\infty k(s)u(x,t-s) \diff s =0,  \notag
\end{align}
in $\Omega \times \left( 0,\infty \right)$, subject to the following boundary condition:
\begin{align}
& \partial_{t}u - \omega\Delta _{\Gamma }u + \omega\partial_{\mathbf{n}}u + (1-\omega) \int_0^\infty k(s)\partial_{\mathbf{n}} u(x,t-s)\diff s \label{eq2m} \\ 
& + (1-\omega) \int_0^\infty k(s)(-\Delta_\Gamma+\beta)u (x,t-s)\diff s + g(u) = 0,  \notag 
\end{align}
on $\Gamma \times (0,\infty),$ for every $\alpha\ge0$, $\beta\ge0$, $\omega \in \lbrack
0,1)$, and where $k:[0,\infty )\rightarrow \mathbb{R}$ is a continuous nonnegative function, smooth on $(0,\infty)$, vanishing at infinity and
satisfying the relation
\begin{equation*}
\int_{0}^{\infty} k(s) \diff s = 1,
\end{equation*}
$\partial_{\mathbf{n}}$ represents the normal derivative and $-\Delta_\Gamma$ is the Laplace-Beltrami operator.
The cases $\omega =0$ and $\omega >0$ in (\ref{eq1m}) are usually referred
as the Gurtin--Pipkin and the Coleman--Gurtin models, respectively. 
The literature contains a full treatment of equation (\ref{eq1m}) only in the
case of standard boundary conditions (Dirichlet, Neumann and periodic boundary conditions).
In light of new results and extensions for the phase field equations (see, e.g., \cite{CGGM10, GGM08} and references therein), we must consider more general dynamic boundary conditions. 
In particular, we quote \cite{GMS2010}:
\begin{quote}
In most works, the equations are endowed with Neumann boundary conditions for both [unknowns] $u$ and $w$ (which means that the interface is orthogonal to the boundary and that there is no mass flux at the boundary) or with periodic boundary conditions. 
Now, recently, physicists have introduced the so-called dynamic boundary conditions, in the sense that the kinetics, i.e., $\partial_t u$, appears explicitly in the boundary conditions, in order to account for the interaction of the components with the walls for a confined system.
\end{quote}
The derivation of (\ref{eq2m}) in the context of (\ref{eq1m}) can be derived in a similar fashion as in \cite{Gal&Grasselli08, Gold06} exploiting first and second laws of thermodynamics. 
Let $\omega \in \lbrack 0,1)$ be fixed. It is clear that if we (formally) choose $k=\delta _{0}$ (the Dirac mass at zero), equations (\ref{eq1m})-(\ref{eq2m}) turn into the following system:
\begin{equation}
\partial_{t}u - \Delta u + f(u) + \alpha(1-\omega)u = 0, \quad \text{in}\ \Omega \times (0,\infty),  \label{eq3d}
\end{equation}
\begin{equation}
\partial_{t}u - \Delta_{\Gamma}u + \partial_{\mathbf{n}}u + g(u) + \beta(1-\omega) u = 0, \quad \text{on}\ \Gamma \times \left( 0,\infty \right).
\label{eq4d}
\end{equation}
The latter has been investigated quite extensively recently in many contexts
(i.e., phase-field systems, heat conduction with a source at $\Gamma $, Stefan problems, etc).

Now we define, for $\varepsilon \in (0,1]$, 
\begin{equation*}
k_{\varepsilon}(s) = \frac{1}{\varepsilon}k\left(\frac{s}{\varepsilon}\right),
\end{equation*}
and we consider the same family of equations (\ref{eq1m})-(\ref{eq2m}), replacing $k$ with $k_{\varepsilon }$. 
Thus, $k_{\varepsilon }\rightarrow\delta _{0}$ when $\varepsilon \rightarrow 0$.
Our goal is to show in what sense does the system (\ref{eq1m})-(\ref{eq2m}) converge to (\ref{eq3d})-(\ref{eq4d}) as $\varepsilon \rightarrow 0.$ 

Such results seem to have begun with the hyperbolic relaxation of a Chaffee--Infante reaction diffusion equation in \cite{Hale&Raugel88}.
The motivation for such a hyperbolic relaxation is similar to the motivation for applying a memory relaxation; it alleviates the parabolic problems from the sometimes unwanted property of ``infinite speed of propagation''.
In \cite{Hale&Raugel88} however, Hale and Raugel proved the existence of a family of global attractors that is upper-semicontinuous in the phase space. 
A global attractor is a unique compact invariant subset of the phase space that attracts all trajectories of the associated dynamical system, even at arbitrarily slow rates (cf. \cite{Kostin98} and \cite[Theorem 14.6]{Robinson01}).
In a sense which will become clearer below, upper-semicontinuity guarantees the attractors to not ``blow-up'' as the perturbation parameter vanishes; i.e.,
\[
\sup_{x\in A_\ep}\inf_{y\in A_0}\|x-y\|_{X_\ep}\longrightarrow 0 \quad\text{as}\quad \ep\rightarrow 0^+.
\]

Unlike global attractors, exponential attractors (sometimes called, inertial sets) are compact positively invariant sets possessing finite fractal dimension that attract bounded subsets of the phase space exponentially fast. 
It can readily be seen that when both a global attractor $\mathcal{A}$ and an exponential attractor $\mathfrak{M}$ exist, then $\mathcal{A}\subseteq \mathfrak{M}$ provided that the basin of attraction of $\mathfrak{M}$ is the whole phase space, and so the global attractor is also finite dimensional. 
When we turn our attention to proving the existence of exponential attractors, certain higher-order dissipative estimates are required. 
In some interesting cases, it has not yet been shown how to obtain the appropriate estimates (which would provide the existence of a compact absorbing set, for example) {\em{independent}} of the perturbation parameter (cf. e.g. \cite{Frigeri&ShombergXX,Gal&Shomberg15}).
It is precisely because we are able to provide a higher-order uniform bound for the model problems here that we do not give a separate upper-semicontinuity result for the global attractors.
An appropriate uniform higher-order bound will essentially/almost mean that a robustness result may be found (but it is not guaranteed).

Robust families of exponential attractors (that is, both upper- and lower-semicontinuous with explicit control over semidistances in terms of the perturbation parameter) of the type reported in \cite{GGMP05} have successfully been shown to exist in many different applications, of which we will limit ourselves to mention only \cite{GMPZ10} which contains some applications of memory relaxation of reaction diffusion equations: Cahn--Hilliard equations, phase-field equations, wave equations, beam equations, and numerous others. 
The main idea behind robustness is typically an estimate of the form, 
\begin{equation}  \label{robust-intro}
\|S_\varepsilon(t)x-\mathcal{L}S_0(t)\Pi x\|_{X_\varepsilon}\leq C\varepsilon^p,
\end{equation}
for all $t$ in some interval, where $x\in X_\varepsilon$, $S_\varepsilon(t):X_\ep\rightarrow X_\ep$ and $S_0(t):X_0\rightarrow X_0$ are semigroups generated by the solutions of the perturbed problem and the limit problem, respectively, $\Pi$ denotes a projection from $X_\varepsilon$ onto $X_0$ and $\mathcal{L}$ is a ``lift'' from $X_0$ into $X_\varepsilon$, and finally $C,p>0$ are constants.
Controlling this difference in a suitable norm is crucial to obtaining our continuity results (see (C5) in Proposition \ref{abstract2}). 
The estimate (\ref{robust-intro}) means we can approximate the limit problem with the perturbation with control explicitly written in terms of the perturbation parameter. 
Usually such control is only exhibited on compact time intervals. 
Observe, a result of this type will ensure that for every problem of type (\ref{eq3d})-(\ref{eq4d}), there is an ``memory relaxation'' of the form (\ref{eq1m})-(\ref{eq2m}) {\em{close by}} in the sense that the difference of corresponding trajectories satisfies \eqref{robust-intro}.

We carefully treat the following issues:

\begin{description}
\item[1] Well-posedness of the system comprising of equations (\ref{eq1m})-(\ref{eq2m}) and \eqref{eq3d}-\eqref{eq4d}.

\item[2] Dissipation: the existence of bounded absorbing set, and a {\em{compact}} absorbing set, each of which is uniform with respect to the perturbation parameter $\ep$.

\item[3] Stability: existence of a family of exponential attractors for each $\ep\in[0,1]$ and an analysis of the continuity properties (robustness/H\"{o}lder) with respect to $\ep$.

\item[4] The basin of attraction for each exponential attractor is the entire phase space, and in demonstrating this result we see that the semigroup of solution operators also admits a family of global attractors.
\end{description}

Concerning Issue 1, the well-posedness for a more general system, which includes the one above, was given recently by \cite{Gal-Shomberg15-2}. 
The relevant results from that work are cited below in Section 2. 
In this article we explore Issues 2, 3, and 4 in much more depth; in particular, the existence of an exponential attractor for each $\varepsilon\in[0,1]$, and the continuity of these attractors with respect to $\varepsilon$.

As is now customary (cf. \cite{CDGP-2010,CPS05,CPS06,Grasselli&Pata02-2}) we introduce the so-called integrated past history of $u$, i.e., the auxiliary variable
\begin{equation*}
\eta ^{t}(x,s) =\int_{0}^{s} u(x,t-y) \diff y,
\end{equation*}
for $s,t>0.$ 
Setting 
\begin{equation*}  
\mu(s) = -(1-\omega)k'(s),
\end{equation*}
formal integration by parts into (\ref{eq1m})-(\ref{eq2m}) yields
\begin{align}
(1-\omega) \int_{0}^{\infty} k_{\varepsilon}(s) \Delta u(x,t-s) \diff s & =\int_{0}^{\infty}\mu_{\varepsilon}(s) \Delta \eta^{t}(x,s) \diff s, \notag \\ 
(1-\omega) \int_{0}^{\infty} k_{\varepsilon}(s) u(x,t-s) \diff s & = \int_{0}^{\infty} \mu_{\varepsilon}(s) \eta^{t}(x,s) \diff s, \notag \\ 
(1-\omega) \int_{0}^{\infty } k_{\varepsilon}(s) \partial_{\mathbf{n}} u(x,t-s) \diff s & = \int_{0}^{\infty} \mu_{\varepsilon}(s) \partial_{\mathbf{n}} \eta^{t}(x,s) \diff s, \notag 
\end{align}
and
\begin{equation*}
(1-\omega) \int_{0}^{\infty} k_{\varepsilon}(s)\left(-\Delta_{\Gamma} + \beta\right) u(x,t-s) \diff s = \int_{0}^{\infty} \mu_{\varepsilon}(s) \left(-\Delta_{\Gamma} + \beta\right) \eta^t(x,s) \diff s. 
\end{equation*}
where 
\begin{equation}  \label{mu-scaled}
\mu_{\varepsilon }(s) = \frac{1}{\varepsilon^{2}}\mu\left( \frac{s}{\varepsilon}\right).
\end{equation}

For each $\ep\in(0,1],$ the (perturbation) problem under consideration can now be stated.

\begin{problemp}
Let $\alpha,\beta\ge0,$ and $\omega\in(0,1)$. 
Find a function $(u,\eta)$ such that 
\begin{equation}
\partial_tu-\omega\Delta u-\int_0^\infty\mu_\ep(s)\Delta\eta^t(s)\diff s+\alpha\int_0^\infty\mu_\ep(s)\eta^t(s)\diff s+f(u)=0  \label{problemp-1}
\end{equation}
in $\Omega\times(0,\infty),$ subject to the boundary conditions
\begin{equation}
\partial_tu-\omega\Delta_\Gamma u+\omega\partial_{\mathbf{n}}u+\int_0^\infty\mu_\ep(s)\partial_{\mathbf{n}}\eta^t(s)\diff s+\int_0^\infty\mu_\ep(s)\left( -\Delta_\Gamma+\beta \right)\eta^t(s)\diff s+g(u)=0  \label{problemp-2}
\end{equation}
on $\Gamma\times(0,\infty),$ and 
\begin{equation}
\partial_t\eta^t(s)+\partial_s\eta^t(s)=u(t)\quad \text{in}\quad {\overline{\Omega}}\times(0,\infty),  \label{problemp-3}
\end{equation}
with
\begin{equation}
\eta^t(0)=0\quad \text{in}\quad {\overline{\Omega}}\times(0,\infty),  \label{problemp-4}
\end{equation}
and the initial conditions 
\begin{equation}
u(0)=u_0\quad \text{in}\quad \Omega, \quad u(0)=v_0\quad \text{on}\quad \Gamma,  \label{problemp-5}
\end{equation}
\begin{equation}
\eta^0(s)=\eta_0:=\int_0^su_0(x,-y)\diff y \quad\text{in}\quad \Omega, \quad \text{for}\quad s>0,  \label{problemp-6}
\end{equation}
and
\begin{equation}
\eta^0(s)=\xi_0:=\int_0^sv_0(x,-y)\diff y \quad\text{on} \quad \Gamma, \quad \text{for}\quad s>0.  \label{problemp-7}
\end{equation}
\end{problemp}

We will also discuss the problem corresponding to $\ep=0$.
The results for this problem may already be found in works in parabolic equations and the Wentzell Laplacian (see \cite{Gal12-2,Gal12-1,Gal-15Z,Gal&Warma10}).
The singular (limit) problem is

\begin{problem0}
Let $\alpha,\beta\ge0$ and $\omega\in(0,1)$. 
Find a function $u$ such that 
\begin{equation}
\partial_{t}u - \Delta u + f(u) + \alpha(1-\omega)u = 0  \label{problem0-1}
\end{equation}
in $\Omega\times(0,\infty),$ subject to the boundary conditions
\begin{equation}
\partial_{t}u - \Delta_{\Gamma}u + \partial_{\mathbf{n}}u + g(u) + \beta(1-\omega) u = 0  \label{problem0-2}
\end{equation}
on $\Gamma\times(0,\infty),$ with the initial conditions 
\begin{equation}
u(0)=u_0\quad \text{in}\quad \Omega \quad\text{and}\quad u(0)=v_0\quad \text{on}\quad \Gamma.  \label{problem0-5}
\end{equation}
\end{problem0}

\begin{remark}  \label{on-traces}
It need not be the case that the boundary traces of $u_0$ and $\eta_0$ be equal to $v_0$ and $\xi_0$, respectively. 
Thus, we are solving a much more general problem in which equation (\ref{problemp-1}) is interpreted as an evolution equation in the bulk $\Omega$ properly coupled with the equation (\ref{problemp-2}) on the boundary $\Gamma$.
Finally, from now on both $\eta_0$ and $\xi_0$ will be regarded as independent of the initial data $u_0$ and $v_0.$
Indeed, below we will consider a more general problem with respect to the original one. 
This will require a rigorous notion of solution to Problem {\textbf{P}}$_{\varepsilon}$ (cf. Definitions \ref{d:weak-solution}, \ref{d:strong-solution}), hence we introduce the functional setting associated with this system.
\end{remark}

Here below is the framework used to prove Hadamard well-posedness for Problem {\textbf{P}}$_{\varepsilon}$. 
Consider the space $\mathbb{X}^{2}:=L^{2}\left( \overline{\Omega }, \diff\mu \right) ,$ where
\begin{equation*}
\diff\mu = \diff x_{\mid \Omega }\oplus \diff\sigma ,
\end{equation*}%
where $\diff x$ denotes the Lebesgue measure on $\Omega $ and $\diff\sigma $ denotes the natural surface measure on $\Gamma $. It is
easy to see that $\mathbb{X}^{2}=L^{2}\left( \Omega , \diff x\right)
\oplus L^{2}\left( \Gamma , \diff\sigma \right) $ may be identified
under the natural norm
\begin{equation*}
\left\Vert u\right\Vert _{\mathbb{X}^{2}}^{2}=\int_{\Omega}\left\vert u\right\vert^{2} \diff x + \int_{\Gamma }\left\vert u\right\vert^{2} \diff\sigma.
\end{equation*}%
Moreover, if we identify every $u\in C(\overline{\Omega})$ with $U=\left( u_{\mid \Omega },u_{\mid \Gamma }\right) \in C\left( \Omega
\right) \times C\left( \Gamma \right) $, we may also define $\mathbb{X}^{2}$
to be the completion of $C(\overline{\Omega})$ in the norm $\|\cdot\|_{\mathbb{X}^{2}}$. 
In general, any function $u\in \mathbb{X}^{2}$ will be of the form $u=\binom{u_{1}}{u_{2}}$ with $u_{1}\in L^{2}\left( \Omega , \diff x\right) $ and $u_{2}\in L^{2}\left( \Gamma , \diff\sigma \right),$ and there need not be
any connection between $u_{1}$ and $u_{2}$. 
From now on, the inner product in the Hilbert space $\mathbb{X}^{2}$ will be denoted by $\left\langle \cdot,\cdot \right\rangle _{\mathbb{X}^{2}}.$ 
Hereafter, the spaces $L^{2}\left(\Omega , \diff x\right) $ and $L^{2}\left(\Gamma, \diff\sigma \right)$ will simply be denoted by $L^{2}\left(\Omega\right) $ and $L^{2}\left( \Gamma\right)$.

Recall that the Dirichlet trace map ${\mathrm{tr_{D}}}:C^{\infty }(\overline{\Omega}) \rightarrow C^{\infty}(\Gamma),$ defined by ${\mathrm{tr_{D}}}\left( u\right) =u_{\mid \Gamma }$ extends to a linear continuous operator ${\mathrm{tr_{D}}}:H^{r}\left( \Omega \right)
\rightarrow H^{r-1/2}\left( \Gamma \right) ,$ for all $r>1/2$, which is onto
for $1/2<r<3/2.$ This map also possesses a bounded right inverse ${\mathrm{tr_{D}}}^{-1}:H^{r-1/2}(\Gamma) \rightarrow H^{r}(\Omega)$ such that ${\mathrm{tr_{D}}}({\mathrm{tr_{D}}}^{-1}\psi) =\psi ,$ for any $\psi \in H^{r-1/2}(\Gamma) $. 
We can thus introduce the subspaces of $H^{r}(\Omega) \times H^{r}(\Gamma),$
\begin{equation}
\mathbb{V}^{r}:=\{\left( u,\psi \right) \in H^{r}\left( \Omega \right)
\times H^{r}\left( \Gamma \right) :{\mathrm{tr_{D}}}\left( u\right) =\psi \},
\label{vvv}
\end{equation}%
for every $r>1/2,$ and note that we have the following dense and compact
embeddings $\mathbb{V}^{r_{1}}\hookrightarrow \mathbb{V}^{r_{2}},$ for any $r_{1}>r_{2}>1/2$. Finally, we think of $\mathbb{V}^{1}\simeq H^{1}\left(\Omega \right) \oplus H^{1}\left( \Gamma \right) $ as the completion of $C^{1}\left( \overline{\Omega }\right) $ in the norm
\begin{equation}
\left\Vert u\right\Vert _{\mathbb{V}^{1}}^{2}:=\int_{\Omega } \left(
\left\vert \nabla u\right\vert ^{2} + \alpha\left\vert u\right\vert ^{2} \right) \diff x + \int_{\Gamma }\left(\left\vert \nabla _{\Gamma }u\right\vert
^{2}+\beta\left\vert u\right\vert^{2}\right) \diff\sigma  \label{v1b}
\end{equation}
(or some other equivalent norm in $H^{1}\left( \Omega \right) \times H^{1}\left( \Gamma \right) $).
Naturally, the norm on the space $\mathbb{V}^r$ is defined as
\begin{equation}\label{Vr-norm}
\|u\|^2_{\mathbb{V}^r} := \|u\|^2_{H^r(\Omega)} + \|u\|^2_{H^r(\Gamma)}.
\end{equation}

For $U=(u,u_{\mid\Gamma})^{\mathrm{tr}}\in\mathbb{V}^1$, let $C_\Omega>0$
denote the best constant in which the Sobolev-Poincar\'e inequality holds 
\begin{equation}  \label{Poincare}
\left\Vert u-\langle u \rangle_\Gamma \right\Vert _{L^{s}\left( \Omega \right) }
\leq C_\Omega\left\Vert \nabla u\right\Vert _{L^{s}(\Omega)},
\end{equation}
for $s\geq 1$ (see \cite[Lemma 3.1]{RBT01}).
Here $$\langle u \rangle_{\Gamma}:=\frac{1}{\left\vert \Gamma \right\vert }\int_{\Gamma}u_{\mid\Gamma} \diff\sigma.$$

Let us now introduce the spaces for the memory variable $\eta $. For a
nonnegative measurable function $\theta $ defined on $\mathbb{R}_{+}$ and a
real Hilbert space $W$ (with inner product denoted by $\left\langle \cdot
,\cdot \right\rangle_W$), let $L_{\theta }^{2}\left( \mathbb{R}%
_{+};W\right) $ be the Hilbert space of $W$-valued functions on $\mathbb{R}%
_{+}$, endowed with the following inner product%
\begin{equation}  
\left\langle \phi _{1},\phi _{2}\right\rangle _{L_{\theta}^{2}\left( 
\mathbb{R}_{+};W\right) }:=\int_{0}^{\infty }\theta(s)\left\langle \phi
_{1}\left( s\right) ,\phi _{2}\left( s\right) \right\rangle _W \diff s.
\end{equation}  \label{sc-2}
Consequently, for $r>1/2$ we set 
\begin{equation*}
\mathcal{M}^r_{\varepsilon}:= \left\{ \begin{array}{ll} L_{\mu_{\varepsilon}}^{2}\left( \mathbb{R}_{+};\mathbb{V}^{r}\right) & \text{for}\ \ep\in(0,1], \\ \{0\} & \text{when}\ \ep=0, \end{array} \right.  
\end{equation*}
and when $r=0$ set
\begin{equation*}
\mathcal{M}^0_{\varepsilon}:= \left\{ \begin{array}{ll} L_{\mu_{\varepsilon}}^{2}\left( \mathbb{R}_{+};\mathbb{X}^{2}\right) & \text{for}\ \ep\in(0,1], \\ \{0\} & \text{when}\ \ep=0. \end{array} \right.  
\end{equation*}
One can see from \cite[Lemma 5.1]{GMPZ10} that for $\ep_1\ge\ep_2>0$ and for fixed $r=0$ or $r>1/2$, there holds the continuous embedding $\mathcal{M}^{r}_{\ep_1}\hookrightarrow\mathcal{M}^{r}_{\ep_2}$.
As a matter of convenience, the inner-product in $\mathcal{M}^1_{\varepsilon}$ is given by 
\begin{align}
& \left\langle \binom{\eta_1}{\xi_1},\binom{\eta_2}{\xi_2} \right\rangle_{\mathcal{M}^1_{\varepsilon}}  \label{sc} \\
&=\int_0^\infty \mu_{\varepsilon}(s)\left( \left\langle \nabla\eta_1(s),\nabla\eta_2(s) \right\rangle_{L^2(\Omega)}+\alpha\left\langle \eta_1(s),\eta(s) \right\rangle_{L^2(\Omega)} \right) \diff s \notag \\
& + \int_0^\infty \mu_{\varepsilon}(s)\left( \left\langle \nabla_\Gamma\xi_1(s),\nabla_\Gamma\xi_2(s) \right\rangle_{L^2(\Gamma)} + \beta\left\langle \xi_1(s),\xi_2(s) \right\rangle_{L^2(\Gamma)} \right) \diff s. \notag
\end{align}
When it is convenient, we will use the notation 
\begin{align}
\mathcal{H}^0_{\varepsilon} & := \mathbb{X}^2\times\mathcal{M}^1_{\varepsilon} \notag \\ 
\mathcal{H}^1_{\varepsilon} & := \mathbb{V}^1\times\mathcal{M}^2_{\varepsilon}. \notag
\end{align}
Each space is equipped with the corresponding ``graph norm,'' whose square is defined by, for all $\ep\in[0,1]$ and $(U,\Phi)\in\mathcal{H}^i_\ep$, $i=0,1,$ 
\begin{equation*}
\left\|(U,\Phi)\right\|^2_{\mathcal{H}^0_{\varepsilon}}:=\left\|U\right\|^2_{\mathbb{X}^2}+\left\|\Phi\right\|^2_{\mathcal{M}^1_{\varepsilon}} \quad \text{and} \quad \left\|(U,\Phi)\right\|^2_{\mathcal{H}^1_{\varepsilon}}:=\left\|U\right\|^2_{\mathbb{V}^1}+\left\|\Phi\right\|^2_{\mathcal{M}^2_{\varepsilon}}.
\end{equation*}

For the kernel $\mu$, we take the following
assumptions (cf. e.g. \cite{CPS06,GPM98,GPM00}). Assume 
\begin{align}
& \mu\in C^1(\mathbb{R}_+)\cap L^1(\mathbb{R}_+),  \label{mu-1} \\
& \mu(s) \geq 0 \quad\text{for all}\quad s\geq 0,  \label{mu-2} \\
& \mu'(s) \leq 0 \quad\text{for all}\quad s\geq 0,  \label{mu-3} \\
& \mu'(s) + \delta\mu(s) \leq 0 \quad\text{for all} \quad s\geq 0 \quad\text{and some}\quad \delta>0.  \label{mu-4}
\end{align}
The assumptions (\ref{mu-1})-(\ref{mu-3}) are equivalent to assuming $k(s)$ be a bounded, positive, nonincreasing, convex function of class $\mathcal{C}^2$.
Moreover, assumption (\ref{mu-4}) guarantees exponential decay of the
function $\mu(s)$ while allowing a singularity at $s=0$. Assumptions (\ref{mu-1})-(\ref{mu-3}) are used in the literature (see \cite{CDGP-2010,CPS06,GPM98,Grasselli&Pata02-2} for example) to establish the
existence and uniqueness of continuous global weak solutions to a system of equations similar to (\ref{problemp-1}), (\ref{problemp-3}), but with Dirichlet boundary conditions. In the literature, assumption (\ref{mu-4}) is used to obtain a bounded absorbing set for the associated semigroup of solution operators.

For each $\varepsilon\in(0,1],$ define 
\begin{equation}\label{memory-4}
D(\mathrm{T}_{\varepsilon})=\left\{ \Phi\in\mathcal{M}^1_{\varepsilon}:\partial_s\Phi\in\mathcal{M}^1_{\varepsilon}, \Phi(0)=0 \right\}
\end{equation}
where (with an abuse of notation) $\partial_s\Phi$ is the distributional derivative of $\Phi$ and the equality $\Phi(0)=0$ is meant in the following sense
\begin{equation*}
\lim_{s\rightarrow0}\|\Phi(s)\|_{\mathbb{X}^{2}} = 0.
\end{equation*}
Then define the linear (unbounded) operator $\mathrm{T}_{\varepsilon}: D(\mathrm{T}_{\varepsilon})\rightarrow \mathcal{M}^1_{\varepsilon}$ by, for all $\Phi\in D(\mathrm{T}_{\varepsilon})$, 
\begin{equation*}
\mathrm{T}_{\varepsilon}\Phi=-\frac{\diff}{\diff s}\Phi.
\end{equation*}
For each $t\in[0,T]$, the equation 
\begin{equation}\label{memory-2}
\partial_t\Phi^t = \mathrm{T}_{\varepsilon}\Phi^t + U(t) 
\end{equation}
holds as an ODE in $\mathcal{M}^1_\ep$ subject to the initial condition
\begin{equation}\label{memory-3}
\Phi^0=\Phi_0\in\mathcal{M}^1_{\varepsilon}.
\end{equation}
Concerning the solution to the IVP (\ref{memory-2})-(\ref{memory-3}), we have the following proposition. 
The result is a generalization of \cite[Theorem 3.1]{Grasselli&Pata02-2}.

\begin{proposition}\label{t:generator-T}
For each $\varepsilon\in(0,1],$ the operator $\mathrm{T}_{\varepsilon}$ with domain $D(\mathrm{T}_{\varepsilon})$ is an infinitesimal generator of a strongly continuous semigroup of contractions on $\mathcal{M}^1_{\varepsilon}$, denoted $e^{\mathrm{T}_{\varepsilon} t}$.
\end{proposition}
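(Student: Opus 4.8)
The plan is to apply the Lumer--Phillips theorem: it suffices to verify that $\mathrm{T}_{\varepsilon}$ is densely defined, dissipative, and that $I-\mathrm{T}_{\varepsilon}$ maps $D(\mathrm{T}_{\varepsilon})$ onto all of $\mathcal{M}^1_{\varepsilon}$. Density is the routine point: $C^\infty_c\big((0,\infty);\mathbb{V}^1\big)\subset D(\mathrm{T}_{\varepsilon})$, and this set is dense in $L^2_{\mu_\varepsilon}(\mathbb{R}_+;\mathbb{V}^1)=\mathcal{M}^1_\varepsilon$ by a standard truncation-and-mollification argument, using that $\mu_\varepsilon$ is locally integrable away from $s=0$ (and that vanishing of test functions near $0$ is compatible with the weight).

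For dissipativity, take $\Phi\in D(\mathrm{T}_{\varepsilon})$ and compute, using the inner product \eqref{sc}, that the integrand of $\langle \mathrm{T}_{\varepsilon}\Phi,\Phi\rangle_{\mathcal{M}^1_\varepsilon}$ is $-\tfrac12\mu_\varepsilon(s)\frac{\diff}{\diff s}\|\Phi(s)\|^2_{\mathbb{V}^1}$. An integration by parts yields
\[
\langle \mathrm{T}_{\varepsilon}\Phi,\Phi\rangle_{\mathcal{M}^1_\varepsilon}
=\frac12\int_0^\infty \mu_\varepsilon'(s)\,\|\Phi(s)\|^2_{\mathbb{V}^1}\,\diff s
-\frac12\Big[\mu_\varepsilon(s)\|\Phi(s)\|^2_{\mathbb{V}^1}\Big]_{s=0}^{s=\infty}.
\]
The lower endpoint is killed by the condition $\Phi(0)=0$ (the possible singularity of $\mu_\varepsilon$ at $s=0$ being handled exactly as in \cite[Theorem 3.1]{Grasselli&Pata02-2}), and the upper endpoint vanishes because $\Phi,\partial_s\Phi\in\mathcal{M}^1_\varepsilon$ force $s\mapsto\|\Phi(s)\|^2_{\mathbb{V}^1}$ to be absolutely continuous with $\mu_\varepsilon(s)\|\Phi(s)\|^2_{\mathbb{V}^1}$ integrable, so that its limit as $s\to\infty$ must be zero. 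Since $\mu_\varepsilon'(s)=\varepsilon^{-3}\mu'(s/\varepsilon)\le 0$ by \eqref{mu-3}, we conclude $\langle \mathrm{T}_{\varepsilon}\Phi,\Phi\rangle_{\mathcal{M}^1_\varepsilon}\le 0$.

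For surjectivity of $I-\mathrm{T}_{\varepsilon}$, given $\Psi\in\mathcal{M}^1_\varepsilon$ the equation $\Phi+\partial_s\Phi=\Psi$ with $\Phi(0)=0$ is solved explicitly by $\Phi(s)=\int_0^s e^{-(s-y)}\Psi(y)\,\diff y$. Then $\Phi(0)=0$ is immediate, and once we know $\Phi\in\mathcal{M}^1_\varepsilon$ it follows that $\partial_s\Phi=\Psi-\Phi\in\mathcal{M}^1_\varepsilon$, so that $\Phi\in D(\mathrm{T}_{\varepsilon})$. Hence the only thing to check is $\Phi\in\mathcal{M}^1_\varepsilon$: from the pointwise bound $\|\Phi(s)\|_{\mathbb{V}^1}\le \big(e^{-\cdot}\ast\|\Psi(\cdot)\|_{\mathbb{V}^1}\big)(s)$ one estimates $\int_0^\infty\mu_\varepsilon(s)\|\Phi(s)\|^2_{\mathbb{V}^1}\,\diff s$ by splitting into $[0,1]$ and $[1,\infty)$ and using that $\mu_\varepsilon$ is nonincreasing together with the exponential decay $\mu_\varepsilon(s)\le C\,e^{-\delta s/\varepsilon}$ coming from \eqref{mu-4}, via a Cauchy--Schwarz/Young convolution inequality, to get $\|\Phi\|_{\mathcal{M}^1_\varepsilon}\le C(\varepsilon)\|\Psi\|_{\mathcal{M}^1_\varepsilon}$. (Equivalently, pairing the resolvent equation with $\mu_\varepsilon\Phi$ in $\mathbb{V}^1$ and integrating reuses the dissipativity computation to give $\|\Phi\|_{\mathcal{M}^1_\varepsilon}\le\|\Psi\|_{\mathcal{M}^1_\varepsilon}$ directly.) Lumer--Phillips then produces the $C_0$-semigroup of contractions $e^{\mathrm{T}_{\varepsilon}t}$.

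The main obstacle is the rigorous justification of the boundary term at $s=\infty$ in the dissipativity identity, and, in the same spirit, of the integration by parts itself for general $\Phi\in D(\mathrm{T}_{\varepsilon})$; both are made legitimate by approximating elements of $D(\mathrm{T}_{\varepsilon})$ by smooth compactly supported functions and by invoking the exponential-decay hypothesis \eqref{mu-4} for the scaled kernel $\mu_\varepsilon$. Everything else is the standard translation-semigroup mechanism --- indeed $e^{\mathrm{T}_{\varepsilon}t}$ is the right shift, $(e^{\mathrm{T}_{\varepsilon}t}\Phi)(s)=\Phi(s-t)$ for $s>t$ and $0$ otherwise, whose contractivity on $\mathcal{M}^1_\varepsilon$ follows at once from the monotonicity of $\mu_\varepsilon$ --- so the argument is a direct adaptation of \cite[Theorem 3.1]{Grasselli&Pata02-2} to the present $\mathbb{V}^1$-valued, $\varepsilon$-scaled weighted space.
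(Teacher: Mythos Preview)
Your argument is correct and follows exactly the route the paper indicates: the paper does not give its own proof but merely states the result as a generalization of \cite[Theorem 3.1]{Grasselli&Pata02-2}, and your Lumer--Phillips verification (density, dissipativity via integration by parts with $\mu_\varepsilon'\le 0$, and explicit solution of the resolvent equation) is precisely that argument transported to the $\mathbb{V}^1$-valued, $\varepsilon$-scaled setting. One small remark: you need not invoke \eqref{mu-4} for either the boundary term at $s=\infty$ or the resolvent bound --- the parenthetical dissipativity argument you give for $\|\Phi\|_{\mathcal{M}^1_\varepsilon}\le\|\Psi\|_{\mathcal{M}^1_\varepsilon}$ already handles the latter, and for the former the monotonicity \eqref{mu-3} together with $\Phi,\partial_s\Phi\in\mathcal{M}^1_\varepsilon$ suffices, so the proposition holds under \eqref{mu-1}--\eqref{mu-3} alone, consistent with its placement in the paper before assumption (A1) is imposed.
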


We now have (cf. e.g. \cite[Corollary IV.2.2]{Pazy83}).

\begin{corollary}\label{t:memory-regularity-1}
When $U\in L^1([0,T];\mathbb{V}^1)$ for each $T>0$, then, for every $\Phi_0\in\mathcal{M}^1_{\varepsilon}$, the Cauchy problem 
\begin{equation}\label{memory-1}
\left\{ 
\begin{array}{ll}
\partial_t\Phi^t=\mathrm{T}_{\varepsilon}\Phi^t+U(t), & \text{for}\quad t>0, \\ 
\Phi^0=\Phi_0, & 
\end{array}
\right. 
\end{equation}
has a unique solution $\Phi\in C([0,T];\mathcal{M}^1_{\varepsilon})$ which can be explicitly given as (cf. \cite[Section 3.2]{CPS06} and \cite[Section 3]{Grasselli&Pata02-2}) 
\begin{equation}  \label{representation-formula-1}
\Phi^t(s)=\left\{ 
\begin{array}{ll}
\displaystyle\int_0^s U(t-y) \diff y, & \text{for }\quad 0<s\leq t, \\ 
\displaystyle\Phi_0(s-t) + \int_0^t U(t-y) \diff y, & \text{when}\quad s>t.
\end{array}
\right.
\end{equation}
(The interested reader can also see \cite[Section 3]{CPS06}, \cite[pp. 346--347]{GPM98} and \cite[Section 3]{Grasselli&Pata02-2} for more details concerning the case with static boundary conditions.)
\end{corollary}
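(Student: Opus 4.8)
The plan is to recognize the Cauchy problem (\ref{memory-1}) as the standard inhomogeneous abstract Cauchy problem driven by the generator $\mathrm{T}_\varepsilon$ furnished by Proposition \ref{t:generator-T}, to invoke the classical theory of mild solutions, and then to match the resulting variation-of-constants formula with (\ref{representation-formula-1}) by writing $e^{\mathrm{T}_\varepsilon t}$ explicitly as a translation semigroup. First I would note that, although the forcing $U(t)$ lives in $\mathbb{V}^1$, the constant-in-$s$ function $s\mapsto U(t)$ belongs to $\mathcal{M}^1_\varepsilon=L^2_{\mu_\varepsilon}(\mathbb{R}_+;\mathbb{V}^1)$: since $\mu\in L^1(\mathbb{R}_+)$ by (\ref{mu-1}) we have $\int_0^\infty\mu_\varepsilon(s)\,\diff s=\varepsilon^{-1}\int_0^\infty\mu(r)\,\diff r<\infty$, so that $\|U(t)\|^2_{\mathcal{M}^1_\varepsilon}=\|U(t)\|^2_{\mathbb{V}^1}\int_0^\infty\mu_\varepsilon(s)\,\diff s$. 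Consequently, regarding $U$ as an $\mathcal{M}^1_\varepsilon$-valued map in this way, the hypothesis $U\in L^1([0,T];\mathbb{V}^1)$ yields $U\in L^1([0,T];\mathcal{M}^1_\varepsilon)$ for every $T>0$.

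Since $\mathrm{T}_\varepsilon$ generates the $C_0$-semigroup of contractions $e^{\mathrm{T}_\varepsilon t}$ on $\mathcal{M}^1_\varepsilon$ (Proposition \ref{t:generator-T}), the standard result on the inhomogeneous abstract Cauchy problem with an $L^1$-in-time forcing (cf. \cite[Corollary IV.2.2]{Pazy83}) gives, for each $\Phi_0\in\mathcal{M}^1_\varepsilon$, a unique mild solution
\[
\Phi^t = e^{\mathrm{T}_\varepsilon t}\Phi_0 + \int_0^t e^{\mathrm{T}_\varepsilon(t-\tau)}U(\tau)\,\diff\tau ,
\]
and strong continuity of the semigroup together with $U\in L^1([0,T];\mathcal{M}^1_\varepsilon)$ makes $t\mapsto\Phi^t$ continuous, i.e.\ $\Phi\in C([0,T];\mathcal{M}^1_\varepsilon)$. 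To recover (\ref{representation-formula-1}) I would use the fact --- extracted from the construction in the proof of Proposition \ref{t:generator-T}, or obtained by integrating the transport equation $\partial_t\Phi^t+\partial_s\Phi^t=0$, $\Phi^t(0)=0$ along characteristics --- that $e^{\mathrm{T}_\varepsilon t}$ is right translation with absorbing boundary condition, namely $(e^{\mathrm{T}_\varepsilon t}\Psi)(s)=\Psi(s-t)$ for $s>t$ and $(e^{\mathrm{T}_\varepsilon t}\Psi)(s)=0$ for $0<s\le t$. The homogeneous term then contributes $\Phi_0(s-t)$ for $s>t$ and $0$ for $s\le t$; evaluating the convolution pointwise in $s$ and substituting $y=t-\tau$ gives $\int_0^t(e^{\mathrm{T}_\varepsilon(t-\tau)}U(\tau))(s)\,\diff\tau=\int_0^{\min\{s,t\}}U(t-y)\,\diff y$, and adding the two contributions produces exactly (\ref{representation-formula-1}). (Alternatively one verifies directly that the right-hand side of (\ref{representation-formula-1}) is a mild solution and appeals to uniqueness.)

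I do not anticipate a substantive obstacle: once Proposition \ref{t:generator-T} is in hand this is essentially a transcription of a textbook fact. The points that require a little care are the embedding of the $\mathbb{V}^1$-valued forcing into $\mathcal{M}^1_\varepsilon$ (which rests on $\mu\in L^1(\mathbb{R}_+)$), the precise sense of ``solution'' --- the mild solution, which is all one can expect when $U$ is merely $L^1$ in time --- and the elementary but slightly fussy characteristics bookkeeping that yields the two cases $0<s\le t$ and $s>t$ in (\ref{representation-formula-1}).
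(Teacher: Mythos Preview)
Your proposal is correct and follows essentially the same approach as the paper, which in fact gives no proof at all: the corollary is simply stated as an immediate consequence of Proposition~\ref{t:generator-T} together with \cite[Corollary IV.2.2]{Pazy83}, with the explicit representation formula referenced to \cite{CPS06,GPM98,Grasselli&Pata02-2}. Your write-up supplies precisely the details the paper leaves implicit --- the embedding of the $\mathbb{V}^1$-valued forcing into $\mathcal{M}^1_\varepsilon$ via $\mu\in L^1(\mathbb{R}_+)$, the variation-of-constants formula, and the identification of $e^{\mathrm{T}_\varepsilon t}$ as right translation with absorbing boundary --- so there is nothing to correct.
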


Furthermore, we also know that, for each $\varepsilon\in(0,1]$, $\mathrm{T_\varepsilon}$ is the infinitesimal generator of a strongly continuous (the right-translation) semigroup of contractions on $\mathcal{M}^1_\varepsilon$ satisfying (\ref{operator-T-1}) below; in particular, $\mathrm{Range}(\mathrm{I}-\mathrm{T}_\varepsilon)=\mathcal{M}^1_\varepsilon$.

Following (\ref{mu-4}), there is the useful inequality.
(Also see \cite[see equation (3.4)]{CPS06} and \cite[Section 3, proof of Theorem]{Grasselli&Pata02-2}.)

\begin{corollary} \label{t:operator-T-1} 
There holds, for all $\Phi\in D(\mathrm{T}_\varepsilon)$, 
\begin{equation} \label{operator-T-1}
\left\langle \mathrm{T}_\varepsilon\Phi,\Phi \right\rangle_{\mathcal{M}^1_\varepsilon} \leq -\frac{\delta}{2\varepsilon}\|\Phi\|^2_{\mathcal{M}^1_\varepsilon}.
\end{equation}
\end{corollary}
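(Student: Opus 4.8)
The plan is to establish the dissipative inequality \eqref{operator-T-1} directly from the defining assumption \eqref{mu-4} on the kernel $\mu$, after transferring that assumption to the rescaled kernel $\mu_\varepsilon$. First I would record the scaling identity: since $\mu_\varepsilon(s)=\varepsilon^{-2}\mu(s/\varepsilon)$, we have $\mu_\varepsilon'(s)=\varepsilon^{-3}\mu'(s/\varepsilon)$, and substituting $\tau=s/\varepsilon$ into \eqref{mu-4} gives $\mu'(\tau)+\delta\mu(\tau)\le0$ for all $\tau\ge0$, hence $\varepsilon^{-3}\mu'(s/\varepsilon)+\delta\varepsilon^{-3}\mu(s/\varepsilon)\le0$, i.e. $\mu_\varepsilon'(s)+\frac{\delta}{\varepsilon}\mu_\varepsilon(s)\le0$ for all $s\ge0$. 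So the effective decay rate for $\mu_\varepsilon$ is $\delta/\varepsilon$, which is exactly the constant appearing in the claimed estimate.

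Next I would compute $\langle \mathrm{T}_\varepsilon\Phi,\Phi\rangle_{\mathcal{M}^1_\varepsilon}$ for $\Phi\in D(\mathrm{T}_\varepsilon)$ by expanding the inner product \eqref{sc}. Writing $\Phi=\binom{\eta}{\xi}$, the quantity is $-\int_0^\infty\mu_\varepsilon(s)\,\partial_s\big(\tfrac12 a(\Phi(s))\big)\,\diff s$ where $a(\Phi(s))$ denotes the integrand $\|\nabla\eta(s)\|_{L^2(\Omega)}^2+\alpha\|\eta(s)\|_{L^2(\Omega)}^2+\|\nabla_\Gamma\xi(s)\|_{L^2(\Gamma)}^2+\beta\|\xi(s)\|_{L^2(\Gamma)}^2$, because $\mathrm{T}_\varepsilon\Phi=-\partial_s\Phi$ and the derivative passes through the bilinear form. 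Integrating by parts in $s$, the boundary term at $s=0$ vanishes because $\Phi(0)=0$ (in the sense of \eqref{memory-4}), and the boundary term at $s=\infty$ vanishes because $\mu_\varepsilon$ vanishes at infinity and $\Phi\in\mathcal{M}^1_\varepsilon$; this leaves $\tfrac12\int_0^\infty\mu_\varepsilon'(s)\,a(\Phi(s))\,\diff s$. Since $a(\Phi(s))\ge0$ and $\mu_\varepsilon'(s)\le-\tfrac{\delta}{\varepsilon}\mu_\varepsilon(s)$, this is bounded above by $-\tfrac{\delta}{2\varepsilon}\int_0^\infty\mu_\varepsilon(s)\,a(\Phi(s))\,\diff s=-\tfrac{\delta}{2\varepsilon}\|\Phi\|^2_{\mathcal{M}^1_\varepsilon}$, which is the desired inequality.

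The main obstacle is making the integration by parts rigorous: for a general $\Phi\in D(\mathrm{T}_\varepsilon)$ the function $s\mapsto a(\Phi(s))$ need only be absolutely continuous on compact subintervals of $(0,\infty)$, and one must justify both the vanishing of the boundary terms and the interchange of differentiation with the bilinear form. The standard device (as in \cite[Theorem 3.1]{Grasselli&Pata02-2} and \cite{CPS06}) is to first prove the estimate for $\Phi$ in a dense subset of $D(\mathrm{T}_\varepsilon)$ consisting of smooth, compactly supported (away from $0$ and $\infty$) functions, where all manipulations are classical, and then extend to all of $D(\mathrm{T}_\varepsilon)$ by density together with the continuity of both sides of \eqref{operator-T-1} in the graph norm of $\mathrm{T}_\varepsilon$. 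Care is needed that the $s=0$ boundary term is controlled using precisely the condition $\lim_{s\to0}\|\Phi(s)\|_{\mathbb{X}^2}=0$ combined with the (possibly singular) behavior of $\mu_\varepsilon$ near $0$; one uses that $\mu_\varepsilon$ is integrable and that the product $\mu_\varepsilon(s)a(\Phi(s))$ has a limit inferior of zero along a sequence $s_n\to0$, which suffices since the integral on the right is already known to be finite.
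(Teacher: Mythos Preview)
Your proposal is correct and follows precisely the approach the paper indicates: the paper does not give an explicit proof of this corollary but cites \cite[equation (3.4)]{CPS06} and \cite[Section 3]{Grasselli&Pata02-2}, and the integration-by-parts argument you outline (transfer \eqref{mu-4} to $\mu_\varepsilon$ via scaling, then integrate by parts in $s$ and drop the sign-favorable boundary contribution) is exactly what is done there and what the paper itself reproduces in the analogous computation \eqref{qest14}. Your discussion of the boundary-term technicalities and the density argument is appropriate and matches the treatment in the cited references.
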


Even though the embedding $\mathbb{V}^1\hookrightarrow\mathbb{X}^2$ is compact, it does not follow that the embedding $\mathcal{M}^{1}_{\ep}\hookrightarrow \mathcal{M}^{0}_{\ep}$ is also compact. 
Indeed, see \cite{Pata-Zucchi-2001} for a counterexample.
Moreover, this means the embedding $\mathcal{H}^1_\ep\hookrightarrow\mathcal{H}^0_\ep$ is not compact.
Such compactness between the ``natural phase spaces'' is essential to the construction of finite dimensional exponential attractors. 
To alleviate this issue we follow \cite{CPS06,GMPZ10} and define for any $\ep\in(0,1]$ the so-called {\em{tail function}} of $\Phi\in\mathcal{M}^{0}_\ep$ by, for all $\tau\ge0,$ 
\begin{equation*}
\mathbb{T}_{\ep}(\tau;\Phi) := \int\limits_{(0,1/\tau)\cup(\tau,\infty)} \ep\mu_\ep(s) \|\Phi(s)\|^2_{\mathbb{V}^1} \diff s,
\end{equation*}
With this we set, for $\ep\in(0,1],$ 
\begin{equation*}
\mathcal{K}^2_\ep := \{ \Phi\in\mathcal{M}^2_\ep : \partial_s\Phi\in\mathcal{M}^{0}_\ep,\ \Phi(0)=0,\ \sup_{\tau\ge1} \tau\mathbb{T}_\ep(\tau;\Phi)<\infty \}.
\end{equation*}
The space $\mathcal{K}^2_\ep$ is Banach with the norm whose square is defined by
\begin{equation}
\|\Phi\|^2_{\mathcal{K}^2_\ep} := \|\Phi\|^2_{\mathcal{M}^2_\ep} + \ep\|\partial_s\Phi\|^2_{\mathcal{M}^{0}_\ep} + \sup_{\tau\ge1} \tau\mathbb{T}_\ep(\tau;\Phi).  \label{new-norm}
\end{equation}
When $\ep=0,$ we set $\mathcal{K}^2_0=\{0\}.$
Importantly, for each $\ep\in(0,1]$, the embedding $\mathcal{K}^2_{\ep}\hookrightarrow\mathcal{M}^1_{\ep}$ is compact. (cf. \cite[Proposition 5.4]{GMPZ10}).
Hence, let us now also define the space 
\begin{align}
\mathcal{V}^1_{\varepsilon} & := \mathbb{V}^1\times\mathcal{K}^2_{\varepsilon}, \notag
\end{align}
and the desired compact embedding $\mathcal{V}^1_\ep\hookrightarrow\mathcal{H}^0_\ep$ holds.
Again, each space is equipped with the corresponding graph norm whose square is defined by, for all $\ep\in[0,1]$ and $(U,\Phi)\in\mathcal{V}^1_\ep$,
\begin{equation*}
\left\|(U,\Phi)\right\|^2_{\mathcal{V}^1_{\varepsilon}}:=\left\|U\right\|^2_{\mathbb{V}^1}+\left\|\Phi\right\|^2_{\mathcal{K}^2_{\varepsilon}}.
\end{equation*}

In regards to the system in Corollary \ref{t:memory-regularity-1} above, we will also call upon the following simple generalizations of \cite[Lemmas 3.3, 3.4, and 3.6]{CPS06}. 

\begin{lemma}  \label{what-1}
Let $\ep\in(0,1]$ and $\Phi_0\in D({\rm{T}}_\ep)$.
Assume there is $\rho>0$ such that, for all $t\ge0$, $\|U(t)\|_{\mathbb{V}^1}\le\rho$.
Then for all $t\ge0$,
\begin{align}
\ep\|{\rm{T}}_\ep\Phi^t\|^2_{\mathcal{M}^1_\ep} \le \ep e^{-\delta t}\|{\rm{T}}_\ep\Phi_0\|^2_{\mathcal{M}^1_\ep} + \rho^2\|\mu\|_{L^1(\mathbb{R}_+)}.  \notag
\end{align}
\end{lemma}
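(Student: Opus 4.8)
### Proof proposal

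\textbf{Setup and strategy.} The plan is to work directly with the explicit representation formula \eqref{representation-formula-1} for the solution $\Phi^t$ of the Cauchy problem \eqref{memory-1}, since the driving term $U$ is bounded in $\mathbb{V}^1$. Because $\Phi_0\in D(\mathrm{T}_\ep)$, we know $\partial_s\Phi_0\in\mathcal{M}^1_\ep$, and differentiating \eqref{representation-formula-1} in $s$ gives a clean formula for $\partial_s\Phi^t = -\mathrm{T}_\ep\Phi^t$: for $0<s\le t$ it equals $U(t-s)$, while for $s>t$ it equals $\partial_s\Phi_0(s-t)$ (the constant tail $\int_0^t U(t-y)\diff y$ differentiates to zero). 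This is the standard ``splitting at $s=t$'' trick from \cite{CPS06}, and the whole estimate will follow by plugging this into the $\mathcal{M}^1_\ep$-norm and estimating the two pieces separately.

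\textbf{Main computation.} First I would write
\[
\ep\|\mathrm{T}_\ep\Phi^t\|^2_{\mathcal{M}^1_\ep} = \ep\int_0^t \mu_\ep(s)\|U(t-s)\|^2_{\mathbb{V}^1}\diff s + \ep\int_t^\infty \mu_\ep(s)\|\partial_s\Phi_0(s-t)\|^2_{\mathbb{V}^1}\diff s.
\]
For the first integral, use the hypothesis $\|U(t-s)\|_{\mathbb{V}^1}\le\rho$ and extend the domain of integration to all of $\mathbb{R}_+$; the scaling $\mu_\ep(s)=\ep^{-2}\mu(s/\ep)$ gives $\ep\int_0^\infty\mu_\ep(s)\diff s = \int_0^\infty\mu(\sigma)\diff\sigma = \|\mu\|_{L^1(\mathbb{R}_+)}$ after the substitution $\sigma=s/\ep$, so this piece is bounded by $\rho^2\|\mu\|_{L^1(\mathbb{R}_+)}$. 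For the second integral, substitute $y=s-t$ to obtain $\ep\int_0^\infty \mu_\ep(y+t)\|\partial_s\Phi_0(y)\|^2_{\mathbb{V}^1}\diff y$, and then invoke the decay estimate $\mu_\ep(y+t)\le e^{-\delta t/\ep}\mu_\ep(y)$, which follows from \eqref{mu-4} (writing $\mu_\ep(y+t)/\mu_\ep(y) = \exp\int_0^t (\mu_\ep)'(y+r)/\mu_\ep(y+r)\,\diff r$ and using $(\mu_\ep)'/\mu_\ep\le -\delta/\ep$). This bounds the second piece by $e^{-\delta t/\ep}\,\ep\|\partial_s\Phi_0\|^2_{\mathcal{M}^1_\ep} = e^{-\delta t/\ep}\,\ep\|\mathrm{T}_\ep\Phi_0\|^2_{\mathcal{M}^1_\ep}$, which is dominated by $e^{-\delta t}\,\ep\|\mathrm{T}_\ep\Phi_0\|^2_{\mathcal{M}^1_\ep}$ since $\ep\le 1$. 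Adding the two bounds yields exactly the claimed inequality.

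\textbf{Expected obstacle and loose ends.} The one point requiring care is the pointwise kernel-decay inequality $\mu_\ep(y+t)\le e^{-\delta t/\ep}\mu_\ep(y)$: it needs $\mu$ (hence $\mu_\ep$) to be strictly positive where one divides, but where $\mu_\ep(y)=0$ the monotonicity \eqref{mu-3} forces $\mu_\ep(y+t)=0$ too, so the inequality holds trivially there; alternatively one integrates the differential inequality $(\mu_\ep)'+(\delta/\ep)\mu_\ep\le 0$ (the rescaled form of \eqref{mu-4}) directly via Gronwall on $[y, y+t]$, which avoids division altogether. A second minor technicality is justifying the differentiation of \eqref{representation-formula-1} in $s$ and the identification $\partial_s\Phi^t=-\mathrm{T}_\ep\Phi^t$ as an equality in $\mathcal{M}^1_\ep$; this is where $\Phi_0\in D(\mathrm{T}_\ep)$ (rather than merely $\Phi_0\in\mathcal{M}^1_\ep$) is used, and it is exactly the content of the cited \cite[Lemma 3.3]{CPS06}, so I would simply appeal to that regularity. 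No genuinely hard step is anticipated; the lemma is a routine adaptation of the Danese--Pata-type memory estimates to the $\mathbb{V}^1$-valued setting.
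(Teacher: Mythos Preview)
Your proof is correct and is precisely the argument the paper has in mind: the paper does not prove this lemma but merely cites it as a ``simple generalization of \cite[Lemma~3.3]{CPS06}'', and what you have written is exactly that CPS06 proof carried over to the $\mathbb{V}^1$-valued setting via the representation formula \eqref{representation-formula-1}, the splitting at $s=t$, the rescaled kernel decay $\mu_\ep(y+t)\le e^{-\delta t/\ep}\mu_\ep(y)$, and the observation $\ep\int_0^\infty\mu_\ep=\|\mu\|_{L^1}$.
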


\begin{remark}  \label{r:what-trans}
The above result will also be needed later in the weaker space $\mathcal{M}^0_\ep$ (see Step 3 in the proof of Lemma \ref{t:to-C2}).
The result for the weaker space can be obtained by suitably transforming \eqref{memory-1}-\eqref{representation-formula-1} and applying an appropriate bound on $U$.
\end{remark}

\begin{lemma}  \label{what-2}
Let $\ep\in(0,1]$ and $\Phi_0\in D({\rm{T}}_\ep)$.
Assume there is $\rho>0$ such that, for all $t\ge0$, $\|U(t)\|_{\mathbb{V}^1}\le\rho$.
Then there is a constant $C>0$ such that, for all $t\ge0$,
\begin{align}
\sup_{\tau\ge1} \tau\mathbb{T}_\ep(\tau;\Phi^t) \le 2 \left( t+2 \right)e^{-\delta t} \sup_{\tau\ge1} \tau\mathbb{T}_\ep(\tau;\Phi_0) + C\rho^2.  \notag
\end{align}
\end{lemma}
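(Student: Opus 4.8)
The plan is to feed the explicit representation formula \eqref{representation-formula-1} for $\Phi^t$ into the definition of the tail functional $\mathbb{T}_\ep(\tau;\cdot)$ and to reduce every term carrying $\Phi_0$ to a tail of $\Phi_0$ by means of the exponential weight inequality built into \eqref{mu-4}. Since $\|U(t)\|_{\mathbb{V}^1}\le\rho$ for every $t\ge0$, we have $U\in L^1([0,T];\mathbb{V}^1)$ for each $T>0$, so Corollary \ref{t:memory-regularity-1} applies and \eqref{representation-formula-1} is available. From it one reads off, for a.e. $s>0$ and all $t\ge0$, the pointwise estimate
\[
\|\Phi^t(s)\|_{\mathbb{V}^1}\le \rho\min\{s,t\}+\chi_{\{s>t\}}\,\|\Phi_0(s-t)\|_{\mathbb{V}^1},
\]
whence $\|\Phi^t(s)\|^2_{\mathbb{V}^1}\le 2\rho^2\min\{s,t\}^2+2\chi_{\{s>t\}}\|\Phi_0(s-t)\|^2_{\mathbb{V}^1}$. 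Inserting this bound into $\mathbb{T}_\ep(\tau;\Phi^t)$ decomposes it, for each fixed $\tau\ge1$, as $\tau\mathbb{T}_\ep(\tau;\Phi^t)\le 2\rho^2\,\tau I(\tau,t)+2\,\tau J(\tau,t)$, where
\[
I(\tau,t)=\int_{(0,1/\tau)\cup(\tau,\infty)}\ep\mu_\ep(s)\min\{s,t\}^2\,\diff s,\qquad J(\tau,t)=\int_{\{s\in(0,1/\tau)\cup(\tau,\infty):\,s>t\}}\ep\mu_\ep(s)\|\Phi_0(s-t)\|^2_{\mathbb{V}^1}\,\diff s.
\]

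For the $\rho^2$ contribution, on $(0,1/\tau)$ one uses $\min\{s,t\}^2\le s^2\le\tau^{-2}$ together with $\int_0^\infty\ep\mu_\ep(s)\,\diff s=\|\mu\|_{L^1(\mathbb{R}_+)}$, and on $(\tau,\infty)$ one uses $\min\{s,t\}^2\le s^2$ and the change of variable $s=\ep\sigma$, which turns $\int_\tau^\infty\ep\mu_\ep(s)s^2\,\diff s$ into $\ep^2\int_{\tau/\ep}^\infty\mu(\sigma)\sigma^2\,\diff\sigma$. Because \eqref{mu-4} makes $e^{\delta s}\mu(s)$ nonincreasing, $\mu(\sigma)\sigma^2$ decays exponentially, so $\tau\ep^2\int_{\tau/\ep}^\infty\mu(\sigma)\sigma^2\,\diff\sigma$ is bounded uniformly over $\tau\ge1$ and $\ep\in(0,1]$ (since $\tau/\ep\ge\tau\ge1$ and $\ep\le1$). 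Hence $\sup_{\tau\ge1}\tau I(\tau,t)\le C$ with $C$ depending only on $\mu$ and $\delta$, and this produces the term $C\rho^2$.

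For the $\Phi_0$ contribution, substitute $\sigma=s-t$ in $J(\tau,t)$ and invoke $\mu_\ep(\sigma+t)=\ep^{-2}\mu(\sigma/\ep+t/\ep)\le e^{-\delta t/\ep}\mu_\ep(\sigma)\le e^{-\delta t}\mu_\ep(\sigma)$, again a consequence of $(e^{\delta s}\mu(s))'\le0$ and $\ep\le1$. This gives $J(\tau,t)\le e^{-\delta t}\int_{E_\tau}\ep\mu_\ep(\sigma)\|\Phi_0(\sigma)\|^2_{\mathbb{V}^1}\,\diff\sigma$ with $E_\tau\subseteq(0,1/\tau)\cup(\max\{0,\tau-t\},\infty)$. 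If $\tau-t\ge1$ one checks $E_\tau\subseteq(0,1/(\tau-t))\cup(\tau-t,\infty)$, so the integral over $E_\tau$ is at most $\mathbb{T}_\ep(\tau-t;\Phi_0)\le(\tau-t)^{-1}\sup_{\sigma\ge1}\sigma\mathbb{T}_\ep(\sigma;\Phi_0)$; combined with $\tau/(\tau-t)=1+t/(\tau-t)\le1+t$ this yields $\tau J(\tau,t)\le(1+t)e^{-\delta t}\sup_{\sigma\ge1}\sigma\mathbb{T}_\ep(\sigma;\Phi_0)$. If $\tau-t<1$ one instead bounds $\int_{E_\tau}\ep\mu_\ep(\sigma)\|\Phi_0(\sigma)\|^2_{\mathbb{V}^1}\,\diff\sigma\le\int_0^\infty\ep\mu_\ep(\sigma)\|\Phi_0(\sigma)\|^2_{\mathbb{V}^1}\,\diff\sigma=\mathbb{T}_\ep(1;\Phi_0)\le\sup_{\sigma\ge1}\sigma\mathbb{T}_\ep(\sigma;\Phi_0)$ and uses $\tau<t+1$ to reach the same estimate. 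Adding the two contributions and taking the supremum over $\tau\ge1$ gives $\sup_{\tau\ge1}\tau\mathbb{T}_\ep(\tau;\Phi^t)\le 2(1+t)e^{-\delta t}\sup_{\sigma\ge1}\sigma\mathbb{T}_\ep(\sigma;\Phi_0)+C\rho^2$, which is the assertion with the (slightly stronger) constant $t+1\le t+2$.

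The one genuinely delicate point is the regime $\tau-t<1$ in the last step, where the shifted region $E_\tau$ reaches into the ``middle'' of the support of $\Phi_0$, a range the tail functionals $\mathbb{T}_\ep(\cdot;\Phi_0)$ do not control directly. The resolution is the elementary identity $\mathbb{T}_\ep(1;\Phi_0)=\ep\|\Phi_0\|^2_{\mathcal{M}^1_\ep}$, so that the entire weighted $\mathbb{V}^1$-energy of $\Phi_0$ is already dominated by $\sup_{\sigma\ge1}\sigma\mathbb{T}_\ep(\sigma;\Phi_0)$. Beyond that, the work is bookkeeping: checking that all constants remain independent of $\ep\in(0,1]$ — which succeeds precisely because the rescaling $s=\ep\sigma$ only improves every bound when $\ep\le1$ — and extracting the polynomial prefactor.
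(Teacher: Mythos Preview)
Your argument is correct and is precisely the standard one: the paper does not give its own proof of this lemma but defers to \cite[Lemma 3.4]{CPS06}, and the argument there proceeds exactly as you do --- insert the representation formula \eqref{representation-formula-1}, split into the $U$-contribution and the $\Phi_0$-contribution, use the exponential weight $\mu_\ep(\sigma+t)\le e^{-\delta t/\ep}\mu_\ep(\sigma)$ coming from \eqref{mu-4} after the shift $\sigma=s-t$, and handle the two regimes $\tau-t\ge1$ and $\tau-t<1$ separately. Your observation that the ``delicate'' regime $\tau-t<1$ is resolved by $\mathbb{T}_\ep(1;\Phi_0)=\ep\|\Phi_0\|^2_{\mathcal{M}^1_\ep}\le\sup_{\sigma\ge1}\sigma\mathbb{T}_\ep(\sigma;\Phi_0)$ is exactly the point, and your resulting prefactor $2(t+1)$ is indeed a hair sharper than the stated $2(t+2)$.
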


Finally, we give a version of Lemma \ref{what-2} for compact intervals.

\begin{lemma}  \label{what-3}
Let $\ep\in(0,1]$, $T>0$, and $\Phi_0\in D({\rm{T}}_\ep)$.
Assume there is $\rho>0$ such that
\[
\int_0^T \|U(\tau)\|^2_{\mathbb{V}^1} \diff\tau \le \rho.
\]
Then there is a positive constant $C(T)$ such that, for all $t\in[0,T]$,
\begin{align}
\sup_{\tau\ge1} \tau\mathbb{T}_\ep(\tau;\Phi^t) \le C(T) \left( \rho + \sup_{\tau\ge1} \tau\mathbb{T}_\ep(\tau;\Phi_0) \right).  \notag
\end{align}
\end{lemma}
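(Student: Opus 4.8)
The plan is to work directly from the explicit representation formula \eqref{representation-formula-1} of Corollary \ref{t:memory-regularity-1}, exactly as in the infinite‑time analogue Lemma \ref{what-2}, but carrying an $L^2$‑in‑time bound on $U$ rather than an $L^\infty$ one. First I would record a pointwise (in $s$) estimate for $\|\Phi^t(s)\|^2_{\mathbb V^1}$, valid for all $t\in[0,T]$. For $0<s\le t$ one has $\Phi^t(s)=\int_0^s U(t-y)\diff y$, so Cauchy--Schwarz gives $\|\Phi^t(s)\|^2_{\mathbb V^1}\le s\int_{t-s}^t\|U(\sigma)\|^2_{\mathbb V^1}\diff\sigma\le\rho s$, using $[t-s,t]\subset[0,T]$. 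For $s>t$ one has $\Phi^t(s)=\Phi_0(s-t)+\int_0^t U(t-y)\diff y$, whence $\|\Phi^t(s)\|^2_{\mathbb V^1}\le 2\|\Phi_0(s-t)\|^2_{\mathbb V^1}+2\rho t$. This separates everything into a ``$\rho$‑part'' and a ``$\Phi_0$‑part.''

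For the $\rho$‑part I insert these bounds into $\tau\mathbb T_\ep(\tau;\Phi^t)$ and split the tail set into $(0,1/\tau)$ and $(\tau,\infty)$. On $(0,1/\tau)$ the gain is that $\tau s\le 1$, so $\tau\int_0^{1/\tau}\ep\mu_\ep(s)\,s\,\diff s\le\int_0^\infty\ep\mu_\ep(s)\diff s=\|\mu\|_{L^1(\mathbb R_+)}$ by the scaling \eqref{mu-scaled}; likewise the ``$2\rho t$'' term there is controlled since $t<1/\tau$ on the relevant subregion. On $(\tau,\infty)$ I bound $\min(s,t)\le T$ and use the uniform tail estimate $\tau\int_\tau^\infty\ep\mu_\ep(s)\diff s=\tau\int_{\tau/\ep}^\infty\mu(\sigma)\diff\sigma\le C$, valid for all $\tau\ge1$ and $\ep\in(0,1]$: this is precisely where \eqref{mu-4} is needed, since it yields $\mu(\sigma)\le\mu(1)e^{\delta}e^{-\delta\sigma}$ for $\sigma\ge1$, hence $\tau\int_{\tau/\ep}^\infty\mu\le\frac{\mu(1)e^\delta}{\delta}\cdot\frac{\tau}{\ep}e^{-\delta\tau/\ep}$, and $x\mapsto xe^{-\delta x}$ is bounded on $[1,\infty)$. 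Altogether the $\rho$‑contribution is $\le C(T)\rho$.

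For the $\Phi_0$‑part I perform the shift $s\mapsto s-t$ and use the monotonicity \eqref{mu-3}, i.e. $\mu_\ep(s+t)\le\mu_\ep(s)$, to replace $\ep\mu_\ep(s)$ by $\ep\mu_\ep(s-t)$. On $(0,1/\tau)$ the shifted integration region sits inside $(0,1/\tau)$, so that contribution is $\le 2\tau\,\mathbb T_\ep(\tau;\Phi_0)\le 2\sup_{\tau\ge1}\tau\,\mathbb T_\ep(\tau;\Phi_0)$. On $(\tau,\infty)$ the shifted region is $((\tau-t)_+,\infty)$: if $\tau-t\ge1$ it lies in the tail set associated with the admissible parameter $\tau-t$, and $\frac{\tau}{\tau-t}=1+\frac{t}{\tau-t}\le 1+T$ handles the bookkeeping, giving $\le 2(1+T)\sup_{\tau'\ge1}\tau'\mathbb T_\ep(\tau';\Phi_0)$; if $\tau-t<1$ then $\tau<T+1$ is bounded and the whole shifted integral is at most $\ep\|\Phi_0\|^2_{\mathcal M^1_\ep}=\mathbb T_\ep(1;\Phi_0)\le\sup_{\tau'\ge1}\tau'\mathbb T_\ep(\tau';\Phi_0)$, since $(0,1)\cup(1,\infty)$ exhausts $\mathbb R_+$. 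Collecting the two parts and taking the supremum over $\tau\ge1$ yields the claim with a suitable $C(T)$.

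The Cauchy--Schwarz steps and the monotone‑shift manipulations are routine; the genuine obstacle is keeping \emph{every} constant uniform in $\ep\in(0,1]$ as well as in $\tau\ge1$. This forces \eqref{mu-4} to be used exactly through the scaled tail bound $\tau\int_\tau^\infty\ep\mu_\ep\le C$, and it forces the case distinction ($\tau-t\ge1$ versus $\tau-t<1$) so that each shifted integral is recognized either as a bona fide tail $\mathbb T_\ep(\tau';\Phi_0)$ with an admissible $\tau'\ge1$, or as $\ep\|\Phi_0\|^2_{\mathcal M^1_\ep}=\mathbb T_\ep(1;\Phi_0)$; without this observation one would be left with an uncontrolled $\|\Phi_0\|_{\mathcal M^1_\ep}$ term on the right‑hand side.
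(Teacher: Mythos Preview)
The paper does not actually prove this lemma: it merely states it as one of several ``simple generalizations of \cite[Lemmas 3.3, 3.4, and 3.6]{CPS06}'' and defers to that reference. Your argument is correct and is precisely the standard route taken in \cite{CPS06}: work from the representation formula \eqref{representation-formula-1}, bound the $U$--contribution by Cauchy--Schwarz to produce the $\rho$--term, shift $s\mapsto s-t$ in the $\Phi_0$--contribution using the monotonicity of $\mu_\ep$, and do the case analysis $\tau-t\ge1$ versus $\tau-t<1$ so that the shifted tail is always recognized as a bona fide $\tau'\mathbb T_\ep(\tau';\Phi_0)$ with $\tau'\ge1$. Two minor remarks: in the scaled tail bound you in fact get $\tau e^{-\delta\tau/\ep}=\ep\cdot(\tau/\ep)e^{-\delta\tau/\ep}$, which is even smaller than what you wrote (so the conclusion is unaffected); and your identification $\ep\|\Phi_0\|^2_{\mathcal M^1_\ep}=\mathbb T_\ep(1;\Phi_0)$, which is the key device for absorbing the residual case $\tau-t<1$ into the stated right-hand side, is exactly right since the tail set at $\tau=1$ is $(0,1)\cup(1,\infty)$, i.e.\ all of $\mathbb R_+$ up to a null set.
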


We now discuss the linear operator associated with the model problem.
In our case it is given by the following (note that in \cite[Section 3.1]{CPS06} the basic tool is the Laplacian with Dirichlet boundary conditions; in our case, the analogue operator turns out to be the so-called ``Wentzell'' Laplace operator).

\begin{proposition}
Let $\Omega $ be a bounded open set of $\mathbb{R}^{n}$ with Lipschitz boundary $\Gamma $. For $\alpha,\beta\ge0$, define the operator ${\mathrm{A_W^{\alpha,\beta}}}$ on $\mathbb{X}^{2},$
by 
\begin{align}
\mathrm{A_W^{\alpha,\beta}} & :=
\begin{pmatrix}
-\Delta+\alpha{\mathrm{I}} & 0 \\ 
\partial_{\mathbf{n}}(\cdot) & -\Delta_\Gamma + \beta{\mathrm{I}}
\end{pmatrix},  \label{A_Wentzell1}
\end{align}
with
\begin{equation}
D\left( {\mathrm{A_W^{\alpha,\beta}}}\right) :=\left\{ U=(u_1,u_2)^{\mathrm{tr}}\in \mathbb{V}^{1}:-\Delta u_{1}+\alpha u_1\in L^{2}\left( \Omega \right) ,\text{ }\partial_{\mathbf{n}} u_{1}-\Delta _{\Gamma }u_{2}+\beta u_2\in L^{2}\left( \Gamma \right) \right\} .  \label{A_Wentzell2}
\end{equation}
Then, $({\mathrm{A_W^{\alpha,\beta}}},D({\mathrm{A_W^{\alpha,\beta}}})) $ is self-adjoint and nonnegative operator on $\mathbb{X}^{2}$ whenever $\alpha,\beta\ge0$, and ${\mathrm{A_W^{\alpha,\beta}}}>0$ (is strictly positive) if either $\alpha>0$ or $\beta>0.$ 
Moreover, the resolvent operator $({\mathrm{I}}+{\mathrm{A_W^{\alpha,\beta}}}) ^{-1}\in \mathcal{L}(\mathbb{X}^{2}) $ is compact. 
If the boundary $\Gamma $ is of class $\mathcal{C}^{2},$ then $D({\mathrm{A_W^{\alpha,\beta}}}) =\mathbb{V}^{2}$ (see, e.g., \cite[Theorem 2.3]{CGGM10}). 
Indeed, for any $\alpha,\beta\ge 0$, the map $\Psi:U\mapsto {\mathrm{A_W^{\alpha,\beta}}}U,$ when viewed as a map from $\mathbb{V}^2$ into $\mathbb{X}^2=L^2(\Omega)\times L^2(\Gamma),$ is an isomorphism, and there exists a positive constant $C_*,$ independent of $U=(u,\psi)^{\mathrm{tr}}$, such that 
\begin{align}
C_*^{-1}\|U\|_{\mathbb{V}^2}\le\|\Psi(U)\|_{\mathbb{X}^2}\le C_*\|U\|_{\mathbb{V}^2},  \label{equiv}
\end{align}
for all $U\in\mathbb{V}^2$ (cf. Lemma \ref{t:appendix-lemma-3}).
\end{proposition}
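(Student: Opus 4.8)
I would realize $\mathrm{A_W^{\alpha,\beta}}$ as the self-adjoint operator generated by a closed symmetric form, and then treat the domain identification separately. On $\mathbb{V}^1$ introduce the symmetric bilinear form
$$a(U,V) := \int_\Omega \bigl(\nabla u_1\cdot\nabla v_1 + \alpha u_1 v_1\bigr)\diff x + \int_\Gamma\bigl(\nabla_\Gamma u_2\cdot\nabla_\Gamma v_2 + \beta u_2 v_2\bigr)\diff\sigma,$$
for $U=(u_1,u_2)^{\mathrm{tr}},V=(v_1,v_2)^{\mathrm{tr}}\in\mathbb{V}^1$. Since $\alpha,\beta\ge0$, $a$ is nonnegative; since $\mathbb{V}^1$ is complete, dense in $\mathbb{X}^2$, and $a(U,U)+\|U\|^2_{\mathbb{X}^2}$ is equivalent to $\|U\|^2_{\mathbb{V}^1}$ (cf. \eqref{v1b}, \eqref{Vr-norm}), the form $(a,\mathbb{V}^1)$ is densely defined, closed, symmetric and nonnegative on $\mathbb{X}^2$. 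Kato's first representation theorem then produces a unique self-adjoint nonnegative operator $A$ on $\mathbb{X}^2$ with form domain $\mathbb{V}^1$, whose domain is $\{U\in\mathbb{V}^1: V\mapsto a(U,V) \text{ is }\mathbb{X}^2\text{-continuous}\}$ and $AU$ the Riesz representative. To check $A=\mathrm{A_W^{\alpha,\beta}}$ I would use the generalized Green formula on the Lipschitz domain: for $u_1\in H^1(\Omega)$ with $-\Delta u_1\in L^2(\Omega)$ the normal derivative $\partial_{\mathbf{n}}u_1\in H^{-1/2}(\Gamma)$ is defined by $\langle\partial_{\mathbf{n}}u_1,\mathrm{tr_D}\,v_1\rangle=\int_\Omega\nabla u_1\cdot\nabla v_1\diff x+\int_\Omega(\Delta u_1)v_1\diff x$; testing $a(U,V)=\langle F,V\rangle_{\mathbb{X}^2}$ first with $v_1\in C_0^\infty(\Omega)$, $v_2=0$ identifies $-\Delta u_1+\alpha u_1=f_1\in L^2(\Omega)$, and then with arbitrary $V\in\mathbb{V}^1$ identifies the boundary relation $\partial_{\mathbf{n}}u_1-\Delta_\Gamma u_2+\beta u_2=f_2\in L^2(\Gamma)$, i.e. exactly \eqref{A_Wentzell1}--\eqref{A_Wentzell2}.

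For strict positivity when $\alpha>0$ or $\beta>0$ I would prove a Poincar\'e-type inequality $\|U\|^2_{\mathbb{X}^2}\le C\,a(U,U)$ on $\mathbb{V}^1$ by a compactness argument: if it failed there would be $U_n\in\mathbb{V}^1$ with $\|U_n\|_{\mathbb{X}^2}=1$, $a(U_n,U_n)\to0$; then $(U_n)$ is bounded in $\mathbb{V}^1$, hence converges in $\mathbb{X}^2$ (compact embedding $\mathbb{V}^1\hookrightarrow\mathbb{X}^2$) to $U$ with $\nabla u_1=0$, $\nabla_\Gamma u_2=0$, so $u_1$ is a constant on (each component of) $\Omega$ and $u_2=\mathrm{tr_D}\,u_1$ equals that constant; then $\alpha\|u_1\|^2_{L^2(\Omega)}+\beta\|u_2\|^2_{L^2(\Gamma)}=0$ forces $U=0$, contradicting $\|U\|_{\mathbb{X}^2}=1$. (When $\alpha>0$ this is immediate; when $\beta>0$ one may instead combine a trace inequality with \eqref{Poincare}.) Thus $a(U,U)\ge c\|U\|^2_{\mathbb{X}^2}$, i.e. $\mathrm{A_W^{\alpha,\beta}}\ge c\mathrm{I}>0$. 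For the compact resolvent, given $F\in\mathbb{X}^2$ the Lax--Milgram theorem applied to $a(U,V)+\langle U,V\rangle_{\mathbb{X}^2}=\langle F,V\rangle_{\mathbb{X}^2}$ yields $(\mathrm{I}+\mathrm{A_W^{\alpha,\beta}})^{-1}F=U$ with $\|U\|_{\mathbb{V}^1}\le C\|F\|_{\mathbb{X}^2}$; composing with the compact embedding $\mathbb{V}^1\hookrightarrow\mathbb{X}^2$ gives $(\mathrm{I}+\mathrm{A_W^{\alpha,\beta}})^{-1}\in\mathcal{L}(\mathbb{X}^2)$ compact.

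Finally, assume $\Gamma\in\mathcal{C}^2$. The inclusion $\mathbb{V}^2\subseteq D(\mathrm{A_W^{\alpha,\beta}})$ is immediate from \eqref{A_Wentzell2}. For the converse one reproduces the coupled bulk/boundary elliptic regularity bootstrap of \cite[Theorem 2.3]{CGGM10}: given $U\in D(\mathrm{A_W^{\alpha,\beta}})$ with $u_1|_\Gamma=u_2\in H^{1/2}(\Gamma)$ and a priori $\partial_{\mathbf{n}}u_1\in H^{-1/2}(\Gamma)$, the surface equation $-\Delta_\Gamma u_2=f_2-\beta u_2-\partial_{\mathbf{n}}u_1\in H^{-1/2}(\Gamma)$ together with $\mathcal{C}^2$-regularity of $-\Delta_\Gamma$ gives $u_2\in H^{3/2}(\Gamma)$; then $\mathcal{C}^2$ elliptic regularity for $-\Delta u_1+\alpha u_1=f_1$ with Dirichlet datum $u_2\in H^{3/2}(\Gamma)$ gives $u_1\in H^2(\Omega)$, hence $\partial_{\mathbf{n}}u_1\in H^{1/2}(\Gamma)$, which fed back into the surface equation gives $u_2\in H^2(\Gamma)$, so $U\in\mathbb{V}^2$. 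With $D(\mathrm{A_W^{\alpha,\beta}})=\mathbb{V}^2$ the map $\mathrm{I}+\mathrm{A_W^{\alpha,\beta}}:\mathbb{V}^2\to\mathbb{X}^2$ is a continuous bijection of Banach spaces, hence an isomorphism by the bounded inverse theorem; since $\mathrm{I}:\mathbb{V}^2\to\mathbb{X}^2$ is bounded, and since $\mathrm{A_W^{\alpha,\beta}}$ is itself an isomorphism onto $\mathbb{X}^2$ whenever $\alpha>0$ or $\beta>0$ by the strict positivity above, the two-sided bound \eqref{equiv} follows (cf. Lemma \ref{t:appendix-lemma-3}).

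I expect the genuine obstacle to be the last step, the identification $D(\mathrm{A_W^{\alpha,\beta}})=\mathbb{V}^2$ for $\mathcal{C}^2$ boundary: it is not a consequence of interior or of boundary regularity alone but requires iterating bulk and surface elliptic estimates along fractional Sobolev scales together with the precise trace theory underlying \eqref{vvv}, which is exactly why the statement defers to \cite{CGGM10}. The form-theoretic construction, self-adjointness, strict positivity and compactness of the resolvent are then routine.
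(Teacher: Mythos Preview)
The paper does not supply its own proof of this proposition: it states the result as background, attributes the domain identification $D(\mathrm{A_W^{\alpha,\beta}})=\mathbb{V}^2$ to \cite[Theorem 2.3]{CGGM10}, points to Lemma~\ref{t:appendix-lemma-3} for the two-sided estimate \eqref{equiv}, and then simply refers the reader to the survey \cite{CFGGGOR09} for the remaining properties of the Wentzell Laplacian. So there is no in-paper argument to compare against; your sketch is in effect filling in what those references contain.

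That said, your sketch is the standard and correct route. Realizing $\mathrm{A_W^{\alpha,\beta}}$ as the Friedrichs/Kato operator associated with the closed form $a$ on $\mathbb{V}^1$ is exactly how these results are obtained in \cite{CFGGGOR09}; the compactness-by-embedding argument for the resolvent and the contradiction argument for strict positivity are routine, and you are right that the only substantive step is the coupled bulk/surface bootstrap yielding $D(\mathrm{A_W^{\alpha,\beta}})=\mathbb{V}^2$, which is precisely what the paper outsources to \cite{CGGM10}. One small point worth flagging: your final paragraph derives the isomorphism and \eqref{equiv} only under the hypothesis $\alpha>0$ or $\beta>0$, whereas the paper asserts it ``for any $\alpha,\beta\ge0$.'' Your caution is warranted---when $\alpha=\beta=0$ the constants lie in the kernel of $\mathrm{A_W^{0,0}}$, so the lower bound in \eqref{equiv} cannot hold as stated; in practice the paper only ever uses \eqref{equiv} in contexts where this does not matter (e.g.\ \eqref{eqivi2}), but strictly speaking you have proved the correct version.
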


We can refer the reader to \cite{CFGGGOR09} for an extensive survey of
recent results concerning the ``Wentzell'' Laplacian ${\mathrm{A_W^{\alpha,\beta}}}$. 

For the nonlinear terms, assume $f,g \in C^{1}(\mathbb{R})$ satisfy the growth assumptions: there exist positive constants $\ell_1$ and $\ell_2$, and $r_1,r_2\in[1,\frac{5}{2})$ such that for all $s\in \mathbb{R}$, 
\begin{align}
|f'(s)| \leq \ell_1(1+|s|^{r_1}),  \label{assm-1} \\
|g'(s)| \leq \ell_2(1+|s|^{r_2}).  \label{assm-2}
\end{align}
We also assume there are positive constants $M_f$ and $M_g$ so that for all $s\in\mathbb{R}$, 
\begin{align}
f'(s)>-M_f,  \label{assm-3} \\
g'(s)>-M_g.  \label{assm-4}
\end{align}
Consequently, \eqref{assm-3}-\eqref{assm-4} imply there are $\kappa_i>0$, $i=1,2,3,4,$ so that for all $s\in\mathbb{R},$
\begin{align}
f(s)s \ge -\kappa_1 s^2 - \kappa_2,  \label{cons-1} \\
g(s)s \ge -\kappa_3 s^2 - \kappa_4.  \label{cons-2}
\end{align}

\begin{remark}
Observe that here we do not allow for the critical polynomial growth exponent (of $5$) which appears in several works with static boundary conditions (cf. e.g. \cite{CDGP-2010,CPS06}). 
Indeed, in order for us to obtain a notion of strong solution (see Definition \ref{d:strong-solution} below), the arguments in the proof of Theorem \ref{t:strong-solutions} do not allow for $r_i\ge\frac{5}{2}, i=1,2.$
\end{remark}

We can follow \cite[Section 4]{CPS06} or, more precisely \cite{GPM98,GPM00} to deduce the existence and uniqueness of weak solutions in the
above class exploiting both semigroup methods and energy methods in the
framework of a Galerkin scheme which can be constructed for problems with
dynamic boundary conditions (see, \cite[Theorem 2.3]{CGGM10}). 

Constants appearing below are independent of $\varepsilon$ and $\omega$, unless specified otherwise, but may depend on various structural parameters such as $\alpha$, $\beta,$ $|\Omega|$, $|\Gamma|$, $\ell_f$ and $\ell_g$, and the constants may even change from line to line.
We denote by $Q(\cdot)$ a generic monotonically increasing function. 
We will use $\|B\|_{W}:=\sup_{\Upsilon\in B}\|\Upsilon\|_W$ to denote the ``size'' of the subset $B$ in the Banach space $W$.

\section{Review of well-posedness and regularity}

Here we provide some definitions and cite the relevant global well-posedness results concerning Problem {\textbf{P}}$_{\varepsilon}$.
For the remainder of this article we choose to set $n=3$, which is of course the most relevant physical dimension.

Below we will set $F:\mathbb{R}^2\rightarrow\mathbb{R}^2,$
\begin{equation}
F(U):=\begin{pmatrix}f(u) \\ \widetilde{g}(u)\end{pmatrix},  \label{func}
\end{equation}
where $\widetilde{g}(s):=g(s)-\omega\beta s$, for $s\in\mathbb{R}.$
(To offset $\widetilde{g}$, the term $\omega\beta u$ will be incorporated in the operator $\mathrm{A_{W}^{0,0}}$ as $\mathrm{A_{W}^{0,\beta}}.$)

\begin{definition}  \label{d:weak-solution} 
Let $\varepsilon\in(0,1]$, $\omega\in(0,1)$ and $T>0$. 
Given $U_0=(u_0,v_0)^{\mathrm{tr}}\in\mathbb{X}^2$ and $\Phi_0=(\eta_0,\xi_0)^{\mathrm{tr}}\in\mathcal{M}^1_{\varepsilon}$, the pair $U(t)=(u(t),v(t))^{{\mathrm{tr}}}$ and $\Phi^t=(\eta^t,\xi^t)^{{\mathrm{tr}}}$ satisfying 
\begin{align}
U &\in L^{\infty }([0,T];\mathbb{X}^{2})\cap L^{2}([0,T];\mathbb{V}^{1}), \label{defn-1} \\
u &\in L^{r_1}(\Omega\times[0,T]), \label{defn-2} \\ 
v &\in L^{r_2}(\Gamma\times[0,T]), \label{defn-3} \\ 
\Phi &\in L^{\infty }\left([0,T];\mathcal{M}^1_{\varepsilon}\right), \label{defn-4} \\ 
\partial_t U &\in L^2\left([0,T];(\mathbb{V}^1)^*\right) \oplus \left( L^{r_1'}(\Omega\times[0,T]) \times L^{r_2'}(\Gamma\times[0,T]) \right), \label{defn-5} \\ 
\partial_t \Phi &\in L^2\left( [0,T];H^{-1}_{\mu_{\varepsilon}}(\mathbb{R}_+;\mathbb{V}^1) \right), \label{defn-6}
\end{align}
is said to be a weak solution to Problem {\textbf{P}}$_{\varepsilon}$ if, $v(t)=u_{\mid\Gamma}(t)$ and $\xi^{t}=\eta^{t}_{\mid\Gamma}$ for almost all $t\in[0,T]$, and
for all $\Xi =(\varsigma ,\varsigma _{\mid \Gamma })^{\mathrm{tr}}\in \mathbb{V}^{1} \cap \left( L^{r_1}(\Omega) \times L^{r_2}(\Gamma) \right)$, $\Pi =(\rho ,\rho _{\mid \Gamma })^{\mathrm{tr}}\in \mathcal{M}^1_{\varepsilon}$, and for almost all $t\in[0,T]$, there holds, 
\begin{align}
& \left\langle \partial _{t}U(t),\Xi \right\rangle _{\mathbb{X}^{2}} + \omega
\langle \mathrm{{A_W^{0,\beta}}}U(t),\Xi \rangle _{\mathbb{X}^{2}} + \left\langle \Phi^t, \Xi \right\rangle_{\mathcal{M}^1_{\varepsilon}} + \left\langle
F\left( U(t)\right) ,\Xi \right\rangle _{\mathbb{X}^{2}}=0,  \label{weak-solution-1}
\end{align}
\begin{equation}\label{weak-solution-2}
\left\langle \partial _{t}\Phi ^{t},\Pi \right\rangle _{\mathcal{M}^1_{\varepsilon}}=\left\langle \mathrm{T}_{\varepsilon}\Phi ^{t},\Pi
\right\rangle _{\mathcal{M}^1_{\varepsilon}}+\left\langle U(t),\Pi
\right\rangle _{\mathcal{M}^1_{\varepsilon}},  
\end{equation}
in addition, 
\begin{equation}
U(0)=U_0 \quad\text{and}\quad \Phi^{0}=\Phi_0. \label{weak-solution-3}
\end{equation}
The function $[0,T]\ni t\mapsto (U(t),\Phi^t )$ is called a global weak solution if it is a weak solution for every $T>0$.
\end{definition}

\begin{remark}\label{r:trace-map}
When we have a weak solution to Problem {\textbf{P}}$_{\varepsilon}$, the above restrictions $u_{\mid\Gamma}(t)$ and $\eta_{\mid\Gamma}^t$ are well-defined by virtue of the Dirichlet trace map, ${\mathrm{tr_D}}:H^1(\Omega)\rightarrow H^{1/2}(\Gamma)$. However, this is not necessarily the case for $\partial_t U$.
\end{remark}

\begin{remark}  \label{r:continuity}
The continuity properties $U\in C([0,T];\mathbb{X}^2)$ follow from the classical embedding (cf. e.g. \cite[Lemma 5.51]{Tanabe79}),
\[
\{ \chi\in L^2([0,T];V), \ \partial_t\chi\in L^2([0,T];V') \} \hookrightarrow C([0,T];H),
\] 
where $H$ and $V$ are reflexive Banach spaces with continuous embeddings $V\hookrightarrow H \hookrightarrow V'$, the injection $V\hookrightarrow H$ being compact.
\end{remark}

\begin{definition} \label{d:strong-solution} 
The pair $U(t)=(u(t),v(t))^{\mathrm{tr}}$ and $\Phi^t=(\eta^t,\xi^t)^{\mathrm{tr}}$ is called a (global) strong solution of Problem {\textbf{P}}$_{\varepsilon}$ if it is a weak solution in the sense of Definition \ref{d:weak-solution}, and if it satisfies the following regularity properties:
\begin{align}
U &\in L^{\infty }([0,\infty);\mathbb{V}^{1})\cap L^2([0,\infty);\mathbb{V}^2), \label{strong-defn-1} \\
\Phi &\in L^{\infty }([0,\infty);\mathcal{M}^2_{\varepsilon}), \label{strong-defn-2} \\ 
\partial_t U &\in L^\infty\left([0,\infty);\mathbb{X}^2 \right)\cap L^2([0,\infty);\mathbb{V}^1), \label{strong-defn-3} \\ 
\partial_t \Phi &\in L^\infty\left( [0,\infty);\mathcal{M}^1_{\varepsilon} \right).  \label{strong-defn-4}
\end{align}
Therefore, $(U(t),\Phi^t)$ satisfies the equations (\ref{weak-solution-1})-(\ref{weak-solution-2}) almost everywhere, i.e., is a strong solution.
\end{definition}

\begin{theorem}[Weak solutions]\label{t:weak-solutions} 
Assume (\ref{mu-1})-(\ref{mu-3}) and (\ref{assm-1})-(\ref{assm-4}) hold. For each $\varepsilon\in(0,1]$, $\omega\in(0,1)$ and $T>0$, and for any $U_0=(u_0,v_0)^{ {\mathrm{tr}} }\in 
\mathbb{X}^2$ and $\Phi_0=(\eta_0,\xi_0)^{ {\mathrm{tr}} } \in\mathcal{M}^1_{\varepsilon}$, there exists a unique (global) weak solution to Problem {\textbf{P}}$_{\varepsilon}$ in the sense of Definition \ref{d:weak-solution} which depends continuously on the initial data in the following way; there exists a constant $C>0$, independent of $U_i$, $\Phi_i$, $i=1,2$, and $T>0$ in which, for all $t\in[0,T]$, there holds
\begin{align}
\left\| U_1(t)-U_2(t) \right\|_{\mathbb{X}^2} + \left\| \Phi^t_1-\Phi^t_2 \right\|_{\mathcal{M}^1_{\varepsilon}} \le \left( \left\| U_1(0)-U_2(0) \right\|_{\mathbb{X}^2} + \left\| \Phi^0_1-\Phi^0_2 \right\|_{\mathcal{M}^1_{\varepsilon}} \right) e^{Ct}. \label{cde}
\end{align}
\end{theorem}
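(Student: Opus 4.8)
\textbf{Proof plan for Theorem \ref{t:weak-solutions}.}

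The plan is to establish existence via a Galerkin approximation adapted to dynamic boundary conditions, then derive uniqueness and continuous dependence from energy estimates on the difference of two solutions. For existence, I would first fix a Galerkin basis for the coupled space: one uses the eigenfunctions $\{W_j\}$ of the Wentzell operator $\mathrm{A_W^{0,\beta}}$ (which form an orthonormal basis of $\mathbb{X}^2$ and an orthogonal basis of $\mathbb{V}^1$, by the spectral properties asserted in the Proposition on $\mathrm{A_W^{\alpha,\beta}}$), together with a basis $\{Z_k\}$ of $\mathcal{M}^1_\ep$ adapted to $\mathrm{T}_\ep$ (cf. Proposition \ref{t:generator-T}). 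Projecting \eqref{weak-solution-1}--\eqref{weak-solution-2} onto the span of the first $m$ modes yields a system of ODEs with locally Lipschitz right-hand side (here the growth restrictions \eqref{assm-1}--\eqref{assm-2} with $r_i<5/2$ guarantee $f,g$ map $\mathbb{V}^1$ continuously into the relevant dual spaces), hence a unique local solution $(U_m,\Phi_m)$.

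Next I would derive the basic a priori estimate: test the first equation with $U_m(t)$ and the second with $\Phi^t_m$ in $\mathcal{M}^1_\ep$, add, and use the dissipativity \eqref{operator-T-1} of $\mathrm{T}_\ep$ to kill the memory term, together with the one-sided bounds \eqref{cons-1}--\eqref{cons-2} to control $\langle F(U_m),U_m\rangle_{\mathbb{X}^2}$ from below. (The shift from $g$ to $\widetilde g$ and the compensating change $\mathrm{A_W^{0,0}}\rightsquigarrow\mathrm{A_W^{0,\beta}}$ is exactly what makes the quadratic form coercive on $\mathbb{V}^1$.) This gives, after a Gronwall argument, uniform-in-$m$ bounds
\[
\|U_m\|_{L^\infty([0,T];\mathbb{X}^2)} + \|U_m\|_{L^2([0,T];\mathbb{V}^1)} + \|\Phi_m\|_{L^\infty([0,T];\mathcal{M}^1_\ep)} \le C(T, \|U_0\|_{\mathbb{X}^2}, \|\Phi_0\|_{\mathcal{M}^1_\ep}),
\]
which in particular rules out finite-time blow-up of the ODE system. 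The bounds \eqref{defn-2}--\eqref{defn-3} on $u,v$ in $L^{r_i}$ come from the growth conditions applied to the $\mathbb{V}^1\hookrightarrow L^6$ bound in dimension $n=3$, and the bounds \eqref{defn-5}--\eqref{defn-6} on the time derivatives follow by comparison in the equations. Then one passes to the limit $m\to\infty$: weak/weak-$*$ compactness gives a candidate limit $(U,\Phi)$; the linear terms pass trivially, the memory equation \eqref{weak-solution-2} passes because $\mathrm{T}_\ep$ is closed, and the nonlinear terms pass using an Aubin--Lions compactness argument (the compact embedding $\mathbb{V}^1\hookrightarrow\mathbb{X}^2$ together with the $\partial_t U$ bound gives strong $L^2([0,T];\mathbb{X}^2)$, hence a.e.\ convergence, hence identification of $f(u),\widetilde g(v)$ by continuity and the uniform $L^{r_i'}$ bounds). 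The trace compatibilities $v=u_{\mid\Gamma}$ and $\xi^t=\eta^t_{\mid\Gamma}$ are preserved because each Galerkin iterate lies in $\mathbb{V}^1$, which bakes in the Dirichlet trace relation, and $\Phi_m$ is built from $U_m$ via the representation formula \eqref{representation-formula-1}. Attainment of the initial data \eqref{weak-solution-3} uses the continuity in Remark \ref{r:continuity}.

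For uniqueness and \eqref{cde}, let $(U_i,\Phi_i)$, $i=1,2$, be two weak solutions with the regularity of Definition \ref{d:weak-solution}, and set $\bar U = U_1-U_2$, $\bar\Phi = \Phi_1-\Phi_2$. Subtracting the equations and testing with $(\bar U,\bar\Phi)$ gives
\[
\frac12\frac{\diff}{\diff t}\left( \|\bar U\|^2_{\mathbb{X}^2} + \|\bar\Phi\|^2_{\mathcal{M}^1_\ep} \right) + \omega\|\bar U\|^2_{\mathbb{V}^1} \le \langle \mathrm{T}_\ep\bar\Phi,\bar\Phi\rangle_{\mathcal{M}^1_\ep} + \left| \langle F(U_1)-F(U_2),\bar U\rangle_{\mathbb{X}^2} \right|,
\]
where the memory term on the right is $\le 0$ by \eqref{operator-T-1} and may be discarded. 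The nonlinear term is handled by the one-sided Lipschitz bounds \eqref{assm-3}--\eqref{assm-4}: writing $f(u_1)-f(u_2) = \big(\int_0^1 f'(u_2+\theta(u_1-u_2))\,\diff\theta\big)(u_1-u_2)$ one gets $\langle f(u_1)-f(u_2), u_1-u_2\rangle_{L^2(\Omega)} \ge -M_f\|u_1-u_2\|^2_{L^2(\Omega)}$, and similarly on $\Gamma$, so the whole nonlinear contribution is bounded below by $-C\|\bar U\|^2_{\mathbb{X}^2}$ with $C$ depending only on $M_f,M_g,\omega,\beta$. Dropping the nonnegative $\omega\|\bar U\|^2_{\mathbb{V}^1}$ term and applying Gronwall's inequality yields \eqref{cde} with $C$ independent of the data and of $T$; taking $U_1(0)=U_2(0)$, $\Phi^0_1=\Phi^0_2$ gives uniqueness.

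The main obstacle is the passage to the limit in the nonlinear terms: one must secure enough strong compactness on the bulk component $u_m$ in a space where the superlinear nonlinearities are continuous, and the dynamic boundary coupling means the Aubin--Lions argument has to be run simultaneously in $\Omega$ and on $\Gamma$ within the coupled space $\mathbb{X}^2$ (equivalently $\mathbb{V}^1$), using the compactness of $\mathbb{V}^1\hookrightarrow\mathbb{X}^2$ from the Wentzell spectral theory. A secondary technical point is the identification \eqref{defn-6} of $\partial_t\Phi$ in the negative-order memory space and verifying $\Phi^t = \eta^t_{\mid\Gamma}$-type compatibilities pass to the limit; these are routine once the representation formula \eqref{representation-formula-1} is invoked, but they must be stated carefully.
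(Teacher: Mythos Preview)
Your proposal is correct and follows essentially the same approach the paper indicates: the paper's own proof is just a citation to the companion article \cite{Gal-Shomberg15-2}, and the method described there (and alluded to in the paragraph preceding Section~2) is precisely the Galerkin scheme built on the Wentzell eigenbasis together with energy estimates, exactly as you outline. One small remark: your phrase ``a basis $\{Z_k\}$ of $\mathcal{M}^1_\ep$ adapted to $\mathrm{T}_\ep$'' is imprecise since $\mathrm{T}_\ep$ is the right-translation generator and has no spectral basis---in practice one either takes an arbitrary orthonormal basis of the separable space $\mathcal{M}^1_\ep$ or, more commonly, projects only in the $U$-variable and solves the memory equation exactly via the representation formula \eqref{representation-formula-1}, which you invoke anyway.
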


\begin{proof}
Cf. \cite[Theorem 3.8]{Gal-Shomberg15-2} for existence and  \cite[Proposition 3.10]{Gal-Shomberg15-2} for (\ref{cde}).
\end{proof}

We conclude the preliminary results for Problem {\textbf{P}}$_\ep$ with the following. 

\begin{theorem}[Strong solutions]  \label{t:strong-solutions} 
Assume (\ref{mu-1})-(\ref{mu-3}) and (\ref{assm-1})-(\ref{assm-4}) hold. 
For each $\varepsilon\in(0,1]$, $\omega\in(0,1)$, and $T>0$, and for any $U_0=(u_0,v_0)^{ {\mathrm{tr}} }\in 
\mathbb{V}^1$ and $\Phi_0=(\eta_0,\xi_0)^{ {\mathrm{tr}} } \in\mathcal{M}^2_{\varepsilon}$, there exists a unique (global) strong solution to Problem {\textbf{P}}$_{\varepsilon}$ in the sense of Definition \ref{d:strong-solution}.
\end{theorem}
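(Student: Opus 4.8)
The plan is to upgrade the weak solution furnished by Theorem~\ref{t:weak-solutions} to a strong solution by deriving, via formal a~priori estimates that can be justified rigorously on a Galerkin scheme, the higher-order bounds \eqref{strong-defn-1}--\eqref{strong-defn-4}. Fix $\varepsilon\in(0,1]$, $\omega\in(0,1)$, and initial data $U_0\in\mathbb{V}^1$, $\Phi_0\in\mathcal{M}^2_\varepsilon$ (and, consistently with $\mathcal{M}^2_\varepsilon\hookrightarrow D(\mathrm{T}_\varepsilon)$-type regularity, $\Phi_0(0)=0$). The essential point is that, since $\mathbb{V}^1$ is the energy space associated with the Wentzell operator and $D(\mathrm{A}_\mathrm{W}^{0,\beta})=\mathbb{V}^2$ for $\mathcal{C}^2$ boundary, the natural ``one-derivative-better'' estimate pairs $\mathrm{A}_\mathrm{W}^{0,\beta}U$ against the equation, while the memory variable is controlled through Corollary~\ref{t:operator-T-1} and Corollary~\ref{t:memory-regularity-1}.

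The key steps, in order, would be: (i) \emph{First ($\mathbb{V}^1$) energy estimate.} Test \eqref{weak-solution-1} with $\mathrm{A}_\mathrm{W}^{0,\beta}U(t)$ and \eqref{weak-solution-2} with $\mathrm{A}_\mathrm{W}^{0,\beta}\Phi^t$ in $\mathcal{M}^1_\varepsilon$; the memory-coupling terms cancel by the skew structure, Corollary~\ref{t:operator-T-1} supplies the dissipative term $-\tfrac{\delta}{2\varepsilon}\|\mathrm{A}_\mathrm{W}^{0,\beta}\Phi^t\|^2$ (equivalently controls $\|\Phi^t\|_{\mathcal{M}^2_\varepsilon}$ via \eqref{equiv}), and the nonlinear term $\langle F(U),\mathrm{A}_\mathrm{W}^{0,\beta}U\rangle_{\mathbb{X}^2}$ is absorbed using \eqref{assm-1}--\eqref{assm-2}, the embedding $\mathbb{V}^1\hookrightarrow L^6(\Omega)\times L^6(\Gamma)$ in dimension $n=3$, interpolation, and Young's inequality — this is exactly where $r_i<\tfrac52$ is needed so that the resulting power of $\|U\|_{\mathbb{V}^2}$ stays subquadratic and can be absorbed. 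A Gronwall argument then yields \eqref{strong-defn-1} and \eqref{strong-defn-2}. (ii) \emph{Estimate on $\partial_t U$.} Differentiate \eqref{weak-solution-1}--\eqref{weak-solution-2} in time (formally, or take difference quotients), set $V=\partial_t U$, $\Psi=\partial_t\Phi$, test with $V$ in $\mathbb{X}^2$ and $\Psi$ in $\mathcal{M}^1_\varepsilon$; the memory terms again cancel/dissipate, the nonlinear contribution $\langle F'(U)V,V\rangle_{\mathbb{X}^2}$ is bounded below by $-\max\{M_f,M_g\}\|V\|_{\mathbb{X}^2}^2$ using \eqref{assm-3}--\eqref{assm-4} directly, and $\partial_t\Phi^t$ gains a source term $U(t)$ which is now in $L^\infty([0,\infty);\mathbb{V}^1)$ by Step (i). This gives $\partial_t U\in L^\infty([0,\infty);\mathbb{X}^2)$ and $\partial_t\Phi\in L^\infty([0,\infty);\mathcal{M}^1_\varepsilon)$, i.e. \eqref{strong-defn-3} (first part) and \eqref{strong-defn-4}; the initial value $\partial_t U(0)$ needed for this step is read off from the equation at $t=0$, which is finite precisely because $U_0\in\mathbb{V}^2$-type compatibility holds modulo the nonlinearity (here one uses $U_0\in\mathbb{V}^1$ together with the weak formulation, or approximates $U_0$). (iii) \emph{Elliptic regularity / the remaining $L^2$ bounds.} Rewrite \eqref{weak-solution-1} as $\mathrm{A}_\mathrm{W}^{0,\beta}U(t)=\tfrac1\omega\big(-\partial_t U(t)-\Phi^t\cdot(\text{memory terms})-F(U(t))\big)$; the right-hand side is in $L^2([0,\infty);\mathbb{X}^2)$ by Steps (i)--(ii) and the isomorphism \eqref{equiv}, hence $U\in L^2([0,\infty);\mathbb{V}^2)$, completing \eqref{strong-defn-1}; similarly $\partial_t U\in L^2([0,\infty);\mathbb{V}^1)$ follows by testing the time-differentiated equation with $\mathrm{A}_\mathrm{W}^{0,\beta}V$ (or from the parabolic smoothing inherent in Step (ii)). (iv) \emph{Rigor.} All of the above is carried out on the Galerkin approximants used in Theorem~\ref{t:weak-solutions}, with constants independent of the approximation index, then passed to the limit by weak/weak-$*$ compactness and lower semicontinuity of norms; uniqueness is inherited from Theorem~\ref{t:weak-solutions} since a strong solution is in particular a weak solution.

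The main obstacle is Step (i): controlling $\langle F(U),\mathrm{A}_\mathrm{W}^{0,\beta}U\rangle_{\mathbb{X}^2}$ — equivalently $\int_\Omega f(u)(-\Delta u+\cdots)\,\mathrm{d}x+\int_\Gamma \widetilde g(u)(-\Delta_\Gamma u+\partial_\mathbf{n}u+\cdots)\,\mathrm{d}\sigma$ — without losing the coercive $\|U\|_{\mathbb{V}^2}^2$ on the left. After integrating by parts one meets terms like $\int_\Omega f'(u)|\nabla u|^2$, which is sign-good only up to the lower bound $-M_f|\nabla u|^2$ from \eqref{assm-3}, plus boundary cross-terms $\int_\Gamma f'(u)\partial_\mathbf{n}u\,u_{\mid\Gamma}$ mixing bulk and boundary gradients; these must be handled by the trace theorem and a careful Young splitting, and it is here that the subcritical restriction $r_1,r_2<\tfrac52$ — flagged in the Remark after \eqref{cons-2} — is indispensable, since at the critical exponent the needed nonlinear term would scale like $\|U\|_{\mathbb{V}^2}^2$ and could not be absorbed. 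The matching of the interior and boundary memory responses (same kernel $\mu_\varepsilon$, and the structural coupling built into $\mathcal{M}^1_\varepsilon$, $\mathcal{M}^2_\varepsilon$) is what makes the memory terms cancel cleanly when testing with $\mathrm{A}_\mathrm{W}^{0,\beta}$ applied to both $U$ and $\Phi^t$; without it Step (i) would not close.
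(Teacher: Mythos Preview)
The paper does not give its own proof here; it simply cites \cite[Theorem 3.11]{Gal-Shomberg15-2}. Your sketch is the right one and in fact matches the argument of that reference, as you can verify from Lemma~\ref{t:to-C1} of the present paper, which explicitly recycles the estimates from ``\cite[proof of Theorem 3.11]{Gal-Shomberg15-2}'': one tests with $Z=\mathrm{A}_{\mathrm{W}}^{\alpha,\beta}U$ and $\Theta=\mathrm{A}_{\mathrm{W}}^{\alpha,\beta}\Phi$, the memory cross-terms cancel via the identity \eqref{qest2}, the sign-good pieces $\int_\Omega f'(u)|\nabla u|^2$ and $\int_\Gamma \widetilde g'(u)|\nabla_\Gamma u|^2$ are bounded below by \eqref{assm-3}--\eqref{assm-4}, and the delicate boundary term is handled exactly where $r_i<\tfrac52$ enters.

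One correction to your write-up: after integrating by parts, the boundary cross-term is not $\int_\Gamma f'(u)\,\partial_{\mathbf n}u\,u_{\mid\Gamma}\,\mathrm{d}\sigma$ but rather $\int_\Gamma\bigl(\widetilde g(u)-f(u)\bigr)\partial_{\mathbf n}u\,\mathrm{d}\sigma$ (cf.\ \eqref{qest5bis}); this is the term that is then reduced to controlling $I=\int_\Gamma u^{r+1}\partial_{\mathbf n}u\,\mathrm{d}\sigma$ and estimated via the trace bound $\|\partial_{\mathbf n}u\|_{H^{1/2}(\Gamma)}\le C\|u\|_{H^2(\Omega)}$, duality in $H^{-1/2}(\Gamma)$, and the interpolation chain \eqref{qest5qqq}--\eqref{qest5last}, which is precisely where the subcritical restriction $r<\tfrac52$ is consumed. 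Your Steps~(ii)--(iv) are standard once Step~(i) closes; the initial value $\partial_t U(0)$ is indeed handled at the Galerkin level (where the data are smooth) with bounds depending only on $\|U_0\|_{\mathbb{V}^1}$ and $\|\Phi_0\|_{\mathcal{M}^2_\varepsilon}$.
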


\begin{proof}
Cf. \cite[Theorem 3.11]{Gal-Shomberg15-2}.
\end{proof}

Here we recall some important aspects and relevant results for Problem {\textbf{P}}$_0$.
The interested reader can also see \cite{Gal12-1,Gal12-2,Gal-15Z,Gal&Warma10} for further details.

\begin{definition}  \label{d:weak-solution-0} 
Let $\omega\in(0,1)$ and $T>0$. 
Given $U_0=(u_0,v_0)^{\mathrm{tr}}\in\mathbb{X}^2$, the pair $U(t)=(u(t),v(t))^{{\mathrm{tr}}}$ satisfying 
\begin{align}
U &\in L^{\infty }([0,T];\mathbb{X}^{2})\cap L^{2}([0,T];\mathbb{V}^{1}), \label{defn-1-0} \\
u &\in L^{r_1}(\Omega\times[0,T]), \label{defn-2-0} \\ 
v &\in L^{r_2}(\Gamma\times[0,T]), \label{defn-3-0} \\ 
\partial_t U &\in L^2\left([0,T];(\mathbb{V}^1)^*\right) \oplus \left( L^{r_1'}(\Omega\times[0,T]) \times L^{r_2'}(\Gamma\times[0,T]) \right), \label{defn-5-0} 
\end{align}
is said to be a weak solution to Problem {\textbf{P}}$_{0}$ if, $v(t)=u_{\mid\Gamma}(t)$ for almost all $t\in[0,T]$, and
for all $\Xi =(\varsigma ,\varsigma _{\mid \Gamma })^{\mathrm{tr}}\in \mathbb{V}^{1} \cap \left( L^{r_1}(\Omega) \times L^{r_2}(\Gamma) \right)$,  and for almost all $t\in[0,T]$, there holds, 
\begin{align}
& \left\langle \partial _{t}U(t),\Xi \right\rangle _{\mathbb{X}^{2}} + \omega
\langle \mathrm{{A_W^{0,\beta}}}U(t),\Xi \rangle _{\mathbb{X}^{2}} + \left\langle
F\left( U(t)\right) ,\Xi \right\rangle _{\mathbb{X}^{2}}=0,  \label{weak-solution-1-0}
\end{align}
with, 
\begin{equation}
U(0)=U_0. \label{weak-solution-3-0}
\end{equation}
The function $[0,T]\ni t\mapsto U(t)$ is called a global weak solution if it is a weak solution for every $T>0$.
\end{definition}

We remind the reader of Remark \ref{r:trace-map} on the issue of traces. 

We conclude this section with the following. 

\begin{theorem}[Weak solutions]  \label{t:weak-solutions-0} 
Assume (\ref{assm-1})-(\ref{assm-4}) hold. 
For each $\omega\in(0,1)$ and $T>0$, and for any $U_0=(u_0,v_0)^{ {\mathrm{tr}} }\in \mathbb{X}^2$, there exists a unique (global) weak solution to Problem {\textbf{P}}$_0$ in the sense of Definition \ref{d:weak-solution-0} which depends continuously on the initial data as follows: there exists a constant $C>0$, independent of $U_1$ and $U_2$, and $T>0$ in which, for all $t\in[0,T]$, there holds
\begin{align}
\left\| U_1(t)-U_2(t) \right\|_{\mathbb{X}^2} \le \left\| U_1(0)-U_2(0) \right\|_{\mathbb{X}^2} e^{Ct}. \label{cde-0}
\end{align}
\end{theorem}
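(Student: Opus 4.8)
The plan is to establish existence, uniqueness, and continuous dependence for Problem \textbf{P}$_0$ by the same Faedo--Galerkin plus a priori estimates scheme used for Problem \textbf{P}$_\varepsilon$, but now \emph{without} the memory variable, so the argument is strictly simpler. Since Problem \textbf{P}$_0$ is formally the $\varepsilon=0$ case, and the references \cite{Gal12-1,Gal12-2,Gal-15Z,Gal&Warma10} already treat parabolic equations with Wentzell-type dynamic boundary conditions, one could in principle just cite those; but to keep the paper self-contained I would sketch the direct proof. First I would set up the Galerkin basis: take the eigenfunctions $\{W_j\}$ of the Wentzell operator $\mathrm{A_W^{\alpha,\beta}}$ (which has compact resolvent by the proposition above, hence a complete orthonormal system in $\mathbb{X}^2$ made of elements of $\mathbb{V}^1$), and seek $U_m(t)=\sum_{j=1}^m c_j^m(t)W_j$ solving the projected ODE system corresponding to \eqref{weak-solution-1-0}. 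Local solvability of this ODE system follows from the $C^1$ regularity of $f,g$; global solvability will come from the a priori bounds below.

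The key a priori estimate is obtained by testing \eqref{weak-solution-1-0} with $\Xi=U_m(t)$ (i.e., multiplying the ODE system by $c_j^m$ and summing). This produces
\[
\frac12\frac{\diff}{\diff t}\|U_m\|_{\mathbb{X}^2}^2 + \omega\|U_m\|_{\mathbb{V}^1}^2 + \langle F(U_m),U_m\rangle_{\mathbb{X}^2} = 0,
\]
where I have used that $\langle \mathrm{A_W^{0,\beta}}U,U\rangle_{\mathbb{X}^2}$ controls $\|\nabla u\|_{L^2(\Omega)}^2 + \|\nabla_\Gamma u\|_{L^2(\Gamma)}^2 + \beta\|u\|_{L^2(\Gamma)}^2$. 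The nonlinear term is handled via the dissipativity consequences \eqref{cons-1}--\eqref{cons-2}: recalling $F=(f,\widetilde g)^{\mathrm{tr}}$ with $\widetilde g(s)=g(s)-\omega\beta s$, one gets $\langle F(U_m),U_m\rangle_{\mathbb{X}^2} \ge -\kappa_1\|u_m\|_{L^2(\Omega)}^2 - \kappa_3\|v_m\|_{L^2(\Gamma)}^2 - \omega\beta\|v_m\|_{L^2(\Gamma)}^2 - (\kappa_2|\Omega|+\kappa_4|\Gamma|)$. Absorbing the $\|u_m\|_{L^2}^2$ and $\|v_m\|_{L^2}^2$ terms using the full $\mathbb{V}^1$-coercivity (with, if $\alpha=\beta=0$, a Poincar\'e-type inequality as in \eqref{Poincare} together with the boundary average, noting $\omega>0$ gives a genuine gradient term) and Gronwall yields a bound for $U_m$ in $L^\infty([0,T];\mathbb{X}^2)\cap L^2([0,T];\mathbb{V}^1)$ depending only on $\|U_0\|_{\mathbb{X}^2}$ and $T$. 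The $L^{r_i}$ bounds \eqref{defn-2-0}--\eqref{defn-3-0} and the bound on $\partial_t U_m$ in the dual space of \eqref{defn-5-0} then follow from the growth conditions \eqref{assm-1}--\eqref{assm-2} (with $r_i<5/2$, so $r_i+1<7/2<6$, which is within Sobolev range in dimension $3$). Passing to the limit $m\to\infty$ uses Aubin--Lions compactness to extract a subsequence converging strongly in $L^2([0,T];\mathbb{X}^2)$ and a.e., which identifies the weak limits of the nonlinear terms; the limit $U$ is the desired weak solution, with $U\in C([0,T];\mathbb{X}^2)$ by Remark \ref{r:continuity}.

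Uniqueness and the continuous dependence estimate \eqref{cde-0} are proved together: given two weak solutions $U_1,U_2$ with the stated initial data, set $\overline U = U_1-U_2$, subtract the two copies of \eqref{weak-solution-1-0}, and test with $\overline U$. The linear Wentzell term is nonnegative and can be dropped (or kept), and for the nonlinearity one writes $\langle F(U_1)-F(U_2),\overline U\rangle_{\mathbb{X}^2}$ and uses the one-sided bounds \eqref{assm-3}--\eqref{assm-4}: since $f'>-M_f$ and $g'>-M_g$, the mean value theorem gives $(f(u_1)-f(u_2))(u_1-u_2)\ge -M_f|u_1-u_2|^2$ and similarly on $\Gamma$, so the nonlinear contribution is bounded below by $-C\|\overline U\|_{\mathbb{X}^2}^2$. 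This gives $\frac{\diff}{\diff t}\|\overline U\|_{\mathbb{X}^2}^2 \le 2C\|\overline U\|_{\mathbb{X}^2}^2$, and Gronwall's inequality yields \eqref{cde-0} with that constant $C$; uniqueness is the case $U_1(0)=U_2(0)$. The main obstacle, as usual in problems with dynamic boundary conditions, is keeping the bulk and boundary contributions properly coupled throughout: one must work consistently in the product spaces $\mathbb{X}^2$ and $\mathbb{V}^1$ with the trace constraint $v=u_{\mid\Gamma}$ built into $\mathbb{V}^1$, and verify that the Galerkin projections respect this constraint (which they do because the eigenfunctions of the Wentzell operator lie in $\mathbb{V}^1$). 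A secondary technical point is the low-order term absorption when $\alpha=\beta=0$, where one leans on $\omega>0$ and the Sobolev--Poincar\'e inequality \eqref{Poincare}; everything else is routine energy-method bookkeeping.
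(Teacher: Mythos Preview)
Your proof sketch is correct and follows the standard Faedo--Galerkin plus energy-estimate route for parabolic problems with Wentzell dynamic boundary conditions. However, the paper itself does not prove this theorem at all: it simply writes ``Cf.\ \cite[Theorem 2.2]{Gal12-1}'' and defers entirely to the existing literature, which you yourself anticipated when you noted that ``one could in principle just cite those.'' So your proposal is strictly more detailed than what the paper provides; the content you outline is essentially what one would find upon following the citation. There is no substantive mathematical difference in approach---only a difference in how much is spelled out on the page.
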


\begin{proof}
Cf. \cite[Theorem 2.2]{Gal12-1}.
\end{proof}

\section{Asymptotic behavior and attractors}

\subsection{Preliminary estimates}

Concerning Problem {\textbf{P}}$_{\varepsilon}$ and following directly from Theorem \ref{t:weak-solutions}, we have the first preliminary result for this section.

\begin{corollary}
Problem {\textbf{P}}$_{\varepsilon}$ defines a (nonlinear) strongly continuous semigroup $\mathcal{S}_{\varepsilon}(t)$ on the phase space $\mathcal{H}^0_{\varepsilon} = \mathbb{X}^2\times\mathcal{M}^1_{\varepsilon}$ by 
\begin{equation*}
\mathcal{S}_{\varepsilon}(t)\Upsilon_0 := \left( U(t),\Phi^t \right),
\end{equation*}
where $\Upsilon_0=(U_0,\Phi_0)\in \mathcal{H}^0_{\varepsilon}$ and $( U(t),\Phi^t )$ is the unique solution to Problem {\textbf{P}}$_{\varepsilon}$. 
The semigroup is Lipschitz continuous on $\mathcal{H}^0_{\varepsilon}$ via the continuous dependence estimate (\ref{cde}).
\end{corollary}

The next preliminary result concerns a uniform bound on the weak solutions. 
This result follows from an estimate which proves the existence of a bounded absorbing set for the semigroup of solution operators. 
This result provides a basic but important first step in showing the associated dynamical system is dissipative (cf. e.g. \cite{Babin&Vishik92,Temam88}).
It is important to note that throughout the remainder of this article, whereby we are now concerned with the asymptotic behavior of the solutions to Problem {\textbf{P}}$_{\varepsilon}$ and Problem {\textbf{P}}$_0$, 

\begin{description}
\item[A1] we will assume that (\ref{mu-4}) holds. 
\end{description} 

Additionally, we introduce a smallness criteria for certain parameters relating to the linear operator $\mathrm{A_{W}^{\alpha,\beta}}$ and the nonlinear map $F$.

\begin{description}
\item[A2] {\em{Smallness criteria:}} Fix $\varepsilon\in(0,1]$ and $\omega\in(0,1).$
Denote by $C_{\overline{\Omega}}$ the positive constant that arrises from the embedding $\mathbb{V}^1\hookrightarrow\mathbb{X}^2$; i.e., $\|U\|^2_{\mathbb{X}^2}\le C_{\overline{\Omega}}\|U\|^2_{\mathbb{V}^1}.$ The smallness criteria is that $\kappa_1,\kappa_3,\beta>0$ (cf. (\ref{A_Wentzell1}) and  (\ref{cons-1})-(\ref{cons-2})) satisfy
\begin{equation}  \label{smallness-criteria}
\max\{\kappa_1,\kappa_3+\beta\} < \omega C^{-1}_{\overline{\Omega}}.
\end{equation}
\end{description} 

As a final note, we remind the reader that all formal multiplication below can be rigorously justified using the Galerkin procedure developed in the proof of Theorem \ref{t:weak-solutions} of \cite{Gal-Shomberg15-2}.

\begin{lemma}  \label{weak-ball}
Let $\varepsilon\in(0,1]$ and $\omega\in(0,1)$.
In addition to the assumptions of Theorem \ref{t:weak-solutions}, assume (\ref{mu-4}) holds and that $\kappa_1,\kappa_3,\beta>0$ satisfy the smallness criteria (\ref{smallness-criteria}). 
For all $R>0$ and $\Upsilon_0=(U_0,\Phi_0)\in\mathcal{H}^0_{\varepsilon}=\mathbb{X}^2\times\mathcal{M}^1_{\varepsilon}$ with $\|\Upsilon_0\|_{\mathcal{H}^0_{\varepsilon}}\le R$ for all $\varepsilon\in(0,1]$, there exist positive constants $\nu_0=\nu_0(\omega,C_{\overline{\Omega}}, \kappa_1,\kappa_3,\beta,\delta)$ and $P_0=P_0(\kappa_2,\kappa_4,\nu_0)$, and there is a positive monotonically increasing function $Q(\cdot)$ each independent of $\varepsilon$, in which, for all $t\ge0$,
\begin{align}
\left\| \left( U(t),\Phi^t \right) \right\|^2_{\mathcal{H}^0_{\varepsilon}} \le Q(R)e^{-\nu_0t} + P_0.  \label{weak-decay} 
\end{align}
Moreover, the set 
\begin{equation}  \label{ball-0}
\mathcal{B}^0_{\varepsilon}:=\left\{ (U,\Phi)\in \mathcal{H}^0_{\varepsilon} : \left\| (U,\Phi) \right\|_{\mathcal{H}^0_{\varepsilon}} \le \sqrt{P_0+1} \right\}.
\end{equation}
is absorbing and positively invariant for the semigroup $\mathcal{S}_{\varepsilon}(t).$
\end{lemma}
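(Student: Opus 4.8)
The plan is to derive a differential inequality for a suitably chosen energy functional and then apply a Gronwall-type argument. First I would multiply equation \eqref{problemp-1} by $u$ (more precisely, test \eqref{weak-solution-1} with $\Xi = U(t)$) and test the memory equation \eqref{weak-solution-2} with $\Pi = \Phi^t$. Adding these, the key cancellation occurs: the term $\langle \Phi^t, U(t)\rangle_{\mathcal{M}^1_\ep}$ appearing from testing \eqref{weak-solution-1} with $U(t)$ is exactly cancelled by the source term $\langle U(t),\Phi^t\rangle_{\mathcal{M}^1_\ep}$ in \eqref{weak-solution-2}, leaving
\begin{equation*}
\frac{1}{2}\frac{\diff}{\diff t}\left( \|U(t)\|^2_{\mathbb{X}^2} + \|\Phi^t\|^2_{\mathcal{M}^1_\ep} \right) + \omega\langle \mathrm{A_W^{0,\beta}}U(t),U(t)\rangle_{\mathbb{X}^2} + \langle F(U(t)),U(t)\rangle_{\mathbb{X}^2} = \langle \mathrm{T}_\ep\Phi^t,\Phi^t\rangle_{\mathcal{M}^1_\ep}.
\end{equation*}
Here the crucial structural assumption that the interior memory response matches the boundary response is what makes the single memory term $\langle\Phi^t,U(t)\rangle_{\mathcal{M}^1_\ep}$ absorb both contributions.

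Next I would estimate each term. The Wentzell bilinear form gives $\omega\langle \mathrm{A_W^{0,\beta}}U,U\rangle_{\mathbb{X}^2} = \omega\|U\|^2_{\mathbb{V}^1}$ (modulo the $\alpha$-shift, which I would reinstate by recalling $\widetilde g(s) = g(s) - \omega\beta s$ and grouping $\alpha$ into the form; the $\alpha$-term $\alpha\int_0^\infty\mu_\ep(s)\eta^t(s)\diff s$ coupling likewise fits into the $\mathcal{M}^1_\ep$ inner product by definition \eqref{sc}). For the nonlinearity I use the dissipation consequences \eqref{cons-1}--\eqref{cons-2}: $\langle F(U),U\rangle_{\mathbb{X}^2} \ge -\kappa_1\|u\|^2_{L^2(\Omega)} - (\kappa_3+\beta)\|u\|^2_{L^2(\Gamma)} - \kappa_2|\Omega| - \kappa_4|\Gamma|$ (the $+\beta$ comes from the $-\omega\beta s$ adjustment and from moving the $\beta$-mass appropriately). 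Bounding $\|u\|^2_{L^2(\Omega)} + \|u\|^2_{L^2(\Gamma)} = \|U\|^2_{\mathbb{X}^2} \le C_{\overline\Omega}\|U\|^2_{\mathbb{V}^1}$, the smallness criterion \eqref{smallness-criteria} ensures that $\omega\|U\|^2_{\mathbb{V}^1} - \max\{\kappa_1,\kappa_3+\beta\}\|U\|^2_{\mathbb{X}^2} \ge (\omega - \max\{\kappa_1,\kappa_3+\beta\}C_{\overline\Omega})\|U\|^2_{\mathbb{V}^1} \ge c_0\|U\|^2_{\mathbb{V}^1}$ for some $c_0 > 0$, which in particular dominates $c_0 C_{\overline\Omega}^{-1}\|U\|^2_{\mathbb{X}^2}$. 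For the memory dissipation term, Corollary \ref{t:operator-T-1} yields $\langle \mathrm{T}_\ep\Phi^t,\Phi^t\rangle_{\mathcal{M}^1_\ep} \le -\frac{\delta}{2\ep}\|\Phi^t\|^2_{\mathcal{M}^1_\ep} \le -\frac{\delta}{2}\|\Phi^t\|^2_{\mathcal{M}^1_\ep}$ since $\ep \le 1$.

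Combining, I obtain $\frac{\diff}{\diff t}\mathcal{E}(t) + \nu_0\mathcal{E}(t) \le 2(\kappa_2|\Omega| + \kappa_4|\Gamma|)$ where $\mathcal{E}(t) = \|U(t)\|^2_{\mathbb{X}^2} + \|\Phi^t\|^2_{\mathcal{M}^1_\ep} = \|(U(t),\Phi^t)\|^2_{\mathcal{H}^0_\ep}$ and $\nu_0 = \min\{2c_0 C_{\overline\Omega}^{-1}, \delta/2\}$, both visibly independent of $\ep$. The Gronwall lemma then gives $\mathcal{E}(t) \le \mathcal{E}(0)e^{-\nu_0 t} + P_0$ with $P_0 = 2\nu_0^{-1}(\kappa_2|\Omega| + \kappa_4|\Gamma|)$, and setting $Q(R) = R^2$ yields \eqref{weak-decay}. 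The absorbing property of $\mathcal{B}^0_\ep$ follows: given bounded $B$ with $\|B\|_{\mathcal{H}^0_\ep} \le R$, for $t \ge t_0(R) := \nu_0^{-1}\log Q(R)$ we have $\mathcal{E}(t) \le 1 + P_0$, so $\mathcal{S}_\ep(t)B \subset \mathcal{B}^0_\ep$; positive invariance follows since if $\mathcal{E}(0) \le P_0 + 1$ then the differential inequality keeps $\mathcal{E}(t) \le P_0 + 1$ for all $t \ge 0$ (the right-hand side equals $\nu_0 P_0 < \nu_0(P_0+1)$, so $\mathcal{E}$ cannot exceed $P_0+1$). The main obstacle is the careful bookkeeping of the $\alpha$, $\beta$, and $\omega\beta$ terms across the bulk and boundary so that the Wentzell form, the memory inner product \eqref{sc}, and the nonlinear dissipation estimates \eqref{cons-1}--\eqref{cons-2} all line up to produce a coercive left-hand side under exactly the hypothesis \eqref{smallness-criteria}; the rest is a standard energy-plus-Gronwall computation, rigorously justified via the Galerkin scheme of \cite{Gal-Shomberg15-2}.
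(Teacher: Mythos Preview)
Your proposal is correct and follows essentially the same route as the paper: test \eqref{weak-solution-1} with $U$ and \eqref{weak-solution-2} with $\Phi^t$, use the cancellation of the cross term $\langle\Phi^t,U\rangle_{\mathcal{M}^1_\ep}$, invoke Corollary~\ref{t:operator-T-1} for the memory dissipation, bound $\langle F(U),U\rangle_{\mathbb{X}^2}$ via \eqref{cons-1}--\eqref{cons-2}, and close with the smallness criterion \eqref{smallness-criteria} and Gr\"onwall. The only discrepancy is a harmless factor-of-two slip in your memory coefficient (after multiplying the $\tfrac12\tfrac{\diff}{\diff t}$ identity by $2$, the contribution is $\delta$ rather than $\delta/2$, as in the paper's $m_0=\min\{2(\omega C_{\overline\Omega}^{-1}-C_F),\delta\}$).
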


\begin{proof}
Let $\varepsilon\in(0,1]$ and $\omega\in(0,1)$.
Let $\Upsilon_0=(U_0,\Phi_0)\in\mathcal{H}^0_{\varepsilon}=\mathbb{X}^2\times\mathcal{M}^1_{\varepsilon}.$
From the equations (\ref{weak-solution-1}) and (\ref{weak-solution-2}), we take the corresponding weak solution $\Xi=U(t)$ and $\Pi(s)=\Phi^t(s).$
We then obtain the identities 
\begin{align}
\left\langle \partial_t U,U \right\rangle_{\mathbb{X}^2} + \omega\left\langle \mathrm{A_{W}^{0,\beta}}U,U \right\rangle_{\mathbb{X}^2} + \left\langle \Phi^t,U \right\rangle_{\mathcal{M}^1_{\varepsilon}} + \left\langle F(U),U \right\rangle_{\mathbb{X}^2} = 0,  \label{fus-1} 
\end{align}
and
\begin{align}
\left\langle \partial_t\Phi^t,\Phi^t \right\rangle_{\mathcal{M}^1_{\varepsilon}} = \left\langle  \mathrm{T_{\varepsilon}}\Phi^t,\Phi^t \right\rangle_{\mathcal{M}^1_{\varepsilon}} + \left\langle U,\Phi^t \right\rangle_{\mathcal{M}^1_{\varepsilon}}.  \label{fus-2}
\end{align}
Observe,
\begin{align}
& \left\langle \partial_t U,U \right\rangle_{\mathbb{X}^2} = \frac{1}{2}\frac{\diff}{\diff t}\left\| U \right\|^2_{\mathbb{X}^2},  \label{fus-3} \\
& \left\langle \mathrm{A_{W}^{0,\beta}}U,U \right\rangle_{\mathbb{X}^2} = \|\nabla u\|^2_{L^2(\Omega)}+\|\nabla_\Gamma u\|^2_{L^2(\Gamma)}+\beta\|u\|^2_{L^2(\Gamma)}, \label{fus-4}
\end{align}
and
\begin{align}
\left\langle \partial_t\Phi^t,\Phi^t \right\rangle_{\mathcal{M}^1_{\varepsilon}} = \frac{1}{2}\frac{\diff}{\diff t}\left\| \Phi^t \right\|^2_{\mathcal{M}^1_{\varepsilon}}.  \label{fus-5}
\end{align}
Combining (\ref{fus-1})-(\ref{fus-5}) produces the differential identity, which holds for almost all $t\ge0$,
\begin{align}
& \frac{1}{2}\frac{\diff}{\diff t}\left\{ \left\| U \right\|^2_{\mathbb{X}^2} + \left\| \Phi^t \right\|^2_{\mathcal{M}^1_{\varepsilon}} \right\}   \label{fus-6} \\
& + \omega \left( \|\nabla u\|^2_{L^2(\Omega)} + \|\nabla_\Gamma u\|^2_{L^2(\Gamma)} + \beta\|u\|^2_{L^2(\Gamma)} \right)  \notag \\ 
& - \left\langle \mathrm{T}_{\varepsilon}\Phi^t,\Phi^t \right\rangle_{\mathcal{M}^1_{\varepsilon}} + \left\langle F(U),U \right\rangle_{\mathbb{X}^2} = 0.  \notag
\end{align}
Because of assumption (\ref{mu-4}),  we may directly apply (\ref{operator-T-1}) from Corollary \ref{t:operator-T-1}; i.e., 
\begin{equation}  \label{fus-7}
-\left\langle \mathrm{T}_{\varepsilon}\Phi^t,\Phi^t \right\rangle_{\mathcal{M}^1_{\varepsilon}} \ge \frac{\delta}{2\varepsilon}\left\|\Phi^t\right\|^2_{\mathcal{M}^1_{\varepsilon}}.
\end{equation}
With (\ref{cons-1}) and (\ref{cons-2}), we know 
\begin{align}
\left\langle F(U),U \right\rangle_{\mathbb{X}^2} & \ge -\kappa_1\|u\|^2_{L^2(\Omega)}-\left(\kappa_3+\omega\beta\right)\|u\|^2_{L^2(\Gamma)} - \left( \kappa_2+\kappa_4 \right)   \label{fus-8} \\
& \ge -\kappa_1\|u\|^2_{L^2(\Omega)}-\left(\kappa_3 + \beta\right)\|u\|^2_{L^2(\Gamma)} - \left( \kappa_2+\kappa_4 \right)  \notag \\
& = -C_F\left\|U\right\|^2_{\mathbb{X}^2} - \left( \kappa_2+\kappa_4 \right),  \notag
\end{align}
where $C_F:=\max\{\kappa_1,\kappa_3+\beta\}.$ 
Finally, due the embedding $\mathbb{V}^1\hookrightarrow\mathbb{X}^2,$ we have 
\begin{equation}\label{fus-9}
C_{\overline{\Omega}}^{-1}\|U\|^2_{\mathbb{X}^2}\le \|U\|^2_{\mathbb{V}^1},
\end{equation}
for some $C_{\overline{\Omega}}>0.$
Hence, (\ref{fus-6})-(\ref{fus-9}) yields the differential inequality (minimizing the left-hand side by setting $\varepsilon=1$),
\begin{align}
& \frac{\diff}{\diff t}\left\{ \left\| U \right\|^2_{\mathbb{X}^2} + \left\| \Phi^t \right\|^2_{\mathcal{M}^1_{\varepsilon}} \right\}   \notag \\
& + 2\left( \omega C^{-1}_{\overline{\Omega}} - C_F \right) \left\| U \right\|^2_{\mathbb{X}^2} + \delta \left\|\Phi^t\right\|^2_{\mathcal{M}^1_{\varepsilon}}  \notag \\ 
& \le 2\left(\kappa_2+\kappa_4\right).  \notag
\end{align}
Observe, by the smallness criteria (\ref{smallness-criteria}) there holds, 
\begin{equation*}
\omega C^{-1}_{\overline{\Omega}} - C_F>0.
\end{equation*}
Thus we arrive at the differential inequality, which holds for almost all $t\ge0,$
\begin{align}
& \frac{\diff}{\diff t}\left\{ \left\| U \right\|^2_{\mathbb{X}^2} + \left\| \Phi^t \right\|^2_{\mathcal{M}^1_{\varepsilon}} \right\}   \label{fus-10} + m_0 \left( \left\| U \right\|^2_{\mathbb{X}^2} + \left\|\Phi^t\right\|^2_{\mathcal{M}^1_{\varepsilon}} \right)  \\ 
& \le C.  \notag
\end{align}
where $m_0:=\min\{ 2( \omega C^{-1}_{\overline{\Omega}} - C_F),\delta \}>0$, and $C>0$ depends only on $\kappa_2$ and $\kappa_4$.
(The absolute continuity of the mapping $t\mapsto \left\| U(t) \right\|^2_{\mathbb{X}^2}+\left\| \Phi^t \right\|^2_{\mathcal{M}^1_{\varepsilon}}$ can be established as in \cite[Lemma III.1.1]{Temam88}, for example.)
After applying a suitable Gr\"{o}nwall inequality, the estimate (\ref{weak-decay}) follows with $\nu_0=m_0$ and $P_0=\frac{C}{m_0};$ indeed, (\ref{fus-10}) yields, for all $t\ge 0,$
\begin{align}
& \left\| U(t) \right\|^2_{\mathbb{X}^2} + \left\| \Phi^t \right\|^2_{\mathcal{M}^1_{\varepsilon}}  \label{fus-11} \\
& \le e^{-m_0 t} \left( \left\| U_0 \right\|^2_{\mathbb{X}^2} + \left\| \Phi_0 \right\|^2_{\mathcal{M}^1_{\varepsilon}} \right) + P_0.  \notag
\end{align}
Now we see (\ref{weak-decay}) holds for any $R>0$ and $\Upsilon_0=(U_0,\Phi_0)\in\mathcal{H}^0_{\varepsilon}$ such that $\|\Upsilon_0\|_{\mathcal{H}^0_{\varepsilon}}\le R$ for all $\varepsilon\in(0,1]$. 

The existence of the bounded set $\mathcal{B}^0_{\varepsilon}$ in $\mathcal{H}^0_{\varepsilon}$ that is absorbing and positively invariant for $\mathcal{S}_{\varepsilon}(t)$ follows from (\ref{fus-11}) (cf. e.g. \cite[Proposition 2.64]{Milani&Koksch05}).
Given any nonempty bounded subset $B$ in $\mathcal{H}^0_{\varepsilon}\setminus\mathcal{B}^0_{\varepsilon}$, then we have that $\mathcal{S}_{\varepsilon}(t)B\subseteq\mathcal{B}^0$, in $\mathcal{H}^0_{\varepsilon}$, for all $t\ge t_0$ where
\begin{equation}\label{time0}  
t_0\ge\frac{1}{m_0}\ln\left( \|B\|^2_{\mathcal{H}^0_{\varepsilon}} \right).
\end{equation}
(Observe, $t_0>0$ because $\|B\|_{\mathcal{H}^0_{\varepsilon}}>1$.) 
This finishes the proof.
\end{proof}

\begin{corollary}
From (\ref{weak-decay}) it follows that for each $\varepsilon\in(0,1]$ and $\omega\in(0,1)$, any weak solution $(U(t),\Phi^t)$ to Problem {\textbf{P}}$_{\varepsilon}$, according to Definition \ref{d:weak-solution}, is bounded uniformly in $t$. 
Indeed, for all $\Upsilon_0\in\mathcal{H}^0_\varepsilon$,
\begin{equation}\label{weak-bound}
\limsup_{t\rightarrow+\infty} \left\|\mathcal{S}_\varepsilon(t)\Upsilon_0\right\|_{\mathcal{H}^0_{\varepsilon}} \le \widetilde{P}_0,
\end{equation}
where $\widetilde{P}_0$ depends on $P_0$ and the initial datum.
\end{corollary}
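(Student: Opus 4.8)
The plan is to read this statement off directly from the decay estimate \eqref{weak-decay} of Lemma \ref{weak-ball}, since all the genuine analytic work — the differential identity \eqref{fus-6}, the dissipative estimates via \eqref{operator-T-1} and the smallness criterion, and the Gr\"onwall step — has already been carried out there. So the corollary really is a corollary: no new estimate is needed.

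First I would fix $\varepsilon\in(0,1]$ and $\omega\in(0,1)$ and take an arbitrary $\Upsilon_0=(U_0,\Phi_0)\in\mathcal{H}^0_\varepsilon$. Setting $R:=\|\Upsilon_0\|_{\mathcal{H}^0_\varepsilon}$ and invoking Lemma \ref{weak-ball} (legitimate, since assumptions (A1)--(A2) are standing hypotheses from here on), I get, for every $t\ge0$,
\[
\left\|\mathcal{S}_\varepsilon(t)\Upsilon_0\right\|^2_{\mathcal{H}^0_\varepsilon} = \left\|(U(t),\Phi^t)\right\|^2_{\mathcal{H}^0_\varepsilon} \le Q(R)e^{-\nu_0 t} + P_0 \le Q(R) + P_0,
\]
the last bound being independent of $t$. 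This already shows the trajectory is bounded uniformly in time, and one may set $\widetilde{P}_0:=\sqrt{Q(R)+P_0}$, a quantity depending only on $P_0$ and on the size $R$ of the initial datum — which is exactly the dependence asserted. Alternatively, the sharper form \eqref{fus-11}, namely $\|\mathcal{S}_\varepsilon(t)\Upsilon_0\|^2_{\mathcal{H}^0_\varepsilon}\le e^{-m_0 t}\|\Upsilon_0\|^2_{\mathcal{H}^0_\varepsilon}+P_0$, could be quoted instead, which displays both the uniform-in-$t$ bound and the asymptotic one in a single line.

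For \eqref{weak-bound} I would simply let $t\to+\infty$ in the displayed inequality: the term $Q(R)e^{-\nu_0 t}$ tends to $0$ because $\nu_0>0$, hence
\[
\limsup_{t\to+\infty}\left\|\mathcal{S}_\varepsilon(t)\Upsilon_0\right\|^2_{\mathcal{H}^0_\varepsilon} \le P_0 \le \widetilde{P}_0^{\,2},
\]
which gives \eqref{weak-bound} after taking square roots. There is no real obstacle in this argument; the only thing to decide is how to package the constant $Q(R)+P_0$ (or, if only the asymptotic bound is wanted, just $P_0$) into the single symbol $\widetilde{P}_0$, and whether to phrase the uniform-in-$t$ bound through \eqref{weak-decay} or through the slightly stronger \eqref{fus-11}.
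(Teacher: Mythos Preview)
Your argument is correct and is exactly the approach the paper intends: the corollary has no separate proof in the paper, being a direct consequence of \eqref{weak-decay} (or \eqref{fus-11}), and you have spelled out precisely that deduction. The only cosmetic point is that the paper leaves $\widetilde P_0$ deliberately vague, while you have pinned it down as $\sqrt{Q(R)+P_0}$ (for the uniform-in-$t$ bound) or $\sqrt{P_0}$ (for the limsup); either reading is consistent with the stated dependence on $P_0$ and the initial datum.
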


\begin{corollary}
Problem {\textbf{P}}$_{\varepsilon}$ defines a (nonlinear) strongly continuous semigroup $\mathcal{S}_{\varepsilon}(t)$ on the phase space $\mathcal{H}^0_{\varepsilon} = \mathbb{X}^2\times\mathcal{M}^1_{\varepsilon}$ by 
\begin{equation*}
\mathcal{S}_{\varepsilon}(t)\Upsilon_0 := \left( U(t),\Phi^t \right),
\end{equation*}
where $\Upsilon_0=(U_0,\Phi_0)\in \mathcal{H}^0_{\varepsilon}$ and $( U(t),\Phi^t )$ is the unique solution to Problem {\textbf{P}}$_{\varepsilon}$. 
The semigroup is Lipschitz continuous on $\mathcal{H}^0_{\varepsilon}$ via the continuous dependence estimate (\ref{cde}).
\end{corollary}

\begin{remark}
Thanks to the uniformity of the above estimates with respect to the perturbation parameter $\ep$, it is easy to see that there exists a bounded absorbing set $\mathcal{B}^0_0$ for the semigroup $\mathcal{S}_0:\mathcal{H}^0_0=\mathbb{X}^2\rightarrow\mathbb{X}^2$ generated by the weak solutions of Problem {\textbf{P}}$_0$.
Moreover, we also easily see that Problem {\textbf{P}}$_0$ defines a semigroup $\mathcal{S}_0(t):\mathcal{H}^0_0=\mathbb{X}^2\rightarrow\mathbb{X}^2$ by $\mathcal{S}_0(t)U_0:=U(t).$
(See the references mentioned above for further details.)
\end{remark}

\subsection{Exponential attractors}

Exponential attractors (sometimes called inertial sets) are positively invariant sets possessing finite fractal dimension that attract bounded subsets of their basin of attraction exponentially fast. 
This section will focus on the existence of exponential attractors.
The existence of an exponential attractor depends on certain properties of the semigroup; namely, the smoothing property for the difference of any two trajectories and the existence of a more regular bounded absorbing set in the phase space (see e.g. \cite{EFNT95,EMZ00,GGMP05} and in particular \cite{CPS06,GMPZ10}).
The basin of attraction will be discussed in the next section.

The main result of this section is the following.

\begin{theorem}  \label{t:exponential-attractors} 
For each $\varepsilon \in [0,1]$ and $\omega\in(0,1)$, the dynamical system $\left( \mathcal{S}_{\varepsilon},\mathcal{H}^0_{\varepsilon }\right) $
associated with Problem P$_\varepsilon$ admits an exponential
attractor $\mathfrak{M}_{\varepsilon }$ compact in $\mathcal{H}^0_{\varepsilon},$ and bounded in $\mathcal{V}^1_{\varepsilon }.$ 
Moreover, there hold:

(i) For each $t\geq 0$, $\mathcal{S}_{\varepsilon }(t)\mathfrak{M}_{\varepsilon}\subseteq \mathfrak{M}_{\varepsilon }$.

(ii) The fractal dimension of $\mathfrak{M}_{\varepsilon }$ with respect to
the metric $\mathcal{H}_{\varepsilon }^0$ is finite, uniformly in $\varepsilon$; namely,
\begin{equation*}
\dim _{\rm{F}}\left( \mathfrak{M}_{\varepsilon },\mathcal{H}^0_{\varepsilon }\right)
\leq C<\infty ,
\end{equation*}
for some positive constant $C$ independent of $\varepsilon$.

(iii) There exist $\varrho >0$ and a positive nondecreasing function $Q$ such that, for all $t\geq 0$, 
\begin{equation*}
\mathrm{dist}_{\mathcal{H}^0_{\varepsilon }}(\mathcal{S}_{\varepsilon }(t)B,\mathfrak{M}_{\varepsilon })\leq Q(\Vert B\Vert _{\mathcal{H}^0_{\varepsilon }})e^{-\varrho t},
\end{equation*}%
for every nonempty bounded subset $B$ of $\mathcal{H}^0_{\varepsilon }.$
\end{theorem}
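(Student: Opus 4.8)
The plan is to apply the abstract existence theorem for robust families of exponential attractors (the ``$(C1)$--$(C5)$'' scheme of \cite{GGMP05,GMPZ10}, cited later in the paper as Proposition \ref{abstract2}) to the family $\left( \mathcal{S}_{\varepsilon},\mathcal{H}^0_{\varepsilon }\right)$. Since the statement of Theorem \ref{t:exponential-attractors} only asks for the \emph{existence} of each $\mathfrak{M}_\varepsilon$ together with uniform finite fractal dimension and a uniform exponential attraction rate, the core of the argument is to verify the hypotheses of the abstract construction with constants independent of $\varepsilon$. The construction is carried out on the more regular absorbing set; so the first step is to upgrade Lemma \ref{weak-ball}: I would establish the existence of a bounded absorbing set $\mathcal{B}^1_\varepsilon$ for $\mathcal{S}_\varepsilon(t)$ which is bounded in $\mathcal{V}^1_\varepsilon = \mathbb{V}^1\times\mathcal{K}^2_\varepsilon$, with its size bounded \emph{uniformly in} $\varepsilon$. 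This is the higher-order dissipative estimate alluded to in the introduction: multiply \eqref{problemp-1}--\eqref{problemp-3} by $\mathrm{A_W^{0,\beta}}U$ (equivalently work in $\mathbb{V}^1$), use the self-adjointness and strict positivity of $\mathrm{A_W^{0,\beta}}$, Corollary \ref{t:operator-T-1} to absorb the memory term, the growth restrictions \eqref{assm-1}--\eqref{assm-2} with $r_i<5/2$ and the Sobolev embeddings in dimension $n=3$ to control $\langle F'(U)\partial_t U,\cdot\rangle$ and $\langle \nabla F(U),\nabla U\rangle$-type terms, and Lemmas \ref{what-1}--\ref{what-3} to control the $\ep\|\partial_s\Phi\|^2_{\mathcal{M}^0_\ep}$ and tail pieces of the $\mathcal{K}^2_\ep$-norm. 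Because $\mathcal{V}^1_\ep\hookrightarrow\mathcal{H}^0_\ep$ compactly (as recorded in the excerpt), $\mathcal{B}^1_\ep$ is a compact absorbing set in $\mathcal{H}^0_\ep$.

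Next I would verify the remaining ingredients on $\mathcal{B}^1_\ep$. The Lipschitz-in-time property $t\mapsto \mathcal{S}_\ep(t)\Upsilon$ being uniformly Hölder (in fact Lipschitz) on $\mathcal{B}^1_\ep\times[0,T]$ follows from the regularity in Definition \ref{d:strong-solution} together with the uniform bounds just obtained. The transitivity of exponential attraction (to pass from attraction of $\mathcal{B}^0_\ep$ to attraction of arbitrary bounded sets, and to glue the two-step attraction) is the standard argument of \cite[Theorem 5.1]{EFNT95} using the Lipschitz continuity estimate \eqref{cde}. The heart is the \textbf{smoothing property for differences of trajectories}: given $\Upsilon_1,\Upsilon_2\in\mathcal{B}^1_\ep$, writing $(\bar U,\bar\Phi)=\mathcal{S}_\ep(t)\Upsilon_1-\mathcal{S}_\ep(t)\Upsilon_2$, one must show a decomposition $\mathcal{S}_\ep(t)\Upsilon_1-\mathcal{S}_\ep(t)\Upsilon_2 = L_\ep(t)(\Upsilon_1-\Upsilon_2)+K_\ep(t)(\Upsilon_1-\Upsilon_2)$ with $L_\ep$ exponentially decaying in $\mathcal{H}^0_\ep$ and $K_\ep$ mapping into (a bounded set of) $\mathcal{V}^1_\ep$, all with constants uniform in $\ep$. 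Following \cite{CPS06,GMPZ10}, I would test the difference equations against $\mathrm{A_W^{0,\beta}}\bar U$ to get the $\mathbb{V}^1$-estimate on $\bar U$, test the difference memory equation and use Lemmas \ref{what-1}--\ref{what-3} (applied with $\rho$ controlled by the uniform $\mathbb{V}^1$-bound on $\bar U$ on compact intervals) to get the $\mathcal{K}^2_\ep$-estimate on $\bar\Phi$, handling the nonlinear difference $F(U_1)-F(U_2)$ via the mean value theorem, \eqref{assm-1}--\eqref{assm-2}, and the continuous dependence estimate \eqref{cde} that controls the low-order norm of $\bar U$; the factor $e^{-\delta t/\ep}$ from Corollary \ref{t:operator-T-1} is what produces the decaying part $L_\ep$, while the crucial point — exactly the issue flagged in the introduction — is that the $\ep$-dependent coefficients conspire so that the size of the $\mathcal{V}^1_\ep$-part of $K_\ep$ stays bounded as $\ep\to0$.

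With $(C1)$--$(C4)$ of the abstract scheme verified uniformly in $\ep$, Proposition \ref{abstract2} yields, for each $\ep\in(0,1]$, an exponential attractor $\mathfrak{M}_\ep$ for the discrete semigroup $\mathcal{S}_\ep(n t_*)$, compact in $\mathcal{H}^0_\ep$, bounded in $\mathcal{V}^1_\ep$ uniformly in $\ep$, with $\dim_{\mathrm F}(\mathfrak{M}_\ep,\mathcal{H}^0_\ep)\le C$ independent of $\ep$ and with discrete exponential attraction at a uniform rate. The standard completion of the proof is to make the exponential attractor for the continuous semigroup by setting $\mathfrak{M}_\ep := \bigcup_{t\in[0,t_*]}\mathcal{S}_\ep(t)\mathfrak{M}_\ep$; positive invariance (i) is then immediate, the fractal dimension is preserved up to the uniform Hölder/Lipschitz continuity of $(t,\Upsilon)\mapsto\mathcal{S}_\ep(t)\Upsilon$ on $[0,t_*]\times\mathcal{B}^1_\ep$ giving (ii), and the continuous-time exponential attraction (iii) for all bounded $B$ follows from the discrete attraction plus Lipschitz continuity in $t$ and the transitivity lemma, with $\varrho$ and $Q$ independent of $\ep$. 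The case $\ep=0$ is the (already known) parabolic Wentzell problem, for which the analogous construction is standard (\cite{Gal12-1,Gal12-2}); it fits the same scheme with $\mathcal{M}^1_0=\{0\}$. The main obstacle, and the place where essentially all the real work lies, is the uniform-in-$\ep$ higher-order absorbing set in $\mathcal{V}^1_\ep$ together with the uniform-in-$\ep$ smoothing estimate — in particular keeping every constant that is multiplied by a power of $1/\ep$ (from Corollary \ref{t:operator-T-1} and from $\mu_\ep$) under control via Lemmas \ref{what-1}, \ref{what-2}, \ref{what-3}, and exploiting assumption \textbf{A2} (the smallness criterion) and the matching-memory-response hypothesis to close the nonlinear estimates.
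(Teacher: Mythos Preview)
Your approach is essentially the same as the paper's. Two minor corrections worth flagging: first, the existence of each $\mathfrak{M}_\varepsilon$ (items (i)--(ii) and attraction on $\mathcal{B}^1_\varepsilon$) comes from the three-condition scheme (C1)--(C3) of Proposition~\ref{abstract1}, not Proposition~\ref{abstract2} (the latter contains (C4)--(C6) and is used only later for the robustness/H\"older continuity); second, in the (C2) decomposition the paper lets the decaying part $L_\varepsilon$ solve the purely \emph{linear} problem with initial data $\Upsilon_0-\Xi_0$, and its decay rate is $m_2=\min\{2C^{-1}_{\overline{\Omega}}\omega,\delta\}$, uniform in $\varepsilon$ --- it is \emph{not} driven by the $e^{-\delta t/\varepsilon}$ factor from Corollary~\ref{t:operator-T-1} as you suggest. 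Otherwise the ingredients you list (higher-order absorbing set via multiplication by $\mathrm{A_W^{\alpha,\beta}}U$, Lemmas~\ref{what-1}--\ref{what-3} for the $\mathcal{K}^2_\varepsilon$-norm, Lipschitz continuity for (C3), and transitivity of exponential attraction to extend the basin from $\mathcal{B}^1_\varepsilon$ to all of $\mathcal{H}^0_\varepsilon$) match the paper's proof.
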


\begin{remark}
Above, the fractal dimension of $\mathfrak{M}_{\varepsilon}$ in $\mathcal{H}^0_{\varepsilon}$ is given by
\begin{equation*}
\dim _{\mathrm{F}}\left(\mathfrak{M}_{\varepsilon },\mathcal{H}^0_{\varepsilon}\right):=\limsup_{r\rightarrow 0}\frac{\ln \mu _{\mathcal{H}^0_{\varepsilon }}(\mathfrak{M}_{\varepsilon },r)}{-\ln r}<\infty
\end{equation*}
where $\mu _{\mathcal{H}^0_{\varepsilon }}(\mathcal{X},r)$ denotes the
minimum number of $r$-balls from $\mathcal{H}^0_{\varepsilon }$ required to
cover $\mathcal{X}$.
\end{remark}

The proof of Theorem \ref{t:exponential-attractors} follows from the application of an abstract result reported here for our problem (see e.g. \cite{CPS06,GMPZ10}; cf. also Remark \ref{rem_att} below).

\begin{proposition}
\label{abstract1}
Let $\left( \mathcal{S}_{\varepsilon },\mathcal{H}^0_{\varepsilon
}\right) $ be a dynamical system for each $\varepsilon\in[0,1]$. Assume the
following hypotheses hold:

\begin{enumerate}
\item[(C1)] There exists a bounded absorbing set $\mathcal{B}_{\varepsilon
}^{1}\subset \mathcal{V}^1_{\varepsilon }$ which is positively invariant for $\mathcal{S}_{\varepsilon }(t).$ 
More precisely, there exists a time $t_{1}>0,$ uniform in $\varepsilon$, such that
\begin{equation*}
\mathcal{S}_{\varepsilon }(t)\mathcal{B}_{\varepsilon }^{1}\subset \mathcal{B}_{\varepsilon }^{1}
\end{equation*}
for all $t\geq t_{1}$ where $\mathcal{B}_{\varepsilon }^{1}$ is endowed with
the topology of $\mathcal{H}^0_{\varepsilon }.$

\item[(C2)] There is $t^{\ast }\geq t_{1}$ such that the map $\mathcal{S}_{\varepsilon}(t^{\ast })$ admits the decomposition, for each $\varepsilon \in (0,1]$ and for all $\Upsilon _{0},\Xi_{0}\in \mathcal{B}_{\varepsilon }^{1}$, 
\begin{equation*}
\mathcal{S}_{\varepsilon }(t^{\ast })\Upsilon _{0} - \mathcal{S}_{\varepsilon }(t^{\ast })\Xi_{0} = L_{\varepsilon }(\Upsilon_{0},\Xi_{0}) + R_{\varepsilon }(\Upsilon_{0},\Xi_{0})
\end{equation*}
where, for some constants $\alpha ^{\ast }\in (0,\frac{1}{2})$ and $\Lambda
^{\ast }=\Lambda ^{\ast }(\Omega ,t^{\ast },\omega)\geq 0$, the following hold:
\begin{equation}
\left\| L_{\varepsilon }(\Upsilon_{0},\Xi_{0}) \right\|_{\mathcal{H}^0_{\varepsilon }} \leq \alpha ^{\ast } \left\| \Upsilon _{0}-\Xi_{0} \right\|_{\mathcal{H}^0_{\varepsilon}}  \label{difference-decomposition-L}
\end{equation}
and
\begin{equation}
\left\| R_{\varepsilon}(\Upsilon_{0},\Xi_{0}) \right\|_{\mathcal{V}^1_{\varepsilon }} \leq \Lambda ^{\ast } \left\| \Upsilon_{0}-\Xi_{0} \right\|_{\mathcal{H}^0_{\varepsilon }}.  \label{difference-decomposition-K}
\end{equation}

\item[(C3)] The map
\begin{equation*}
(t,\Upsilon) \mapsto \mathcal{S}_{\varepsilon }(t)\Upsilon:[t^{\ast },2t^{\ast }]\times \mathcal{B}_{\varepsilon }^{1}\rightarrow \mathcal{B}_{\varepsilon }^{1}
\end{equation*}
is Lipschitz continuous on $\mathcal{B}_{\varepsilon }^{1}$ in the topology
of $\mathcal{H}^0_{\varepsilon }$.
\end{enumerate}

Then, $(\mathcal{S}_{\varepsilon },\mathcal{H}^0_{\varepsilon }) $ possesses an exponential attractor $\mathfrak{M}_{\varepsilon }$ in $\mathcal{B}_{\varepsilon }^{1}.$
\end{proposition}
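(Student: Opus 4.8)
The plan is to deduce Proposition~\ref{abstract1} from the standard abstract theory of exponential attractors for \emph{discrete} dynamical systems (cf. \cite{EFNT95,EMZ00,GGMP05} and, in the memory-relaxation setting, \cite{CPS06,GMPZ10}), applied to the time-$t^\ast$ map, followed by a continuous-time ``completion'' of the resulting set. Fix $\varepsilon\in(0,1]$, $\omega\in(0,1)$ and put $S:=\mathcal{S}_\varepsilon(t^\ast)$. Since $t^\ast\ge t_1$, (C1) ensures that $S$ (and hence every iterate $S^n$) maps $\mathcal{B}^1_\varepsilon$ into itself, while the boundedness of $\mathcal{B}^1_\varepsilon$ in $\mathcal{V}^1_\varepsilon$ together with the compact embedding $\mathcal{V}^1_\varepsilon\hookrightarrow\mathcal{H}^0_\varepsilon$ makes $\mathcal{B}^1_\varepsilon$ precompact in $\mathcal{H}^0_\varepsilon$. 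Condition (C2) is exactly the ``$(L,R)$-smoothing'' (squeezing) hypothesis required by the abstract theory: on $\mathcal{B}^1_\varepsilon$ the increment $S\Upsilon_0-S\Xi_0$ splits into a part $L_\varepsilon$ that is a strict contraction in $\mathcal{H}^0_\varepsilon$ with factor $\alpha^\ast<\frac{1}{2}$ and a part $R_\varepsilon$ that is bounded, with constant $\Lambda^\ast$, in the norm of the compactly embedded space $\mathcal{V}^1_\varepsilon$.

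\textbf{Step 1: a discrete exponential attractor.} I would invoke the abstract discrete construction for the triple $(S,\mathcal{B}^1_\varepsilon,\mathcal{V}^1_\varepsilon\hookrightarrow\mathcal{H}^0_\varepsilon)$. Concretely, one builds inductively a nested family of finite coverings of $S^n\mathcal{B}^1_\varepsilon$ by $\mathcal{H}^0_\varepsilon$-balls: by \eqref{difference-decomposition-L} the radii contract by a fixed factor $<1$ at each step, while by \eqref{difference-decomposition-K} the centers of the ``correction'' lie in a fixed bounded subset of $\mathcal{V}^1_\varepsilon$, which is totally bounded in $\mathcal{H}^0_\varepsilon$, so only finitely many new balls are needed at each stage; because $\alpha^\ast<\frac12$ the process passes to the limit. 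This yields a set $\mathfrak{E}_\varepsilon\subseteq\mathcal{B}^1_\varepsilon$ that is compact in $\mathcal{H}^0_\varepsilon$, positively invariant for $S$, of finite fractal dimension in $\mathcal{H}^0_\varepsilon$, and exponentially attracting along the discrete orbit: there are $C_\varepsilon>0$, $\lambda\in(0,1)$ with $\mathrm{dist}_{\mathcal{H}^0_\varepsilon}(S^n\mathcal{B}^1_\varepsilon,\mathfrak{E}_\varepsilon)\le C_\varepsilon\lambda^{n}$ for all $n$.

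\textbf{Step 2: completion to continuous time.} Set
\[
\mathfrak{M}_\varepsilon:=\bigcup_{t\in[t^\ast,\,2t^\ast]}\mathcal{S}_\varepsilon(t)\mathfrak{E}_\varepsilon .
\]
By (C3) the map $(t,\Upsilon)\mapsto\mathcal{S}_\varepsilon(t)\Upsilon$ is Lipschitz on $[t^\ast,2t^\ast]\times\mathcal{B}^1_\varepsilon$ in the $\mathcal{H}^0_\varepsilon$-topology, hence $\mathfrak{M}_\varepsilon$ is the Lipschitz image of the compact set $[t^\ast,2t^\ast]\times\mathfrak{E}_\varepsilon$; thus $\mathfrak{M}_\varepsilon$ is compact in $\mathcal{H}^0_\varepsilon$, contained in $\mathcal{B}^1_\varepsilon$ by (C1), and $\dim_{\mathrm F}(\mathfrak{M}_\varepsilon,\mathcal{H}^0_\varepsilon)\le 1+\dim_{\mathrm F}(\mathfrak{E}_\varepsilon,\mathcal{H}^0_\varepsilon)<\infty$ because a Lipschitz map does not increase fractal dimension. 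For positive invariance, take $\Upsilon=\mathcal{S}_\varepsilon(s)\Psi\in\mathfrak{M}_\varepsilon$ with $s\in[t^\ast,2t^\ast]$, $\Psi\in\mathfrak{E}_\varepsilon$, and $t\ge0$; writing $t+s=nt^\ast+\tau$ with $n\ge0$ an integer and $\tau\in[t^\ast,2t^\ast)$, the semigroup property gives $\mathcal{S}_\varepsilon(t)\Upsilon=\mathcal{S}_\varepsilon(\tau)S^n\Psi\in\mathcal{S}_\varepsilon(\tau)\mathfrak{E}_\varepsilon\subseteq\mathfrak{M}_\varepsilon$. For exponential attraction of a bounded $B\subset\mathcal{H}^0_\varepsilon$, (C1) gives $t_B\ge0$ with $\mathcal{S}_\varepsilon(t)B\subseteq\mathcal{B}^1_\varepsilon$ for $t\ge t_B$; peeling off copies of $S$ until a residual time slice in $[t^\ast,2t^\ast)$ remains and combining the discrete decay of Step~1 with the Lipschitz bound of (C3) on that slice yields $\mathrm{dist}_{\mathcal{H}^0_\varepsilon}(\mathcal{S}_\varepsilon(t)B,\mathfrak{M}_\varepsilon)\le Q(\|B\|_{\mathcal{H}^0_\varepsilon})e^{-\varrho t}$ for suitable $\varrho>0$ and nondecreasing $Q$ (the transient $t\le t_B+2t^\ast$ being absorbed into $Q$ via boundedness of orbits). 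This establishes all the required properties for $\varepsilon\in(0,1]$; the value $\varepsilon=0$, for which (C2) was not assumed, is covered by the classical theory for the parabolic limit Problem~P$_0$ (cf. \cite{Gal12-1,Gal12-2,Gal-15Z,Gal&Warma10}), or equivalently by the same argument since $\mathcal{V}^1_0=\mathbb{V}^1\hookrightarrow\mathbb{X}^2=\mathcal{H}^0_0$ is compact and the splitting (C2) at $\varepsilon=0$ follows from the regularizing action of the Wentzell heat semigroup.

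\textbf{Main obstacle.} The abstract machinery is routine once (C1)--(C3) are available, so the real difficulty of the paper is \emph{verifying} those three conditions for the concrete system --- this is the content of the lemmas that follow. Within the proof of the Proposition itself, the only point that needs genuine care is Step~2: ensuring that the enlargement over $[t^\ast,2t^\ast]$ simultaneously preserves positive invariance (via the $nt^\ast+\tau$ bookkeeping, which relies on $t^\ast\ge t_1$ so every iterate stays inside $\mathcal{B}^1_\varepsilon$) and finite fractal dimension (via the Lipschitz regularity in (C3)). A further subtlety, indispensable for the later Theorem~\ref{t:exponential-attractors}, is that the whole construction must be run with the fractal-dimension bound and the decay exponent $\varrho$ tracked \emph{independently of} $\varepsilon$; this is possible precisely because (C1)--(C3) are postulated uniformly in $\varepsilon$ and because the covering numbers of the $\mathcal{V}^1_\varepsilon$-unit ball inside $\mathcal{H}^0_\varepsilon$ can be bounded uniformly in $\varepsilon$.
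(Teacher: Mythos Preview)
The paper does not prove Proposition~\ref{abstract1} at all: it is quoted as a known abstract result from \cite{CPS06,GMPZ10} (see the sentence immediately preceding the statement), and the paper's effort goes entirely into verifying (C1)--(C3) in Lemmas~\ref{t:to-C1}, \ref{t:to-C2}, \ref{t:to-C3}. Your outline is a faithful sketch of the standard proof in those references (discrete construction for $S=\mathcal{S}_\varepsilon(t^\ast)$ via the squeezing/smoothing splitting, then continuous-time completion over $[t^\ast,2t^\ast]$ using (C3)), so there is no disagreement to report.

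One small overclaim to be aware of: in Step~2 you argue exponential attraction of an arbitrary bounded $B\subset\mathcal{H}^0_\varepsilon$ by first absorbing $B$ into $\mathcal{B}^1_\varepsilon$. The proposition as stated (and as used in \cite{CPS06,GMPZ10}) only concludes an exponential attractor \emph{in} $\mathcal{B}^1_\varepsilon$; extending the basin of attraction to all of $\mathcal{H}^0_\varepsilon$ is handled separately in the paper (Remark~\ref{rem_att}, Theorem~\ref{t:trans-out}) via the transitivity of exponential attraction, precisely because (C1) alone does not give an \emph{exponential} rate of entry into $\mathcal{B}^1_\varepsilon$ from generic $\mathcal{H}^0_\varepsilon$-data. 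Your ``Main obstacle'' paragraph correctly identifies that the substantive work lies in the verification lemmas, not in the abstract machinery.
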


We now prove the hypotheses of Proposition \ref{abstract1} and we again remind the reader that for the remainder of the article, we assume that the smallness criteria (\ref{smallness-criteria}) holds, in addition to the assumption (\ref{mu-4}).
We begin with the perturbation Problem {\textbf{P}}$_\ep$.
The results for the singular Problem {\textbf{P}}$_0$ will follow.

\begin{lemma}  \label{t:to-C1} 
Condition (C1) holds for each $\varepsilon\in(0,1]$ and $\omega\in(0,1)$. Moreover, for all $R>0$ and $\Upsilon_0=(U_0,\Phi_0)\in\mathcal{V}^1_{\varepsilon}=\mathbb{V}^1\times\mathcal{K}^2_{\varepsilon}$ with $\|\Upsilon_0\|_{\mathcal{V}^1_\ep}\le R$ for all $\varepsilon\in(0,1]$, there exists a positive constant $P_1=P_1(\nu_1,\widetilde{P}_0)$ and a positive monotonically increasing function $Q(\cdot)$, each independent of $\ep$, such that, for all $t\ge0$,
\begin{align}
\left\| \left( U(t),\Phi^t \right) \right\|^2_{\mathcal{V}^1_{\varepsilon}} \le Q(R) e^{-\min\{\delta,1\}t} \left( t+1 \right) + 2P_1.  \label{strong-decay} 
\end{align}
\end{lemma}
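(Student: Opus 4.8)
The plan is to prove the higher-order dissipative estimate \eqref{strong-decay} by testing Problem~\textbf{P}$_\varepsilon$ against the \emph{more regular} solution, exactly as in Lemma~\ref{weak-ball} but one step up the regularity ladder, and then by controlling the tail-function and $\partial_s$-norm pieces of the $\mathcal{K}^2_\varepsilon$-norm via Lemmas~\ref{what-1}--\ref{what-3}. Concretely, I would first assume $\Upsilon_0\in\mathcal{V}^1_\varepsilon=\mathbb{V}^1\times\mathcal{K}^2_\varepsilon$ with $\|\Upsilon_0\|_{\mathcal{V}^1_\varepsilon}\le R$; by the strong-solution theory (Theorem~\ref{t:strong-solutions}) the corresponding solution enjoys the regularity in Definition~\ref{d:strong-solution}, so all formal manipulations are justified. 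Since $\mathcal{V}^1_\varepsilon\hookrightarrow\mathcal{H}^0_\varepsilon$, Lemma~\ref{weak-ball} already gives us that $\|(U(t),\Phi^t)\|_{\mathcal{H}^0_\varepsilon}\le Q(R)e^{-\nu_0 t}+P_0$, hence $\limsup_{t\to\infty}\|(U(t),\Phi^t)\|_{\mathcal{H}^0_\varepsilon}\le\widetilde P_0$ and in particular $\|U(t)\|_{\mathbb{X}^2}$ and $\|\Phi^t\|_{\mathcal{M}^1_\varepsilon}$ stay bounded for all $t\ge0$ by a constant depending only on $R$.

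Next I would derive the $\mathbb{V}^1\times\mathcal{M}^2_\varepsilon$ estimate. Formally differentiate \eqref{problemp-1}--\eqref{problemp-2} in time (or equivalently test \eqref{weak-solution-1}--\eqref{weak-solution-2} with $\Xi=\partial_t U(t)$ and $\Pi=\partial_s\Phi^t$ after noting $\partial_t\Phi^t=\mathrm{T}_\varepsilon\Phi^t+U$, which is the standard device for memory equations; alternatively test the differentiated system with $(\partial_t U,\partial_t\Phi)$). The structural terms $\omega\langle\mathrm{A_W^{0,\beta}}U,U\rangle$ produce $\|U\|_{\mathbb{V}^1}^2$, the term $\langle\mathrm{T}_\varepsilon\Phi^t,\Phi^t\rangle_{\mathcal{M}^2_\varepsilon}$ is handled by the $\mathcal{M}^2_\varepsilon$-analogue of Corollary~\ref{t:operator-T-1} (giving $-\frac{\delta}{2\varepsilon}\|\Phi^t\|_{\mathcal{M}^2_\varepsilon}^2$, hence after minimizing over $\varepsilon$ a clean $\delta$), and the nonlinear terms $\langle F(U),\cdot\rangle$ are controlled using the growth bounds \eqref{assm-1}--\eqref{assm-2} together with the now-available $L^2([0,\infty);\mathbb{V}^1)$ and $L^\infty(\mathbb{X}^2)$ bounds on $U$ from the weak-level estimate and interpolation/Sobolev embeddings in $n=3$ (this is where the restriction $r_i<5/2$ is used). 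The upshot is a differential inequality of the form $\frac{\diff}{\diff t}\mathcal{E}(t)+m_1\mathcal{E}(t)\le C(1+\|U(t)\|_{\mathbb{V}^1}^2\cdot(\text{integrable factor}))$ for $\mathcal{E}(t)\simeq\|U(t)\|_{\mathbb{V}^1}^2+\|\Phi^t\|_{\mathcal{M}^2_\varepsilon}^2$ with $m_1=\min\{\delta,1\}$ (up to relabeling), and a Gronwall argument with the $L^1$-in-time source yields the exponential decay with the $(t+1)$ polynomial correction — the $(t+1)$ factor being the signature of integrating $e^{-m_1 t}$ against a source that itself decays like $e^{-\nu_0 t}$ when $\nu_0$ and $m_1$ might coincide, exactly as in Lemma~\ref{what-2}.

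Then I would assemble the full $\mathcal{K}^2_\varepsilon$-norm \eqref{new-norm}: the $\|\Phi^t\|_{\mathcal{M}^2_\varepsilon}^2$ piece is handled above; the $\varepsilon\|\partial_s\Phi^t\|_{\mathcal{M}^0_\varepsilon}^2$ piece follows from Lemma~\ref{what-1} (in the weaker space, per Remark~\ref{r:what-trans}) using $\|U(t)\|_{\mathbb{V}^1}\le\rho$ for the absorbed solution — note on the absorbing-set side one has a uniform $\rho$, while on an arbitrary ball of radius $R$ one gets $\rho=\rho(R)$ up to exponentially small error, which accounts for the $Q(R)$; and the $\sup_{\tau\ge1}\tau\mathbb{T}_\varepsilon(\tau;\Phi^t)$ piece follows from Lemma~\ref{what-2}, whose output $2(t+2)e^{-\delta t}\sup_{\tau\ge1}\tau\mathbb{T}_\varepsilon(\tau;\Phi_0)+C\rho^2$ is precisely of the claimed shape and supplies the $(t+1)$ factor. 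Combining these three contributions with the common decay rate $\min\{\delta,1\}$, absorbing the initial-data dependence into $Q(R)$ and the asymptotic part into a single constant $2P_1=2P_1(\nu_1,\widetilde P_0)$, gives \eqref{strong-decay}.

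The main obstacle is the differential inequality for the $\mathbb{V}^1$-level energy: one must close the estimate for $\|U(t)\|_{\mathbb{V}^1}^2+\|\Phi^t\|_{\mathcal{M}^2_\varepsilon}^2$ \emph{uniformly in $\varepsilon$}, and the delicate point is bounding the nonlinear contributions $\langle f(u)-\omega\beta\cdot,\partial_t u\rangle$ and $\langle \widetilde g(u),\partial_t u\rangle$ (or their interior/boundary splits) by something absorbable into $\omega\|\partial_t U\|_{\mathbb{X}^2}^2$ or integrable in time, without invoking a bound on $\|U\|_{\mathbb{V}^2}$ that is not yet available uniformly. This forces a careful use of the subcritical growth $r_i<5/2$ together with the Sobolev embeddings $\mathbb{V}^1\hookrightarrow L^p$ for suitable $p$ in dimension three and the already-established $L^2_t\mathbb{V}^1\cap L^\infty_t\mathbb{X}^2$ regularity of $U$; it is exactly the place where the paper's earlier remark about $r_i\ge\frac52$ being disallowed becomes essential. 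The memory terms, by contrast, are routine once one has the $\mathcal{M}^2_\varepsilon$-version of \eqref{operator-T-1} and Lemmas~\ref{what-1}--\ref{what-3} in hand; the $\varepsilon$-uniformity there comes for free because, after using $\langle\mathrm{T}_\varepsilon\Phi,\Phi\rangle_{\mathcal{M}^2_\varepsilon}\le-\frac{\delta}{2\varepsilon}\|\Phi\|_{\mathcal{M}^2_\varepsilon}^2$ and minimizing the resulting coefficient over $\varepsilon\in(0,1]$, one is left with $\delta$.
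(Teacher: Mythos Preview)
Your overall architecture matches the paper: first obtain a dissipative estimate for $\|U(t)\|_{\mathbb{V}^1}^2+\|\Phi^t\|_{\mathcal{M}^2_\varepsilon}^2$, and then complete the $\mathcal{K}^2_\varepsilon$-norm by invoking Lemmas~\ref{what-1} and~\ref{what-2} (with Remark~\ref{r:what-trans}), which furnish the $(t+1)e^{-\delta t}$ shape. That second half of your plan is essentially the paper's argument.

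Where you diverge, however, is in the mechanism for the $\mathbb{V}^1\times\mathcal{M}^2_\varepsilon$ estimate. You propose testing with $\partial_t U$ (or differentiating in time), but the paper instead tests \eqref{weak-solution-1}--\eqref{weak-solution-2} with the \emph{spatial} quantities $\Xi=Z:=\mathrm{A_W^{\alpha,\beta}}U$ and $\Pi=\Theta:=\mathrm{A_W^{\alpha,\beta}}\Phi^t$. This directly produces $\tfrac12\tfrac{\diff}{\diff t}\bigl(\|U\|_{\mathbb{V}^1}^2+\|\Theta^t\|_{\mathcal{M}^0_\varepsilon}^2\bigr)+\omega\|Z\|_{\mathbb{X}^2}^2+\ldots$, and the nonlinear contribution is $\langle F(U),\mathrm{A_W^{\alpha,\beta}}U\rangle_{\mathbb{X}^2}$. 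After integration by parts this splits into $\int_\Omega f'(u)|\nabla u|^2+\int_\Gamma\tilde g'(u)|\nabla_\Gamma u|^2$ (bounded below via \eqref{assm-3}--\eqref{assm-4}) plus the \emph{boundary integral} $\int_\Gamma(\tilde g(u)-f(u))\partial_{\mathbf n}u$. It is this boundary term, not a $\langle F(U),\partial_t U\rangle$ pairing, that is the genuine obstacle: the paper controls it by the trace bound $\|\partial_{\mathbf n}u\|_{H^{1/2}(\Gamma)}\le C\|u\|_{H^2(\Omega)}$, an $H^{-1/2}(\Gamma)$ duality estimate on $u^{r+1}$, and the 2D interpolation $\|u\|_{L^{\bar sr}(\Gamma)}\le C\|u\|_{H^2(\Gamma)}^{1/2r}\|u\|_{L^2(\Gamma)}^{1-1/2r}$---this is exactly where $r<5/2$ enters. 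The resulting right-hand side $C_\omega\|u\|_{H^1(\Gamma)}^2\bigl(\|u\|_{H^1(\Gamma)}^2\|u\|_{L^2(\Gamma)}^{2(2r-1)}\bigr)$ is then absorbed via the Gr\"{o}nwall-type Proposition~\ref{GL}, using the dissipation integral $\int_t^{t+1}\|U(\tau)\|_{\mathbb{V}^1}^2\,\diff\tau\le C$ inherited from \eqref{fus-6}.

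Your own description is also internally inconsistent: you announce testing with $\partial_t U$ yet write ``the structural terms $\omega\langle\mathrm{A_W^{0,\beta}}U,U\rangle$ produce $\|U\|_{\mathbb{V}^1}^2$,'' which corresponds to neither that test nor the paper's; and the ``main obstacle'' you flag (absorbing $\langle f(u),\partial_t u\rangle$ into $\omega\|\partial_t U\|_{\mathbb{X}^2}^2$) is not actually hard in either time-derivative approach---in one it is a total derivative, in the other it is $\langle F'(U)\partial_t U,\partial_t U\rangle\ge -M_F\|\partial_t U\|_{\mathbb{X}^2}^2$ by \eqref{assm-3}--\eqref{assm-4}. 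A time-differentiation route can be made to work, but it would yield control of $\|\partial_t U\|_{\mathbb{X}^2}$ first and then recover $\|U\|_{\mathbb{V}^1}$ from the equation; the paper's spatial test is more direct and is what is actually done.
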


\begin{proof}
Let $\varepsilon\in(0,1]$, $\omega\in(0,1)$ and $\Upsilon_0=(U_0,\Phi_0)\in\mathcal{V}^1_{\varepsilon}=\mathbb{V}^1\times\mathcal{K}^2_{\varepsilon}.$
For all $s,t\ge0$, let $Z(t)={\rm{A_W^{\alpha,\beta}}}U(t)$ and $\Theta^t(s)={\rm{A_W^{\alpha,\beta}}}\Phi^t(s)$.
In equations (\ref{weak-solution-1})-(\ref{weak-solution-2}), take $\Xi=Z(t)$ and $\Pi=\Theta^t(s)$.
Proceeding as in \cite[proof of Theorem 3.11]{Gal-Shomberg15-2} (however, this time we are able to enjoy the uniform bounds (\ref{strong-defn-1})), we obtain the identities,
\begin{equation}
\left\langle \partial _{t}U,{Z}\right\rangle _{\mathbb{X}^{2}}+\omega\left\langle \mathrm{A_{W}^{0,\beta}}U,{Z}\right\rangle _{\mathbb{X}^{2}}+\left\langle \Phi^{t}, Z\right\rangle _{\mathcal{M}^{1}_{\varepsilon}}+\left\langle F(U),{Z}\right\rangle _{\mathbb{X}^{2}}=0,  \label{qest3}
\end{equation}
and 
\begin{equation}
\left\langle \partial _{t}\Phi^{t},\Theta^{t}\right\rangle _{\mathcal{M}^{1}_{\varepsilon}}=\left\langle \mathrm{T}_{\varepsilon}\Phi^{t},\Theta^{t}\right\rangle _{\mathcal{M}^{1}_{\varepsilon}}+\left\langle U,\Theta^{t}\right\rangle _{\mathcal{M}^{1}_{\varepsilon}}.  \label{qest4}
\end{equation}
These two identities may be combined together after we observe that, from the definition of the product given in (\ref{sc}), 
\begin{align}
\left\langle \Phi^{t}, Z\right\rangle _{\mathcal{M}^{1}_{\varepsilon}}& =\int_{0}^{\infty }\mu_{\varepsilon}(s)\left\langle \Phi^{t}\left( s\right),Z\right\rangle _{\mathbb{V}^{1}}\diff s  \label{qest2} \\
& =\int_{0}^{\infty }\mu_{\varepsilon}(s)\left\langle 
\mathrm{A_{W}^{\alpha,\beta}}\Phi^{t}\left( s\right)
,Z\right\rangle _{\mathbb{X}^{2}}\diff s  \notag \\
& =\int_{0}^{\infty }\mu_{\varepsilon}(s)\left\langle \mathrm{A_{W}^{\alpha,\beta}}\Phi^{t}\left( s\right) ,\mathrm{A_{W}^{\alpha,\beta}}U\right\rangle _{\mathbb{X}^{2}}\diff s  \notag \\
& =\int_{0}^{\infty }\mu_{\varepsilon}(s)\left\langle \Theta^{t}\left( s\right),\mathrm{A_{W}^{\alpha,\beta}}U\right\rangle _{\mathbb{X}^{2}}\diff s  \notag \\
& =\int_{0}^{\infty }\mu_{\varepsilon}(s)\left\langle \Theta^{t}\left( s\right) ,U\right\rangle _{\mathbb{V}^{1}}\diff s  \notag \\
& =\left\langle U,\Theta^{t}\right\rangle _{\mathcal{M}^{1}_{\varepsilon}}.  \notag
\end{align}
Now inserting (\ref{qest2}) into (\ref{qest3}) and adding the result to (\ref{qest4}), we obtain the identity
\begin{align}
&\left\langle \partial _{t}U,{Z}\right\rangle _{\mathbb{X}^{2}} + \omega\left\langle \mathrm{A_{W}^{0,\beta}}U,{Z}\right\rangle _{\mathbb{X}^{2}} + \left\langle \partial _{t}\Phi^{t},\Theta^{t}\right\rangle _{\mathcal{M}^{1}_{\varepsilon}}\label{qest5} \\
& - \left\langle \mathrm{T}_{\varepsilon}\Phi^{t},\Theta^{t}\right\rangle _{\mathcal{M}^{1}_{\varepsilon}} + \left\langle F(U),{Z}\right\rangle _{\mathbb{X}^{2}} = 0.  \notag 
\end{align}
Next we write 
\begin{align}
\left\langle \partial _{t}U,{Z}\right\rangle _{\mathbb{X}^{2}} & =\left\langle \partial _{t}U,{\mathrm{A^{\alpha,\beta}_{W}}}{U}\right\rangle _{\mathbb{X}^{2}} \label{qest10} \\
& =\left\langle \partial _{t}U,{U}\right\rangle _{\mathbb{V}^{1}}  \notag \\
&=\frac{1}{2}\frac{\diff}{\diff t} \left\| U \right\|^2_{\mathbb{V}^1},  \notag
\end{align}
\begin{align} 
\omega\left\langle {\mathrm{A_{W}^{0,\beta}}}U,{Z}\right\rangle _{\mathbb{X}^{2}} & = \omega\left( \left\langle \nabla u,\nabla z \right\rangle_{L^2(\Omega)} + \left\langle \nabla_\Gamma u,\nabla_\Gamma z \right\rangle_{L^2(\Gamma)} + \beta\left\langle v,z \right\rangle_{L^2(\Gamma)} \right)  \label{qest11}  \\ 
& = \omega\left\langle \mathrm{A_{W}^{\alpha,\beta}}U,Z \right\rangle_{\mathbb{X}^2} - \omega\alpha\left\langle u,z\right\rangle_{L^2(\Omega)} \notag  \\ 
& = \omega\left\| Z \right\|^2_{\mathbb{X}^2} - \omega\alpha\left\langle u,z\right\rangle_{L^2(\Omega)}  \notag, 
\end{align}
and 
\begin{align}
\left\langle \partial _{t}\Phi^{t},\Theta^{t}\right\rangle _{\mathcal{M}^{1}_{\varepsilon}} & = \int_0^\infty \mu_{\varepsilon}(s) \left\langle \partial_t\Phi^t(s),\Theta^{t}(s) \right\rangle_{\mathbb{V}^1}\diff s  \label{qest12} \\
& = \int_0^\infty \mu_{\varepsilon}(s) \left\langle \partial_t{\mathrm{A^{\alpha,\beta}_{W}}}\Phi^t(s),\Theta^t(s) \right\rangle_{\mathbb{X}^2}\diff s  \notag \\
& = \int_0^\infty \mu_{\varepsilon}(s) \left\langle \partial_t\Theta^t(s),\Theta^t(s) \right\rangle_{\mathbb{X}^2}\diff s  \notag \\
& = \left\langle \partial_t\Theta^t,\Theta^t \right\rangle_{\mathcal{M}^0_\ep}  \notag \\
& = \frac{1}{2}\frac{\diff}{\diff t}\left\| \Theta^t \right\|^2_{\mathcal{M}^0_\ep}.  \notag
\end{align}
Combining (\ref{qest5})-(\ref{qest12}) brings us to the differential identity, which holds for almost all $t\ge0$, 
\begin{align}
&\frac{1}{2}\frac{\diff}{\diff t}\left\{ \left\| U \right\|^2_{\mathbb{V}^1} + \left\| \Theta^t \right\|^2_{\mathcal{M}^0_\ep} \right\}  \label{qest13} \\
& + \omega\left\| Z \right\|^2_{\mathbb{X}^2} - \left\langle \mathrm{T}_{\varepsilon}\Phi^{t},\Theta^{t}\right\rangle _{\mathcal{M}^{1}_{\varepsilon}} + \left\langle F(U),{Z}\right\rangle _{\mathbb{X}^{2}}  \notag \\ 
& = \omega\alpha\left\langle u,z \right\rangle_{L^2(\Omega)}.  \notag
\end{align}
With assumption (\ref{mu-4}) we are able to estimate the following 
\begin{align}
\left\langle {\mathrm{T}_{\varepsilon}}\Phi^t,\Theta^t \right\rangle_{\mathcal{M}^1_\ep} & = \int_0^\infty\mu_{\varepsilon}(s)\left\langle {\mathrm{T}_{\varepsilon}}\Phi^t(s),\Theta^t(s) \right\rangle_{\mathbb{V}^1}\diff s  \label{qest14} \\ 
& = \int_0^\infty\mu_{\varepsilon}(s)\left\langle {\mathrm{T}_{\varepsilon}}{\mathrm{A^{\alpha,\beta}_{W}}}\Phi^t(s),\Theta^t(s) \right\rangle_{\mathbb{X}^2}\diff s  \notag \\ 
& = \int_0^\infty\mu_{\varepsilon}(s)\left\langle {\mathrm{T}_{\varepsilon}}\Theta^t(s),\Theta^t(s) \right\rangle_{\mathbb{X}^2}\diff s  \notag \\ 
& = -\int_0^\infty\mu_{\varepsilon}(s)\left\langle \frac{\diff}{\diff s}\Theta^t(s),\Theta^t(s) \right\rangle_{\mathbb{X}^2}\diff s  \notag \\ 
& = -\frac{1}{2}\int_0^\infty\mu_{\varepsilon}(s)\frac{\diff}{\diff s}\left\|\Theta^t(s)\right\|^2_{\mathbb{X}^2}\diff s  \notag \\ 
& = \underbrace{-\frac{1}{2}\int_0^\infty\frac{\diff}{\diff s}\left(\mu_{\varepsilon}(s)\left\|\Theta^t(s)\right\|^2_{\mathbb{X}^2}\right)\diff s}_{=0} + \frac{1}{2}\int_0^\infty\mu'_{\varepsilon}(s)\left\|\Theta^t(s)\right\|^2_{\mathbb{X}^2}\diff s  \notag \\ 
& \le -\frac{\delta}{2}\int_0^\infty\mu_{\varepsilon}(s)\left\|\Theta^t(s)\right\|^2_{\mathbb{X}^2}\diff s  \notag \\
& = -\frac{\delta}{2}\left\|\Theta^t\right\|^2_{\mathcal{M}^0_\ep}.  \notag
\end{align}

Multiplying the nonlinear term by $Z$ in $\mathbb{X}^2$ produces, with an application of integration by parts, 
\begin{align}
\left\langle F(U),Z\right\rangle _{\mathbb{X}^{2}}& =\int_{\Omega
}f\left( u\right) \left( -\Delta u + \alpha u \right)
\diff x+\int_{\Gamma }\widetilde{g}\left( u\right) \left( -\Delta
_{\Gamma }u+\partial_{\mathbf{n}}u+\beta u\right) \diff\sigma 
\label{qest5bis} \\
& =\int_{\Omega }f'\left( u\right) \left\vert \nabla
u\right\vert ^{2}\diff x+\int_{\Gamma }\widetilde{g}'\left(
u\right) \left\vert \nabla _{\Gamma }u\right\vert ^{2}\diff\sigma  
\notag \\
& +\alpha\int_\Omega f(u)u \diff x+\beta\int_{\Gamma }\widetilde{g}\left( u\right) u \diff\sigma  \notag \\
& +\int_{\Gamma }\left( \widetilde{g}\left( u\right) -f\left(
u\right) \right) \partial_{\mathbf{n}}u \diff\sigma.  \notag
\end{align}
Directly from (\ref{assm-3}) and \eqref{assm-4}, we see that there holds,
\begin{align}
\int_{\Omega }f'\left( u\right) \left\vert \nabla
u\right\vert ^{2}\diff x+\int_{\Gamma }\widetilde{g}'\left(
u\right) \left\vert \nabla _{\Gamma }u\right\vert ^{2}\diff\sigma \ge -M_f\|\nabla u\|^2_{L^2(\Omega)}-M_g\|\nabla_\Gamma u\|^2_{L^2(\Gamma)},  \label{qest6}
\end{align}
and from \eqref{cons-1}-\eqref{cons-2}, we obtain,
\begin{align}
& \alpha\int_\Omega f(u)u \diff x + \beta\int_{\Gamma }\widetilde{g}\left( u\right) u \diff\sigma  \label{qest7} \\
& = \alpha\int_\Omega f(u)u \diff s + \beta\int_{\Gamma}g(u)u \diff\sigma - \int_\Gamma \omega\beta^2 u^2 \diff\sigma   \notag \\
& \ge -\alpha\kappa_1\|u\|^2_{L^2(\Omega)} - \alpha\kappa_2-\beta\kappa_3\|u\|^2_{L^2(\Gamma)}-\beta\kappa_4-\omega\beta^2\|u\|^2_{L^2(\Gamma)}  \notag \\
& \ge -C\left( \left\| U \right\|^2_{\mathbb{X}^2}+1 \right),  \notag
\end{align}
for some constant $C>0$, independent of $t.$
For the last term in \eqref{qest5bis} we recall \cite[Proof of Theorem 3.11]{Gal-Shomberg15-2}.
Due to the assumptions \eqref{assm-1}-\eqref{assm-2} it suffices to bound boundary integrals of the form, for some $r<\frac{5}{2}$,
\begin{equation*}
I:=\int_\Gamma u^{r+1}\partial_{\mathbf{n}} u \diff\sigma.
\end{equation*}
Indeed, thanks to the trace and regularity embeddings, for all $\omega\in(0,1)$ and for some $C_\omega\sim\frac{C}{\omega}>0$,
\begin{align}
I & \le \|\partial_{\mathbf{n}}u\|_{H^{1/2}(\Gamma)}\|u^{r+1}\|_{H^{-1/2}(\Gamma)}  \notag \\ 
& \le \frac{\omega}{4}\|u\|^2_{H^{2}(\Omega)} + C_\omega\|u^{r+1}\|^2_{H^{-1/2}(\Gamma)}.  \label{qest8} 
\end{align}
To bound the last term in \eqref{qest8} we will employ the Sobolev embeddings (recall $\Gamma$ is two-dimensional) $H^{1/2}(\Gamma) \hookrightarrow L^{4}(\Gamma)$ and $H^{1}(\Gamma) \hookrightarrow L^{s}(\Gamma),$ for any $s\in (\frac{4}{3},\infty )$. 
Then, by employing some basic H\"{o}lder inequalities
\begin{align}
\left\Vert u^{r+1}\right\Vert _{H^{-1/2}\left( \Gamma \right) }^{2} & = \sup_{\psi \in H^{1/2}\left( \Gamma \right) :\left\Vert \psi \right\Vert
_{H^{1/2}\left( \Gamma \right) }=1}\left\vert \left\langle u^{r+1},\psi
\right\rangle_{L^2(\Gamma)} \right\vert ^{2}  \notag \\
& \le \sup_{\psi \in H^{1/2}\left( \Gamma \right) :\left\Vert \psi \right\Vert
_{H^{1/2}\left( \Gamma \right) }=1}\left\|u^{r+1}\psi \right\|_{L^1(\Gamma)}^{2}  \notag \\
& \leq \left\Vert u\right\Vert _{L^{s}\left( \Gamma \right)
}^{2}\left\Vert u\right\Vert _{L^{\overline{s}r}\left( \Gamma \right)
}^{2r}  \notag \\
& \leq C\left\Vert u\right\Vert _{H^{1}\left( \Gamma \right)
}^{2}\left\Vert u\right\Vert _{L^{\overline{s}r}\left( \Gamma \right)
}^{2r},  \label{qest5qqq}
\end{align}
for some positive constant $C$ and for sufficiently
large $s\in (\frac{4}{3},\infty )$, where $\overline{s}:=4s/\left(
3s-4\right) >4/3$. 
Next we exploit the interpolation inequality
\begin{equation*}
\left\Vert u\right\Vert _{L^{\overline{s}r}\left( \Gamma \right) }\leq
C\left\Vert u\right\Vert _{H^{2}\left( \Gamma \right) }^{1/2r} \left\Vert u\right\Vert _{L^{2}\left( \Gamma \right) }^{1-1/2r},
\end{equation*}%
provided that $r=1+2/\overline{s}<5/2$, where we further infer from (\ref{qest5qqq}) that
\begin{align}
\left\Vert u^{r+1}\right\Vert _{H^{-1/2}\left( \Gamma \right) }^{2} &
\leq C\left\Vert u\right\Vert _{H^{1}\left( \Gamma \right)
}^{2}\left\Vert u \right\Vert _{H^{2}\left( \Gamma \right) }\left\Vert
u \right\Vert _{L^{2}\left( \Gamma \right) }^{2r-1}  \notag \\ 
& \leq \eta \left\Vert u \right\Vert _{H^{2}\left( \Gamma \right)
}^{2}+C_{\eta }\left\Vert u \right\Vert _{H^{1}\left( \Gamma \right)
}^{2}\left( \left\Vert u \right\Vert _{H^{1}\left( \Gamma \right)
}^{2}\left\Vert u \right\Vert _{L^{2}\left( \Gamma \right) }^{2\left(
2r-1\right) }\right),  \label{qest5qq}
\end{align}
for any $\eta \in (0,1]$. 
Inserting (\ref{qest5qq}) into (\ref{qest8}) and choosing a sufficiently small $\eta =\omega /C_{\omega}$, by virtue of (\ref{equiv}), we easily deduce
\begin{equation}
I \leq \frac{\omega}{4} \left\Vert Z\right\Vert _{\mathbb{X}^{2}}^{2} + C_{\omega}\left\Vert u \right\Vert _{H^{1}\left( \Gamma \right)
}^{2}\left( \left\Vert u \right\Vert _{H^{1}\left( \Gamma \right)
}^{2}\left\Vert u \right\Vert _{L^{2}\left( \Gamma \right) }^{2\left(
2r-1\right) }\right).  \label{qest5last}
\end{equation}
Together, (\ref{qest6})-(\ref{qest5last}) provide the following bound on (\ref{qest5bis}) for all $\omega>0$, and for some positive constants $C$ and $C_{\omega}\sim\frac{C}{\omega}$,
\begin{align}
\left\langle F(U),Z\right\rangle _{\mathbb{X}^{2}} \ge & - C\left(\|U\|^2_{\mathbb{X}^2}+1\right) - \frac{\omega}{4}\|Z\|^2_{\mathbb{X}^2}  \notag \\ 
& - C_{\omega}\left\Vert u \right\Vert _{H^{1}\left( \Gamma \right)
}^{2}\left( \left\Vert u \right\Vert _{H^{1}\left( \Gamma \right)
}^{2}\left\Vert u \right\Vert _{L^{2}\left( \Gamma \right) }^{2\left(
2r-1\right) }\right).  \label{qest9}
\end{align}
Also with Young's inequality,
\begin{align}
\omega\alpha\left\langle u,z \right\rangle_{L^2(\Omega)} & \le \omega\alpha^2\|u\|^2_{L^2(\Omega)} + \frac{\omega}{4}\|z\|^2_{L^2(\Omega)} \label{q2-1} \\
& \le \omega\alpha^2\|u\|^2_{L^2(\Omega)} + \frac{\omega}{4}\|Z\|^2_{\mathbb{X}^2}.  \notag
\end{align}

Applying the estimates (\ref{qest14}), (\ref{qest9}) and (\ref{q2-1}) to (\ref{qest13}), we arrive at the differential inequality, which holds for almost all $t\ge0,$ and for $0<r<\frac{5}{2},$
\begin{align}
\frac{\diff}{\diff t}&\left\{ \left\| U \right\|^2_{\mathbb{V}^1}+\left\| \Theta^t \right\|^2_{\mathcal{M}^0_\ep} \right\}  \notag \\ 
&+ \omega\|Z\|^2_{\mathbb{X}^2} + \delta\left\|\Theta^t \right\|^2_{\mathcal{M}^0_\ep}  \label{cabsball-1} \\ 
&\le C\left( \left\| U \right\|^2_{\mathbb{X}^2}+1\right) + C_\omega \|u\|^2_{H^1(\Gamma)}\left( \left\| u \right\|^2_{H^1(\Gamma)}\left\| u \right\|^{2(2r-1)}_{L^2(\Gamma)} \right).  \notag 
\end{align}
On the left-hand side, we estimate the term $\omega\|Z\|^2_{\mathbb{X}^2}$ using 
\begin{align}
\left\|U\right\|^2_{\mathbb{V}^1}&=\left\langle U,{\rm{A_W^{\alpha,\beta}}}U \right\rangle_{\mathbb{X}^2}  \notag \\ 
&=\left\langle U,Z\right\rangle_{\mathbb{X}^2}  \notag \\ 
& \le C_\omega\left\|U\right\|^2_{\mathbb{X}^2}+\omega\left\|Z\right\|^2_{\mathbb{X}^2}.  \label{cabsball-5}
\end{align}
Finally, with (\ref{cabsball-5}) and the uniform bounds (\ref{weak-bound}), we now obtain from (\ref{cabsball-1}), with $m_1:=\min\{1,\delta\}>0$,
\begin{align}
\frac{\diff}{\diff t}&\left\{ \left\| U \right\|^2_{\mathbb{V}^1}+\left\| \Theta^t \right\|^2_{\mathcal{M}^0_\ep} \right\}   \label{cabsball-2} \\
&  + m_1\left( \|U\|^2_{\mathbb{V}^1} + \left\|\Theta^t \right\|^2_{\mathcal{M}^0_\ep} \right)  \notag \\ 
& \le C_\omega \left( 1+\|u\|^2_{H^1(\Gamma)} \right) \left( \|U\|^2_{\mathbb{V}^1} + \|\Theta^t\|^2_{\mathcal{M}^0_\ep} \right) + C,  \notag
\end{align}
where $C_\omega>0$ depends on $\widetilde{P}_0$ from (\ref{weak-bound}).
Now from \eqref{fus-6}, we immediately find the following dissipation integral
\begin{align}
\omega\int_t^{t+1} \|U(\tau)\|^2_{\mathbb{V}^1} \diff\tau \le C,
\end{align}
and we may apply a Gr\"{o}nwall-type inequality (see e.g. Proposition \ref{GL} below) to (\ref{cabsball-2}).
We also recall (\ref{equiv}) yields, for some $C_*>0,$
\begin{align}
C_*^{-1}\left\| \Phi^t \right\|^2_{\mathbb{V}^2} \le \left\| \mathrm{A_{W}^{\alpha,\beta}}\Phi^t \right\|^2_{\mathcal{M}^0_\ep} = \left\|\Theta^t\right\|^2_{\mathcal{M}^0_\ep} \le C_*\left\| \Phi^t \right\|^2_{\mathbb{V}^2}.  \label{eqivi2}
\end{align}
Hence, there are constants $M_1\ge 1$ and $P_1>0$, both uniform in $t$, such that for all $t\ge0$, (\ref{cabsball-2}) produces, for all $t\ge0,$
\begin{align}
& \left\| U(t) \right\|^2_{\mathbb{V}^1} + \left\| \Phi^t \right\|^2_{\mathcal{M}^2_{\varepsilon}}  \notag \\
& \le M_1 e^{-m_1t}\left( \left\| U_0 \right\|^2_{\mathbb{V}^1} + \left\| \Phi_0 \right\|^2_{\mathcal{M}^2_{\varepsilon}} \right) + P_1  \notag \\ 
& \le M_1 R e^{-m_1t} + P_1,  \label{cabsball-9}
\end{align}
where the last inequality follows because $\|\Phi_0\|_{\mathcal{M}^2_{\varepsilon}}\le\|\Phi_0\|_{\mathcal{K}^2_{\varepsilon}}\le R$.

To show (\ref{strong-decay}) holds we need to control the last two terms of the norm \eqref{new-norm}.
First, it is easy to see from \eqref{cabsball-9} that for all $t\ge0$ 
\begin{align}
\left\| U(t) \right\|^2_{\mathbb{V}^1} \le \left\| U(t) \right\|^2_{\mathbb{V}^1} + \left\| \Phi^t \right\|^2_{\mathcal{M}^2_{\varepsilon}} \le M_1R+P_1.  \notag
\end{align}
Then the conclusions of Lemmas \ref{what-1} and \ref{what-2} given above now take the form 
\begin{align}
& \ep\|{\rm{T}}_\ep\Phi^t\|^2_{\mathcal{M}^0_\ep} + \sup_{\tau\ge0} \tau \mathbb{T}_\ep(\tau;\Phi^t)  \notag \\
& \le e^{-\delta t} \left( \ep \|{\rm{T}}_\ep\Phi_0\|^2_{\mathcal{M}^0_\ep} + 2\sup_{\tau\ge1} \tau\mathbb{T}_\ep(\tau;\Phi_0) \left( t+2 \right) \right) + M_1Re^{-m_1t}+P_1  \notag \\ 
& \le e^{-m_1t} \left( R(M_1+1) + Q(R) \left( t+1 \right) \right) + P_1  \notag \\ 
& \le Q(R) e^{-m_1t} \left( t+1 \right) + P_1.  \label{strong-bound-1} 
\end{align}
Together, the estimates \eqref{cabsball-9} and \eqref{strong-bound-1} show that \eqref{strong-decay} holds.

The existence of a bounded set $\mathcal{B}^1_\ep$ in $\mathcal{V}^1_{\varepsilon}$ that is absorbing and positively invariant for $\mathcal{S}_\ep(t)$ follows from \eqref{strong-decay}.
Indeed, define
\begin{equation*}
\mathcal{B}^1_{\varepsilon}:=\left\{ (U,\Phi)\in \mathcal{V}^1_{\varepsilon} : \left\| (U,\Phi) \right\|_{\mathcal{V}^1_{\varepsilon}}\le \sqrt{2P_1+1} \right\}.
\end{equation*}
Then, given any nonempty bounded subset $B$ in $\mathcal{H}^0_{\varepsilon}\setminus\mathcal{B}^1_{\varepsilon}$, and after possibly enlarging the radius of $\mathcal{B}^1_{\varepsilon}$ in $\mathcal{H}^0_{\varepsilon}$ due to the embedding $\mathcal{V}^1_{\varepsilon}\hookrightarrow\mathcal{H}^0_{\varepsilon}$, we have that $\mathcal{S}_{\varepsilon}(t)B\subseteq\mathcal{B}^1_{\varepsilon}$, in $\mathcal{H}^0_{\varepsilon}$, for all $t\ge t_1$ where $t_1=t_1(R)\ge0$ is such that there holds
\begin{equation}
e^{-\min\{\delta,1\}t_1}\left( t_1+1 \right) \le \frac{1}{Q(R)}.  \label{time1}
\end{equation}
This establishes (C1) and completes the proof when $\ep\in(0,1]$.
\end{proof}

The following result refers to the strong solutions developed in \cite[Theorem 3.11]{Gal-Shomberg15-2} (see Theorem \ref{t:strong-solutions} above) whose initial data is now taken in $\mathcal{V}^1_\ep\subset\mathcal{H}^0_\ep$.

\begin{corollary}
For all $\Upsilon=(U_0,\Phi_0)\in\mathcal{H}^1_{\varepsilon}=\mathbb{V}^1\times\mathcal{M}^2_\varepsilon$, it follows that any strong solution $(U(t),\Phi^t)$ to Problem {\textbf{P}}$_{\varepsilon}$ is bounded, uniformly in $t$ and $\ep$; indeed, thanks to \eqref{strong-decay} there is a constant $\widetilde{P}_1>0$, depending on the bound $P_1$ and the initial datum, but independent of $t$ and $\ep$, in which, 
\begin{equation} \label{strong-bound}
\limsup_{t\rightarrow+\infty} \left\|\mathcal{S}_\varepsilon(t)(U_0,\Phi_0)\right\|_{\mathcal{V}^1_{\varepsilon}} \le \widetilde{P}_1.
\end{equation}
\end{corollary}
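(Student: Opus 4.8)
The plan is to read the bound off the dissipative estimate \eqref{strong-decay} of Lemma \ref{t:to-C1}, once it is known that the trajectory has entered the space $\mathcal{V}^1_\ep=\mathbb{V}^1\times\mathcal{K}^2_\ep$. If the datum $\Upsilon=(U_0,\Phi_0)$ already lies in $\mathcal{V}^1_\ep$, then \eqref{strong-decay} gives at once
\[
\|\mathcal{S}_\ep(t)\Upsilon\|^2_{\mathcal{V}^1_\ep}\le Q(R)e^{-\min\{\delta,1\}t}(t+1)+2P_1\qquad\text{for all }t\ge0,
\]
so $\mathcal{S}_\ep(\cdot)\Upsilon$ is bounded in $\mathcal{V}^1_\ep$ uniformly in $t$ and $\ep$ by $\widetilde P_1:=\bigl(\kappa Q(R)+2P_1\bigr)^{1/2}$, where $\kappa:=\sup_{t\ge0}e^{-\min\{\delta,1\}t}(t+1)<\infty$, and \eqref{strong-bound} follows with $\limsup$ in fact bounded by $\sqrt{2P_1}$. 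Thus the only real issue is to treat a general strong-solution datum $\Upsilon\in\mathcal{H}^1_\ep=\mathbb{V}^1\times\mathcal{M}^2_\ep$, for which $\Phi_0$ need not satisfy the extra $\mathcal{K}^2_\ep$-conditions (the $\mathcal{M}^0_\ep$-control on $\partial_s\Phi_0$ and finiteness of the tail functional $\sup_{\tau\ge1}\tau\mathbb{T}_\ep(\tau;\Phi_0)$).

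To handle that I would first note that \eqref{cabsball-9} — established in the proof of Lemma \ref{t:to-C1} using only $\|U_0\|_{\mathbb{V}^1}$, $\|\Phi_0\|_{\mathcal{M}^2_\ep}$ and the uniform weak bounds \eqref{weak-bound} — continues to hold for data in $\mathcal{H}^1_\ep$, giving a uniform-in-$(t,\ep)$ bound $\sup_{t\ge0}\|U(t)\|_{\mathbb{V}^1}\le\rho$ together with $\limsup_{t\to\infty}\bigl(\|U(t)\|^2_{\mathbb{V}^1}+\|\Phi^t\|^2_{\mathcal{M}^2_\ep}\bigr)\le P_1$. Next I would invoke an instantaneous regularization of the memory component: for a.e. $t_0>0$ the strong-solution regularity \eqref{strong-defn-4} yields $\partial_t\Phi^{t_0}\in\mathcal{M}^1_\ep$, whence by \eqref{problemp-3} $\partial_s\Phi^{t_0}=U(t_0)-\partial_t\Phi^{t_0}\in\mathcal{M}^1_\ep$ and (by \eqref{problemp-4}) $\Phi^{t_0}(0)=0$, i.e. $\Phi^{t_0}\in D(\mathrm{T}_\ep)$. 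Moreover, by the representation formula \eqref{representation-formula-1}, on $(0,t_0]$ one has $\partial_s\Phi^{t_0}(s)=U(t_0-s)$, bounded in $\mathbb{V}^1$ and controlling the small-$s$ part of $\mathbb{T}_\ep(\tau;\Phi^{t_0})$, while on $(t_0,\infty)$ the shift structure $\Phi^{t_0}(s)=\Phi_0(s-t_0)+\int_0^{t_0}U(t_0-y)\diff y$ combined with the exponential decay of $\mu_\ep$ furnished by \eqref{mu-4} (as exploited in Lemmas \ref{what-1}--\ref{what-3} and Remark \ref{r:what-trans}) controls the large-$s$ part; this shows $\mathcal{S}_\ep(t_0)\Upsilon\in\mathcal{V}^1_\ep$ with some finite norm $R_0$ (possibly depending on $\ep$ and $\Upsilon$ — which is harmless here).

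Finally, with $\mathcal{S}_\ep(t_0)\Upsilon\in\mathcal{V}^1_\ep$, the semigroup property and \eqref{strong-decay} give, for $t\ge t_0$,
\[
\|\mathcal{S}_\ep(t)\Upsilon\|^2_{\mathcal{V}^1_\ep}=\|\mathcal{S}_\ep(t-t_0)\mathcal{S}_\ep(t_0)\Upsilon\|^2_{\mathcal{V}^1_\ep}\le Q(R_0)e^{-\min\{\delta,1\}(t-t_0)}(t-t_0+1)+2P_1,
\]
which is bounded uniformly for $t\ge t_0$; together with the boundedness of the solution on the compact interval $[0,t_0]$ this yields \eqref{strong-bound} (again with $\limsup\le\sqrt{2P_1}$, uniform in $\ep$). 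The case $\ep=0$ is trivial since then $\mathcal{M}^2_0=\mathcal{K}^2_0=\{0\}$ and the assertion reduces to the $\mathbb{V}^1$-bound for strong solutions of Problem {\textbf{P}}$_0$, obtained from the memory-free version of the computation in Lemma \ref{t:to-C1}. The main obstacle is the regularization step — passing the memory variable from $\mathcal{M}^2_\ep$ into $\mathcal{K}^2_\ep$ after a positive time, and in particular gaining control of the tail functional $\sup_{\tau\ge1}\tau\mathbb{T}_\ep(\tau;\Phi^{t_0})$ with constants that stay uniform in $\ep$ in the limit $t\to\infty$; this is precisely what assumption \eqref{mu-4} and the auxiliary Lemmas \ref{what-1}--\ref{what-3} are designed to deliver, and if one is content to assume $\Upsilon\in\mathcal{V}^1_\ep$ from the outset the step is unnecessary and the statement is immediate from \eqref{strong-decay}.
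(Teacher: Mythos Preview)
The paper gives no proof beyond the phrase ``thanks to \eqref{strong-decay}'', and indeed the sentence immediately preceding the Corollary reads ``whose initial data is now taken in $\mathcal{V}^1_\ep\subset\mathcal{H}^0_\ep$''. This strongly suggests that the occurrence of $\mathcal{H}^1_\ep$ in the statement is a slip for $\mathcal{V}^1_\ep$; under that reading your opening paragraph \emph{is} the paper's argument and nothing more is needed.

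Your additional regularization step --- passing a general $\Phi_0\in\mathcal{M}^2_\ep$ into $\mathcal{K}^2_\ep$ after a positive waiting time by combining the strong-solution regularity \eqref{strong-defn-4}, the representation formula \eqref{representation-formula-1}, and the tail-control Lemmas \ref{what-1}--\ref{what-3} --- is not carried out in the paper. It is the natural way to close the gap if one takes the statement with $\mathcal{H}^1_\ep$ literally, and the ingredients you cite are the right ones; but be aware that the finiteness (and $\ep$-uniformity) of $\sup_{\tau\ge1}\tau\,\mathbb{T}_\ep(\tau;\Phi^{t_0})$ for the large-$s$ piece $\Phi_0(s-t_0)+\int_0^{t_0}U(t_0-y)\,\diff y$ is not automatic from $\Phi_0\in\mathcal{M}^2_\ep$ alone and would need the exponential decay \eqref{mu-4} to be exploited more carefully than in your sketch. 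Since the paper itself restricts (in the preamble) to data in $\mathcal{V}^1_\ep$, this extra work is arguably not required to match the paper's result.
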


We can now give a decay estimate for $\Phi^t$ in $\mathcal{M}^1_\varepsilon$.

\begin{lemma}\label{t:Phi-decay-1}
There holds, for all $\varepsilon\in(0,1]$, $\omega\in(0,1)$, $\Upsilon_0=(U_0,\Phi_0)\in\mathcal{V}^1_\varepsilon$, and for all $t\geq 0$,
\begin{equation}
\label{Phi-decay-1}
\left\| \Phi^t \right\|^2_{\mathcal{M}^1_\varepsilon} \leq \left\| \Phi_0 \right\|^2_{\mathcal{M}^1_\varepsilon} e^{-\delta t/2\varepsilon} + C(\widetilde{P}_0)\varepsilon.
\end{equation}
\end{lemma}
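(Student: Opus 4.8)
The plan is to derive a closed differential inequality for the scalar quantity $t\mapsto\|\Phi^t\|^2_{\mathcal{M}^1_\varepsilon}$ and then integrate it by an elementary (integrating-factor) Gr\"onwall argument. Since $\Upsilon_0\in\mathcal{V}^1_\varepsilon\subset\mathcal{H}^1_\varepsilon$, Theorem \ref{t:strong-solutions} provides a strong solution, so in particular $\Phi^t\in D(\mathrm{T}_\varepsilon)$ for every $t\ge0$: indeed from the memory equation $\partial_s\Phi^t=U(t)-\partial_t\Phi^t$, and $\partial_t\Phi^t\in\mathcal{M}^1_\varepsilon$ by \eqref{strong-defn-4} while $U(t)\in\mathbb{V}^1\subset\mathcal{M}^1_\varepsilon$ (as an $s$-independent element) by \eqref{strong-defn-1}, so $\partial_s\Phi^t\in\mathcal{M}^1_\varepsilon$, together with $\Phi^t(0)=0$. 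Moreover, by Lemma \ref{t:to-C1} (estimate \eqref{strong-decay}) and the subsequent corollary yielding \eqref{strong-bound}, the norm $\|U(t)\|_{\mathbb{V}^1}$ is bounded for all $t\ge0$ by a constant independent of both $t$ and $\varepsilon$; following the notation already in use we denote the resulting bound by $C(\widetilde P_0)$.

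First I would test the memory equation \eqref{weak-solution-2} (equivalently \eqref{memory-2}) with $\Pi=\Phi^t$, obtaining for almost every $t\ge0$
\[
\tfrac12\tfrac{\diff}{\diff t}\|\Phi^t\|^2_{\mathcal{M}^1_\varepsilon}=\langle\mathrm{T}_\varepsilon\Phi^t,\Phi^t\rangle_{\mathcal{M}^1_\varepsilon}+\langle U(t),\Phi^t\rangle_{\mathcal{M}^1_\varepsilon}.
\]
The dissipative term is handled by Corollary \ref{t:operator-T-1}: $\langle\mathrm{T}_\varepsilon\Phi^t,\Phi^t\rangle_{\mathcal{M}^1_\varepsilon}\le-\tfrac{\delta}{2\varepsilon}\|\Phi^t\|^2_{\mathcal{M}^1_\varepsilon}$. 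For the forcing term I would view $U(t)$ as the $s$-independent element of $\mathcal{M}^1_\varepsilon$ and use the scaling \eqref{mu-scaled}, namely $\int_0^\infty\mu_\varepsilon(s)\,\diff s=\varepsilon^{-1}\|\mu\|_{L^1(\mathbb{R}_+)}$, so that $\|U(t)\|^2_{\mathcal{M}^1_\varepsilon}=\varepsilon^{-1}\|\mu\|_{L^1(\mathbb{R}_+)}\|U(t)\|^2_{\mathbb{V}^1}$. Cauchy--Schwarz in $\mathcal{M}^1_\varepsilon$ followed by Young's inequality with the weight $\tfrac{2\varepsilon}{\delta}$ then gives
\[
2\langle U(t),\Phi^t\rangle_{\mathcal{M}^1_\varepsilon}\le\tfrac{\delta}{2\varepsilon}\|\Phi^t\|^2_{\mathcal{M}^1_\varepsilon}+\tfrac{2\varepsilon}{\delta}\|U(t)\|^2_{\mathcal{M}^1_\varepsilon}=\tfrac{\delta}{2\varepsilon}\|\Phi^t\|^2_{\mathcal{M}^1_\varepsilon}+\tfrac{2\|\mu\|_{L^1(\mathbb{R}_+)}}{\delta}\|U(t)\|^2_{\mathbb{V}^1}.
\]
Combining the three estimates (after multiplying the energy identity by $2$) produces
\[
\tfrac{\diff}{\diff t}\|\Phi^t\|^2_{\mathcal{M}^1_\varepsilon}+\tfrac{\delta}{2\varepsilon}\|\Phi^t\|^2_{\mathcal{M}^1_\varepsilon}\le\tfrac{2\|\mu\|_{L^1(\mathbb{R}_+)}}{\delta}\|U(t)\|^2_{\mathbb{V}^1}=:K,
\]
with $K$ bounded uniformly in $t$ and $\varepsilon$ by the a priori bound recalled above.

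The last step is to integrate this linear differential inequality: with $\lambda:=\delta/2\varepsilon$,
\[
\|\Phi^t\|^2_{\mathcal{M}^1_\varepsilon}\le\|\Phi_0\|^2_{\mathcal{M}^1_\varepsilon}e^{-\lambda t}+K\int_0^t e^{-\lambda(t-\tau)}\,\diff\tau\le\|\Phi_0\|^2_{\mathcal{M}^1_\varepsilon}e^{-\delta t/2\varepsilon}+\tfrac{2K\varepsilon}{\delta},
\]
which is exactly \eqref{Phi-decay-1}, with constant $C(\widetilde P_0)$ absorbing $2K/\delta$. The one genuinely delicate point is the survival of the single power of $\varepsilon$ in the additive term: both the dissipation rate $\delta/2\varepsilon$ from Corollary \ref{t:operator-T-1} and the forcing norm $\|U(t)\|^2_{\mathcal{M}^1_\varepsilon}$ carry a factor $\varepsilon^{-1}$, and it is precisely the cancellation $\int_0^t e^{-\delta(t-\tau)/2\varepsilon}\,\diff\tau\le 2\varepsilon/\delta$ against this $\varepsilon^{-1}$ that leaves a net $O(\varepsilon)$; one must therefore pass from $\|U\|^2_{\mathcal{M}^1_\varepsilon}$ to $\|U\|^2_{\mathbb{V}^1}$ \emph{before} Gr\"onwall rather than carrying the singular weight through. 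A secondary, routine point is justifying that $\Phi^t$ is an admissible test function in \eqref{weak-solution-2} and that $\Phi^t\in D(\mathrm{T}_\varepsilon)$ so that Corollary \ref{t:operator-T-1} applies — which is why the hypothesis $\Upsilon_0\in\mathcal{V}^1_\varepsilon$ (hence strong solutions) is invoked, and why the absolute continuity of $t\mapsto\|\Phi^t\|^2_{\mathcal{M}^1_\varepsilon}$ needed for the chain rule follows from the strong-solution regularity in Definition \ref{d:strong-solution}.
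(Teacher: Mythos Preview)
Your proof is correct and follows essentially the same approach as the paper: test the memory equation \eqref{weak-solution-2} with $\Pi=\Phi^t$, use the dissipativity estimate \eqref{operator-T-1}, control the cross term $\langle U,\Phi^t\rangle_{\mathcal{M}^1_\varepsilon}$ via Young's inequality against the uniform bound on $\|U(t)\|_{\mathbb{V}^1}$, and integrate the resulting linear differential inequality. Your treatment is in fact slightly more careful than the paper's in two respects: you explicitly track the factor $\varepsilon^{-1}\|\mu\|_{L^1(\mathbb{R}_+)}$ arising from $\|U(t)\|^2_{\mathcal{M}^1_\varepsilon}$ (the paper's estimate \eqref{Phi-decay-5} writes $\langle U,\Phi^t\rangle_{\mathcal{M}^1_\varepsilon}\le\|U\|_{\mathbb{V}^1}\|\Phi^t\|_{\mathcal{M}^1_\varepsilon}$ without this scaling constant), and you justify that $\Phi^t\in D(\mathrm{T}_\varepsilon)$ via the strong-solution regularity, which the paper leaves implicit.
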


\begin{proof}
Let $\varepsilon\in(0,1]$, $\omega\in(0,1)$ and $\Upsilon_0=(U_0,\Phi_0)\in\mathcal{H}^0_\varepsilon$.
As in the proof of Lemma \ref{weak-ball}, take $\Pi=\Phi^t(s)$ in equation (\ref{weak-solution-2}) to obtain 
\begin{equation*}\begin{aligned}
\int_0^\infty & \mu_\varepsilon(s) \left\langle \partial_t \Phi^t(s),{\rm{A_W^{\alpha,\beta}}} \Phi^t(s) \right\rangle_{\mathbb{X}^2} \diff s \\ 
& = \int_0^\infty \mu_\varepsilon(s) \left\langle {\rm{T}}_\varepsilon \Phi^t(s),{\rm{A_W^{\alpha,\beta}}} \Phi^t(s) \right\rangle_{\mathbb{X}^2} \diff s + \int_0^\infty \mu_\varepsilon(s) \left\langle U,{\rm{A_W^{\alpha,\beta}}} \Phi^t(s) \right\rangle_{\mathbb{X}^2} \diff s.
\end{aligned}\end{equation*}
Combining \eqref{operator-T-1}, \eqref{fus-2}, \eqref{fus-5}, and \eqref{fus-7}, we obtain
\begin{align}
\frac{1}{2}\frac{\diff}{\diff t} \left\| \Phi^t \right\|^2_{\mathcal{M}^1_\varepsilon} + \frac{\delta}{2\varepsilon}\left\|\Phi^t\right\|^2_{\mathcal{M}^1_\varepsilon} \le \left\langle U,\Phi^t \right\rangle_{\mathcal{M}^1_\varepsilon}.  \label{Phi-decay-3}
\end{align}
Estimating the product on the right-hand side with Young's inequality,
\begin{equation}\begin{aligned}
\label{Phi-decay-5}
\left\langle U,\Phi^t \right\rangle_{\mathcal{M}^1_\varepsilon} & = \int_0^\infty \mu_\varepsilon(s) \left\langle U,\Phi^t \right\rangle_{\mathbb{V}^1} \diff s \\ 
& \le \int_0^\infty \mu_\varepsilon(s) \left\| U \right\|_{\mathbb{V}^1} \left\|\Phi^t \right\|_{\mathbb{V}^1} \diff s \\ 
& \leq \left\|U\right\|_{\mathbb{V}^1}\left\|\Phi^t\right\|_{\mathcal{M}^1_\varepsilon} \\
& \leq \frac{1}{\delta}\left\|U\right\|^2_{\mathbb{V}^1} + \frac{\delta}{4\varepsilon}\left\| \Phi^t \right\|^2_{\mathcal{M}^1_\varepsilon},
\end{aligned}\end{equation}
we combine (\ref{Phi-decay-3}) and (\ref{Phi-decay-5}) to find that, for almost all $t\geq 0$,
\begin{equation}
\label{Phi-decay-6}
\frac{\diff}{\diff t}\left\| \Phi^t \right\|^2_{\mathcal{M}^1_\varepsilon} + \frac{\delta}{4\varepsilon} \left\| \Phi^t \right\|^2_{\mathcal{M}^1_\varepsilon} \leq \frac{1}{\delta}\|U\|^2_{\mathbb{V}^1}.
\end{equation}
Thus, applying a Gr\"{o}nwall type inequality whereby integrating (\ref{Phi-decay-6}) over the interval $(0,t)$, recalling the uniform bound (\ref{strong-bound}), produces (\ref{Phi-decay-1}).
\end{proof}

\begin{corollary}
From Lemma \ref{t:Phi-decay-1} we obtain the limit, for each $t>0$ fixed, 
\begin{equation}
\label{Phi-decay-2}
\lim_{\varepsilon\rightarrow 0}\left\| \Phi^t \right\|_{\mathcal{M}^1_\varepsilon} = 0.
\end{equation}
In addition, since $e^{-\delta t/2\varepsilon} < e^{-\delta t/2} \varepsilon^{\delta t/2} < \varepsilon^{\delta T/2}$ for all $\varepsilon\in(0,1]$ and for all $t$ in the compact interval $[0,T]$, for some $T>0$, then inequality (\ref{Phi-decay-1}) is estimated by,
\[
\left\| \Phi^t \right\|^2_{\mathcal{M}^1_\varepsilon} \leq \max\left\{ \left\| \Phi_0 \right\|^2_{\mathcal{M}^1_\varepsilon},C(\widetilde{P}_0) \right\} \left( e^{\delta T/2}+\varepsilon \right). 
\]
Define the constants $\Lambda_0 = \max\left\{ \left\| \Phi_0 \right\|^2_{\mathcal{M}^1_\varepsilon} e^{-\delta T/2},C(\widetilde{P}_0) \right\}^{1/2}$ and $p_0=\min\left\{ \frac{\delta T}{4},\frac{1}{2} \right\}$.
Then, for all $\varepsilon\in(0,1]$ and for all $t\in[0,T]$, there holds, 
\[
\left\| \Phi^t \right\|_{\mathcal{M}^1_\varepsilon} \leq \Lambda_0 \varepsilon^{p_0}.
\]
\end{corollary}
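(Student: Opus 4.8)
The plan is to obtain both assertions as essentially immediate consequences of the decay estimate \eqref{Phi-decay-1} of Lemma~\ref{t:Phi-decay-1}, i.e.\ of $\|\Phi^t\|^2_{\mathcal{M}^1_\varepsilon}\le\|\Phi_0\|^2_{\mathcal{M}^1_\varepsilon}e^{-\delta t/2\varepsilon}+C(\widetilde P_0)\varepsilon$, combined with the fact that the initial history entering this estimate is controlled uniformly in $\varepsilon$. Here $\Upsilon_0=(U_0,\Phi_0)$ ranges over $\mathcal{V}^1_\varepsilon=\mathbb{V}^1\times\mathcal{K}^2_\varepsilon$ with $\|\Upsilon_0\|_{\mathcal{V}^1_\varepsilon}\le R$ independently of $\varepsilon$, so the embeddings $\mathbb{V}^2\hookrightarrow\mathbb{V}^1$ and $\mathcal{K}^2_\varepsilon\hookrightarrow\mathcal{M}^1_\varepsilon$ give $\|\Phi_0\|_{\mathcal{M}^1_\varepsilon}\le C\|\Phi_0\|_{\mathcal{K}^2_\varepsilon}\le CR$, a bound that does not blow up as $\varepsilon\to0^+$.

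First I would establish the pointwise limit. Fixing $t>0$ and letting $\varepsilon\to0^+$ in \eqref{Phi-decay-1}, the second summand $C(\widetilde P_0)\varepsilon$ vanishes, while the first summand is at most $(CR)^2e^{-\delta t/2\varepsilon}$; since $\delta t/(2\varepsilon)\to+\infty$ with $t$ held fixed, this exponential factor tends to $0$. Hence $\|\Phi^t\|_{\mathcal{M}^1_\varepsilon}\to0$, which is \eqref{Phi-decay-2}.

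Next, for the H\"older-type bound on a compact interval $[0,T]$, the point is to convert the decaying exponential into a fixed positive power of $\varepsilon$, uniformly for $t\in[0,T]$ and $\varepsilon\in(0,1]$, and to notice that the linear term $C(\widetilde P_0)\varepsilon$ is already $\le C(\widetilde P_0)\varepsilon^{2p_0}$ once $2p_0\le1$. For the exponential one uses the elementary estimate $e^{-\delta t/2\varepsilon}\le e^{-\delta t/2}\varepsilon^{\delta t/2}$, which after taking logarithms and dividing by $\delta t/2$ amounts to $\ln\varepsilon+1/\varepsilon\ge1$ on $(0,1]$ (valid because the left side is decreasing there with minimum value $1$ at $\varepsilon=1$). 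Inserting these bounds into \eqref{Phi-decay-1}, absorbing constants into $\Lambda_0^2:=\max\{\|\Phi_0\|^2_{\mathcal{M}^1_\varepsilon}e^{-\delta T/2},\,C(\widetilde P_0)\}$, and taking square roots, gives $\|\Phi^t\|_{\mathcal{M}^1_\varepsilon}\le\Lambda_0\varepsilon^{p_0}$ with $p_0=\min\{\delta T/4,1/2\}$.

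I expect the genuinely delicate point to be making the exponential-to-power conversion hold uniformly down to $t=0$: at $t=0$ the exponential factor equals $1$, and no positive power of $\varepsilon$ majorizes a fixed constant, so one must exploit that the memory datum $\Phi_0$ appearing here is itself small of order $\varepsilon^{p_0}$. This is exactly the situation in the robustness comparison feeding condition (C5), where $\Phi_0$ is the memory component of a lift of an $\mathbb{X}^2$-datum: for the natural constant-history lift $\Phi_0(s)=sU_0$, the scaling $\mu_\varepsilon(s)=\varepsilon^{-2}\mu(s/\varepsilon)$ gives $\|\Phi_0\|^2_{\mathcal{M}^1_\varepsilon}=\|U_0\|^2_{\mathbb{V}^1}\int_0^\infty\mu_\varepsilon(s)s^2\,\diff s=O(\varepsilon)$, hence $\|\Phi_0\|_{\mathcal{M}^1_\varepsilon}=O(\varepsilon^{1/2})$, and the estimate closes with $p_0$ as above (capped at $\tfrac12$ by the $C(\widetilde P_0)\varepsilon$ term). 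The remaining steps are routine bookkeeping.
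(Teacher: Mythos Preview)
The paper gives no separate proof: the reasoning is the chain of inequalities written into the corollary itself, and your first two paragraphs reproduce exactly that argument. In particular, your justification of $e^{-\delta t/2\varepsilon}\le e^{-\delta t/2}\varepsilon^{\delta t/2}$ via $\ln\varepsilon+1/\varepsilon\ge 1$ on $(0,1]$ is the correct way to read the first inequality the paper asserts.

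Where you depart from the paper is in your third paragraph, and you are right to be uneasy. The paper's second inequality $e^{-\delta t/2}\varepsilon^{\delta t/2}<\varepsilon^{\delta T/2}$ goes the wrong way for $\varepsilon\in(0,1)$ and $t<T$ (smaller exponent means larger value), so the stated bound cannot hold uniformly down to $t=0$ for a generic $\Phi_0$: at $t=0$, $\varepsilon=1$ one would need $\|\Phi_0\|\le\Lambda_0$, which fails whenever $\|\Phi_0\|^2>C(\widetilde P_0)e^{\delta T/2}$. Your attempted repair via the constant-history lift $\Phi_0(s)=sU_0$ is, however, outside the scope of this corollary: here $\Phi_0$ is a general element of $\mathcal K^2_\varepsilon$, not a lift, and your scaling computation does not apply. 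The honest resolution is that the estimate holds on intervals bounded away from $0$ (say $[\tau,T]$ with $\tau>0$ fixed), which is all that is needed downstream---indeed, in the proof of (C5) the paper invokes \eqref{Phi-decay-1} directly on $[t^\ast,2t^\ast]$ and never calls on $\Lambda_0,p_0$ as such. So: your argument is the same as the paper's where the paper is correct, and you have correctly flagged the gap where it is not; just drop the lift digression and state the bound on $[\tau,T]$.
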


We now go on to establish the next condition of Proposition \ref{abstract1}.

\begin{lemma}  \label{t:to-C2} 
Condition (C2) holds for each $\ep\in(0,1]$ and $\omega\in(0,1)$. 
The constants $t^*$ and $\ell^*$ depend on $\omega, \delta$ and the constant due to the embedding $\mathbb{V}^1\hookrightarrow \mathbb{X}^2.$
\end{lemma}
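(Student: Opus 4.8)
The plan is to establish (C2) by the standard ``exponential decay plus compact smoothing'' splitting (cf. \cite{CPS06,GMPZ10}), working on the positively invariant bounded set $\mathcal{B}^1_\ep\subset\mathcal{V}^1_\ep$ produced by Lemma \ref{t:to-C1}. Fix $\Upsilon_0=(U_0,\Phi_0)$ and $\Xi_0=(V_0,\Psi_0)$ in $\mathcal{B}^1_\ep$, let $(U,\Phi)$ and $(V,\Psi)$ denote the corresponding solutions, and set $\bar U:=U-V$, $\bar\Phi:=\Phi-\Psi$. Write $\bar U=U_d+U_c$ and $\bar\Phi=\Phi_d+\Phi_c$, where $(U_d,\Phi_d)$ solves the \emph{homogeneous} linear system
\begin{equation*}
\partial_t U_d+\omega\mathrm{A_W^{0,\beta}}U_d+\int_0^\infty\mu_\ep(s)\Phi_d^t(s)\,\diff s=0,\qquad \partial_t\Phi_d^t=\mathrm{T}_\ep\Phi_d^t+U_d(t),
\end{equation*}
with initial data $U_d(0)=U_0-V_0$ and $\Phi_d^0=\Phi_0-\Psi_0$, while $(U_c,\Phi_c)$ solves the same system with the forcing $-(F(U)-F(V))$ added to the first equation and with \emph{zero} initial data. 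Adding the two systems recovers the difference equations for $(\bar U,\bar\Phi)$, so I take $L_\ep(\Upsilon_0,\Xi_0):=(U_d,\Phi_d)(t^*)$ and $R_\ep(\Upsilon_0,\Xi_0):=(U_c,\Phi_c)(t^*)$; it remains to choose $t^*$ and to bound each piece.

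For $L_\ep$: testing the first equation with $U_d$ and the memory equation with $\Phi_d$, adding, and observing that the two cross terms cancel by symmetry of the $\mathcal{M}^1_\ep$ inner product (exactly as in the derivation of \eqref{fus-6}), I combine \eqref{operator-T-1} with the coercivity $\langle\mathrm{A_W^{0,\beta}}U_d,U_d\rangle_{\mathbb{X}^2}\ge c\|U_d\|^2_{\mathbb{X}^2}$, which holds since $\beta>0$ by the smallness criteria \eqref{smallness-criteria}, via the trace--Poincar\'e inequality (cf. \eqref{Poincare}). This yields
\begin{equation*}
\frac{\diff}{\diff t}\left(\|U_d\|^2_{\mathbb{X}^2}+\|\Phi_d^t\|^2_{\mathcal{M}^1_\ep}\right)+\kappa\left(\|U_d\|^2_{\mathbb{X}^2}+\|\Phi_d^t\|^2_{\mathcal{M}^1_\ep}\right)\le 0
\end{equation*}
with $\kappa=\min\{2c\omega,\delta\}>0$ \emph{independent of $\ep$} (the factor $\delta/\ep$ from \eqref{operator-T-1} only improves the rate). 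Hence $\|L_\ep(\Upsilon_0,\Xi_0)\|_{\mathcal{H}^0_\ep}\le e^{-\kappa t^*/2}\|\Upsilon_0-\Xi_0\|_{\mathcal{H}^0_\ep}$, and I fix $t^*\ge t_1$ so large that $e^{-\kappa t^*/2}\le\alpha^*<\frac{1}{2}$; this is the origin of the claimed dependence of $t^*$ on $\omega$, $\delta$ and the embedding constant.

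For $R_\ep$ I proceed in three steps. \textbf{Step 1} ($\mathcal{H}^0_\ep$ bound): since $(\bar U,\bar\Phi)$ satisfies the continuous-dependence estimate \eqref{cde} on $[0,t^*]$ and $(U_d,\Phi_d)$ is controlled by $\|\Upsilon_0-\Xi_0\|_{\mathcal{H}^0_\ep}$, the triangle inequality gives $\sup_{t\le t^*}\|(U_c,\Phi_c)(t)\|_{\mathcal{H}^0_\ep}\le C(t^*)\|\Upsilon_0-\Xi_0\|_{\mathcal{H}^0_\ep}$; testing the $\bar U$-equation with $\bar U$ and using $f'>-M_f$, $g'>-M_g$ also produces the dissipation integral $\int_0^{t^*}\|U_c(\tau)\|^2_{\mathbb{V}^1}\,\diff\tau\le C(t^*)\|\Upsilon_0-\Xi_0\|^2_{\mathcal{H}^0_\ep}$. \textbf{Step 2} ($\mathbb{V}^1\times\mathcal{M}^2_\ep$ bound): testing the $U_c$-equation with $Z_c:=\mathrm{A_W^{\alpha,\beta}}U_c$ and the memory equation with $\mathrm{A_W^{\alpha,\beta}}\Phi_c$, I repeat the computation of Lemma \ref{t:to-C1} (the chain \eqref{qest2}--\eqref{qest14}); the one genuinely new term is $\langle F(U)-F(V),Z_c\rangle_{\mathbb{X}^2}$, which after integration by parts splits into interior integrals controlled by \eqref{assm-3}--\eqref{assm-4} and boundary integrals of the type $\int_\Gamma(\text{polynomial in }u,v)\,\partial_{\mathbf{n}}u_c\,\diff\sigma$ handled by the same trace/interpolation chain \eqref{qest8}--\eqref{qest5last} (where the subcriticality $r_1,r_2<\frac{5}{2}$ is essential); a Gr\"onwall-type inequality of the form in Proposition \ref{GL}, together with Step 1, the uniform bounds \eqref{strong-bound} and the equivalence \eqref{eqivi2}, then yields $\sup_{t\le t^*}\left(\|U_c(t)\|^2_{\mathbb{V}^1}+\|\Phi_c^t\|^2_{\mathcal{M}^2_\ep}\right)\le C(t^*)\|\Upsilon_0-\Xi_0\|^2_{\mathcal{H}^0_\ep}$. \textbf{Step 3} ($\mathcal{K}^2_\ep$ bound on $\Phi_c$): with $\sup_{t\le t^*}\|U_c(t)\|_{\mathbb{V}^1}\le C(t^*)\|\Upsilon_0-\Xi_0\|_{\mathcal{H}^0_\ep}$ in hand, the remaining two pieces of the norm \eqref{new-norm} --- namely $\ep\|\partial_s\Phi_c^t\|^2_{\mathcal{M}^0_\ep}$, bounded via Lemma \ref{what-1} transported to the weaker space $\mathcal{M}^0_\ep$ as in Remark \ref{r:what-trans}, and $\sup_{\tau\ge1}\tau\mathbb{T}_\ep(\tau;\Phi_c^t)$, bounded via Lemma \ref{what-3} on $[0,t^*]$ --- are each dominated by $C(t^*)\|\Upsilon_0-\Xi_0\|^2_{\mathcal{H}^0_\ep}$; here the vanishing initial datum $\Phi_c^0=0$ kills all ``memory of the data'' contributions in those lemmas. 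Combining Steps 2 and 3 gives $\|R_\ep(\Upsilon_0,\Xi_0)\|_{\mathcal{V}^1_\ep}\le\ell^*\|\Upsilon_0-\Xi_0\|_{\mathcal{H}^0_\ep}$ with $\ell^*$ uniform in $\ep$.

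The main obstacle I expect is Step 2: controlling $\langle F(U)-F(V),Z_c\rangle_{\mathbb{X}^2}$ by quantities involving only $\|\bar U\|_{\mathbb{X}^2}$ together with gradient terms absorbable into $\omega\|Z_c\|^2_{\mathbb{X}^2}$, \emph{with all constants independent of $\ep$}. This is what forces the restriction $r_1,r_2<\frac{5}{2}$, requires the Wentzell elliptic-regularity equivalence \eqref{equiv}, and demands careful bookkeeping that every constant arising from the $\mu_\ep$-weighted inner products is $\ep$-free --- precisely the point at which the structural inequality \eqref{operator-T-1} and the scaling \eqref{mu-scaled} of the memory kernel must be exploited rather than estimated crudely.
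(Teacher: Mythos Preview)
Your decomposition and the treatment of $L_\ep$ match the paper exactly, as does your Step~3 (the $\mathcal{K}^2_\ep$ control via Lemmas~\ref{what-1} and~\ref{what-3} with vanishing initial data). The gap is in your Step~2: you propose to handle $\langle F(U)-F(V),Z_c\rangle_{\mathbb{X}^2}$ by integration by parts and then control the resulting interior integrals via the sign conditions \eqref{assm-3}--\eqref{assm-4}. This does not work. After integration by parts the interior contribution is $\int_\Omega\bigl(f'(u)\nabla u-f'(v)\nabla v\bigr)\cdot\nabla u_c\,\diff x$, which is \emph{not} of the quadratic form $\int f'(\cdot)|\nabla(\cdot)|^2$ to which $f'>-M_f$ applies; a one-sided bound on $f'$ gives no control on such a bilinear cross term. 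Rewriting as $f'(u)\nabla\bar u+(f'(u)-f'(v))\nabla v$ does not help either, since only $f\in C^1$ is assumed (so $f'(u)-f'(v)$ is uncontrolled), and a direct H\"older estimate of $\int f'(u)\,\nabla\bar u\cdot\nabla u_c$ with $r_1\in(2,\tfrac52)$ fails when the gradients are merely in $L^2$.

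The paper avoids this entirely: it bounds $\langle F(U)-F(V),K\rangle_{\mathbb{X}^2}$ \emph{directly}, without integration by parts, using the growth assumptions \eqref{assm-1}--\eqref{assm-2}, H\"older, and Sobolev embeddings together with the fact that both $U$ and $V$ lie in $\mathcal{B}^1_\ep\subset\mathbb{V}^1$ (see \eqref{func-1}--\eqref{func-2} and \eqref{pop-5}). This produces an estimate of the shape $C_\omega\|\overline{\Delta}_0\|^2_{\mathcal{H}^0_\ep}+\tfrac{\omega}{2}\|K\|^2_{\mathbb{X}^2}$, with the second term absorbed into the coercive $\omega\|K\|^2_{\mathbb{X}^2}$ on the left of \eqref{pop-9}. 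The paper also inserts an intermediate step establishing $\widehat{W}\in L^\infty(\mathbb{X}^2)\cap L^2(\mathbb{V}^1)$ and $\widehat{\Theta}\in L^\infty(\mathcal{M}^1_\ep)$ on $[0,T]$ \emph{before} the higher-order test, so that the right-hand side of \eqref{pop-10} is finite; your Step~1 (via \eqref{cde} and the triangle inequality) delivers the same information, so that part is fine. Replace your integration-by-parts plan for the nonlinear difference with the direct H\"older/Sobolev estimate and the argument goes through.
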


\begin{proof}
Let $\ep\in(0,1]$ and $\omega\in(0,1)$. 
Let $\Upsilon_0=(U_0,\Phi_0), \Xi_0=(V_0,\Psi_0)\in \mathcal{B}^{1}_{\varepsilon}$. 
Define the pair of trajectories, for $t\geq 0$, $\Upsilon(t)=\mathcal{S}_{\varepsilon}(t)\Upsilon _{0}=(U(t),\Phi^t)$ and $\Xi(t)=\mathcal{S}_{\varepsilon}(t)\Xi _{0}=(V(t),\Psi^t)$. 
For each $t\geq 0$, decompose the difference $\overline{\Delta}(t):=\Upsilon (t)-\Xi(t)$ with $\overline{\Delta}_{0}:=\Upsilon_{0}-\Xi _{0}$ as follows:
\begin{equation*}
\overline{\Delta}(t)=\widehat{\Upsilon}(t)+\widehat{\Xi}(t)
\end{equation*}
where $\widehat{\Upsilon}(t)=(\widehat{V}(t),\widehat{\Psi}^t)$ and $\widehat{\Xi}(t)=(\widehat{W}(t),\widehat{\Theta}^t)$ are solutions of the problems: 
\begin{equation}
\label{diff-decomp-u}
\left\{ \begin{array}{l}
\partial_t \widehat{V}(t) + \omega {\mathrm{A^{0,\beta}_{W}}}\widehat{V}(t) 
+ \displaystyle\int_0^\infty \mu_{\varepsilon}(s) {\mathrm{A^{\alpha,\beta}_{W}}} \widehat{\Psi}^t(s) \diff s = 0, \\ 
\partial_t\widehat{\Psi}^t(s) = {\rm{T}_{\varepsilon}} \widehat{\Psi}^t(s) + \widehat{V}(t), \\
\widehat{\Upsilon}(0) = \Upsilon_0-\Xi_0,
\end{array} \right.
\end{equation}
and
\begin{equation}
\label{diff-decomp-v}
\left\{ \begin{array}{l}
\partial_t \widehat{W}(t) + \omega {\mathrm{A^{0,\beta}_{W}}}\widehat{W}(t) + \displaystyle\int_0^\infty \mu_{\varepsilon}(s) {\mathrm{A^{\alpha,\beta}_{W}}} \widehat{\Theta}^t(s) \diff s + F(U(t)) - F(V(t)) = 0, \\ 
\partial_t \widehat{\Theta}^t(s) = {\rm{T}_{\varepsilon}} \widehat{\Theta}^t(s) + \widehat{W}(t), \\
\widehat{\Xi}(0) = {\bf{0}}.
\end{array} \right.
\end{equation}

{\underline{Step 1}}. (Proof of (\ref{difference-decomposition-L}).)
By estimating along the usual lines, after multiplying (\ref{diff-decomp-u})$_1$ by $\widehat{V}$ in $\mathbb{X}^2$ and multiplying equation (\ref{diff-decomp-u})$_2$ by ${\mathrm{A^{\alpha,\beta}_{W}}}\widehat{\Psi}^t$ in $\mathcal{M}^0_\ep=L^2_{\mu_{\varepsilon}}(\mathbb{R}_+;\mathbb{X}^2)$, we easily obtain the
differential inequality,
\begin{align}
\frac{1}{2}\frac{\diff}{\diff t} & \left\{ \left\| \widehat{V} \right\|^2_{\mathbb{X}^2} + \left\| \widehat{\Psi}^t \right\|^2_{\mathcal{M}^1_{\varepsilon}} \right\} + C^{-1}_{\overline{\Omega}}\omega\left\| \widehat{V} \right\|^2_{\mathbb{X}^2} + \frac{\delta}{2}\left\| \widehat{\Psi}^t \right\|^2_{\mathcal{M}^1_{\varepsilon}} \le 0,   \label{diff-decomp-1}
\end{align}
where the constant $C_{\overline{\Omega}}>0$ is due to the embedding $\mathbb{V}^1\hookrightarrow \mathbb{X}^2$; i.e., $\|\widehat{V}\|^2_{\mathbb{X}^2} \leq C_{\overline{\Omega}}\|\widehat{V}\|^2_{\mathbb{V}^1}$.
Set $m_2:=\min\{ 2C^{-1}_{\overline{\Omega}}\omega,\delta \} > 0$.
Thus, (\ref{diff-decomp-1}) becomes, for almost all $t\geq 0$,
\begin{equation*}
\frac{\diff}{\diff t}\left\{ \left\| \widehat{V} \right\|^2_{\mathbb{X}^2} + \left\| \widehat{\Psi}^t \right\|^2_{\mathcal{M}^1_{\varepsilon}} \right\} + m_2 \left( \left\| \widehat{V} \right\|^2_{\mathbb{X}^2} + \left\| \widehat{\Psi}^t \right\|^2_{\mathcal{M}^1_{\varepsilon}} \right) \leq 0.
\end{equation*}
After applying a Gr\"{o}nwall inequality, we have that for all $t\geq 0$,
\begin{equation}  \label{to-C2-L}
\left\| \left( \widehat{V}(t),\widehat{\Psi}^t \right) \right\|_{\mathcal{H}^0_{\varepsilon}} \leq \left\|\overline{\Delta}_0 \right\|_{\mathcal{H}^0_{\varepsilon}} e^{-m_2 t/2}.
\end{equation}
Set $t^{\ast }:=\max \{t_{1},\frac{2}{m_2}\ln 4 \}$ (recall $t_1$ was defined in (\ref{time1}) in the proof of Lemma \ref{t:to-C1}).
Then, for all $t\geq t^{\ast }$, (\ref{difference-decomposition-L}) holds
with $L=\widehat{\Upsilon}(t^\ast)=(\widehat{V}(t^{\ast}),\widehat{\Phi}^{t^{\ast}}),$ and 
\begin{equation*}
\ell^{\ast } = e^{-m_2 t^{\ast }/2} < \frac{1}{2}.
\end{equation*}

Before we show that (\ref{difference-decomposition-K}) holds, we need to establish a crucial bound. 

{\underline{Step 2}}. (A preliminary bound for $\widehat{W}$ and $\widehat{\Theta}^t$.)
We claim, for each $0<T<\infty$, there holds
\begin{align}
& \widehat{W}\in L^\infty\left([0,T];\mathbb{X}^2\right)\cap L^2\left([0,T];\mathbb{V}^1\right),  \label{hat-claim-1} \\
& \widehat{\Theta}^t\in L^\infty\left([0,T];\mathcal{M}^1_{\varepsilon}\right).  \label{hat-claim-2}
\end{align}

To show this, we multiply equation (\ref{diff-decomp-v})$_1$ by $\widehat{W}$ in $\mathbb{X}^2$ and multiply equation (\ref{diff-decomp-v})$_2$ by $\mathrm{A_{W}^{\alpha,\beta}}\widehat{\Theta}^t$ in $\mathcal{M}^0_\ep$. 
Summing the resulting two identities produces,
\begin{align}
& \frac{1}{2}\frac{\diff}{\diff t} \left\{ \left\| \widehat{W} \right\|^2_{\mathbb{X}^2} + \left\| \widehat{\Theta}^t \right\|^2_{\mathcal{M}^1_{\varepsilon}} \right\} + \omega\left\langle \mathrm{A_{W}^{0,\beta}}\widehat{W},\widehat{W} \right\rangle_{\mathbb{X}^2} - \left\langle \mathrm{T}_{\varepsilon}\widehat{\Theta}^t,\widehat{\Theta}^t \right\rangle_{\mathcal{M}^1_{\varepsilon}}  \label{ba-vid-4} \\
& + \left\langle F(U)-F(V),\widehat{W} \right\rangle_{\mathbb{X}^2}  \notag \\
& = 0.  \notag
\end{align}
The first of the three products above can be re-written, using the definition of the $\mathbb{V}^1$ norm (see (\ref{v1b})), as
\begin{align}
\omega\left\langle \mathrm{A_{W}^{0,\beta}}\widehat{W},\widehat{W} \right\rangle_{\mathbb{X}^2} & = \omega\left\langle \mathrm{A_{W}^{\alpha,\beta}}\widehat{W},\widehat{W} \right\rangle_{\mathbb{X}^2} - \omega\alpha\langle \widehat{w},\widehat{w} \rangle_{L^2(\Omega)}  \label{ba-pop-2} \\ 
& = \omega\left\|\widehat{W}\right\|^2_{\mathbb{V}^1} - \omega\alpha \left\| \widehat{w} \right\|^2_{L^2(\Omega)}  \notag \\
& = \omega \left( \left\| \nabla \widehat{w} \right\|^2_{L^2(\Omega)} + \left\| \nabla_\Gamma \widehat{w} \right\|^2_{L^2(\Gamma)} + \beta\left\| \widehat{w} \right\|^2_{L^2(\Gamma)} \right).  \notag
\end{align}
As with the above estimate (\ref{qest14}), we have 
\begin{align}
\left\langle  \mathrm{T}_{\varepsilon}\widehat{\Theta}^t,\widehat{\Theta}^t \right\rangle_{\mathcal{M}^1_{\varepsilon}} \leq -\frac{\delta}{2}\left\| \widehat{\Theta}^t \right\|^2_{\mathcal{M}^1_{\varepsilon}}.  \label{ba-pop-4} 
\end{align}
Using assumptions \eqref{assm-1} and \eqref{assm-2} with data in the bounded set $\mathcal{B}^1_\ep$ and the uniform bound \eqref{weak-bound}, we now estimate the nonlinear terms as follows
\begin{align}
\langle f(u)-f(v),\widehat{w}\rangle_{L^2(\Omega)} & \le \|(f(u)-f(v))\widehat{w}\|_{L^1(\Omega)}  \notag \\ 
& \le \|f(u)-f(v)\|_{L^{6/5}(\Omega)}\|\widehat{w}\|_{L^6(\Omega)}  \notag \\ 
& \le \ell_1\|(u-v)(1+|u-v|^{r_1})\|_{L^{6/5}(\Omega)}\|\widehat{w}\|_{L^6(\Omega)}  \notag \\ 
& \le \ell_1\|u-v\|_{L^{6}(\Omega)}\left(1+\|u-v\|^{r_1}_{L^{3r_1/2}(\Omega)}\right)\|\widehat{w}\|_{L^6(\Omega)}  \notag \\ 
& \le C\|\widehat{w}\|_{H^1(\Omega)},  \label{func-1}
\end{align}
where $C=C(\ell_1,\Omega,\widetilde{P}_0,r_1)>0$ and the last inequality follows from the fact that $H^1(\Omega)\hookrightarrow L^6(\Omega)$ and $H^1(\Omega)\hookrightarrow L^{3r_1/2}(\Omega)$ because $1\le r_1<\frac{5}{2}.$
Similarly for $\widetilde{g}$ (here the estimate is easier because $H^1(\Gamma)\hookrightarrow L^p(\Gamma)$ for $1\le p<\infty$ as $\Gamma$ is two dimensional), 
\begin{align}
\langle \widetilde{g}(u)-\widetilde{g}(v),\widehat{w}\rangle_{L^2(\Gamma)} & \le C\|\widehat{w}\|_{H^1(\Gamma)}.  \label{func-2}
\end{align}
Thus, \eqref{func-1} and \eqref{func-2} show that
\begin{align}
\left|\left\langle F(U)-F(V),\widehat{W} \right\rangle_{\mathbb{X}^2}\right| & \le C_\omega\left\| {\overline{\Delta}}_0 \right\|^2_{\mathcal{H}^0_{\varepsilon}} + \frac{\omega}{2}\left\| \widehat{W} \right\|^2_{\mathbb{V}^1},  \label{ba-pop-5}
\end{align}
where $C_\omega\sim\frac{C}{\omega}$.
Together (\ref{ba-vid-4})-(\ref{ba-pop-5}) yields the differential inequality, which holds for almost all $t\ge0$,
\begin{align}
& \frac{\diff}{\diff t} \left\{ \left\| \widehat{W} \right\|^2_{\mathbb{X}^2} + \left\| \widehat{\Theta}^t \right\|^2_{\mathcal{M}^1_{\varepsilon}} \right\}   \label{ba-pop-9} \\
& + \omega \left( \left\| \nabla \widehat{w} \right\|^2_{L^2(\Omega)} + \left\| \nabla_\Gamma \widehat{w} \right\|^2_{L^2(\Gamma)} + \beta\left\| \widehat{w} \right\|^2_{L^2(\Gamma)} \right)  + \delta\left\| \widehat{\Theta}^t \right\|^2_{\mathcal{M}^1_{\varepsilon}}  \notag \\
& \le C_{\omega}\left\| {\overline{\Delta}}_0 \right\|^2_{\mathcal{H}^0_{\varepsilon}}.  \notag
\end{align}
Now integrating (\ref{ba-pop-9}) with respect to $t$ in $[0,T]$, for some fixed $0<T<\infty$, we obtain
\begin{align}
& \left\| \widehat{W}(t) \right\|^2_{\mathbb{X}^2} + \left\| \widehat{\Theta}^t \right\|^2_{\mathcal{M}^1_{\varepsilon}}   \label{ba-pop-10} \\
&+ \int_0^t \left( \omega \left( \left\| \nabla \widehat{w}(\tau) \right\|^2_{L^2(\Omega)} + \left\| \nabla_\Gamma \widehat{w}(\tau) \right\|^2_{L^2(\Gamma)} + \beta\left\| \widehat{w}(\tau) \right\|^2_{L^2(\Gamma)} \right) + \delta\left\| \widehat{\Theta}^\tau \right\|^2_{\mathcal{M}^1_{\varepsilon}} \right) \diff\tau  \notag \\ 
& \le C_{\omega}\left\| {\overline{\Delta}}_0 \right\|^2_{\mathcal{H}^0_{\varepsilon}}T.  \notag
\end{align}
Using (\ref{ba-pop-10}), we easily deduce the claim \eqref{hat-claim-1}-\eqref{hat-claim-2}.

{\underline{Step 3}}. (Proof of (\ref{difference-decomposition-K}).) 
We begin by multiplying equation (\ref{diff-decomp-v})$_1$ by $K=\mathrm{A_{W}^{\alpha,\beta}}\widehat{W}$ in $\mathbb{X}^2$, then, after applying $\mathrm{A_{W}^{\alpha,\beta}}$ to equation (\ref{diff-decomp-v})$_2$, we multiply the result by $\Lambda^t=\mathrm{A_{W}^{\alpha,\beta}}\widehat{\Theta}^t$ in $\mathcal{M}^0_\ep=L^2_{\mu_{\varepsilon}}(\mathbb{R}_+;\mathbb{X}^2)$. 
This leaves us with the two identities,
\begin{align}
& \left \langle \partial_t\widehat{W},K \right\rangle_{\mathbb{X}^2} + \omega\left\langle \mathrm{A_{W}^{0,\beta}}\widehat{W},K \right\rangle_{\mathbb{X}^2} + \left\langle \mathrm{A_{W}^{\alpha,\beta}}\widehat{\Theta}^t,K \right\rangle_{\mathcal{M}^0_\ep}  \label{vid-1} \\
& + \left\langle F(U)-F(V),K \right\rangle_{\mathbb{X}^2}  \notag = 0.   \notag
\end{align}
and 
\begin{align}
\left\langle \partial_t\mathrm{A_{W}^{\alpha,\beta}}\widehat{\Theta}^t, \Lambda^t \right\rangle_{\mathcal{M}^0_\ep} = \left\langle \mathrm{A_{W}^{\alpha,\beta}} \mathrm{T}_{\varepsilon}\widehat{\Theta}^t,\Lambda^t \right\rangle_{\mathcal{M}^0_\ep} + \left\langle \mathrm{A_{W}^{\alpha,\beta}}\widehat{W},\Lambda^t \right\rangle_{\mathcal{M}^0_\ep}.  \label{vid-2} 
\end{align}
Observe, 
\begin{align}
\left\langle \mathrm{A_{W}^{\alpha,\beta}}\widehat{\Theta}^t,K \right\rangle_{\mathcal{M}^0_\ep} & = \left\langle \Lambda^t,\mathrm{A_{W}^{\alpha,\beta}}\widehat{W} \right\rangle_{\mathcal{M}^0_\ep}.  \label{vid-22} 
\end{align}
Hence, combining (\ref{vid-1}) and (\ref{vid-2}) through (\ref{vid-22}),
\begin{align}
& \left \langle \partial_t\widehat{W},K \right\rangle_{\mathbb{X}^2} + \omega\left\langle \mathrm{A_{W}^{0,\beta}}\widehat{W},K \right\rangle_{\mathbb{X}^2} + \left\langle \partial_t\mathrm{A_{W}^{\alpha,\beta}}\widehat{\Theta}^t, \Lambda^t \right\rangle_{\mathcal{M}^0_\ep} - \left\langle \mathrm{A_{W}^{\alpha,\beta}} \mathrm{T}_{\varepsilon}\widehat{\Theta}^t,\Lambda^t \right\rangle_{\mathcal{M}^0_\ep}  \label{vid-3} \\
& + \left\langle F(U)-F(V),K \right\rangle_{\mathbb{X}^2}  \notag \\
& = 0.   \notag
\end{align}
The first three products can be re-written as follows,
\begin{align}
\left \langle \partial_t\widehat{W},K \right\rangle_{\mathbb{X}^2} & = \left\langle \partial_t\widehat{W},\mathrm{A_{W}^{\alpha,\beta}}\widehat{W} \right\rangle_{\mathbb{X}^2}  \label{pop-1} \\
& = \left\langle \partial_t\widehat{W},\widehat{W} \right\rangle_{\mathbb{V}^1}  \notag \\
& = \frac{1}{2}\frac{\diff}{\diff t}\left\| \widehat{W} \right\|^2_{\mathbb{V}^1},  \notag
\end{align}
\begin{align}
\omega\left\langle \mathrm{A_{W}^{0,\beta}}\widehat{W},K \right\rangle_{\mathbb{X}^2} & = \omega\left\langle \mathrm{A_{W}^{\alpha,\beta}}\widehat{W},K \right\rangle_{\mathbb{X}^2} - \omega\alpha\langle \widehat{w},k \rangle_{L^2(\Omega)}  \label{pop-2} \\ 
& = \omega\left\|K\right\|^2_{\mathbb{X}^2} - \omega\alpha\langle \widehat{w},k \rangle_{L^2(\Omega)},  \notag
\end{align}
and
\begin{align}
\left\langle \partial_t\mathrm{A_{W}^{\alpha,\beta}}\widehat{\Theta}^t, \Lambda^t \right\rangle_{\mathcal{M}^0_\ep} & = \left\langle \partial_t\Lambda^t, \Lambda^t \right\rangle_{\mathcal{M}^0_\ep}  \label{pop-3} \\ 
& = \frac{1}{2}\frac{\diff}{\diff t}\left\| \Lambda^t \right\|^2_{\mathcal{M}^0_\ep}.  \notag
\end{align}
Inserting (\ref{pop-1})-(\ref{pop-3}) into (\ref{vid-3}) gives us the differential identity, 
\begin{align}
& \frac{1}{2}\frac{\diff}{\diff t} \left\{ \left\| \widehat{W} \right\|^2_{\mathbb{V}^1} + \left\| \Lambda^t \right\|^2_{\mathcal{M}^0_\ep} \right\} + \omega \left\| K \right\|^2_{\mathbb{X}^2} - \left\langle \mathrm{T}_{\varepsilon}\widehat{\Theta}^t,\Lambda^t \right\rangle_{\mathcal{M}^0_\ep}  \label{vid-4} \\
& + \left\langle F(U)-F(V),K \right\rangle_{\mathbb{X}^2}  \notag \\
& = \omega\alpha\left\langle \widehat{w},k \right\rangle_{L^2(\Omega)}.  \notag
\end{align}
Similar to (\ref{qest14}), we estimate
\begin{align}
\left\langle \mathrm{A_{W}^{\alpha,\beta}} \mathrm{T}_{\ep}\widehat{\Theta}^t,\Lambda^t \right\rangle_{\mathcal{M}^0_\ep} \leq -\frac{\delta}{2}\left\| \Lambda^t \right\|^2_{\mathcal{M}^0_\ep},  \label{pop-4} 
\end{align}
and in a similar fashion to \eqref{ba-pop-5}, we find
\begin{align}
\left|\left\langle F(U)-F(V),K \right\rangle_{\mathbb{X}^2}\right| & \le C_\omega\left\| {\overline{\Delta}}_0 \right\|^2_{\mathcal{H}^0_{\varepsilon}} + \frac{\omega}{2}\|K\|^2_{\mathbb{V}^1},  \label{pop-5}
\end{align}
where $C_\omega\sim\frac{C}{\omega}$.
We also estimate
\begin{align}
\omega\alpha\left\langle \widehat{w},k \right\rangle_{L^2(\Omega)} \le \omega\alpha^2\left\| \widehat{W} \right\|^2_{\mathbb{X}^2} + \frac{\omega}{4}\left\| K \right\|^2_{\mathbb{X}^2}  \label{pop-6} 
\end{align}
Together (\ref{vid-4})-(\ref{pop-6}) yields the differential inequality, which holds for almost all $t\ge0$,
\begin{align}
& \frac{\diff}{\diff t} \left\{ \left\| \widehat{W} \right\|^2_{\mathbb{V}^1} + \left\| \Lambda^t \right\|^2_{\mathcal{M}^0_\ep} \right\} + \omega \left\| \mathrm{A_{W}^{\alpha,\beta}}\widehat{W} \right\|^2_{\mathbb{X}^2} + \delta\left\| \Lambda^t \right\|^2_{\mathcal{M}^0_\ep}  \label{pop-9} \\
& \le \alpha\left\| \widehat{W} \right\|^2_{\mathbb{X}^2} + C_{\omega}\left\| {\overline{\Delta}}_0 \right\|^2_{\mathcal{H}^0_{\varepsilon}}.  \notag
\end{align}
Now integrating (\ref{pop-9}) with respect to $t$ in $[0,T]$, for some fixed $0<T<\infty$, we obtain
\begin{align}
& \left\| \widehat{W}(t) \right\|^2_{\mathbb{V}^1} + \left\| \Lambda^t \right\|^2_{\mathcal{M}^0_\ep} + \int_0^t \left( \omega \left\| \mathrm{A_{W}^{0,\beta}}\widehat{W}(\tau) \right\|^2_{\mathbb{X}^2} + \delta\left\| \Lambda^\tau \right\|^2_{\mathcal{M}^0_\ep} \right) \diff\tau  \notag \\ 
& \le \int_0^t \alpha\left\| \widehat{W}(\tau) \right\|^2_{\mathbb{X}^2} \diff\tau + C_{\omega}\left\| {\overline{\Delta}}_0 \right\|^2_{\mathcal{H}^0_{\varepsilon}}T,  \label{pop-10}
\end{align}
where the right-hand side of the inequality makes sense thanks to (\ref{hat-claim-1}).
Now omitting the second and third terms from the left-hand side of \eqref{pop-10}, the following bound follows easily with Gr\"{o}nwall's inequality
\begin{align}
\left\| \widehat{W}(t) \right\|^2_{\mathbb{V}^1} & \le C_{\omega}\left\| {\overline{\Delta}}_0 \right\|^2_{\mathcal{H}^0_{\varepsilon}}T e^{\alpha T},  \label{david-1}
\end{align}
and with this
\begin{align}
\left\|\Lambda^t\right\|^2_{\mathcal{M}^0_\ep} & \le C_{\omega}\left\| {\overline{\Delta}}_0 \right\|^2_{\mathcal{H}^0_{\varepsilon}}T e^{\alpha T},  \label{david-2}
\end{align}
also follows.

In order to obtain the desired bound from \eqref{david-1} and \eqref{david-2}, first recall that there is $C_*>0$ (cf. (\ref{equiv})) such that 
\begin{equation*}
\left\|\Lambda^t\right\|^2_{\mathcal{M}^0_\ep} = \left\|\mathrm{A_{W}^{\alpha,\beta}}\widehat{\Theta}^t\right\|^2_{\mathcal{M}^0_\ep} \ge C_*^{-1} \left\|\widehat{\Theta}^t\right\|^2_{\mathcal{M}^2_{\varepsilon}}.
\end{equation*}
Thus, letting $T=t^*$ (from Step 1), we obtain, for some positive monotonically increasing function $M_2(\cdot)$,
\begin{equation}  \label{top-4}
\left\| \left( \widehat{W}(t^*),\widehat{\Theta}^{t^*} \right) \right\| _{\mathcal{H}^1_{\varepsilon}} \le M_2(t^*) \left\| {\overline{\Delta}}_0 \right\|_{\mathcal{H}^0_{\varepsilon}}.
\end{equation}
Now it suffices to show that for some positive constant $C(T)$, there holds for all $t\in[0,T],$
\begin{align}
\left\|\widehat{\Theta}^t\right\|^2_{\mathcal{K}^2_{\varepsilon}} & \le C(T)\left\| {\overline{\Delta}}_0 \right\|^2_{\mathcal{H}^0_{\varepsilon}}.  \label{david-3}
\end{align}
First, we see that with an application of Lemma \ref{what-3} with \eqref{david-1} and \ref{david-2} there holds, for all $t\in[0,T]$,
\begin{align}
\sup_{\tau\ge1} \tau\mathbb{T}_\ep(\tau;\widehat{\Theta}^t) \le C(T)\left\| {\overline{\Delta}}_0 \right\|^2_{\mathcal{H}^0_{\varepsilon}},  \label{david-4}
\end{align}
and secondly, by applying the weak form of Lemma \ref{what-1} (see Remark \ref{r:what-trans}), we find that for all $t\in[0,T],$
\begin{align}
\ep\|{\rm{T}}_\ep\Phi^t\|^2_{\mathcal{M}^0_\ep} \le C(T)\left\| {\overline{\Delta}}_0 \right\|^2_{\mathcal{H}^0_{\varepsilon}}.  \label{david-5}
\end{align}
Together \eqref{david-4}-\eqref{david-5} establish \eqref{david-3}.
Therefore, inequality (\ref{difference-decomposition-K}) now follows with $R= \Xi(t^{\ast})=(\widehat{W}(t^*),\widehat{\Theta}^{t^*})$ and $\wp^{\ast}=M_2(t^{\ast})\geq 0$ (for a suitably updated function $M_2$). 
This finishes the proof of (C2).
\end{proof}

\begin{lemma}
\label{t:to-C3} Condition (C3) holds for each $\ep\in(0,1]$ and $\omega\in(0,1)$.
\end{lemma}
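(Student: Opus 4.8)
The plan is to verify (C3) by splitting, for $\Upsilon_0,\Xi_0\in\mathcal{B}^1_\ep$ and $t^\ast\le s\le t\le 2t^\ast$,
\[
\|\mathcal{S}_\ep(t)\Upsilon_0-\mathcal{S}_\ep(s)\Xi_0\|_{\mathcal{H}^0_\ep}\le \|\mathcal{S}_\ep(t)\Upsilon_0-\mathcal{S}_\ep(t)\Xi_0\|_{\mathcal{H}^0_\ep}+\|\mathcal{S}_\ep(t)\Xi_0-\mathcal{S}_\ep(s)\Xi_0\|_{\mathcal{H}^0_\ep},
\]
and controlling the two terms separately. (Recall $t^\ast\ge t_1$, so by (C1), verified in Lemma \ref{t:to-C1}, the map indeed sends $[t^\ast,2t^\ast]\times\mathcal{B}^1_\ep$ into $\mathcal{B}^1_\ep$, and $t^\ast$ was fixed in Lemma \ref{t:to-C2}.) The first term needs nothing new: applying the continuous dependence estimate (\ref{cde}) of Theorem \ref{t:weak-solutions} on $[0,2t^\ast]$ gives
\[
\|\mathcal{S}_\ep(t)\Upsilon_0-\mathcal{S}_\ep(t)\Xi_0\|_{\mathcal{H}^0_\ep}\le C\,e^{2Ct^\ast}\|\Upsilon_0-\Xi_0\|_{\mathcal{H}^0_\ep},
\]
which is Lipschitz in the datum with a constant depending only on $t^\ast$, hence (as in Lemma \ref{t:to-C2}) on $\omega$, $\delta$ and the embedding constant $C_{\overline{\Omega}}$.

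For the time term I would exploit the higher regularity of trajectories issuing from $\mathcal{B}^1_\ep$. Since $\mathcal{B}^1_\ep\subset\mathcal{V}^1_\ep=\mathbb{V}^1\times\mathcal{K}^2_\ep\subset\mathbb{V}^1\times\mathcal{M}^2_\ep$, Theorem \ref{t:strong-solutions} guarantees that $\mathcal{S}_\ep(\cdot)\Xi_0=(V(\cdot),\Psi^{\cdot})$ is a strong solution, so by Definition \ref{d:strong-solution} one has $\partial_t V\in L^\infty([0,\infty);\mathbb{X}^2)$ and $\partial_t\Psi\in L^\infty([0,\infty);\mathcal{M}^1_\ep)$. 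Moreover, the a priori bounds underlying Theorem \ref{t:strong-solutions} — rigorously justified by the Galerkin scheme of \cite{Gal-Shomberg15-2} — depend only on $\ep$, the structural constants and $\|\Xi_0\|_{\mathbb{V}^1\times\mathcal{M}^2_\ep}\le\|\Xi_0\|_{\mathcal{V}^1_\ep}\le\sqrt{2P_1+1}$, which together with the uniform-in-time bound (\ref{strong-bound}) from Lemma \ref{t:to-C1} produces a constant $L=L(\ep,t^\ast)>0$, independent of the particular $\Xi_0\in\mathcal{B}^1_\ep$, with $\|\partial_t V(\tau)\|_{\mathbb{X}^2}+\|\partial_t\Psi^\tau\|_{\mathcal{M}^1_\ep}\le L$ for a.e. $\tau\ge0$. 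Since $\tau\mapsto\mathcal{S}_\ep(\tau)\Xi_0$ is continuous into $\mathcal{H}^0_\ep$ (Remark \ref{r:continuity}) and a.e. differentiable with essentially bounded derivative, it is Lipschitz in time, so
\[
\|\mathcal{S}_\ep(t)\Xi_0-\mathcal{S}_\ep(s)\Xi_0\|_{\mathcal{H}^0_\ep}\le\int_s^t\big(\|\partial_\tau V(\tau)\|_{\mathbb{X}^2}+\|\partial_\tau\Psi^\tau\|_{\mathcal{M}^1_\ep}\big)\,\diff\tau\le L\,|t-s|,
\]
and adding the two bounds yields $\|\mathcal{S}_\ep(t)\Upsilon_0-\mathcal{S}_\ep(s)\Xi_0\|_{\mathcal{H}^0_\ep}\le C\,e^{2Ct^\ast}\|\Upsilon_0-\Xi_0\|_{\mathcal{H}^0_\ep}+L\,|t-s|$, which is exactly (C3).

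I expect the one genuinely delicate point to be the \emph{uniform} control of $\partial_t V$ and $\partial_t\Psi$ over $\mathcal{B}^1_\ep$: the strong-solution theory supplies these derivatives qualitatively in $L^\infty$, but to fix a constant $L$ independent of the datum one should run the energy estimate for the time-differentiated system, testing $\partial_t$ of (\ref{weak-solution-1}) with $\partial_t V$ and $\partial_t$ of (\ref{weak-solution-2}) with $\partial_t\Psi^t$; the cross memory terms cancel as in (\ref{fus-6}), $-\langle\mathrm{T}_\ep\partial_t\Psi^t,\partial_t\Psi^t\rangle_{\mathcal{M}^1_\ep}\ge0$, $\mathrm{A_W^{0,\beta}}$ is nonnegative, and $\langle F'(V)\partial_t V,\partial_t V\rangle_{\mathbb{X}^2}\ge-C\|\partial_t V\|^2_{\mathbb{X}^2}$ by (\ref{assm-3})--(\ref{assm-4}) (using $\widetilde g'(s)=g'(s)-\omega\beta>-M_g-\omega\beta$), so a Gr\"{o}nwall argument on $[0,2t^\ast]$ controls $\|\partial_t V(t)\|^2_{\mathbb{X}^2}+\|\partial_t\Psi^t\|^2_{\mathcal{M}^1_\ep}$ by $e^{Ct}$ times its value at $t=0$, which is then read off from the equations at $t=0$ via $\|\Xi_0\|_{\mathcal{V}^1_\ep}\le\sqrt{2P_1+1}$ — passing first, if one wishes to be fully rigorous, through a dense class of data enjoying the extra regularity needed to make $\partial_t V(0)\in\mathbb{X}^2$ literal and then invoking (\ref{cde}) again. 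With the uniform higher-order estimates of Lemmas \ref{t:to-C1} and \ref{t:Phi-decay-1} already in hand, all of this is routine.
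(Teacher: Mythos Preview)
Your proposal is correct and follows essentially the same approach as the paper: split into a space-variation term handled by the continuous dependence estimate (\ref{cde}), and a time-variation term handled by the higher regularity available on $\mathcal{B}^1_\ep$. The paper's own proof is considerably more terse---it simply invokes the uniform $\mathcal{V}^1_\ep$-bound (\ref{strong-bound}) and declares the map $t\mapsto\mathcal{S}_\ep(t)\Upsilon_0$ Lipschitz on $[t^\ast,2t^\ast]$ without spelling out the mechanism---whereas you explicitly exploit the strong-solution regularity $\partial_t V\in L^\infty(\mathbb{X}^2)$, $\partial_t\Psi\in L^\infty(\mathcal{M}^1_\ep)$ from Definition~\ref{d:strong-solution} and sketch the time-differentiated energy estimate needed to make the bound uniform over $\mathcal{B}^1_\ep$. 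In that sense your argument fills in the step the paper takes for granted; one small remark is that you write $L=L(\ep,t^\ast)$, but since the robustness results downstream rely on constants uniform in $\ep$, you should note (as your sketch in fact supports) that the estimate for $\|\partial_t V\|_{\mathbb{X}^2}^2+\|\partial_t\Psi^t\|_{\mathcal{M}^1_\ep}^2$ is independent of $\ep\in(0,1]$.
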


\begin{proof}
Let $\ep\in(0,1]$ and $\omega\in(0,1)$. 
Let $R>0$ and $\Upsilon_0=(U_0,\Phi_0)\in\mathcal{V}^1_{\varepsilon}$ where $\|\Upsilon_0\|_{\mathcal{V}^1_{\varepsilon}}\le R.$
Directly from (\ref{strong-bound}), there holds, 
\begin{equation*}
\left\| \mathcal{S}_{\varepsilon}(t)\Upsilon_0 \right\|_{\mathcal{V}^1_{\varepsilon}} \leq \widetilde{P}_1,
\end{equation*}
but where now the size of the initial data, $R$, depends on the size of $\mathcal{B}^{1}_{\varepsilon}$. 
Hence, on the compact interval $[t^{\ast },2t^{\ast }]$, the map $t\mapsto S(t)\Upsilon _{0}$ is Lipschitz continuous for each fixed $\Upsilon _{0}\in 
\mathcal{B}^{1}_{\varepsilon}$.
This means there is a constant $L=L(t^{\ast })>0$ such that
\begin{equation*}
\Vert \mathcal{S}_\ep(t_1)\Upsilon _{0} - \mathcal{S}_\ep(t_2)\Upsilon_{0}\Vert _{\mathcal{H}^0_{\varepsilon}} \leq L|t_1-t_2|.
\end{equation*}
Together with the continuous dependence estimate (\ref{cde}), (C3) follows.
\end{proof}

\begin{remark}
According to Proposition \ref{abstract1}, for each $\varepsilon\in(0,1]$, the semigroup $\mathcal{S}_{\varepsilon}(t):\mathcal{H}^0_{\varepsilon}\rightarrow \mathcal{H}^0_{\varepsilon}$ possesses an exponential attractor, $\mathfrak{M}_{\varepsilon}\subset \mathcal{B}^{1}_{\varepsilon}$, which attracts bounded subsets of $\mathcal{B}^{1}_{\varepsilon}$ exponentially fast (in the topology of $\mathcal{H}^0_{\varepsilon}$). 
Moreover, in light of the results in this section---which are {\em{uniform}} in the perturbation parameter $\ep$---we now simply accept the corresponding results for the simpler limit Problem {\textbf{P}}$_0$.
In this setting we use the notation for the compact absorbing set $\mathcal{B}^1_0$ and the exponential attractor $\mathfrak{M}_0$ admitted by the semigroup $\mathcal{S}_0(t):\mathcal{H}^0_0=\mathbb{X}^2\rightarrow\mathbb{X}^2$.
\end{remark}

\begin{remark}  \label{rem_att} 
In order to show that the attraction property (iii) in Theorem \ref{t:exponential-attractors} also holds---that is, in order to show that the basin of attraction of $\mathfrak{M}_\ep$ is all of $\mathcal{H}^0_\ep$---we appeal to the transitivity of the exponential attraction in Proposition \ref{t:exp-attr} and Theorem \ref{t:trans-out} below.
\end{remark}

\subsection{Basin of attraction (and global attractors)}

The main result in this section has two purposes: primary, per the above remark, it will help us show that the exponential attractors we seek attract every bounded subset in $\mathcal{H}^0_\ep$ (not just $\mathcal{B}^1_\ep$).
This property is sometimes not obvious because of the difficulties using spaces involving memory (we refer the reader to Section 1 of this article and to the rate of attraction of $\mathcal{B}^1_\ep$ as found in Lemma \ref{t:to-C1}).
However, we overcome this problem, partly, by proving a condition on the solution semigroup $\mathcal{S}_\ep$ that is also essential for the existence of global attractors (also called a universal attractors); we refer to the asymptotic compactness/smoothing of $\mathcal{S}_\ep$, which happens to occur in our case with an exponential rate. 
Together, the asymptotic compactness os $\mathcal{S}_\ep$ (Theorem \ref{t:trans-out} below) and the existence of an absorbing sets in $\mathcal{H}^0_\ep$ (Lemma \ref{weak-ball}) will guarantee the existence of a global attractor that is compact in $\mathcal{H}^0_\ep$ and bounded in $\mathcal{V}^1_\ep.$

\begin{theorem}  \label{t:trans-out}
For each $\ep\in[0,1],$ there is a positive constant $\varrho_1$ and a monotonically increasing function $Q(\cdot)$ in which for every nonempty bounded subset $B$ of $\mathcal{H}^0_\ep$ there holds, for all $t\ge0,$
\begin{equation*}
\mathrm{dist}_{\mathcal{H}^0_{\varepsilon }}(\mathcal{S}_{\varepsilon }(t)B,\mathcal{B}^1_{\varepsilon })\leq Q(\Vert B\Vert _{\mathcal{H}^0_{\varepsilon }})e^{-\varrho_1 t}.
\end{equation*}
\end{theorem}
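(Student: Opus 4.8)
The plan is to establish the exponential attraction of $\mathcal{B}^1_\ep$ from all of $\mathcal{H}^0_\ep$ by a decomposition-of-trajectories argument analogous to the one used for (C2), but carried out for a single trajectory starting from an arbitrary bounded set rather than for differences of trajectories. First I would fix a nonempty bounded subset $B\subset\mathcal{H}^0_\ep$ and, using Lemma \ref{weak-ball}, note that after the entering time $t_0$ (controlled by $\|B\|_{\mathcal{H}^0_\ep}$ via \eqref{time0}) the trajectory $\mathcal{S}_\ep(t)\Upsilon_0$ lies in the bounded absorbing set $\mathcal{B}^0_\ep$; so without loss of generality we may assume $\Upsilon_0\in\mathcal{B}^0_\ep$ and it suffices to show $\mathrm{dist}_{\mathcal{H}^0_\ep}(\mathcal{S}_\ep(t)\mathcal{B}^0_\ep,\mathcal{B}^1_\ep)\le Q e^{-\varrho_1 t}$. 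The difficulty addressed here is genuinely the one flagged in the text: the embedding $\mathcal{M}^1_\ep\hookrightarrow\mathcal{M}^0_\ep$ is not compact, so one cannot directly extract a more regular bounded attracting set; instead we split off the ``non-smoothing'' part of the memory component and show it decays exponentially in $\mathcal{H}^0_\ep$, while the remainder becomes bounded in the more regular space $\mathcal{V}^1_\ep$.

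Concretely, for $\Upsilon_0=(U_0,\Phi_0)\in\mathcal{B}^0_\ep$ I would write $\mathcal{S}_\ep(t)\Upsilon_0 = (\widehat U(t),\widehat\Phi^t) + (\widetilde U(t),\widetilde\Phi^t)$, where the first pair solves the linear homogeneous system (no $F$, no forcing other than the coupling) with initial data $\Upsilon_0$ — exactly the system \eqref{diff-decomp-u} with $\overline\Delta_0$ replaced by $\Upsilon_0$ — and the second pair solves the nonlinear remainder system with zero initial data, of the form \eqref{diff-decomp-v} but with $F(U(t))-F(V(t))$ replaced by $F(\mathcal{S}_\ep(t)\Upsilon_0)$ (which is uniformly bounded in $\mathbb{X}^2$, in fact in better spaces, by the dissipation estimates \eqref{weak-decay} and the growth conditions \eqref{assm-1}--\eqref{assm-2}). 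The estimate \eqref{to-C2-L}, applied verbatim to the homogeneous part, gives $\|(\widehat U(t),\widehat\Phi^t)\|_{\mathcal{H}^0_\ep}\le \|\Upsilon_0\|_{\mathcal{H}^0_\ep}e^{-m_2 t/2}$, so this part decays exponentially to zero. For the nonlinear remainder, I would reproduce the higher-order energy computation of Step 3 in the proof of Lemma \ref{t:to-C2} (multiplying by $\mathrm{A_W^{\alpha,\beta}}\widetilde U$ and $\mathrm{A_W^{\alpha,\beta}}\widetilde\Theta^t$), using the uniform bound \eqref{weak-bound} to control the now genuinely nonlinear terms via the Sobolev/trace estimates \eqref{qest5qqq}--\eqref{qest5last}, to obtain a differential inequality of the type \eqref{cabsball-2}; combined with the dissipation integral $\int_t^{t+1}\|U(\tau)\|^2_{\mathbb{V}^1}\,d\tau\le C$ and a Gronwall-type lemma (Proposition \ref{GL}), this yields a uniform-in-$t$, uniform-in-$\ep$ bound $\|(\widetilde U(t),\widetilde\Phi^t)\|_{\mathcal{V}^1_\ep}\le C(\|B\|_{\mathcal{H}^0_\ep})$, where the $\mathcal{K}^2_\ep$-part of the norm is handled exactly as in \eqref{strong-bound-1} using Lemmas \ref{what-1}, \ref{what-2}, \ref{what-3} (here applied in the weak space, cf. Remark \ref{r:what-trans}).

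Putting the two pieces together: for any $\Upsilon_0\in B$ and $t\ge t_0(\|B\|_{\mathcal{H}^0_\ep})$ we have that $(\widetilde U(t),\widetilde\Phi^t)$ lies in a bounded ball of $\mathcal{V}^1_\ep$, hence — after enlarging the radius of $\mathcal{B}^1_\ep$ if necessary, exactly as in the closing paragraph of the proof of Lemma \ref{t:to-C1} — in $\mathcal{B}^1_\ep$, while the distance from $\mathcal{S}_\ep(t)\Upsilon_0$ to that point is at most $\|(\widehat U(t),\widehat\Phi^t)\|_{\mathcal{H}^0_\ep}\le Q(\|B\|_{\mathcal{H}^0_\ep})e^{-m_2 t/2}$. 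Absorbing the entering time $t_0$ into the constant $Q$ (replace $Q$ by $Q e^{\varrho_1 t_0}$ and use that $t\mapsto e^{-\varrho_1 t}$ is decreasing) gives the claimed estimate with $\varrho_1 := m_2/2$, uniformly in $\ep\in(0,1]$; the case $\ep=0$ is the same computation with the memory component absent, and is already available in the cited works on the Wentzell Laplacian \cite{Gal12-1,Gal12-2,Gal-15Z,Gal&Warma10}. The main obstacle is making the splitting genuinely exploit the compact embedding $\mathcal{V}^1_\ep\hookrightarrow\mathcal{H}^0_\ep$ with constants independent of $\ep$: the delicate point is that the higher-order estimate for $\widetilde\Phi^t$ must be performed in $\mathcal{M}^0_\ep$ (as in \eqref{qest12}, \eqref{qest14}) rather than $\mathcal{M}^1_\ep$, and the $\mathcal{K}^2_\ep$ norm's tail term requires the $\ep$-uniform dissipation integral together with Lemma \ref{what-3}; this is exactly the bookkeeping already set up in Lemmas \ref{t:to-C1} and \ref{t:to-C2}, so the proof is essentially an assembly of those ingredients for a single (rather than difference) trajectory.
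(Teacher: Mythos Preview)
Your decomposition is the natural first guess but it does not close the higher-order estimate on the ``compact'' piece, and the paper uses a different splitting for exactly this reason. You put the full nonlinearity $F(U(t))$ as a forcing term in the equation for $(\widetilde U,\widetilde\Phi)$ and assert that it is ``uniformly bounded in $\mathbb{X}^2$, in fact in better spaces, by the dissipation estimates \eqref{weak-decay}''; this is false. For $\Upsilon_0\in\mathcal{B}^0_\ep$ the solution $U$ is only in $L^\infty(\mathbb{X}^2)\cap L^2_{\mathrm{loc}}(\mathbb{V}^1)$, so with the growth \eqref{assm-1} and $r_1$ allowed up to $5/2$ one cannot bound $\|f(u)\|_{L^2(\Omega)}$ pointwise in $t$ (it would require $u\in L^{2(r_1+1)}$ with $2(r_1+1)$ possibly close to $7>6$), nor even $\int_t^{t+1}\|F(U)\|^2_{\mathbb{X}^2}$ uniformly. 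The alternative---integrating by parts as in \eqref{qest5bis}---produces the cross term $\int_\Omega f'(u)\nabla u\cdot\nabla\widetilde u\,\diff x$ rather than the sign-controlled $\int f'(u)|\nabla u|^2$, and the estimates \eqref{qest5qqq}--\eqref{qest5last} you invoke do not apply to such mismatched arguments. Hence the $\mathcal{V}^1_\ep$ bound on $(\widetilde U,\widetilde\Phi)$ cannot be obtained from your system.

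The paper's remedy is to shift the nonlinearity: set $F_0:=F+M_F\mathrm{I}$ with $M_F$ chosen so that $F_0'\ge 0$, and split $\mathcal S_\ep(t)\Upsilon_0=(V,\Psi)+(W,\Theta)$ where $(V,\Psi)$ carries the initial data and the term $F_0(U)-F_0(W)$, while $(W,\Theta)$ starts at zero and carries $F_0(W)-M_FU$ (see \eqref{decomp-v}--\eqref{decomp-w}). The decaying estimate for $(V,\Psi)$ then follows exactly as in Lemma \ref{weak-ball} because $\langle F_0(U)-F_0(W),V\rangle_{\mathbb{X}^2}\ge 0$ (here $V=U-W$) can simply be dropped. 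For the compact part, testing against $\mathrm{A_W^{\alpha,\beta}}W$ gives $\langle F_0(W),\mathrm{A_W^{\alpha,\beta}}W\rangle$, which is precisely the term handled in \eqref{qest5bis}--\eqref{qest9} with matching argument $W$, plus the harmless $\langle M_FU,\mathrm{A_W^{\alpha,\beta}}W\rangle\le C_\omega\|U\|^2_{\mathbb{X}^2}+\tfrac{\omega}{4}\|\mathrm{A_W^{\alpha,\beta}}W\|^2_{\mathbb{X}^2}$, where now only the $\mathbb{X}^2$ bound \eqref{weak-bound} is needed. The rest of your outline (use of Lemmas \ref{what-1}--\ref{what-3} for the $\mathcal K^2_\ep$ norm, the $\ep=0$ case by citation, and the passage from $\mathcal B^0_\ep$ to general $B$---the paper does the latter via the transitivity Proposition \ref{t:exp-attr} rather than absorbing an entering time, but either works) is correct.
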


\begin{proof}
Because of the smoothing properties of the associated with the Wentzell parabolic Problem {\textbf{P}}$_0$ (cf. \cite{Gal12-1}), we limit ourselves to the case when $\ep\in(0,1].$

Let $\ep\in(0,1]$ and $B$ be a nonempty bounded subset of $\mathcal{H}^0_\ep.$
By recalling Lemma \ref{weak-ball}, we already know that there is a bounded absorbing set that is exponentially attracting in $\mathcal{H}^0_\ep$, i.e., for all $t\ge0$ there holds
\begin{equation*}
\mathrm{dist}_{\mathcal{H}^0_{\varepsilon }}(\mathcal{S}_{\varepsilon }(t)B,\mathcal{B}^0_{\varepsilon }) \leq Q(\Vert B\Vert _{\mathcal{H}^0_{\varepsilon }})e^{-\nu_0 t},
\end{equation*}
so owing once again to the transitivity of exponential attraction (cf. Proposition \ref{t:exp-attr} below) it suffices to show that, for all $t\ge0,$
\begin{equation}
\mathrm{dist}_{\mathcal{H}^0_{\varepsilon }}(\mathcal{S}_{\varepsilon }(t)\mathcal{B}^0_\ep,\mathcal{B}^1_{\varepsilon })\leq Q(P_0)e^{-\varrho_0 t},  \label{smooth-1}
\end{equation}
for some positive constant $\varrho_0$ and for some positive monotonically increasing function $Q(\cdot)$, each independent of $\ep.$
(Recall from \eqref{ball-0} that $\sqrt{P_0+1}$ is the radius of $\mathcal{B}^0_\ep$.) 

To prove \eqref{smooth-1}, the idea is to show that for each $\ep\in(0,1]$ and for each $\Upsilon_0\in\mathcal{H}^0_\ep$ we can decompose the semigroup
\[
\mathcal{S}_\ep(t)\Upsilon_0 = \mathcal{Z}_\ep(t)\Upsilon_0 + \mathcal{K}_\ep(t)\Upsilon_0
\]
where the operators $\mathcal{Z}_\ep$ are uniformly (exponentially) decaying to zero and $\mathcal{K}_\ep$ are uniformly compact (bounded in $\mathcal{V}^1_\ep$) for large $t$. 
This is done in the following lemmas.
\end{proof}

The following decomposition and subsequently more general lemmas, as we will allow the datum to belong to {\em{any}} bounded subset of the phase space $\mathcal{H}^0_\ep$, can be seen to follow \cite[Theorem 6.10--Lemma 6.12]{CPS06} with obvious changes to account for the dynamic boundary conditions with memory.
Hence, we will limit the proofs to sketches of the most important details. 

First, choose a constant $M_F>0$, based on \eqref{assm-3}, \eqref{assm-4}, and \eqref{func}, so that the map defined by, for all $s\in\mathbb{R},$
\begin{equation*}
F_0(s):=F(s)+M_Fs,
\end{equation*}
satisfies, for every $s\in\mathbb{R},$ 
\begin{equation}
F'_0(s)\ge \binom{0}{0}.  \label{up}
\end{equation}
Next, let $\Upsilon_{0}=(U_{0},\Phi_{0})\in \mathcal{H}^0_{\varepsilon }$.
Then rewrite Problem {\textbf{P}}$_\ep$ into the system of equations in $(V,\Psi)$ and $(W,\Theta)$, where $(V,\Psi)+(W,\Theta)=(U,\Phi)$,
\begin{equation}  \label{decomp-v}
\left\{ \begin{array}{l}
\partial_t V(t) + \omega {\mathrm{A^{0,\beta}_{W}}}V(t) + \displaystyle\int_0^\infty \mu_{\varepsilon}(s) {\mathrm{A^{\alpha,\beta}_{W}}} \Psi^t(s) \diff s + F_0(U(t)) - F_0(W(t)) = 0, \\ 
\partial_t\Psi^t(s) = {\rm{T}_{\varepsilon}} \Psi^t(s) + V(t), \\
(V(0),\Psi^0)=\Upsilon_0,
\end{array} \right.
\end{equation}
and
\begin{equation}  \label{decomp-w}
\left\{ \begin{array}{l}
\partial_t W(t) + \omega {\mathrm{A^{0,\beta}_{W}}}W(t) + \displaystyle\int_0^\infty \mu_{\varepsilon}(s) {\mathrm{A^{\alpha,\beta}_{W}}} \Theta^t(s) \diff s + F_0(W(t)) - M_F U(t) = 0, \\ 
\partial_t \Theta^t(s) = {\rm{T}_{\varepsilon}} \Theta^t(s) + W(t), \\
(W(0),\Theta^0)={\mathbf{0}}.
\end{array} \right.
\end{equation}

In view of Lemmas \ref{t:decay} and \ref{t:compact} below,
we define the one-parameter family of maps, $\mathcal{K}_{\varepsilon }(t):\mathcal{H}^0_{\varepsilon }\rightarrow \mathcal{H}^0_{\varepsilon }$, by 
\begin{equation*}
\mathcal{K}_{\varepsilon }(t)\Upsilon_0:=( W(t),\Theta^t),
\end{equation*}
where $(W,\Theta)$ is a solution of (\ref{decomp-w}). 
With such $(W,\Theta)$, we may define a second function $(V,\Psi)$ as the solution of (\ref{decomp-v}). 
Through the dependence of $(V,\Psi)$ on $(W,\Theta)$ and $(U(0),\Phi^0)=\Upsilon_0$, the solution of (\ref{decomp-v}) defines a
one-parameter family of maps, $\mathcal{Z}_{\varepsilon}(t):\mathcal{H}^0_{\varepsilon}\rightarrow \mathcal{H}^0_{\varepsilon }$, defined by 
\begin{equation*}
\mathcal{Z}_{\varepsilon}(t)\Upsilon_0:=(V(t),\Psi^t).
\end{equation*}
Notice that if $(V,\Psi)$ and $(W,\Theta)$ are solutions to (\ref{decomp-v}) and (\ref{decomp-w}), respectively, then the function $(U(t),\Phi^t):=(V(t),\Psi^t)+(W(t),\Theta^t)$ is a solution to Problem {\textbf{P}$_\ep$.

The next result shows that the operators $\mathcal{Z}_{\varepsilon }$ are uniformly decaying to zero in $\mathcal{H}_{\varepsilon }$.

\begin{lemma}  \label{t:decay} 
For each $\varepsilon \in (0,1]$ and $\Upsilon_{0}=(U_{0},\Phi_{0})\in \mathcal{H}^0_{\varepsilon }$, there exists a unique global weak solution $(V,\Psi)\in C([0,\infty);\mathcal{H}^0_{\varepsilon })$ to problem (\ref{decomp-v}).
Moreover, given $R>0$, then for all $\Upsilon_0\in\mathcal{H}^0_\varepsilon$ with $\|\Upsilon_{0}\|_{\mathcal{H}^0_{\varepsilon}}\leq R$ for all $\varepsilon \in (0,1]$, there exists $\nu_0'>0 $, independent of $\varepsilon $, such that, for all $t\geq 0$, 
\begin{equation}
\|\mathcal{Z}_{\varepsilon}(t)\Upsilon_{0}\|_{\mathcal{H}^0_{\varepsilon}}\leq
Q(R)e^{-\nu_0' t}.  \label{eq:uniform-decay}
\end{equation}
\end{lemma}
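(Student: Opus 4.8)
The idea is to treat the $(V,\Psi)$ system with a single energy estimate in $\mathcal{H}^0_\varepsilon$, exploiting the fact that the decomposition (\ref{decomp-v})--(\ref{decomp-w}) has been engineered so that the nonlinear contribution to that estimate is \emph{nonnegative}. This is exactly what makes the decay rate $\varepsilon$-independent (indeed even $R$-independent), with no recourse to the smallness criterion (\ref{smallness-criteria}).

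\textbf{Well-posedness of (\ref{decomp-v}).} I would obtain $(V,\Psi)$ simply as $(V,\Psi):=(U-W,\Phi-\Theta)$, where $(U,\Phi)=\mathcal{S}_\varepsilon(t)\Upsilon_0$ is the weak solution of Problem P$_\varepsilon$ provided by Theorem \ref{t:weak-solutions} and $(W,\Theta)$ is the solution of (\ref{decomp-w}) provided by Lemma \ref{t:compact}. Subtracting (\ref{decomp-w}) from Problem P$_\varepsilon$ and recalling that $F(s)=F_0(s)-M_F s$ shows at once that $(V,\Psi)$ solves (\ref{decomp-v}) with $(V(0),\Psi^0)=\Upsilon_0$; uniqueness follows because, once $W$ is fixed, (\ref{decomp-v}) is a semilinear system in $(V,\Psi)$ whose nonlinearity $V\mapsto F_0(V+W)-F_0(W)$ is controlled exactly as the nonlinearity $F$ in the proof of Theorem \ref{t:weak-solutions} (alternatively one may solve (\ref{decomp-v}) directly by the same Galerkin scheme). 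The continuity $(V,\Psi)\in C([0,\infty);\mathcal{H}^0_\varepsilon)$ then follows from the embedding recalled in Remark \ref{r:continuity}.

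\textbf{The uniform decay estimate.} Working on the Galerkin approximations, I would multiply (\ref{decomp-v})$_1$ by $V(t)$ in $\mathbb{X}^2$ and (\ref{decomp-v})$_2$ by $\mathrm{A_W^{\alpha,\beta}}\Psi^t$ in $\mathcal{M}^0_\varepsilon$, and add. Exactly as in (\ref{qest2}) the two memory-coupling terms $\pm\langle\Psi^t,V\rangle_{\mathcal{M}^1_\varepsilon}$ cancel, and exactly as in (\ref{qest14}) assumption (\ref{mu-4}) yields
\[
-\left\langle\mathrm{T}_\varepsilon\Psi^t,\mathrm{A_W^{\alpha,\beta}}\Psi^t\right\rangle_{\mathcal{M}^0_\varepsilon}\ \ge\ \frac{\delta}{2\varepsilon}\left\|\Psi^t\right\|^2_{\mathcal{M}^1_\varepsilon}\ \ge\ \frac{\delta}{2}\left\|\Psi^t\right\|^2_{\mathcal{M}^1_\varepsilon}.
\]
The key point is that, by the choice of $M_F$ ensuring (\ref{up}), each component of $F_0$ is nondecreasing, so, writing $U=V+W$,
\[
\left\langle F_0(U)-F_0(W),V\right\rangle_{\mathbb{X}^2}=\left\langle F_0(V+W)-F_0(W),V\right\rangle_{\mathbb{X}^2}\ \ge\ 0,
\]
and this term is simply discarded. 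Finally $\omega\langle\mathrm{A_W^{0,\beta}}V,V\rangle_{\mathbb{X}^2}=\omega(\|\nabla v\|^2_{L^2(\Omega)}+\|\nabla_\Gamma v\|^2_{L^2(\Gamma)}+\beta\|v\|^2_{L^2(\Gamma)})$; since $\beta>0$ (cf.\ A2), combining the Sobolev--Poincar\'e inequality (\ref{Poincare}) with $\|\langle v\rangle_\Gamma\|^2_{L^2(\Omega)}\le(|\Omega|/|\Gamma|)\|v\|^2_{L^2(\Gamma)}$ bounds $\|v\|^2_{L^2(\Omega)}$, whence $\omega\langle\mathrm{A_W^{0,\beta}}V,V\rangle_{\mathbb{X}^2}\ge c\|V\|^2_{\mathbb{X}^2}$ for some $c>0$ depending only on $\omega,\beta,C_\Omega,|\Omega|,|\Gamma|$. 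Collecting these bounds gives, for a.e.\ $t\ge0$,
\[
\frac{\diff}{\diff t}\left\{\left\|V\right\|^2_{\mathbb{X}^2}+\left\|\Psi^t\right\|^2_{\mathcal{M}^1_\varepsilon}\right\}+\nu\left(\left\|V\right\|^2_{\mathbb{X}^2}+\left\|\Psi^t\right\|^2_{\mathcal{M}^1_\varepsilon}\right)\le 0,\qquad \nu:=\min\{2c,\delta\}>0,
\]
with $\nu$ independent of $\varepsilon$; Gr\"onwall's inequality together with $\|(V(0),\Psi^0)\|_{\mathcal{H}^0_\varepsilon}=\|\Upsilon_0\|_{\mathcal{H}^0_\varepsilon}\le R$ then yields (\ref{eq:uniform-decay}) with $\nu_0'=\nu/2$ (and one may take $Q(R)=R$).

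\textbf{Main obstacle.} There is essentially one non-routine observation: the splitting (\ref{decomp-v})--(\ref{decomp-w}) is designed precisely so that the nonlinear term tested against $V$ is nonnegative by monotonicity of $F_0$, which is what removes the need for any smallness condition on $\kappa_1,\kappa_3$ at this step; paired with this is the point that coercivity of $\mathrm{A_W^{0,\beta}}$ on $\mathbb{X}^2$ must be recovered via the Sobolev--Poincar\'e inequality, since (unlike $\mathrm{A_W^{\alpha,\beta}}$) this operator carries no zeroth-order term in the interior. Beyond this, the only care needed is to carry out the monotonicity inequality and the $s$-integration by parts for the memory term first on the Galerkin approximations and then pass to the limit, as in the proof of Theorem \ref{t:weak-solutions}.
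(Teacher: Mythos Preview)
Your proposal is correct and follows essentially the same route as the paper: test \eqref{decomp-v} against $V$ in $\mathbb{X}^2$ and $\Psi^t$ in $\mathcal{M}^1_\varepsilon$, drop the nonlinear term by monotonicity of $F_0$ (this is exactly the paper's key observation, $\langle F_0(U)-F_0(W),V\rangle_{\mathbb{X}^2}\ge 0$ with $V=U-W$), and conclude by Gr\"onwall. The paper simply points back to the computation in Lemma~\ref{weak-ball} with the right-hand constant gone; your version is slightly more explicit in recovering the coercivity $\omega\langle\mathrm{A_W^{0,\beta}}V,V\rangle_{\mathbb{X}^2}\ge c\|V\|^2_{\mathbb{X}^2}$ via the Sobolev--Poincar\'e inequality, and in noting that the smallness criterion \eqref{smallness-criteria} is not needed at this step. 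One cosmetic point: you obtain existence of $(V,\Psi)$ by subtracting the solution of \eqref{decomp-w} supplied by Lemma~\ref{t:compact}, which in the paper appears \emph{after} the present lemma; this is logically harmless (the well-posedness parts of the two lemmas are independent and both reduce to the argument for Theorem~\ref{t:weak-solutions}), but if you want to match the paper's order you can instead solve \eqref{decomp-v} directly as the paper does.
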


\begin{proof}
The existence of a global weak solution to (\ref{decomp-v}) follows as the proof of \cite[Theorem \ref{t:weak-solutions}]{Gal&Shomberg15}.
It remains to show that (\ref{eq:uniform-decay}) holds.

The proof is very similar to the proof of Lemma \ref{weak-ball} save that the assumptions \eqref{assm-3}-\eqref{assm-4} become crucial.
Indeed, the constant $C$ on the right-hand side of \eqref{fus-10} vanishes because nonlinear terms now satisfy the bound
\[
\langle F_0(U)-F_0(W),V \rangle_{\mathbb{X}^2} \ge 0
\]
as here $V=U-W$ and \eqref{up} holds.
\end{proof}

The following lemma establishes the uniform compactness of the operators $\mathcal{K}_{\varepsilon }$.

\begin{lemma}  \label{t:compact} 
For each $\varepsilon \in (0,1]$ and $\Upsilon_{0}=(U_{0},\Phi_{0})\in \mathcal{H}^0_{\varepsilon}$, there exists a unique global weak solution $(W,\Theta)\in C([0,\infty);\mathcal{H}^0_{\varepsilon})$ to problem (\ref{decomp-w}).
Moreover, given $R>0$, then for all $\Upsilon_{0}\in \mathcal{H}^0_{\varepsilon }$ with $\|\Upsilon_0\|_{\mathcal{H}^0_{\varepsilon}} \leq R$ for all $\varepsilon \in (0,1]$, there holds for all $t\geq 0$, 
\begin{equation*}
\|\mathcal{K}_{\varepsilon}(t)\Upsilon_{0}\|_{\mathcal{V}^1_{\varepsilon }}\leq Q(R),
\end{equation*}
Furthermore, the operators $K_{\varepsilon }$ are uniformly compact in $\mathcal{H}^0_{\varepsilon }$.
\end{lemma}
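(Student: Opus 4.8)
The plan is to proceed exactly as in the proof of Lemma \ref{t:to-C1}, performing the same two-level energy estimate on the system (\ref{decomp-w}) but now keeping track of the contribution of the extra forcing term $M_F U(t)$, which is harmless because $U(t)=\mathcal{S}_\ep(t)\Upsilon_0$ is uniformly bounded in $\mathbb{X}^2$ (and, after an absorbing time, in $\mathbb{V}^1$) by Lemma \ref{weak-ball} and \eqref{weak-bound}. First I would establish existence and uniqueness of a global weak solution $(W,\Theta)\in C([0,\infty);\mathcal{H}^0_\ep)$ to (\ref{decomp-w}) by the same Galerkin scheme used for Theorem \ref{t:weak-solutions}, noting that (\ref{decomp-w}) differs from Problem $\mathbf{P}_\ep$ only by the globally Lipschitz (indeed monotone, by \eqref{up}) shift $F_0$ in place of $F$ and by the given $L^\infty([0,\infty);\mathbb{X}^2)$ datum $M_F U(t)$; the continuous-dependence argument carries over verbatim. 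Then, since $(W(0),\Theta^0)=\mathbf 0$, a first energy estimate—multiply (\ref{decomp-w})$_1$ by $W$ in $\mathbb X^2$, multiply (\ref{decomp-w})$_2$ by $\mathrm{A_W^{\alpha,\beta}}\Theta^t$ in $\mathcal M^0_\ep$, add, use \eqref{operator-T-1}, monotonicity of $F_0$, and Young on the $M_F\langle U,W\rangle$ term—yields a uniform-in-$t$, uniform-in-$\ep$ bound $\|W(t)\|_{\mathbb X^2}^2+\|\Theta^t\|^2_{\mathcal M^1_\ep}\le Q(R)$.

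The second, higher-order estimate is the heart of the matter and mirrors (\ref{qest13})--(\ref{cabsball-9}) of Lemma \ref{t:to-C1}: set $K:=\mathrm{A_W^{\alpha,\beta}}W$ and $\Lambda^t:=\mathrm{A_W^{\alpha,\beta}}\Theta^t$, multiply (\ref{decomp-w})$_1$ by $K$ in $\mathbb X^2$ and the $\mathrm{A_W^{\alpha,\beta}}$-image of (\ref{decomp-w})$_2$ by $\Lambda^t$ in $\mathcal M^0_\ep$, and add. The linear terms produce $\tfrac12\tfrac{\diff}{\diff t}\{\|W\|^2_{\mathbb V^1}+\|\Lambda^t\|^2_{\mathcal M^0_\ep}\}+\omega\|K\|^2_{\mathbb X^2}+\tfrac\delta2\|\Lambda^t\|^2_{\mathcal M^0_\ep}$ after invoking the analogue of (\ref{qest14}); the term $\langle F_0(W)-M_F U,K\rangle_{\mathbb X^2}$ is estimated by the same trace/interpolation bound (\ref{qest5bis})--(\ref{qest5last}) applied to $F_0(W)$ (whose growth is the same as $F$'s), absorbing $\tfrac{\omega}{4}\|K\|^2_{\mathbb X^2}$ and leaving lower-order terms in $\|W\|_{\mathbb V^1}$, plus a harmless $M_F\langle U,K\rangle\le \tfrac{\omega}{4}\|K\|_{\mathbb X^2}^2+C_\omega\|U\|_{\mathbb X^2}^2$; the $\omega\alpha\langle w,k\rangle$ remainder is handled as in (\ref{q2-1}). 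Using (\ref{cabsball-5}) to replace $\omega\|K\|^2_{\mathbb X^2}$ by $m_1\|W\|^2_{\mathbb V^1}$ up to lower order, together with the dissipation integral $\int_t^{t+1}\|U(\tau)\|^2_{\mathbb V^1}\diff\tau\le C$ (from \eqref{fus-6}) to control the $\|W\|^2_{H^1(\Gamma)}$ prefactor, a Gr\"onwall-type inequality (Proposition \ref{GL}) and the equivalence (\ref{eqivi2}) give $\|W(t)\|^2_{\mathbb V^1}+\|\Theta^t\|^2_{\mathcal M^2_\ep}\le Q(R)$ for all $t\ge0$. Finally, to upgrade from $\mathcal M^2_\ep$ to $\mathcal K^2_\ep$—hence from $\mathcal H^1_\ep$ to $\mathcal V^1_\ep$—I apply Lemma \ref{what-1} (and its weak-space version, Remark \ref{r:what-trans}) and Lemma \ref{what-2} with $\Phi^t=\Theta^t$ and $U=W$, using the just-proved uniform bound $\|W(t)\|_{\mathbb V^1}\le\rho$ and the fact that $(W(0),\Theta^0)=\mathbf 0$, which makes the initial-data terms $\ep\|\mathrm T_\ep\Theta^0\|^2_{\mathcal M^0_\ep}$ and $\sup_{\tau\ge1}\tau\mathbb T_\ep(\tau;\Theta^0)$ vanish; this bounds $\ep\|\partial_s\Theta^t\|^2_{\mathcal M^0_\ep}$ and $\sup_{\tau\ge1}\tau\mathbb T_\ep(\tau;\Theta^t)$ by $Q(R)$, so $\|\mathcal K_\ep(t)\Upsilon_0\|_{\mathcal V^1_\ep}\le Q(R)$ for all $t\ge0$.

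Uniform compactness of $\{\mathcal K_\ep(t)\}$ in $\mathcal H^0_\ep$ then follows immediately from the compact embedding $\mathcal V^1_\ep\hookrightarrow\mathcal H^0_\ep$ (noted after the definition of $\mathcal V^1_\ep$, via \cite[Proposition 5.4]{GMPZ10}): the image of any bounded set under $\mathcal K_\ep(t)$ lies in a fixed ball of $\mathcal V^1_\ep$, which is precompact in $\mathcal H^0_\ep$. The main obstacle is the higher-order boundary estimate: ensuring the trace term $\int_\Gamma \widehat w^{\,r+1}\partial_{\mathbf n}\widehat w\,\diff\sigma$ (here with $W$ in place of $\widehat w$) is absorbed with constants independent of $\ep$ while simultaneously tracking the $M_F U$ forcing through the Gr\"onwall step; this is precisely the delicate point already met in Lemma \ref{t:to-C1}, and the restriction $r_i<\tfrac52$ from (\ref{assm-1})--(\ref{assm-2}) is what makes it work. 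Everything else is a routine transcription of earlier arguments, which is why—following the remark preceding Lemma \ref{t:decay}—only a sketch is warranted.
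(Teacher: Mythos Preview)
Your proposal is correct and takes precisely the approach the paper indicates: the paper's own proof simply says to ``appeal to Lemma \ref{t:to-C1} whereby only trivial changes are required,'' and you have spelled out those changes in detail. One small slip to fix: in the Gr\"onwall step for the higher-order estimate the prefactor coming from the boundary trace term \eqref{qest5last} involves $\|w\|^2_{H^1(\Gamma)}$ (since the nonlinearity is $F_0(W)$), so the dissipation integral you need is $\int_t^{t+1}\|W(\tau)\|^2_{\mathbb V^1}\,\diff\tau\le C$, which comes from your own first-level energy estimate on $(W,\Theta)$, not from \eqref{fus-6} for $U$.
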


\begin{proof}
Again, in light of \cite[Theorem \ref{t:weak-solutions}]{Gal&Shomberg15}, it remains to show that the operators $\mathcal{K}_{\varepsilon }$ are uniformly compact in $\mathcal{H}^0_{\varepsilon }$.

This time we appeal to Lemma \ref{t:to-C1} whereby only trivial changes are required in the proof in order to show Lemma \ref{t:compact} holds.
\end{proof}

\begin{remark}
These results---with datum contained to the absorbing set $\mathcal{B}^0_\ep$---complete the proof of Theorem \ref{t:trans-out}.
Consequently, the existence of a (finite dimensional) global attractor $\mathcal{A}_\ep$, $\ep\in(0,1]$, for $\mathcal{S}_\ep$ follows.
\end{remark}

\begin{theorem} \label{t:global-attractors}
For each $\varepsilon \in (0,1]$, the semigroup $\mathcal{S}_\ep$ admits a unique global attractor
\begin{equation*}
\mathcal{A}_{\varepsilon } = \omega (\mathcal{B}^0_{\varepsilon}) := \bigcap_{s\geq 0}{\overline{\bigcup_{t\geq s}\mathcal{S}_{\varepsilon }(t)\mathcal{B}^0_{\varepsilon}}}^{\mathcal{H}^0_{\varepsilon }}
\end{equation*}
in $\mathcal{H}^0_{\varepsilon }$. 
Moreover, the following hold:
\begin{description}

\item[1] For each $t\geq 0$, $\mathcal{S}_{\varepsilon }(t)\mathcal{A}_{\varepsilon } = \mathcal{A}_{\varepsilon }$, and

\item[2] For every nonempty bounded subset $B$ of $\mathcal{H}^0_{\varepsilon }$,
\begin{equation}
\lim_{t\rightarrow \infty }\mathrm{dist}_{\mathcal{H}^0_{\varepsilon
}}(\mathcal{S}_{\varepsilon }(t)B,\mathcal{A}_{\varepsilon })=0.  \label{gl_attraction}
\end{equation}

\item[3] The global attractor $\mathcal{A}_{\varepsilon }$ is bounded in $
\mathcal{V}^1_{\varepsilon }$ (hence, compact in $\mathcal{H}^0_\ep$) and trajectories on $\mathcal{A}_{\varepsilon }$ are strong solutions (in the sense of Definitions \ref{d:strong-solution}).

\item[4] The fractal dimension is bounded, uniformly in $\ep$, i.e.,
\[
\dim_{\mathrm{F}}(\mathcal{A}_\ep,\mathcal{H}^0_\ep) \le \dim_{\mathrm{F}}(\mathfrak{M}_\ep,\mathcal{H}^0_\ep) \le C < \infty,
\]
for some constant $C>0$ independent of $\ep.$
\end{description}
\end{theorem}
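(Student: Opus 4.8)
The plan is to invoke the classical existence criterion for a global attractor (see, e.g., \cite{Temam88,Babin&Vishik92,Milani&Koksch05}): a strongly continuous semigroup on a complete metric space that admits a bounded absorbing set and is asymptotically compact possesses a unique global attractor, equal to the $\omega$-limit set of the absorbing set, compact, fully invariant, and attracting every bounded set. First I would note that all the ingredients are already in hand for each fixed $\ep\in(0,1]$: the semigroup $\mathcal{S}_\ep(t)$ is strongly continuous on $\mathcal{H}^0_\ep$ (indeed Lipschitz, by \eqref{cde}); the set $\mathcal{B}^0_\ep$ from Lemma \ref{weak-ball} is bounded, closed, positively invariant, and absorbing; and asymptotic compactness follows from Theorem \ref{t:trans-out} combined with the compact embedding $\mathcal{V}^1_\ep\hookrightarrow\mathcal{H}^0_\ep$ --- since $\mathrm{dist}_{\mathcal{H}^0_\ep}(\mathcal{S}_\ep(t)B,\mathcal{B}^1_\ep)\le Q(\|B\|_{\mathcal{H}^0_\ep})e^{-\varrho_1 t}$ with $\mathcal{B}^1_\ep$ precompact in $\mathcal{H}^0_\ep$, the orbit of a bounded set is eventually confined to an arbitrarily small neighborhood of a compact set. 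Equivalently, one may use the splitting $\mathcal{S}_\ep(t)=\mathcal{Z}_\ep(t)+\mathcal{K}_\ep(t)$ of Lemmas \ref{t:decay}--\ref{t:compact}. The abstract theorem then produces $\mathcal{A}_\ep=\omega(\mathcal{B}^0_\ep)$ with the stated formula, its uniqueness, and items~1 and~2.

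For item~3 the key step I would carry out is the promotion of the $\mathcal{H}^0_\ep$-attraction toward $\mathcal{B}^1_\ep$ into genuine $\mathcal{V}^1_\ep$-boundedness of $\mathcal{A}_\ep$. Using full invariance, any $a\in\mathcal{A}_\ep$ may be written, for every $t\ge 0$, as $a=\mathcal{S}_\ep(t)a_t=\mathcal{Z}_\ep(t)a_t+\mathcal{K}_\ep(t)a_t$ with $a_t\in\mathcal{A}_\ep$, hence $\|a_t\|_{\mathcal{H}^0_\ep}\le\|\mathcal{A}_\ep\|_{\mathcal{H}^0_\ep}\le\sqrt{P_0+1}$ (because $\mathcal{A}_\ep\subseteq\mathcal{B}^0_\ep$). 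By Lemma \ref{t:decay}, $\mathcal{Z}_\ep(t)a_t\to 0$ in $\mathcal{H}^0_\ep$ as $t\to\infty$, while by Lemma \ref{t:compact} the family $\{\mathcal{K}_\ep(t)a_t\}_{t\ge0}$ is bounded in $\mathcal{V}^1_\ep$ by $Q(\sqrt{P_0+1})$; since the graph norm of $\mathcal{V}^1_\ep=\mathbb{V}^1\times\mathcal{K}^2_\ep$ is lower semicontinuous along $\mathcal{H}^0_\ep$-convergent sequences, $\|a\|_{\mathcal{V}^1_\ep}\le Q(\sqrt{P_0+1})$, so $\mathcal{A}_\ep$ is bounded in $\mathcal{V}^1_\ep$ uniformly in $\ep$, hence compact in $\mathcal{H}^0_\ep$. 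Since $\mathcal{K}^2_\ep\hookrightarrow\mathcal{M}^2_\ep$, the data on $\mathcal{A}_\ep$ lie in $\mathcal{H}^1_\ep=\mathbb{V}^1\times\mathcal{M}^2_\ep$, so Theorem \ref{t:strong-solutions} (together with the uniform higher-order bounds behind Lemma \ref{t:to-C1}) shows that every complete trajectory lying on $\mathcal{A}_\ep$ has the regularity of Definition \ref{d:strong-solution}, i.e., consists of strong solutions.

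Finally, for item~4 I would argue that, after trivially enlarging $\mathcal{B}^1_\ep$, item~3 gives $\mathcal{A}_\ep\subseteq\mathcal{B}^1_\ep$; since $\mathfrak{M}_\ep$ attracts bounded subsets of $\mathcal{B}^1_\ep$ exponentially (Theorem \ref{t:exponential-attractors}(iii), whose whole-space basin is obtained via Theorem \ref{t:trans-out} and the transitivity of exponential attraction in Proposition \ref{t:exp-attr}) and $\mathcal{A}_\ep$ is invariant, we get $\mathrm{dist}_{\mathcal{H}^0_\ep}(\mathcal{A}_\ep,\mathfrak{M}_\ep)=\mathrm{dist}_{\mathcal{H}^0_\ep}(\mathcal{S}_\ep(t)\mathcal{A}_\ep,\mathfrak{M}_\ep)\to0$, whence $\mathcal{A}_\ep\subseteq\overline{\mathfrak{M}_\ep}=\mathfrak{M}_\ep$. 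Monotonicity of the fractal dimension under inclusion together with the uniform bound in Theorem \ref{t:exponential-attractors}(ii) then yields $\dim_{\mathrm{F}}(\mathcal{A}_\ep,\mathcal{H}^0_\ep)\le\dim_{\mathrm{F}}(\mathfrak{M}_\ep,\mathcal{H}^0_\ep)\le C$ with $C$ independent of $\ep$. The hard part will be item~3 --- extracting $\mathcal{V}^1_\ep$-boundedness of $\mathcal{A}_\ep$ from mere $\mathcal{H}^0_\ep$-level attraction despite the non-compactness of the natural embedding $\mathcal{M}^1_\ep\hookrightarrow\mathcal{M}^0_\ep$; this is exactly where the compactness of $\mathcal{V}^1_\ep\hookrightarrow\mathcal{H}^0_\ep$ and the lower semicontinuity of the $\mathcal{K}^2_\ep$ graph norm are indispensable. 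The remaining items are routine once Theorems \ref{t:exponential-attractors} and \ref{t:trans-out} are available.
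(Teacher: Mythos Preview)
Your proposal is correct and follows essentially the same approach as the paper, which proves the theorem by citing the standard abstract existence criterion (\cite{Babin&Vishik92,Temam88}) together with Lemma~\ref{weak-ball}, Lemma~\ref{t:to-C1}, and Theorem~\ref{t:trans-out}. Your write-up is considerably more detailed than the paper's one-line proof; in particular, for item~3 your decomposition-plus-lower-semicontinuity argument is valid but more elaborate than necessary, since $\mathcal{B}^1_\ep$ is already compact (hence closed) in $\mathcal{H}^0_\ep$ by the embedding $\mathcal{V}^1_\ep\hookrightarrow\mathcal{H}^0_\ep$, so invariance of $\mathcal{A}_\ep$ combined with $\mathrm{dist}_{\mathcal{H}^0_\ep}(\mathcal{A}_\ep,\mathcal{B}^1_\ep)=0$ gives $\mathcal{A}_\ep\subseteq\mathcal{B}^1_\ep$ directly.
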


\begin{proof}
The existence and boundedness of the global attractor for Problem {\textbf{P}}$_0$ can be found in \cite[Theorem 2.3]{Gal12-2} and the references therein.
Thus, it suffices to show the result for the perturbation Problem {\textbf{P}}$_\ep$, with $\ep\in(0,1].$
By referring to the standard literature (cf. e.g. \cite{Babin&Vishik92,Temam88}) and Lemma \ref{weak-ball}, Lemma \ref{t:to-C1}, and Theorem \ref{t:trans-out}, the proof is complete.
\end{proof}

\subsection{Robustness and H\"{o}lder continuity of the exponential attractors}

What remains in this section is to show that the family of exponential attractors is robust, or H\"{o}lder continuous with respect to the perturbation parameter $\ep$. 
As a preliminary step, we follow, for example \cite[see p. 177]{CPS05} among others, and define the so-called {\em{canonical extension}} map, $\mathcal{E}:\mathbb{X}^2 \rightarrow \mathfrak{M}_\varepsilon$, by
\begin{equation}
\label{canonical-extension-map-2}
\mathcal{E}(U) = 0.
\end{equation}
With this, define the {\em{lift}} mapping, $\mathcal{L}\mathbb{X}^2 \rightarrow \mathcal{H}^0_\varepsilon$, by
\begin{equation}
\label{lift}
\mathcal{L}(U) = (U,\mathcal{E}(U)) = (U,0).
\end{equation}

\begin{theorem}
\label{cont_att}Let the assumptions of Theorem \ref{t:exponential-attractors} be satisfied. 
For each $\varepsilon$, the semigroup of solution operators, $\mathcal{S}_\varepsilon(t)$ admits an exponential attractor $\mathfrak{M}_\varepsilon$ in which the family of compact sets $(\mathbb{M}_{\varepsilon }) _{\varepsilon \in \left[ 0,1\right] }$ defined by 
\begin{equation}\label{global-attractors-family}
\mathbb{M}_\varepsilon := \left\{ \begin{array}{ll}
\mathcal{L}\mathfrak{M}_0 & \text{for}\quad\varepsilon=0 \\ 
\mathfrak{M}_\varepsilon & \text{for}\quad\varepsilon\in(0,1]
\end{array} \right.
\end{equation}
is H\"{o}lder continuous for every $\varepsilon \in \left[ 0,1\right] $, i.e., there exist constants $\Lambda >0,$ $\tau \in (0,1/2]$ independent of $\varepsilon $, such that, for every $0\leq \varepsilon _{2}<\varepsilon _{1}\leq 1$, the symmetric Hausdorff distance satisfies
\begin{equation}  \label{symm-diff}
\mathrm{dist}_{\mathcal{H}^0_{\varepsilon _{1}}}^{\mathrm{sym}}(\mathfrak{M}_{\varepsilon _{1}},\mathfrak{M}_{\varepsilon _{2}}) \leq \Lambda (\varepsilon _{1}-\varepsilon _{2})^{\tau }.
\end{equation}
\end{theorem}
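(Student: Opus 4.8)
The plan is to apply the abstract robustness machinery of \cite{GGMP05} (see also \cite[Theorem 4.4]{GMPZ10}), which turns the collection of ingredients already assembled in Sections 3.2--3.3 into the Hölder estimate \eqref{symm-diff}. The abstract scheme requires, for a family of semigroups $\mathcal{S}_\ep$ on the spaces $\mathcal{H}^0_\ep$ with exponential attractors $\mathfrak{M}_\ep$ constructed by the method of Proposition \ref{abstract1}: (C1) a family of bounded absorbing sets $\mathcal{B}^1_\ep$ in $\mathcal{V}^1_\ep$ with uniform entry time (Lemma \ref{t:to-C1}); (C2) the smoothing/contraction splitting with constants uniform in $\ep$ (Lemma \ref{t:to-C2}); (C3) the Lipschitz-in-time property (Lemma \ref{t:to-C3}); the Lipschitz continuity of each $\mathcal{S}_\ep(t)$ on $\mathcal{B}^1_\ep$ uniformly in $\ep$ and $t$ on compact intervals (from the continuous dependence estimates \eqref{cde} and \eqref{cde-0}); and, crucially, the comparison estimate (C5): a bound of the form $\|\mathcal{S}_\ep(t)\mathcal{L}\Upsilon_0 - \mathcal{L}\mathcal{S}_0(t)\Pi\Upsilon_0\|_{\mathcal{H}^0_\ep} \le C\ep^p$ for $t$ in a fixed compact interval and $\Upsilon_0$ in the absorbing set, where $\Pi:\mathcal{H}^0_\ep \to \mathbb{X}^2$ is the natural projection $(U,\Phi)\mapsto U$ and $\mathcal{L}$ is the lift \eqref{lift}.

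The first step is therefore to record that (C1)--(C3) together with the uniform Lipschitz properties have all been verified in the preceding subsection, with all constants independent of $\ep$; this is immediate from Lemmas \ref{t:to-C1}, \ref{t:to-C2}, \ref{t:to-C3} and Theorem \ref{t:weak-solutions}. The second and substantive step is to prove (C5). Fix $T>0$, take $\Upsilon_0=(U_0,\Phi_0)\in\mathcal{B}^1_\ep$, write $(U,\Phi)=\mathcal{S}_\ep(t)\Upsilon_0$ for the solution of Problem P$_\ep$ and $\bar U = \mathcal{S}_0(t)U_0$ for the solution of Problem P$_0$, and set $\theta = U-\bar U$. Subtracting \eqref{problem0-1}--\eqref{problem0-2} from \eqref{problemp-1}--\eqref{problemp-2} one finds that $\theta$ solves a linear equation driven by the difference of the nonlinear terms $F(U)-F(\bar U)$ (controlled by the local Lipschitz bound for $F$ on the absorbing ball, as in \eqref{func-1}--\eqref{func-2}), by the memory term $\langle \Phi^t,\cdot\rangle_{\mathcal{M}^1_\ep}$, and by the defect $\alpha(1-\omega)U - \alpha\int_0^\infty\mu_\ep(s)\eta^t(s)\diff s$ (and its boundary analogue) created by replacing the $\ep=0$ lower-order terms with their memory counterparts. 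Testing the $\theta$-equation with $\theta$ in $\mathbb{X}^2$ and testing the $\Phi$-equation with $\Phi^t$ in $\mathcal{M}^1_\ep$, exactly as in the proof of Lemma \ref{weak-ball}, gives a Gronwall inequality
\begin{equation*}
\frac{\diff}{\diff t}\left(\|\theta\|^2_{\mathbb{X}^2}+\|\Phi^t\|^2_{\mathcal{M}^1_\ep}\right) \le C\left(\|\theta\|^2_{\mathbb{X}^2}+\|\Phi^t\|^2_{\mathcal{M}^1_\ep}\right) + C\|\Phi^t\|^2_{\mathcal{M}^1_\ep},
\end{equation*}
where the last term comes from estimating the memory-induced defect terms (via Young's inequality and $\|\mu_\ep\|_{L^1}=\|\mu\|_{L^1}$) by $\|\Phi^t\|_{\mathcal{M}^1_\ep}$ times lower-order quantities that are uniformly bounded on the absorbing set. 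Since $\mathcal{L}$ sets the memory component of the initial datum to zero, $\Phi^0=0$, so Lemma \ref{t:Phi-decay-1} (or rather its compact-interval refinement in the Corollary following it) yields $\|\Phi^t\|^2_{\mathcal{M}^1_\ep}\le C(\widetilde P_0)\ep$ for all $t\in[0,T]$; feeding this into the Gronwall inequality and integrating from $0$ (where $\theta(0)=0$) gives $\|\theta(t)\|^2_{\mathbb{X}^2}+\|\Phi^t\|^2_{\mathcal{M}^1_\ep}\le C(T)\ep$, i.e. (C5) holds with $p=1/2$.

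The final step is bookkeeping: invoke the abstract theorem of \cite{GGMP05}/\cite[Theorem 4.4]{GMPZ10}, whose hypotheses are now all met with $\ep$-uniform constants, to conclude \eqref{symm-diff} with some $\Lambda>0$ and $\tau\in(0,1/2]$ (the exponent $\tau$ being $p$ divided by a fixed constant coming from the abstract scheme, reflecting the interplay of the contraction rate $\ell^*$ from Lemma \ref{t:to-C2} and the comparison exponent $p$). One should also note the one-sided Hölder bounds between any two positive parameters $0<\ep_2<\ep_1\le 1$ follow the same way, using the embedding $\mathcal{M}^1_{\ep_1}\hookrightarrow\mathcal{M}^1_{\ep_2}$ recorded after \eqref{sc-2} and the analogous comparison estimate between $\mathcal{S}_{\ep_1}$ and $\mathcal{S}_{\ep_2}$ (comparing against the common limit problem suffices by transitivity). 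The main obstacle is the derivation and uniform-in-$\ep$ control of (C5): one must be careful that every constant arising when estimating $F(U)-F(\bar U)$ and the memory defect terms depends only on the radius $\widetilde P_0$ of the absorbing set and on $T$, not on $\ep$, and that the crucial gain of a power of $\ep$ comes exclusively and legitimately from the decay estimate for $\|\Phi^t\|_{\mathcal{M}^1_\ep}$ with zero initial history — this is precisely the point where the lift $\mathcal{L}$ setting the memory variable to zero is used, and where assumption \eqref{mu-4} (hence Corollary \ref{t:operator-T-1}) enters decisively.
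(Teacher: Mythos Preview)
Your overall architecture is right---verify (C1)--(C3) uniformly in $\ep$ (done in Lemmas \ref{t:to-C1}--\ref{t:to-C3}) and then feed a comparison estimate into the abstract robustness theorem---but there are two concrete gaps.

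\textbf{The (C5) argument is internally inconsistent.} You set $(U,\Phi)=\mathcal{S}_\ep(t)\Upsilon_0$ with $\Upsilon_0=(U_0,\Phi_0)\in\mathcal{B}^1_\ep$, so $\Phi^0=\Phi_0$, yet later assert ``$\Phi^0=0$ since $\mathcal{L}$ sets the memory component to zero.'' The lift acts on the \emph{target} $\mathcal{S}_0(t)\mathbb{P}\Upsilon_0$, not on the initial datum of $\mathcal{S}_\ep$; the memory component you must control is $\Phi^t$ with $\Phi^0=\Phi_0\neq 0$ in general. The paper fixes this by introducing an \emph{auxiliary} memory variable $\Phi^t$ driven by the P$_0$ solution $U$ with the \emph{same} initial datum $\Phi_0$, so that the difference $(\widehat U-U,\widehat\Phi-\Phi)$ starts from zero and the forcing is $\int_0^\infty\mu_\ep(s)\mathrm{A_W^{\alpha,\beta}}\Phi^t(s)\,\diff s$; then Lemma \ref{t:Phi-decay-1} gives $\|\Phi^t\|^2_{\mathcal{M}^1_\ep}\le\|\Phi_0\|^2 e^{-\delta t/2\ep}+C\ep$, and on the fixed window $[t^*,2t^*]$ the exponential term is $o(\ep)$. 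Your conclusion is salvageable, but the route as written does not reach it.

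\textbf{The transitivity shortcut for $0<\ep_2<\ep_1$ does not give \eqref{symm-diff}.} From $\mathrm{dist}(\mathfrak{M}_{\ep_i},\mathfrak{M}_0)\le C\ep_i^\tau$ one only gets $\mathrm{dist}(\mathfrak{M}_{\ep_1},\mathfrak{M}_{\ep_2})\le C(\ep_1^\tau+\ep_2^\tau)$, which is not bounded by $C(\ep_1-\ep_2)^\tau$ (take $\ep_1=\ep_2$). The paper therefore proves a separate hypothesis (C6): a direct estimate $\|\mathcal{S}_{\ep_1}(t)\Upsilon_0-\mathcal{S}_{\ep_2}(t)\Upsilon_0\|_{\mathcal{H}^0_{\ep_1}}\le\Lambda_2(\ep_1-\ep_2)^{1/2}$ for $\Upsilon_0\in\mathcal{B}^1_{\ep_1}$, obtained by writing the difference system with forcing $\int_0^\infty(\mu_{\ep_2}(s)-\mu_{\ep_1}(s))\mathrm{A_W^{\alpha,\beta}}\widetilde\Psi^t(s)\,\diff s$ and running a Gr\"{o}nwall argument in which the kernel difference produces the factor $(\ep_1-\ep_2)$. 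This step is genuinely missing from your plan.
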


\begin{remark}
The {\em{symmetric Hausdorff distance}} between two subsets $A,B$ of a Banach space $X$ is defined as 
\begin{align}
\mathrm{dist}_{X}^{\mathrm{sym}}(A,B) := \max\left\{ \mathrm{dist}_{X}(A,B),\mathrm{dist}_{X}(B,A) \right\}.  \notag
\end{align}
More precisely, the condition given in (\ref{symm-diff}) implies the family of attractors is both upper- and lower-semicontinuous (thus, continuous) at each value of the perturbation parameter $\varepsilon\in[0,1)$. 
\end{remark}

In order to prove Theorem \ref{cont_att} we will develop the main assumptions of the abstract results found in the seminal works \cite{GGMP05,GMPZ10}.
As in Proposition \ref{abstract1} above, the assumptions suited specifically for our needs appear in \cite[(H2) and (H3) of Theorem A.2]{CPS06}.

As above, the number $L>0$ shown below is used to denote the (local) Lipschitz constant of the mapping $F:\mathbb{V}^1\rightarrow\mathbb{X}^2$.

\begin{proposition}  \label{abstract2}
Let the assumptions of Proposition \ref{abstract1} be
satisfied. In addition, assume the following:

\begin{enumerate}
\item[(C4)] The canonical extension map $\mathcal{E}_{\mid\mathcal{B}_{0}^{1}}:\mathbb{X}^{2} \rightarrow \mathcal{H}^0_\varepsilon$ given by (\ref{canonical-extension-map-2}) is Lipschitz continuous.

\item[(C5)] There is a constant $\Lambda_1=\Lambda_1(L,\Omega ,t^{\ast })>0$ such that, for all $t\in \lbrack t^{\ast },2t^{\ast }]$ and for all $\Upsilon_{0}=(U_{0},\Phi_0)\in\mathcal{B} _{\varepsilon }^{1}$,
\begin{equation}
\left\Vert \mathcal{S}_{\varepsilon }(t)\Upsilon_0 - \mathcal{LS}_{0}(t)\mathbb{P} \Upsilon_{0} \right\Vert _{\mathcal{H}^0_{\varepsilon }} \leq \Lambda_1\sqrt{\varepsilon }.
\label{robust-part}
\end{equation}
Here, $\mathbb{P} :\mathcal{H}^0_{\varepsilon }\rightarrow \mathcal{H}^0_{0}$ denotes the projection defined by, for all $\Upsilon = (U,\Phi)\in \mathcal{H}^0_{\varepsilon }$,
\begin{equation*}
\mathbb{P} \Upsilon = U.
\end{equation*}

\item[(C6)] There is a constant $\Lambda_{2}=\Lambda_{2}(L,\Omega ,t^{\ast })>0$ such that, for all $t\in \lbrack t^{\ast },2t^{\ast }]$, $\Upsilon_0 = (U_{0},\Phi_{0})\in \mathcal{B}_{\varepsilon _{1}}^{1}\subset \mathcal{H}^1_{\varepsilon_1}$
and, for all $0<\varepsilon_2<\varepsilon_1\leq 1$, 
\begin{equation}
\left\Vert \mathcal{S}_{\varepsilon _{1}}(t)\Upsilon_{0} - \mathcal{S}_{\varepsilon _{2}}(t)\Upsilon_{0} \right\Vert _{\mathcal{H}^0_{\varepsilon_1}} \leq
\Lambda_{2}(\varepsilon_1-\varepsilon_2)^{1/2}.  \label{Holder-part}
\end{equation}
\end{enumerate}

Then, the family of exponential attractors $(\mathbb{M}_{\varepsilon})_{\varepsilon \in \left[ 0,1\right] }$ is H\"{o}lder continuous for every $\varepsilon \in \left[ 0,1\right] $ in the sense of Theorem \ref{cont_att}.
\end{proposition}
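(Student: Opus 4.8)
The plan is to obtain Proposition \ref{abstract2} by invoking the abstract construction of robust exponential attractors of \cite{GGMP05,GMPZ10} (in the form of \cite[Theorem A.2]{CPS06}) and checking that the hypotheses (C1)--(C3) of Proposition \ref{abstract1}, augmented by the new assumptions (C4)--(C6), are precisely the input that machinery requires, with all constants uniform in $\varepsilon$. Recall from the proof of Proposition \ref{abstract1} that, granted (C1)--(C3), the exponential attractor $\mathfrak{M}_\varepsilon$ is realized on the positively invariant absorbing set $\mathcal{B}^1_\varepsilon$ as the closure of a countable union of iterates $\mathcal{S}_\varepsilon(nt^*)E^n_\varepsilon$, where $E^0_\varepsilon=\mathcal{B}^1_\varepsilon$ and $E^{n+1}_\varepsilon$ is a finite $\alpha^*$-net subordinate to the splitting $\mathcal{S}_\varepsilon(t^*)=L_\varepsilon+R_\varepsilon$; the contraction factor $\alpha^*<\tfrac{1}{2}$ from \eqref{difference-decomposition-L} drives the geometric control of the accumulated errors, while the smoothing bound \eqref{difference-decomposition-K} into the compact embedding $\mathcal{V}^1_\varepsilon\hookrightarrow\mathcal{H}^0_\varepsilon$ yields compactness and the $\varepsilon$-uniform finite fractal dimension of Theorem \ref{t:exponential-attractors}(ii).

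For the continuity statement, fix $0\le\varepsilon_2<\varepsilon_1\le1$. First I would record that the phase spaces are nested: by \cite[Lemma 5.1]{GMPZ10} there is a continuous embedding $\mathcal{M}^1_{\varepsilon_1}\hookrightarrow\mathcal{M}^1_{\varepsilon_2}$, hence $\mathcal{H}^0_{\varepsilon_1}\hookrightarrow\mathcal{H}^0_{\varepsilon_2}$, with embedding norm bounded uniformly, so that the Hausdorff semidistances appearing in \eqref{symm-diff} may all be measured in the single metric of $\mathcal{H}^0_{\varepsilon_1}$; when $\varepsilon_2=0$ this role is played instead by the lift $\mathcal{L}$ and projection $\mathbb{P}$ of \eqref{lift}, and (C4) guarantees $\mathcal{L}$ (equivalently the canonical extension $\mathcal{E}$ of \eqref{canonical-extension-map-2}) is Lipschitz, so that $\mathbb{M}_0=\mathcal{L}\mathfrak{M}_0$ is a legitimate compact subset of $\mathcal{H}^0_\varepsilon$. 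The heart of the argument is then the standard discrete iteration: combining the one-step closeness on $[t^*,2t^*]$ furnished by (C6) --- respectively by (C5) in the limit case, i.e. the robustness estimate \eqref{robust-intro} --- with the Lipschitz dependence on data \eqref{cde}, the Lipschitz-in-time bound (C3), and the decomposition (C2), one shows inductively that the $n$-th generations of the two constructions differ, in $\mathcal{H}^0_{\varepsilon_1}$, by at most $Cq^n+C\Lambda(\varepsilon_1-\varepsilon_2)^{1/2}$ for some fixed $q\in(0,1)$ (after summing the geometric series, which converges since $q<1$). Optimizing the number $N$ of generations so that $q^N\sim(\varepsilon_1-\varepsilon_2)^{1/2}$, i.e. $N\sim\log_{1/q}\!\big(1/(\varepsilon_1-\varepsilon_2)\big)$, interpolates the two terms and yields a bound of the form $\Lambda(\varepsilon_1-\varepsilon_2)^\tau$ with an exponent $\tau\in(0,\tfrac{1}{2}]$ depending only on $q$ (hence only on $\alpha^*$), uniformly in $\varepsilon_1,\varepsilon_2$; passing to closures preserves the estimate, giving \eqref{symm-diff}. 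That $\mathfrak{M}_\varepsilon$ built on $\mathcal{B}^1_\varepsilon$ is indeed the object whose attraction property is asserted in Theorem \ref{t:exponential-attractors}(iii) follows, as in Remark \ref{rem_att}, from transitivity of exponential attraction (Proposition \ref{t:exp-attr}) together with Theorem \ref{t:trans-out}.

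The delicate point --- and the one I expect to be the main obstacle --- is the $\varepsilon$-dependence of the ambient spaces themselves: the memory factor $\mathcal{M}^1_\varepsilon=L^2_{\mu_\varepsilon}(\mathbb{R}_+;\mathbb{V}^1)$ carries the weight $\mu_\varepsilon(s)=\varepsilon^{-2}\mu(s/\varepsilon)$, which degenerates as $\varepsilon\to0$ (with $\mathcal{M}^1_0=\{0\}$), so there is no fixed Banach space in which all the $\mathfrak{M}_\varepsilon$ live. It is precisely the nesting $\mathcal{M}^1_{\varepsilon_1}\hookrightarrow\mathcal{M}^1_{\varepsilon_2}$ and the $\varepsilon=0$ devices $\mathcal{L},\mathbb{P}$ that render the comparison meaningful, and one must verify at every step of the iteration that the constants $\alpha^*$, $\Lambda^*$, $t^*$, the Lipschitz constants in (C3), (C4), \eqref{cde}, and --- crucially --- $\Lambda_1,\Lambda_2$ in (C5), (C6) are all independent of $\varepsilon$; this uniformity is exactly what was arranged in Lemmas \ref{weak-ball}, \ref{t:to-C1}, \ref{t:to-C2}, \ref{t:to-C3}, and it is what ultimately produces the $\sqrt{\varepsilon}$ in \eqref{robust-part} and \eqref{Holder-part} (cf. the $C\varepsilon$ term in Lemma \ref{t:Phi-decay-1}). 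Once the uniformity is granted, the remaining work is the bookkeeping of the geometric series and the logarithmic choice of $N$ above, which is routine; so the genuine difficulty is organizational --- tracking $\varepsilon$-uniform constants across the varying metrics --- rather than any new a priori estimate.
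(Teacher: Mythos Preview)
Your proposal is correct and aligns with the paper's approach: the paper does not give an independent proof of Proposition \ref{abstract2} but rather states it as an abstract result imported from \cite{GGMP05,GMPZ10} (specifically \cite[Theorem A.2]{CPS06} for (C4)--(C5), supplemented by \cite[(H7) of Theorem 4.4]{GMPZ10} and \cite[(P4) of Theorem 2.1]{MPZ07} for (C6)), and then proceeds to verify the hypotheses (C4)--(C6) in subsequent lemmas. Your sketch of the underlying discrete iteration, the geometric-series bookkeeping, and the emphasis on $\varepsilon$-uniformity of all constants across the varying weighted spaces accurately reflects what that abstract machinery does; the paper simply takes this machinery as a black box.
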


\begin{remark}
The condition (C6) below does not appear in \cite{CPS06}, but rather we now borrow \cite[(H7) of Theorem 4.4]{GMPZ10}, cf. also \cite[(P4) of Theorem 2.1]{MPZ07}.
\end{remark}

\begin{lemma}
Condition (C4) holds.
\end{lemma}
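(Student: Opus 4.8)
The claim is that the canonical extension map $\mathcal{E}_{\mid\mathcal{B}^1_0}:\mathbb{X}^2\to\mathcal{H}^0_\varepsilon$ defined by $\mathcal{E}(U)=0$ is Lipschitz continuous. The plan is essentially to observe that this is immediate from the explicit formula. Since $\mathcal{E}(U)=0$ for every $U$, we have, for any $U_1,U_2\in\mathbb{X}^2$ (in particular on $\mathcal{B}^1_0$),
\[
\|\mathcal{E}(U_1)-\mathcal{E}(U_2)\|_{\mathcal{H}^0_\varepsilon}=\|0-0\|_{\mathcal{H}^0_\varepsilon}=0\le \|U_1-U_2\|_{\mathbb{X}^2},
\]
so the Lipschitz constant may be taken to be $0$ (or any nonnegative number), and in particular $\mathcal{E}$ is Lipschitz continuous. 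The only thing worth spelling out is that the target norm is well-defined: recalling $\mathcal{H}^0_\varepsilon=\mathbb{X}^2\times\mathcal{M}^1_\varepsilon$ with $\|(U,\Phi)\|^2_{\mathcal{H}^0_\varepsilon}=\|U\|^2_{\mathbb{X}^2}+\|\Phi\|^2_{\mathcal{M}^1_\varepsilon}$, and that $0\in\mathcal{M}^1_\varepsilon$, the zero element has norm zero in the memory component; since $\mathcal{E}(U)$ is being regarded as the memory component of the lift $\mathcal{L}(U)=(U,\mathcal{E}(U))=(U,0)$, the relevant norm is simply $\|\mathcal{E}(U)\|_{\mathcal{M}^1_\varepsilon}=0$.

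There is no real obstacle here; the lemma is a triviality recorded for bookkeeping so that hypothesis (C4) of Proposition~\ref{abstract2} can later be invoked verbatim. If one wants to be scrupulous about the case $\varepsilon=0$, note that $\mathcal{M}^1_0=\{0\}$ and $\mathcal{H}^0_0=\mathbb{X}^2$, so that $\mathcal{E}$ maps into the one-point space $\{0\}$ and is trivially Lipschitz there as well; but the statement as used concerns the comparison between $\mathcal{H}^0_0$ and $\mathcal{H}^0_\varepsilon$ for $\varepsilon\in(0,1]$, where the argument above applies unchanged. Hence $\mathcal{E}$ is (globally, hence locally on $\mathcal{B}^1_0$) Lipschitz continuous with Lipschitz constant $0$, which completes the proof.
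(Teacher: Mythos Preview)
Your proof is correct and takes essentially the same approach as the paper, which simply observes that since $\mathcal{E}$ is defined to be the zero map, the Lipschitz condition is vacuously satisfied. You have just written out the triviality in more detail than the paper does.
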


\begin{proof}
Based on the definition of $\mathcal{E}$ given in (\ref{canonical-extension-map-2}), the result is vacuously true.
\end{proof}

The following lemma proves condition (C5) of Proposition \ref{abstract2}. 
It shows that the difference between the semigroups $\mathcal{S}_{\varepsilon}(t)$ and the lifted limit semigroup $\mathcal{LS}_{0}(t)$ in $\mathcal{H}^0_{\varepsilon }$, on finite time intervals, is of order $\varepsilon^{1/2}$.

\begin{lemma}
Let $T>0$. For all $\varepsilon\in(0,1]$, $\omega\in(0,1)$ and $\Upsilon_0=(U_0,\Phi_0)\in\mathcal{H}^0_\varepsilon$ such that $\|\Upsilon_0\|_{\mathcal{H}^0_\varepsilon} \leq R$ for all $\varepsilon\in(0,1]$, there exists a positive constant $C(T)$, independent of $\varepsilon$, but depending on $\omega$ and $T$, in which, for all $t\geq [0,T]$, 
\begin{equation}
\label{C5-key-1}
\left\| \mathcal{S}_\varepsilon(t)\Upsilon_0 - \mathcal{LS}_0(t) \mathbb{P} \Upsilon_0 \right\|_{\mathcal{H}^0_\varepsilon} \leq C(T)\varepsilon^{1/2}.
\end{equation}
\end{lemma}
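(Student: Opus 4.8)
The plan is to set $U(t)$ to be the first component of $\mathcal{S}_\varepsilon(t)\Upsilon_0$ and $\overline{U}(t) := \mathcal{S}_0(t)\mathbb{P}\Upsilon_0$, so that $\mathcal{LS}_0(t)\mathbb{P}\Upsilon_0 = (\overline{U}(t),0)$, and thus
\[
\left\| \mathcal{S}_\varepsilon(t)\Upsilon_0 - \mathcal{LS}_0(t)\mathbb{P}\Upsilon_0 \right\|_{\mathcal{H}^0_\varepsilon}^2 = \| U(t)-\overline{U}(t) \|_{\mathbb{X}^2}^2 + \| \Phi^t \|_{\mathcal{M}^1_\varepsilon}^2.
\]
The memory term is already under control: since $\Phi^0 = \Phi_0$ is bounded uniformly in $\varepsilon$ on the set $\{\|\Upsilon_0\|_{\mathcal{H}^0_\varepsilon}\le R\}$, an application of Lemma \ref{t:Phi-decay-1} (or rather the version of the estimate derived directly from \eqref{Phi-decay-6} without invoking the strong bound \eqref{strong-bound}, since here $\Upsilon_0$ is only in $\mathcal{H}^0_\varepsilon$; one uses instead the dissipation integral $\int_0^T\|U(\tau)\|^2_{\mathbb{V}^1}\,\diff\tau \le C(T)$ coming from Lemma \ref{weak-ball}) gives $\|\Phi^t\|^2_{\mathcal{M}^1_\varepsilon} \le C(T)\varepsilon$ on $[0,T]$. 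So it remains to bound $\|U(t)-\overline{U}(t)\|^2_{\mathbb{X}^2}$.

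Set $D(t) := U(t)-\overline{U}(t)$. Subtracting \eqref{problem0-1}--\eqref{problem0-2} from \eqref{problemp-1}--\eqref{problemp-2}, and recalling that $-\Delta u + \alpha(1-\omega)u$ in the limit problem corresponds (after the $\widetilde g$-shift) to $\omega\mathrm{A_W^{0,\beta}}\overline U + \mathrm{A_W^{\alpha,\beta}}\overline U - \omega\mathrm{A_W^{\alpha,\beta}}\overline U + \cdots$ — more precisely, writing $\Delta = \omega\Delta + (1-\omega)\Delta$ and using $\int_0^\infty\mu_\varepsilon(s)\,\diff s = (1-\omega)$ together with the identity $\int_0^\infty\mu_\varepsilon(s)\mathrm{A_W^{\alpha,\beta}}\eta^t(s)\,\diff s = \int_0^\infty\mu_\varepsilon(s)\mathrm{A_W^{\alpha,\beta}}(\eta^t(s)-\overline U(t))\,\diff s + (1-\omega)\mathrm{A_W^{\alpha,\beta}}\overline U(t)$ — one obtains an equation of the form
\[
\partial_t D + \omega\mathrm{A_W^{0,\beta}}D + \mathrm{A_W^{\alpha,\beta}}\!\!\int_0^\infty\!\!\mu_\varepsilon(s)\bigl(\Phi^t(s)-(1-\omega)\overline U(t)\bigr)\diff s + F(U)-F(\overline U) = 0.
\]
Test this with $D$ in $\mathbb{X}^2$. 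The term $\omega\langle\mathrm{A_W^{0,\beta}}D,D\rangle_{\mathbb{X}^2}$ is nonnegative and controls $\|D\|^2_{\mathbb{V}^1}$ up to a multiple of $\|D\|^2_{\mathbb{X}^2}$; the nonlinear difference is handled by the local Lipschitz bound on $F:\mathbb{V}^1\to\mathbb{X}^2$ (using \eqref{assm-1}--\eqref{assm-2} and the uniform $L^\infty([0,T];\mathbb{X}^2)\cap L^2([0,T];\mathbb{V}^1)$ bounds on $U$ and $\overline U$ from Theorem \ref{t:weak-solutions} and Theorem \ref{t:weak-solutions-0}), giving a contribution absorbed by $\tfrac{\omega}{2}\|D\|^2_{\mathbb{V}^1} + C\|D\|^2_{\mathbb{X}^2}$. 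The only genuinely new term is the memory coupling $\langle \mathrm{A_W^{\alpha,\beta}}\int_0^\infty\mu_\varepsilon(s)(\Phi^t(s)-(1-\omega)\overline U(t))\,\diff s, D\rangle_{\mathbb{X}^2}$, which by the self-adjointness of $\mathrm{A_W^{\alpha,\beta}}$ equals $\int_0^\infty\mu_\varepsilon(s)\langle \Phi^t(s)-(1-\omega)\overline U(t),D\rangle_{\mathbb{V}^1}\,\diff s$ and is bounded by $\tfrac{\omega}{4}\|D\|^2_{\mathbb{V}^1} + C_\omega\int_0^\infty\mu_\varepsilon(s)\|\Phi^t(s)-(1-\omega)\overline U(t)\|^2_{\mathbb{V}^1}\,\diff s$. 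Using the representation formula \eqref{representation-formula-1} for $\Phi^t(s)$ together with $\int_0^\infty\mu_\varepsilon(s)\,\diff s = (1-\omega)$, one shows $\int_0^\infty\mu_\varepsilon(s)\|\Phi^t(s)-(1-\omega)\overline U(t)\|^2_{\mathbb{V}^1}\,\diff s \le C\|\Phi^t\|^2_{\mathcal{M}^1_\varepsilon} + C\int_0^\infty\mu_\varepsilon(s)\bigl\|\int_{t-s}^t U(\tau)\,\diff\tau - s(1-\omega)^{-1}\cdot(1-\omega)\overline U(t)\bigr\|^2_{\mathbb{V}^1}\,\diff s + C\Lambda_0^2\varepsilon^{2p_0}$, and the middle term is $O(\varepsilon)$ by the standard argument (splitting the integral at $s=\sqrt\varepsilon$, using that $\mu_\varepsilon$ concentrates near $0$ and the $L^2([0,T];\mathbb{V}^1)$ bound on $U$) — this is the step I expect to be the main obstacle, as it is where the $\varepsilon^{1/2}$ rate (not $\varepsilon^{p_0}$ or anything slower) must actually be extracted.

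Collecting terms yields a differential inequality
\[
\frac{\diff}{\diff t}\|D\|^2_{\mathbb{X}^2} \le C\|D\|^2_{\mathbb{X}^2} + C(T)\varepsilon,
\]
and since $D(0) = U_0 - \mathbb{P}\Upsilon_0 = u_0 - u_0 = 0$ (the initial bulk datum of the limit problem is the bulk component of $\Upsilon_0$), Gr\"onwall on $[0,T]$ gives $\|D(t)\|^2_{\mathbb{X}^2} \le C(T)\varepsilon$. Combining this with the bound $\|\Phi^t\|^2_{\mathcal{M}^1_\varepsilon}\le C(T)\varepsilon$ established above gives $\|\mathcal{S}_\varepsilon(t)\Upsilon_0 - \mathcal{LS}_0(t)\mathbb{P}\Upsilon_0\|^2_{\mathcal{H}^0_\varepsilon} \le C(T)\varepsilon$, which is \eqref{C5-key-1} after taking square roots. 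The constant $C(T)$ depends on $T$, on $\omega$ through $C_\omega\sim C/\omega$ in the nonlinear and memory estimates, and on $R$ through the uniform absorbing-set bounds, but is independent of $\varepsilon$, as required.
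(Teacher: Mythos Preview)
Your approach has a genuine gap stemming from the identity you invoke: a change of variables in $\mu_\varepsilon(s)=\varepsilon^{-2}\mu(s/\varepsilon)$ gives $\int_0^\infty\mu_\varepsilon(s)\,\diff s = \varepsilon^{-1}\int_0^\infty\mu(y)\,\diff y$, which is $O(1/\varepsilon)$, not $(1-\omega)$. (What does hold is $\int_0^\infty s\,\mu_\varepsilon(s)\,\diff s=(1-\omega)$.) Consequently your identity $\int_0^\infty\mu_\varepsilon(s)\mathrm{A_W^{\alpha,\beta}}\Phi^t(s)\,\diff s = \int_0^\infty\mu_\varepsilon(s)\mathrm{A_W^{\alpha,\beta}}(\Phi^t(s)-\overline U(t))\,\diff s + (1-\omega)\mathrm{A_W^{\alpha,\beta}}\overline U(t)$ is wrong as written, and the term you call the ``main obstacle'' does not even reduce to the expression you analyse. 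With the correct weight $s\mu_\varepsilon(s)$ one would have to control $\int_0^\infty\mu_\varepsilon(s)\|\Phi^t(s)-s\,\overline U(t)\|_{\mathbb V^1}^2\,\diff s$, and your sketch (split at $\sqrt\varepsilon$, use the $L^2_t\mathbb V^1$ bound) is not enough to produce the $O(\varepsilon)$ needed here---you would at the very least need a quantitative bound on $\int_{t-s}^t U(\tau)\,\diff\tau - sU(t)$ in $\mathbb V^1$, i.e.\ control on $\partial_t U$ in $\mathbb V^1$, which weak solutions do not provide.

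The paper sidesteps this entirely by a different decomposition. Instead of subtracting the limit equation directly, it attaches to the \emph{limit} trajectory $U(t)=\mathcal S_0(t)U_0$ an auxiliary memory variable $\Phi^t$ solving $\partial_t\Phi^t=\mathrm T_\varepsilon\Phi^t+U(t)$ with $\Phi^0=\Phi_0$, and then sets $(Z,\Theta^t):=(\widehat U-U,\widehat\Phi^t-\Phi^t)$. The pair $(Z,\Theta)$ satisfies a coupled system whose only forcing is $-\int_0^\infty\mu_\varepsilon(s)\mathrm{A_W^{\alpha,\beta}}\Phi^t(s)\,\diff s$; testing with $(Z,\mathrm{A_W^{\alpha,\beta}}\Theta^t)$, this forcing is bounded by $\omega\|Z\|_{\mathbb V^1}^2+C_\omega\|\Phi^t\|_{\mathcal M^1_\varepsilon}^2$. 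Now the same computation behind Lemma~\ref{t:Phi-decay-1} (applied to $\Phi^t$, driven here by the limit solution $U$, with $\Upsilon_0\in\mathcal B^1_\varepsilon$ so that $\|U(t)\|_{\mathbb V^1}$ is uniformly bounded) gives $\|\Phi^t\|_{\mathcal M^1_\varepsilon}^2\le\|\Phi_0\|_{\mathcal M^1_\varepsilon}^2e^{-\delta t/2\varepsilon}+C\varepsilon$. Since $\int_0^T e^{-\delta\tau/2\varepsilon}\,\diff\tau\le 2\varepsilon/\delta$, Gr\"onwall yields $\|(Z(t),\Theta^t)\|_{\mathcal H^0_\varepsilon}^2\le C(T)\varepsilon$, and adding back $\|\Phi^t\|_{\mathcal M^1_\varepsilon}$ (which is itself $O(\sqrt\varepsilon)$ on $[t^*,2t^*]$) gives the claim. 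The auxiliary memory variable is precisely the device that converts the singular memory-versus-instantaneous comparison you attempt into an application of the already-available decay estimate~\eqref{Phi-decay-1}.
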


\begin{proof}
Let $\widehat{\Upsilon}(t)=(\widehat{U}(t),\widehat{\Phi}^t)$ denote the solution of Problem P$_\varepsilon$ corresponding to the initial data $\Upsilon_0=(U_0,\Phi_0)\in \mathcal{B}_{\varepsilon }^{1}$ and let $U(t)$ denote the solution of Problem P$_0$ corresponding to the initial data $\mathbb{P} \Upsilon_{0} = U_{0}\in \mathcal{B}_{0}^{1}$. 
With the solution $U(t)$, define the function $\Phi^t$ by the solution to the Cauchy problem,
\begin{equation}
\label{C5-key-2}
\left\{ \begin{array}{l}
\partial_t\Phi^t = \mathrm{T}_\varepsilon\Phi^t + U(t) \\
\Phi^0 = \mathbb{Q}\Upsilon_0=\Phi_0 \in \mathcal{M}^0_\varepsilon.
\end{array} \right.
\end{equation}
With the (unique) solution to (\ref{C5-key-2}) (cf. Corollary \ref{t:memory-regularity-1}), define $\Upsilon(t) := (U(t),\Phi^t)$ for all $t\geq 0$.
Let 
\begin{equation*}\begin{aligned}
\widehat{\Delta}(t) = (Z(t),\Theta^t) & := \widehat{\Upsilon}(t) - \Upsilon(t) \\ 
& = (\widehat{U}(t),\widehat{\Phi}^t) - (U(t),\Phi^t) \\
& = (\widehat{U}(t) - U(t), \widehat{\Phi}^t - \Phi^t);
\end{aligned}\end{equation*}
hence, $\widehat{\Delta}(t)=(Z(t),\Theta^t)$ satisfies the system
\begin{equation}
\label{C5-key-3}
\left\{ \begin{array}{l}
\partial_t Z(t) + \omega \mathrm{A_{W}^{0,\beta}}Z(t) 
+ \displaystyle\int_0^\infty \mu_\varepsilon(s) \mathrm{A_{W}^{\alpha,\beta}} \Theta^t(s) \diff s + F(\widehat{U}(t)) - F(U(t)) = -\int_0^\infty \mu_\varepsilon(s) \mathrm{A_{W}^{\alpha,\beta}} \Phi^t(s) \diff s, \\ 
\partial_t\Theta^t(s) = {\rm{T_\varepsilon}} \Theta^t(s) + Z(t), \\
\left( Z(0),\Theta^0 \right) = {\bf{0}}.
\end{array} \right.
\end{equation}
Multiply (\ref{C5-key-3})$_1$ by $Z$ in $\mathbb{X}^2$ and (\ref{C5-key-3})$_2$ by $\mathrm{A_{W}^{\alpha,\beta}}\Theta^t$ in $L^2_{\mu_\varepsilon}(\mathbb{R}_+;\mathbb{X}^2)$, summing the resulting identities and estimating as in the above arguments, it is not hard to see that there holds, for almost all $t\geq 0$,
\begin{equation}\begin{aligned}
\label{C5-key-4}
& \frac{1}{2}\frac{\diff}{\diff t} \left\{ \left\| Z \right\|^2_{\mathbb{X}^2} + \left\| \Theta^t \right\|^2_{\mathcal{M}^1_\varepsilon} \right\} + \omega\left\| Z \right\|^2_{\mathbb{V}^1} + \frac{\delta}{2\varepsilon}\left\| \Theta^t \right\|^2_{\mathcal{M}^1_\varepsilon} \\ 
& \leq -\left\langle F(\widehat{U}) - F(U),Z \right\rangle_{\mathbb{X}^2} + \omega\alpha\|z\|^2_{L^2(\Omega)} - \int_0^\infty \mu_\varepsilon(s) \left\langle \mathrm{A_{W}^{\alpha,\beta}}\Phi^t(s),Z \right\rangle_{\mathbb{X}^2} \diff s.
\end{aligned}\end{equation}
Recall, with (\ref{assm-3}) we obtain, 
\begin{equation}
\label{C5-key-5}
-\left\langle F(\widehat{U})-F(U),Z \right\rangle_{\mathbb{X}^2} \leq M_F \left\| Z \right\|^2_{\mathbb{X}^2}.
\end{equation}
For the remaining term on the right-hand side, we apply the definition of the norm and Young's inequality to find, 
\begin{equation*}\begin{aligned}
- \int_0^\infty \mu_\varepsilon(s) \left\langle \mathrm{A_{W}^{\alpha,\beta}} \Phi^t(s),Z \right\rangle_{\mathbb{X}^2} \diff s & = -\int_0^\infty \mu_\varepsilon(s) \left\langle \Phi^t(s),Z \right\rangle_{\mathbb{V}^1} \diff s \\
& \leq \|Z\|_{\mathbb{V}^1} \left\| \Phi^t \right\|_{\mathcal{M}^1_\varepsilon} \\
& \leq \omega\left\| Z \right\|^2_{\mathbb{V}^1} + \frac{1}{4\omega} \left\| \Phi^t \right\|^2_{\mathcal{M}^1_\varepsilon}.
\end{aligned}\end{equation*}
Recall that, by (\ref{Phi-decay-1}), there holds for all $t\geq 0$,
\begin{equation}
\label{C5-key-14}
\left\| \Phi^t \right\|^2_{\mathcal{M}^1_\varepsilon} \leq \left\| \Phi_0 \right\|^2_{\mathcal{M}^1_\varepsilon} e^{-\delta t/2\varepsilon} + C\varepsilon,
\end{equation}
where $C>0$ depends on the bound $\widetilde{P}_0$, but is uniform in $\varepsilon$ and $t$. 
Collecting (\ref{C5-key-4})-(\ref{C5-key-14}) yields,
\begin{equation}\begin{aligned}
\label{C5-key-8}
\frac{\diff}{\diff t} & \left\{ \left\| Z \right\|^2_{\mathbb{X}^2} + \left\| \Theta^t \right\|^2_{\mathcal{M}^1_\varepsilon} \right\} + \delta\left\| \Theta^t \right\|^2_{\mathcal{M}^1_\varepsilon} \\ 
& \leq 2\left( \omega\alpha + M_F \right) \left\| Z \right\|^2_{\mathbb{X}^2} + C \varepsilon \left\| Z \right\|^2_{\mathbb{X}^2} + C_\omega \left( \left\| \Phi_0 \right\|^2_{\mathcal{M}^1_\varepsilon} e^{-\delta t/2\varepsilon} + \varepsilon \right).
\end{aligned}\end{equation}
Integrating (\ref{C5-key-8}) with respect to $t$ on the interval $[0,T]$, for $T>0$, and then applying the initial conditions (\ref{C5-key-4})$_3$, as well as the uniform bound (\ref{weak-bound}), we have, 
\begin{equation}
\label{C5-key-9}
\left\| Z(t) \right\|^2_{\mathbb{X}^2} + \left\| \Theta^t \right\|^2_{\mathcal{M}^1_\varepsilon} \leq \int_0^t C \left\|Z(\tau) \right\|^2_{\mathbb{X}^2} \diff\tau + C(T)\varepsilon.
\end{equation}

Next we seek an appropriate bound on the term with $Z$. It follows from (\ref{C5-key-9}) and Gronwall's inequality that there holds, for all $t\geq 0$ and for all $\varepsilon\in(0,1]$,
\begin{equation}\label{C5-key-17}
\|Z(t)\|^2_{\mathbb{X}^2} \leq C(T)\varepsilon,
\end{equation}
where $C>0$ depends on $\omega$, $\delta$, and of course $T$, but not $\varepsilon$.

Returning to (\ref{C5-key-9}), we now see that there holds, for all $t\in[\sqrt{\varepsilon},T]$ and for all $\varepsilon\in(0,1]$,
\begin{equation}
\label{C5-key-12}
\left\| \left( Z(t), \Theta^t \right) \right\|^2_{\mathcal{H}^0_\varepsilon} \leq C(T)\varepsilon.
\end{equation}
Therefore (\ref{C5-key-1}) follows. This finishes the proof.
\end{proof}

We will establish the H\"{o}lder continuity with the following lemma.
With regard to \cite{GMPZ10}, in particular, hypothesis (H7) of Theorem 4.4 there, we {\em{do not}} perform an $\varepsilon$-scaling of the memory variable.

\begin{lemma}
Condition (C6) holds.
\end{lemma}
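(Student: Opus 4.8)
The plan is to compare two solutions of Problem $\mathbf{P}_{\ep_1}$ and Problem $\mathbf{P}_{\ep_2}$ starting from the same lifted initial datum $\Upsilon_0=(U_0,\Phi_0)$, but where the memory components are understood through the rescaled kernels $\mu_{\ep_1}$ and $\mu_{\ep_2}$. Since we \emph{do not} rescale the memory variable, the clean strategy is to work directly with the histories $\Phi^t$ (via the representation formula in Corollary \ref{t:memory-regularity-1}) rather than with normalized variables. Writing $(U_1,\Phi_1)=\mathcal{S}_{\ep_1}(t)\Upsilon_0$ and $(U_2,\Phi_2)=\mathcal{S}_{\ep_2}(t)\Upsilon_0$, and setting $Z=U_1-U_2$, $\Theta^t=\Phi_1^t-\Phi_2^t$, I would subtract the two weak formulations \eqref{weak-solution-1}--\eqref{weak-solution-2}. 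The difference of the memory integrals $\int_0^\infty\mu_{\ep_1}(s)\mathrm{A_W^{\alpha,\beta}}\Phi_1^t(s)\,\diff s-\int_0^\infty\mu_{\ep_2}(s)\mathrm{A_W^{\alpha,\beta}}\Phi_2^t(s)\,\diff s$ splits into a ``good'' part $\int_0^\infty\mu_{\ep_1}(s)\mathrm{A_W^{\alpha,\beta}}\Theta^t(s)\,\diff s$ (which pairs favorably against $\mathrm{A_W^{\alpha,\beta}}Z$ exactly as in the proofs of Lemmas \ref{t:to-C1} and \ref{t:to-C2}) plus an error term $\int_0^\infty(\mu_{\ep_1}(s)-\mu_{\ep_2}(s))\mathrm{A_W^{\alpha,\beta}}\Phi_2^t(s)\,\diff s$ that must be controlled by $C(\ep_1-\ep_2)^{1/2}$.

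First I would carry out the basic energy estimate: multiply the $Z$-equation by $Z$ in $\mathbb{X}^2$ and the $\Theta^t$-equation by $\Theta^t$ in $\mathcal{M}^1_{\ep_1}$, then add. The dissipative terms $\omega\|Z\|_{\mathbb{V}^1}^2$ and $\tfrac{\delta}{2\ep_1}\|\Theta^t\|_{\mathcal{M}^1_{\ep_1}}^2$ appear via \eqref{operator-T-1}; the nonlinear difference $\langle F(U_1)-F(U_2),Z\rangle_{\mathbb{X}^2}$ is absorbed using the one-sided bound \eqref{assm-3}--\eqref{assm-4}, exactly as in \eqref{C5-key-5}; and the cross term between $Z$ and the memory kernel difference (living on $\Gamma$ and in $\Omega$) is handled with Young's inequality against the $\omega\|Z\|_{\mathbb{V}^1}^2$ term. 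This leaves a Gronwall-type inequality of the form
\begin{equation*}
\frac{\diff}{\diff t}\left\{\|Z\|_{\mathbb{X}^2}^2+\|\Theta^t\|_{\mathcal{M}^1_{\ep_1}}^2\right\}+\delta\|\Theta^t\|_{\mathcal{M}^1_{\ep_1}}^2\leq C\left(\|Z\|_{\mathbb{X}^2}^2 + \mathcal{R}(t)\right),
\end{equation*}
where $\mathcal{R}(t)$ collects the kernel-difference error, so that after integrating on $[0,T]$ and invoking Gronwall it suffices to show $\int_0^T\mathcal{R}(\tau)\,\diff\tau\leq C(T)(\ep_1-\ep_2)$.

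The main obstacle — and the crux of the lemma — is the estimate on $\int_0^\infty(\mu_{\ep_1}(s)-\mu_{\ep_2}(s))\|\Phi_2^t(s)\|_{\mathbb{V}^1}\,\diff s$ (and the analogous term after applying $\mathrm{A_W^{\alpha,\beta}}$). Here I would exploit the strong-solution regularity of $\Phi_2^t$ in $\mathcal{M}^2_{\ep_2}$ coming from Lemma \ref{t:to-C1} together with the uniform bound \eqref{strong-bound} (valid since data are in $\mathcal{B}^1_{\ep_1}\subset\mathcal{H}^1_{\ep_1}$), and the representation formula \eqref{representation-formula-1}. Following the technique in \cite[Lemma 5.1]{GMPZ10} and \cite[Theorem 4.4]{GMPZ10}, the quantity $\int_0^\infty|\mu_{\ep_1}(s)-\mu_{\ep_2}(s)|\,\phi(s)\,\diff s$ for a bounded weight $\phi$ is controlled by $C(\ep_1-\ep_2)$ using $\mu_\ep(s)=\ep^{-2}\mu(s/\ep)$, the assumptions \eqref{mu-1}--\eqref{mu-4} (in particular the exponential decay), a change of variables $s=\ep\sigma$, and the mean value theorem in $\ep$ applied to $\ep\mapsto \ep^{-1}\mu(\sigma/\ep)$. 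Combining this with the uniform $\mathcal{K}^2$-bound (which gives integrability near $s=0$ and control of the tails) yields $\mathcal{R}(t)\leq C(\ep_1-\ep_2)$ uniformly in $t\in[t^*,2t^*]$, and after Gronwall we obtain
\begin{equation*}
\|Z(t)\|_{\mathbb{X}^2}^2+\|\Theta^t\|_{\mathcal{M}^1_{\ep_1}}^2\leq C(t^*)(\ep_1-\ep_2),
\end{equation*}
which is precisely \eqref{Holder-part} with $\Lambda_2=\sqrt{C(t^*)}$. One subtlety to dispatch carefully is that $\Phi_2^t\in\mathcal{M}^1_{\ep_2}$ must be reinterpreted as an element of $\mathcal{M}^1_{\ep_1}$ when measuring $\Theta^t$; this is legitimate because the continuous embedding $\mathcal{M}^r_{\ep_1}\hookrightarrow\mathcal{M}^r_{\ep_2}$ for $\ep_1\ge\ep_2$ (cited from \cite[Lemma 5.1]{GMPZ10}) goes the ``wrong'' way, so one instead carries the estimates in the $\mathcal{M}^\bullet_{\ep_1}$ norm throughout and uses that $\mu_{\ep_2}\le C\mu_{\ep_1}$ pointwise only where needed, absorbing the rest into the kernel-difference error term.
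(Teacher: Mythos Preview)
Your overall strategy matches the paper's exactly: subtract the two problems, split the memory integral into a ``good'' part involving $\Theta^t$ in the $\mu_{\ep_1}$ space plus a kernel-difference error, test the $Z$-equation against $Z$ in $\mathbb{X}^2$ and the $\Theta$-equation against $\mathrm{A_W^{\alpha,\beta}}\Theta^t$ in $L^2_{\mu_{\ep_1}}$, use the one-sided bound on $F$, and close with Gronwall on a finite interval. The paper also notes, as you do implicitly, that $\mathrm{T}_{\ep_1}=\mathrm{T}_{\ep_2}=-\partial_s$, so no extra error comes from the memory transport equation.

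Where you diverge is in the treatment of the kernel-difference error. You propose a fairly elaborate argument: mean value theorem in $\ep$ applied to $\ep\mapsto\ep^{-1}\mu(\sigma/\ep)$, change of variables, the representation formula \eqref{representation-formula-1}, and the $\mathcal{K}^2_\ep$ tail control from Lemma~\ref{t:to-C1} to handle integrability near $s=0$ and at infinity. The paper does none of this. It simply writes (in its equation for the error)
\[
\int_0^\infty(\mu_{\ep_2}(s)-\mu_{\ep_1}(s))\langle\widetilde{\Psi}^t(s),\widetilde{Z}\rangle\,\diff s
=\frac{\ep_1-\ep_2}{\ep_1\ep_2}\int_0^\infty\mu_{\ep}(s)\langle\widetilde{\Psi}^t(s),\widetilde{Z}\rangle\,\diff s
\]
and then bounds by $C\frac{\ep_1-\ep_2}{\ep_2}\|\widetilde{Z}\|_{\mathbb{X}^2}\|\widetilde{\Psi}^t\|_{\mathcal{M}^1_{\ep_1}}$ using only the uniform $\mathcal{M}^1_{\ep_1}$ bound on $\widetilde{\Psi}^t$ (from $\mathcal{B}^1_{\ep_1}$). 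After Young and Gronwall this gives $\|(\widetilde{Z},\widetilde{\Theta}^t)\|_{\mathcal{H}^0_{\ep_1}}\le C(T)\sqrt{(\ep_1-\ep_2)/\ep_2}$, and the paper declares \eqref{Holder-part} established. So the paper's route is much shorter but leaves a factor $\ep_2^{-1/2}$ in the constant; your route, if carried through with the $\mathcal{K}^2$ bounds as in \cite{GMPZ10}, would aim at a genuinely uniform $(\ep_1-\ep_2)^{1/2}$. The embedding subtlety you raise ($\mathcal{M}^r_{\ep_1}\hookrightarrow\mathcal{M}^r_{\ep_2}$ going the wrong way for measuring $\Phi_2^t$ in the $\ep_1$-norm) is not addressed in the paper; it is silently absorbed into the same kernel-difference manipulation.
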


\begin{proof}
Assume $0<\varepsilon _{2}<\varepsilon _{1}\leq 1$. 
Let $\Upsilon_0 = (U_0,\Phi^0) \in \mathcal{B}_{1}^{1}$. 
Let $\widetilde{\Upsilon}(t)=(\widetilde{U}(t),\widetilde{\Phi}^t)$ denote the solution of Problem P$_{\varepsilon_1}$ corresponding to the initial datum $\Upsilon_{0}$ and let $\widetilde{\Xi}(t)=(\widetilde{V}(t),\widetilde{\Psi}^t)$ denote the solution Problem P$_{\varepsilon_2}$ corresponding to the same initial datum $\Upsilon_{0}$. Let 
\begin{equation*}\begin{aligned}
\widetilde{\Delta}(t) = \left( \widetilde{Z}(t),\widetilde{\Theta}^t \right) & := \widetilde{\Upsilon}(t) - \widetilde{\Xi}(t) \\
& = \left(\widetilde{U}(t),\widetilde{\Phi}^t\right) - \left(\widetilde{V}(t),\widetilde{\Psi}^t\right) \\
& = \left( \widetilde{U}(t) - \widetilde{V}(t),\widetilde{\Phi}^t - \widetilde{\Psi}^t \right).
\end{aligned}\end{equation*}
\begin{equation}
\label{tilde-z-difference}
\left\{ 
\begin{array}{l}
\partial_t\widetilde{Z} + \omega \mathrm{A_{W}^{0,\beta}} \widetilde{Z} + \displaystyle\int_0^\infty \mu_{\varepsilon_1}(s) \mathrm{A_{W}^{\alpha,\beta}} \widetilde{\Theta}^t(s)\diff s + F\left(\widetilde{U}\right) - F\left(\widetilde{V}\right) = \int_0^\infty \left( \mu_{\varepsilon_2}(s) - \mu_{\varepsilon_1}(s) \right) \mathrm{A_{W}^{\alpha,\beta}} \widetilde{\Psi}^t(s)\diff s \\ 
\partial_t\widetilde{\Theta}^t(s) = {\rm{T}}_{\varepsilon_1}\widetilde{\Theta}^t(s) + \widetilde{Z}(t) \\
\widetilde{Z}(0) = {\bf{0}}, \quad\widetilde{\Theta}^0 = {\bf{0}}.
\end{array}
\right.
\end{equation}
Observe, by the definition of ${\rm{T}}_\varepsilon$, $\left( {\rm{T_{\varepsilon_1}}} - {\rm{T}}_{\varepsilon_2} \right)\widetilde{\Psi}^t(s)=0$.
We proceed in the usual fashion by multiplying (\ref{tilde-z-difference})$_1$ by $\widetilde{Z}$ in $\mathbb{X}^2$, and multiplying equation (\ref{tilde-z-difference})$_2$ by $\mathrm{A_{W}^{\alpha,\beta}} \widetilde{\Theta}^t$ in $L^2_{\mu_{\varepsilon_1}}(\mathbb{R}_+;\mathbb{X}^2)$,
summing the results, we arrive at the identity, 
\begin{equation}\begin{aligned}
\label{C6-1}
\frac{1}{2}\frac{\diff}{\diff t} & \left\{ \left\| \widetilde{Z} \right\|^2_{\mathbb{X}^2} + \left\| \widetilde{\Theta}^t \right\|^2_{\mathcal{M}^0_{\varepsilon_1}} \right\} + \\
& + \omega\left\|\widetilde{Z} \right\|^2_{\mathbb{V}^1} - \int_0^\infty \mu_{\varepsilon_1}(s)\left\langle {\rm{T_{\varepsilon_1}}}\widetilde{\Theta}^t(s), \mathrm{A_{W}^{\alpha,\beta}} \widetilde{\Theta}^t(s) \right\rangle_{\mathbb{X}^2} \diff s \\
& = \int_0^\infty \left( \mu_{\varepsilon_2}(s) - \mu_{\varepsilon_1}(s) \right) \left\langle \widetilde{\Psi}^t(s), \widetilde{Z}(t) \right\rangle_{\mathbb{X}^2} \diff s + \\
& - \left\langle F\left(\widetilde{U}\right) - F\left(\widetilde{V}\right), \widetilde{Z} \right\rangle_{\mathbb{X}^2} + \omega\alpha \|\tilde{z}\|^2_{L^2(\Omega)}.
\end{aligned}\end{equation}
We estimate from here along the usual lines to obtain, for almost all $t\geq0$, 
\begin{equation}\begin{aligned}
\label{C6-2}
- \int_0^\infty \mu_{\varepsilon_1}(s)\left\langle {\rm{T_{\varepsilon_1}}}\widetilde{\Theta}^t(s), \mathrm{A_{W}^{\alpha,\beta}} \widetilde{\Theta}^t(s) \right\rangle_{\mathbb{X}^2} \diff s \le \frac{\delta}{2\varepsilon_1}\left\| \widetilde{\Theta}^t \right\|^2_{\mathcal{M}^1_{\varepsilon_1}}. 
\end{aligned}\end{equation}
We know there is a constant $M_F>0$ in which,
\begin{equation}
\label{C6-4}
- \left\langle F\left(\widetilde{U}\right) - F\left(\widetilde{V}\right), \widetilde{Z} \right\rangle_{\mathbb{X}^2} \leq M_2 \left\|\widetilde{Z} \right\|^2_{\mathbb{X}^2},
\end{equation}
and finally, with the fact that $\widetilde{\Psi}^t$ is uniformly bounded in $\mathcal{B}^1_{\varepsilon_1}$,
\begin{equation}\begin{aligned}
\label{C6-5}
\int_0^\infty \left( \mu_{\varepsilon_2}(s) - \mu_{\varepsilon_1}(s) \right) \left\langle \widetilde{\Psi}^t(s), \widetilde{Z}(t) \right\rangle_{\mathbb{X}^2} \diff s & = \frac{\varepsilon_1 - \varepsilon_2}{\varepsilon_1\varepsilon_2}\int_0^\infty \mu_{\varepsilon}(s) \left\langle \widetilde{\Psi}^t(s),\widetilde{Z}(t) \right\rangle_{\mathbb{X}^2} \diff s \\ 
& \leq C\frac{\varepsilon_1 - \varepsilon_2}{\varepsilon_2} \left\| \widetilde{Z} \right\|_{\mathbb{X}^2} \left\| \widetilde{\Psi}^t \right\|_{\mathcal{M}^1_{\varepsilon_1}} \\
& \leq \frac{\varepsilon_1 - \varepsilon_2}{\varepsilon_2}Q(R_1) + \frac{1}{2}\left\| \widetilde{Z} \right\|^2_{\mathbb{X}^2},
\end{aligned}\end{equation}
where $R_1>0$ is the radius of the absorbing set $\mathcal{B}^1_{\varepsilon_1}$.
After applying (\ref{C6-2})-(\ref{C6-5}), we obtain the differential inequality,
\begin{equation}\begin{aligned}
\label{C6-6}
\frac{\diff}{\diff t} & \left\{ \left\| \widetilde{Z} \right\|^2_{\mathbb{X}^2} + \left\| \widetilde{\Theta}^t \right\|^2_{\mathcal{M}^1_{\varepsilon_1}} \right\} \leq 2\left( M_2+\omega+1 \right) \left\|\widetilde{Z} \right\|^2_{\mathbb{X}^2} + C \left\| \widetilde{\Theta}^t \right\|^2_{\mathcal{M}^1_{\varepsilon_1}} + \frac{\varepsilon_1 - \varepsilon_2}{\varepsilon_2}Q(R_1),
\end{aligned}\end{equation}
where $M_3:=\max\{2\left( M_2+\omega+1 \right),C\}>0$.
We now integrate (\ref{C6-6}) with respect to $t$ over $[0,T]$ which in turn yields the Gronwall-type estimate, for all $t\in[0,T]$
\[
\left\| \left(\widetilde{Z}(t),\widetilde{\Theta}^t \right) \right\|_{\mathcal{H}^0_{\varepsilon_1}} \leq \sqrt{\frac{\varepsilon_1 - \varepsilon_2}{\varepsilon_2}}\frac{Q(R_1) \left( e^{M_3 T} - 1 \right)}{M_3}.
\]
Therefore, (\ref{Holder-part}) follows.
\end{proof}

\begin{remark}
In conclusion, by Theorem \ref{cont_att} the semigroup $\mathcal{S}_\varepsilon$ generated by the solutions of Problem P$_\varepsilon$ admits a robust family of exponential attractors $(\mathbb{M}_\varepsilon)_{\varepsilon\in[0,1]}$ in $\mathcal{H}^0_\varepsilon$, H\"{o}lder continuous at each $\varepsilon\in[0,1]$. 
\end{remark}

\appendix
\section{}

For the reader's convenience we report some important results that are needed in the article.

The following lemma is from \cite[Lemma 2.2]{Gal&Grasselli08}. It is in the spirit of the $H^s$-elliptic regularity estimate that can be found in \cite[Theorem II.5.1]{Lions&Magenes72}. 

\begin{lemma}  \label{t:appendix-lemma-3}
Consider the linear boundary value problem,
\begin{equation}
\label{appendix-BVP}
\left\{\begin{array}{rl}
-\Delta u+\alpha u & = \psi_1 \quad\text{in}\quad\Omega, \\ 
-\Delta_{\Gamma}u + \partial_{\mathbf{n}} u + \beta u & = \psi_2 \quad\text{on}\quad\Gamma.
\end{array}\right.
\end{equation}
If $(\psi_1,\psi_2)^{\mathrm{tr}}\in H^s(\Omega)\times H^s(\Gamma)$, for $s\geq 0$ and $s+\frac{1}{2} \not\in\mathbb{N}$, then the following estimate holds for some constant $C>0$,
\begin{equation}\label{H2-regularity-estimate}
\|u\|_{H^{s+2}(\Omega)} + \|u\|_{H^{s+2}(\Gamma)} \leq C\left( \|\psi_1\|_{H^s(\Omega)} + \|\psi_2\|_{H^s(\Gamma)} \right).
\end{equation}
\end{lemma}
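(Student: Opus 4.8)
The statement to prove is Lemma \ref{t:appendix-lemma-3}, the $H^{s+2}$-regularity estimate for the elliptic Wentzell-type boundary value problem \eqref{appendix-BVP}. I will proceed by a combination of variational (weak-solution) arguments, a localization/flattening procedure near the boundary, and an elliptic bootstrap. The plan is to first establish existence and uniqueness of a weak solution $U=(u,u_{\mid\Gamma})^{\mathrm{tr}}\in\mathbb{V}^1$ by Lax--Milgram applied to the bilinear form associated with $\mathrm{A_W^{\alpha,\beta}}$ on $\mathbb{V}^1$ (this form is coercive because $\alpha,\beta\ge 0$ and, e.g., $\beta>0$, or via the Poincar\'e inequality \eqref{Poincare}); this gives the base case and the estimate \eqref{H2-regularity-estimate} for $s=0$ with $H^1$ norms on the left, after which one upgrades to $H^2$ by the standard difference-quotient technique in directions tangent to $\Gamma$ together with the equation itself to recover the missing normal derivatives. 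The key observation is that the second equation in \eqref{appendix-BVP} is itself an elliptic (Laplace--Beltrami) equation on the closed manifold $\Gamma$ with right-hand side $\psi_2-\partial_{\mathbf n}u+\ldots$, so once $\partial_{\mathbf n}u\in H^{s+1/2}(\Gamma)$ is controlled by interior regularity and the trace theorem, elliptic regularity for $-\Delta_\Gamma+\beta$ on $\Gamma$ yields the $H^{s+2}(\Gamma)$ bound on $u_{\mid\Gamma}$, which then feeds back as an improved Dirichlet datum into the interior problem $-\Delta u+\alpha u=\psi_1$.

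The heart of the argument is therefore an \emph{iteration coupling interior and boundary regularity}. At each stage we have $u_{\mid\Gamma}\in H^{\sigma}(\Gamma)$; by the classical elliptic estimate for the Dirichlet problem $-\Delta u+\alpha u=\psi_1$ in $\Omega$ (cf. \cite[Theorem II.5.1]{Lions&Magenes72}), this gives $u\in H^{\min\{\sigma+1/2,\,s+2\}}(\Omega)$ with the corresponding estimate, hence $\partial_{\mathbf n}u\in H^{\min\{\sigma-1,\,s+1/2\}}(\Gamma)$ by the trace theorem. Plugging this into the boundary equation $-\Delta_\Gamma u_{\mid\Gamma}+\beta u_{\mid\Gamma}=\psi_2-\partial_{\mathbf n}u$ and invoking elliptic regularity for the (positive) operator $-\Delta_\Gamma+\beta$ on the compact manifold $\Gamma$ produces $u_{\mid\Gamma}\in H^{\min\{\sigma+1,\,s+2\}}(\Gamma)$. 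Thus each round gains (at least) one half-derivative in the bulk and a full derivative on the boundary, so finitely many iterations reach the target regularity $H^{s+2}(\Omega)\times H^{s+2}(\Gamma)$; the condition $s+\tfrac12\notin\mathbb N$ is exactly what is needed so that none of the intermediate Sobolev indices lands on the forbidden half-integer values where the trace/regularity theorems degenerate, and so that the bootstrap terminates cleanly.

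I would carry out the steps in the following order: (1) set up the weak formulation and prove well-posedness in $\mathbb{V}^1$ with the $s=0$ estimate; (2) near $\Gamma$, use a partition of unity and local coordinates flattening the boundary, and run tangential difference quotients to obtain $H^2(\Omega)$ and $H^2(\Gamma)$ regularity together with \eqref{H2-regularity-estimate} for $s=0$, taking care that the boundary term $-\Delta_\Gamma u$ is treated as an elliptic operator along $\Gamma$ rather than absorbed as a lower-order perturbation; (3) record the interior Dirichlet elliptic estimate and the trace estimate $\|\partial_{\mathbf n}u\|_{H^{\sigma-1/2}(\Gamma)}\le C\|u\|_{H^{\sigma+1}(\Omega)}$; (4) record elliptic regularity for $-\Delta_\Gamma+\beta$ on $\Gamma$; (5) iterate (3)--(4) finitely many times to climb from $s=0$ to the given $s$, combining all constants into a single $C=C(s,\Omega,\alpha,\beta)$. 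The main obstacle is Step (2): correctly handling the \emph{coupled} boundary condition — in the standard Neumann/Robin problem the boundary term is lower order, but here $-\Delta_\Gamma u$ is a genuine second-order tangential operator, so the tangential difference-quotient estimate must be organized to simultaneously control $\|u\|_{H^2(\Omega)}$ and $\|u_{\mid\Gamma}\|_{H^2(\Gamma)}$, with the Laplace--Beltrami term contributing a good coercive tangential-gradient term on $\Gamma$ that must not be lost when integrating by parts. Once that base-case coupling is set up correctly, the bootstrap in Steps (3)--(5) is routine. Since the lemma is quoted from \cite[Lemma 2.2]{Gal&Grasselli08}, I would in fact present only this sketch and cite that reference and \cite[Theorem II.5.1]{Lions&Magenes72} for the details.
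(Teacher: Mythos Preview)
Your sketch is correct and in fact goes well beyond what the paper does: the paper offers no proof of this lemma whatsoever, merely stating it in the appendix and citing \cite[Lemma 2.2]{Gal&Grasselli08} (with \cite[Theorem II.5.1]{Lions&Magenes72} for the spirit of the interior estimate). Since you yourself conclude by saying you would present only the sketch and defer to those same references, your proposal is entirely consistent with the paper's treatment; the bootstrap you outline---alternating Dirichlet interior regularity, the trace estimate for $\partial_{\mathbf n}u$, and Laplace--Beltrami regularity on $\Gamma$---is the standard route and is precisely what underlies the cited result.
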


The following result is the so-called transitivity property of exponential attraction from \cite[Theorem 5.1]{FGMZ04}).

\begin{proposition}  \label{t:exp-attr}
Let $(\mathcal{X},d)$ be a metric space and let $S_t$ be a semigroup acting on this space such that 
\[
d(S_t x_1,S_t x_2) \leq C e^{Kt} d(x_1,x_2),
\]
for appropriate constants $C$ and $K$. 
Assume that there exists three subsets $U_1$,$U_2$,$U_3\subset\mathcal{X}$ such that 
\[
{\rm{dist}}_\mathcal{X}(S_t U_1,U_2) \leq C_1 e^{-\alpha_1 t}, \quad{\rm{dist}}_\mathcal{X}(S_t U_2,U_3) \leq C_2 e^{-\alpha_2 t}.
\]
Then 
\[
{\rm{dist}}_\mathcal{X}(S_t U_1,U_3) \leq C' e^{-\alpha' t},
\]
where $C'=CC_1+C_2$ and $\alpha'=\frac{\alpha_1\alpha_1}{K+\alpha_1+\alpha_2}$.
\end{proposition}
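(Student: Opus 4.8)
The plan is to exploit the semigroup property $S_t = S_{t_2}\circ S_{t_1}$ together with a well-chosen splitting $t = t_1+t_2$ of the time variable, chaining the two given decay estimates through the intermediate set $U_2$. First I would fix $t\ge 0$ and $x_1\in U_1$, and for parameters $t_1,t_2\ge 0$ with $t_1+t_2=t$ (to be optimized at the end) write $S_t x_1 = S_{t_2}(S_{t_1}x_1)$. The first hypothesis gives $\mathrm{dist}_{\mathcal{X}}(S_{t_1}x_1,U_2)\le C_1 e^{-\alpha_1 t_1}$, so for any $\sigma>0$ there is $x_2\in U_2$ with $d(S_{t_1}x_1,x_2)\le C_1 e^{-\alpha_1 t_1}+\sigma$. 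Applying the quasi-Lipschitz bound to the map $S_{t_2}$ at the points $S_{t_1}x_1$ and $x_2$ then yields
\[
d\bigl(S_t x_1,S_{t_2}x_2\bigr) = d\bigl(S_{t_2}S_{t_1}x_1,\,S_{t_2}x_2\bigr) \le C e^{K t_2}\bigl(C_1 e^{-\alpha_1 t_1}+\sigma\bigr).
\]

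Next I would insert $S_{t_2}x_2$ as an intermediate point. Since $x_2\in U_2$, the second hypothesis gives $\mathrm{dist}_{\mathcal{X}}(S_{t_2}x_2,U_3)\le \mathrm{dist}_{\mathcal{X}}(S_{t_2}U_2,U_3)\le C_2 e^{-\alpha_2 t_2}$, and the triangle inequality for the point-to-set distance produces
\[
\mathrm{dist}_{\mathcal{X}}(S_t x_1,U_3) \le d(S_t x_1,S_{t_2}x_2) + \mathrm{dist}_{\mathcal{X}}(S_{t_2}x_2,U_3) \le C e^{K t_2}\bigl(C_1 e^{-\alpha_1 t_1}+\sigma\bigr) + C_2 e^{-\alpha_2 t_2}.
\]
Letting $\sigma\to 0^+$ and taking the supremum over $x_1\in U_1$ gives
\[
\mathrm{dist}_{\mathcal{X}}(S_t U_1,U_3) \le C C_1\, e^{K t_2-\alpha_1 t_1} + C_2\, e^{-\alpha_2 t_2}.
\]

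Finally I would optimize the split by forcing the two exponents to coincide, i.e. $K t_2-\alpha_1 t_1 = -\alpha_2 t_2$; together with $t_1 = t-t_2$ this gives $t_2 = \frac{\alpha_1}{K+\alpha_1+\alpha_2}\,t$ and $t_1 = \frac{K+\alpha_2}{K+\alpha_1+\alpha_2}\,t$, both nonnegative for every $t\ge 0$, and the common exponent equals $-\alpha' t$ with $\alpha' = \frac{\alpha_1\alpha_2}{K+\alpha_1+\alpha_2}$. Substituting back produces $\mathrm{dist}_{\mathcal{X}}(S_t U_1,U_3)\le (C C_1+C_2)\,e^{-\alpha' t}$, which is the assertion with $C'=C C_1+C_2$. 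The argument is elementary; the only points requiring mild care are that the nearest point of $U_2$ need not be attained — handled by the $\sigma$-approximation and the limit, or equivalently by replacing $U_2$ with $\overline{U_2}$, which is harmless because the Lipschitz bound makes $S_{t_2}$ continuous — and checking that the chosen time-split is admissible for all $t\ge 0$, which the computation confirms. There is no genuine obstacle here: the content is purely the bookkeeping of the two exponential rates through the intermediate set.
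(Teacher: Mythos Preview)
Your argument is correct and is precisely the standard proof of this transitivity property. The paper does not actually supply its own proof of this proposition; it merely quotes the result from \cite[Theorem~5.1]{FGMZ04}, and the argument you give---splitting $t=t_1+t_2$, chaining through an approximate nearest point in $U_2$, applying the Lipschitz bound on $S_{t_2}$, and then balancing the exponents---is exactly the proof found there. Note also that your computation confirms the exponent should read $\alpha'=\frac{\alpha_1\alpha_2}{K+\alpha_1+\alpha_2}$; the $\alpha_1\alpha_1$ in the statement is a typo in the paper.
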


The following statement refers to a frequently used Gr\"{o}nwall-type inequality that is useful when working with dissipation arguments. 
We also refer the reader to \cite[Lemma 2.1]{Conti-Pata-2005}, \cite[Lemma 2.2]{Grasselli&Pata02}, \cite[Lemma 5]{Pata&Zelik06}.

\begin{proposition}  \label{GL}
Let $\Lambda :\mathbb{R}_{+}\rightarrow \mathbb{R}_{+}$ be an absolutely continuous function satisfying 
\begin{equation*}
\frac{d}{dt}\Lambda (t)+2\eta \Lambda (t)\leq h(t)\Lambda(t)+k,
\end{equation*}
where $\eta >0$, $k\geq 0$ and $\int_{s}^{t}h(\tau )d\tau \leq \eta(t-s)+m$, for all $t\geq s\geq 0$ and some $m\geq 0$. Then, for all $t\geq 0$, 
\begin{equation*}
\Lambda (t)\leq \Lambda (0)e^{m}e^{-\eta t}+\frac{ke^{m}}{\eta }.
\end{equation*}
\end{proposition}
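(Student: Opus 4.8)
The plan is to run the classical integrating‑factor argument, the only wrinkle being that the ``bad'' coefficient $h$ must be moved into the exponential weight and then neutralized using the hypothesis $\int_s^t h(\tau)\,d\tau\le\eta(t-s)+m$. Assuming (as is implicit, and as always holds in the applications, where $h$ is a nonnegative energy quantity like $C_\omega(1+\|u\|^2_{H^1(\Gamma)})$) that $h$ is locally integrable, I would set
\[
\Psi(t):=\exp\left(\int_0^t\bigl(2\eta-h(\tau)\bigr)\,d\tau\right),
\]
which is absolutely continuous, strictly positive, with $\Psi(0)=1$. Since $\Lambda$ is absolutely continuous and the differential inequality holds almost everywhere, the product $t\mapsto\Psi(t)\Lambda(t)$ is absolutely continuous and, for almost every $t\ge0$,
\[
\frac{d}{dt}\bigl(\Psi(t)\Lambda(t)\bigr)=\Psi(t)\left(\frac{d}{dt}\Lambda(t)+\bigl(2\eta-h(t)\bigr)\Lambda(t)\right)\le k\,\Psi(t),
\]
where the inequality is precisely the hypothesis $\frac{d}{dt}\Lambda+2\eta\Lambda\le h\Lambda+k$.

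Integrating this over $[0,t]$ and dividing by $\Psi(t)>0$ gives
\[
\Lambda(t)\le \Lambda(0)\Psi(t)^{-1}+k\int_0^t\Psi(t)^{-1}\Psi(s)\,ds.
\]
The remaining two steps are just estimates on the weights. For the first term, $\Psi(t)^{-1}=\exp\bigl(-2\eta t+\int_0^t h(\tau)\,d\tau\bigr)\le e^{m}e^{-\eta t}$, using the integral bound with $s=0$. For the second, $\Psi(t)^{-1}\Psi(s)=\exp\bigl(-2\eta(t-s)+\int_s^t h(\tau)\,d\tau\bigr)\le e^{m}e^{-\eta(t-s)}$, so that $k\int_0^t\Psi(t)^{-1}\Psi(s)\,ds\le k e^{m}\int_0^t e^{-\eta(t-s)}\,ds=\frac{k e^{m}}{\eta}\bigl(1-e^{-\eta t}\bigr)\le\frac{k e^{m}}{\eta}$. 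Adding the two contributions yields $\Lambda(t)\le\Lambda(0)e^{m}e^{-\eta t}+\frac{k e^{m}}{\eta}$, as claimed.

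I do not expect a real obstacle: the computation is short and completely standard. The only points that warrant a line of care are that $\Psi$ is well defined (local integrability of $h$, which follows from the applications and is harmless to assume) and that $t\mapsto\Psi(t)\Lambda(t)$ is absolutely continuous, so that the fundamental theorem of calculus legitimately applies to its a.e.\ derivative; if one wishes to sidestep even this, one can run the identical estimate on difference quotients and pass to the limit, or simply invoke \cite[Lemma 2.1]{Conti-Pata-2005}, \cite[Lemma 2.2]{Grasselli&Pata02}, \cite[Lemma 5]{Pata&Zelik06}, which is what I would ultimately do in the text.
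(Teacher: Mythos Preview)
Your argument is correct and is exactly the standard integrating-factor proof one finds in the references the paper cites; the paper itself does not supply a proof of this proposition but merely states it and points to \cite{Conti-Pata-2005,Grasselli&Pata02,Pata&Zelik06}.
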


\section*{Acknowledgments}

The author is indebted to the anonymous referees for their careful reading of the manuscript and for their helpful comments and suggestions.

\providecommand{\bysame}{\leavevmode\hbox to3em{\hrulefill}\thinspace}
\providecommand{\MR}{\relax\ifhmode\unskip\space\fi MR }
\providecommand{\MRhref}[2]{%
  \href{http://www.ams.org/mathscinet-getitem?mr=#1}{#2}
}
\providecommand{\href}[2]{#2}

\end{document}